\DeclareMathOperator{\Hom}{Hom}
\DeclareMathOperator{\Spec}{Spec}
\DeclareMathOperator{\Frac}{Frac}
\DeclareMathOperator{\Pic}{Pic}
\newcommand{\angles}[1]{\left\langle #1 \right\rangle}
\theoremstyle{definition}
\newtheorem{mydef}{\textbf{Definition}}[section]
\newtheorem{myeg}[mydef]{\textbf{Example}}
\newtheorem{rmk}[mydef]{\textbf{Remark}}
\theoremstyle{plain}
\newtheorem{mythm}[mydef]{\textbf{Theorem}}
\newtheorem*{nothma}{\textbf{Theorem A}}
\newtheorem*{nothmb}{\textbf{Theorem B}}
\newtheorem*{nothmc}{\textbf{Theorem C}}
\newtheorem*{nothmd}{\textbf{Theorem D}}
\newtheorem*{nothme}{\textbf{Theorem E}}
\newtheorem*{nothmf}{\textbf{Theorem F}}
\newtheorem{mytheorem}[mydef]{\textbf{Theorem}}
\newtheorem{lem}[mydef]{\textbf{Lemma}}
\newtheorem{pro}[mydef]{\textbf{Proposition}}
\newtheorem{cor}[mydef]{\textbf{Corollary}}
\newtheorem{condition}[mydef]{\textbf{Condition}}
\newcommand{\R}{\mathbb{R}}
\newcommand{\T}{\mathbb{T}}
\newcommand{\TT}{\T}
\newcommand{\Z}{\mathbb{Z}}
\begin{document}

\title{Equivariant vector bundles on toric schemes over semirings}
\author{Jaiung Jun}
\address{Department of Mathematics, State University of New York at New Paltz, NY 12561, USA}
\email{junj@newpaltz.edu}

\author{Kalina Mincheva}
\address{Department of Mathematics, Tulane University, New Orleans, LA 70118, USA}
\email{kmincheva@tulane.edu}

\author{Jeffrey Tolliver}
\address{}
\email{jeff.tolli@gmail.com}
\makeatletter
\@namedef{subjclassname@2020}{%
	\textup{2020} Mathematics Subject Classification}
\makeatother

\subjclass[2020]{14A23, 14T10, 14C22}
\keywords{monoid scheme, tropical geometry, vector bundle, semiring, semiring scheme, the field with one element, characteristic one, Klyachko classification, toric vector bundle.}
\thanks{}

\begin{abstract}
We introduce a notion of equivariant vector bundles on schemes over semirings. We do this by considering the functor of points of a locally free sheaf. We prove that every toric vector bundle on a toric scheme $X$ over an idempotent semifield equivariantly splits as a sum of toric line bundles. We then study the equivariant Picard group $\Pic_G(X)$. Finally, we prove a version of Klyachko's classification theorem for toric vector bundles over an idempotent semifield.
\end{abstract}

\maketitle

\tableofcontents

\section{Introduction}


In this paper we take the approach of functor of points to study equivariant vector bundles on schemes over semirings and monoids. The functor of points approach of schemes says that one may view schemes as locally representable sheaves on the category of commutative rings. In fact, T\"oen and Vaqui\'e \cite{toen2009dessous} developed scheme theory over a closed symmetric monoidal category $\mathcal{C}$ by defining schemes  as (suitably defined) locally representable sheaves on $\mathcal{C}$. The construction is entirely formal and category-theoretic, and hence one may immediately apply the construction of  T\"oen and Vaqui\'e to various closed symmetric monoidal categories $\mathcal{C}$ to have scheme theory over $\mathcal{C}$ without much efforts. For instance,  when $\mathcal{C}$ is the category of $\mathbb{N}$-algebras (i.e., semirings), the theory of T\"oen and Vaqui\'e immediately gives us schemes over $\mathbb{N}$ as it does for schemes over $\mathbb{Z}$. Likewise, T\"oen and Vaqui\'e's theory applied to the category of (pointed) monoids gives one monoid schemes, which is equivalent to monoid schemes constructed via prime ideals first introduced by Deitmar \cite{Deitmar}\footnote{See \cite{marty2012relative} for the equivalence between \cite{toen2009dessous} and \cite{Deitmar} for monoid schemes.} 

While the theory proposed by T\"oen and Vaqui\'e \cite{toen2009dessous} is intriguing, it offers an opportunity for further enrichment through the incorporation of concrete examples. In this regard, the exploration of monoids and semirings presents an ideal place for not only testing but also enhancing the applicability and clarity of this theory. 

We will take this approach in the special case of the category of monoids and semirings in order to study vector bundles.
Scheme theory over monoids is very closely related to toric varieties. For instance, in \cite{cortinas2015toric}, Corti\~nas, Haesemeyer, Walker, and Weibel utilized the theory of monoid schemes to study the algebraic K-theory of blow-up squares of toric varieties and schemes. Semirings naturally appear in various ares of mathematics, but in this paper our focus will be on algebraic geometry over semirings. 

To develop scheme theory over semirings, one may consider two types of semirings based on their relations to rings. 

Firstly, there are semirings which are subsets of rings, such as $\mathbb{N}$ or $\mathbb{R}_{>0}$. One potential application of developing scheme theory over these semirings would be to obtain positive models for schemes over rings. This can be seen as an analogy to the fact that schemes over $\mathbb{Z}$ provide integral models of schemes over $\mathbb{Q}$. For instance, in \cite{BJ24}, Borger and the first author show that the narrow class group of a number field can be recovered as a reflexive Picard group of its subsemiring of totally nonnegative algebraic integers.\footnote{See the introduction of \cite{BJ24} for more detailed explanation concerning positive models of schemes.} 
Similarly, one can develop a theory of algebraic groups over semifields to capture positivity, such as in \cite{bao2021flag} and \cite{lusztig2018positive}.

The second type is (additively) idempotent semirings, which can never be embedded in rings. A class of such semirings arises naturally from totally ordered abelian groups $G$ as follows: let $S=G\cup \{-\infty\}$. The multiplication of $S$ is the addition of $G$ and the addition of $S$ is defined by $a+b = \max\{a,b\}$ with $-\infty$ the smallest element. When $G=\mathbb{R}$, the associated semiring is called the \emph{tropical semifield} denoted by $\mathbb{T}$.

Tropical geometry is a version of algebraic geometry over the tropical semifield (or its variants, described for instance in \cite{lorscheid2023unifying}). Tropical geometry brings a new set of combinatorial tools to approach classical problems in algebraic geometry.

Although the two types of semirings and semiring geometries described above may seem intrinsically different, they complement each other. In algebraic geometry, we often study a scheme $X$ over $\mathbb{Z}$ through its base changes to $X_{\mathbb{F}_q}$ or $X_\mathbb{C}$. Similarly here, one may consider a scheme $X$ over $\mathbb{N}$ and its base change to $\mathbb{T}$, which we denote by $X_\mathbb{T}$. In some sense, for a scheme $X$ over $\mathbb{N}$, the ``complement'' of $X_\mathbb{T}$ is $X_\mathbb{Z}$ as a semiring $R$ is a ring if and only if $R\otimes_{\mathbb{N}}\mathbb{T}=\{0\}$. See \cite{borger2016witt} and \cite{borger2016boolean}.

In this paper we focus our attention to schemes over additively idempotent semifields. We explore several properties of equivariant vector bundles on schemes over idempotent semifields, especially torus-equivariant vector bundles on the toric scheme $X_R$ associated to a fan, where $R$ is a semiring. Loosely speaking, $X_R$ is a semiring scheme over $R$ obtained from the monoid scheme associated to a fan via base change to $R$. For a rigorous definition and examples we refer the reader to Section~\ref{section: preliminaries}.

Recently there has been a lot of interest in extending tropical methods to the study of vector bundles and thus developing the notion of a tropical vector bundles. In \cite{JMT20} the authors of this paper develop a theory of tropical vector bundles on tropical schemes, defined as locally free sheaves of finite rank. There we observe that vector bundles globally split as sum of line bundles. In \cite{GUZ22} the authors take a different approach and construct tropical vector bundles for metric graphs (tropical curves). The definition of vector bundles they use is the one proposed by Allermann in \cite{allermann2012chern} in terms of cocycle conditions.

In \cite{khan2024tropical} Khan and Maclagan and independently in \cite{kaveh2024toric} Kaveh and Manon propose a different definition of a vector bundle on a tropical variety. This combinatorial approach is inspired by the observation that tropical linear spaces correspond to (valuated) matroids.\footnote{A matroid is a combinatorial abstraction of linear independence. See \cite{oxley2006matroid}. We refer the reader whose background is in algebraic geometry to \cite{katz2016matroid}.} Roughly speaking, a tropical toric vector bundle in the sense of Khan and Maclagan on a tropical toric variety associated to a fan $\Delta$ is a triple $(\mathcal{M},\mathcal{G},\{E^\rho(j)\})$, where $\mathcal{M}$ is a (simple) valuated matroid on a ground set $\mathcal{G}$, and $\{E^\rho(j)\}$ is a collection of flats of the underlying matroid $\underline{\mathcal{M}}$ with some compatibility condition analogous to compatibility condition of the filtrations in Klyachko's classification of toric vector bundles \cite[Theorem 2.2.1]{alexander1989equivariant}. A more general case is defined by using embedded tropicalization and restriction of tropical toric vector bundle. The tropical toric vector bundles of Khan and Maclagan (in the case of trivial  valuated matroid structure) are the same as the Kaveh and Manon's toric matroid bundles.

Yet another approach to a notion of vector bundles over semirings focuses on the impact of the choice of (Grothendieck) topology on the base space. In \cite{BJ24}, Borger and the first author study the module theory over semirings and show that while not all of the classical definitions of vector bundles agree over semirings, all definitions of line bundle agree. In particular, for a module $M$ over a semiring $R$, the following three are equivalent: (1) $M$ is Zariski-locally free of rank $1$, (2) $M$ is fpqc-locally free of rank $1$, and (3) $M$ is invertible. 


Our previous results in \cite{JMT20} do not cover the case of equivariant vector bundles. In particular, while we show that a tropical vector bundle globally splits as sum of line bundles, we do not know if this happens equivariantly in the case of tropical toric vector bundles. 

In this paper we define toric vector bundles over a semiring in a purely algebraic way, and we show they equivariantly split as a sum of line bundles and show that we have a  classification analogous to Klyachko's classification of toric vector bundles in \cite{alexander1989equivariant}. At the end, we briefly discuss how our work is related to the notion of tropical vector bundles by Khan and Maclagan \cite{khan2024tropical} . 

\subsection{Summary of results}
In this paper, we begin by confirming that the natural correspondence between locally free sheaves and geometric vector bundles still holds in the semiring setting. One may easily modify the classical proofs for schemes (over rings) to prove the correspondence in the context of semirings, but we include sketches of some proofs for the readers who are not familiar with commutative algebra of semirings. 

We introduce a notion of the functor of points of a locally free sheaf $\mathcal{F}$ of a scheme over a semiring $R$ as follows (Definition \ref{definition: functor of points locally free sheaf}): for an $R$-algebra $A$ and a morphism $x:\Spec A \to X$, we define the fiber $\mathcal{F}_x$ as
\[
\mathcal{F}_x:=(x^*\mathcal{F})(\Spec A).
\]
In other words, $\mathcal{F}_x$ is the global sections of the pullback $x^*\mathcal{F}$ of $\mathcal{F}$ along $x$. Then we define the set:
\[
\mathcal{F}(A):=\bigsqcup_{x \in X(A)}\mathcal{F}_x,
\]
where $X(A)=\Hom(\Spec A, X)$, the set of $A$-rational points of $X$. The functor of points of $\mathcal{F}$ is the functor sending an $R$-algebra $A$ to $\mathcal{F}(A)$ and an $R$-algebra morphism $f:A \to B$ to the morphism $\mathcal{F}(A) \to \mathcal{F}(B)$ given by $v \mapsto f^*(v)$, where $f^*(v) \in \mathcal{F}_{f^*(x)}$ is the pullback of the global section $v$ along $f^*:\Spec B \to \Spec A$ (see Remark \ref{remark: pullback global section}). With this definition in mind, we prove our first main result.

\begin{nothma}[Corollary \ref{corollary: equivalence of functor of points}]
The functor of points of a geometric vector bundle on a scheme $X$ over a semiring $R$ is the same as that of the corresponding locally free sheaf on a scheme $X$ over a semiring $R$.    
\end{nothma}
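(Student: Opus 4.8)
The plan is to establish the equivalence by unwinding both sides of the claimed identity locally and checking that they agree. Recall that a geometric vector bundle $V \to X$ over $R$ is, by the correspondence with locally free sheaves $\mathcal{F}$, glued from pieces of the form $\mathbb{A}^n_U = \Spec(\mathcal{O}_X(U)[t_1,\dots,t_n]) \to U$ over a trivializing open cover $\{U_i\}$, with the transition data given by $\mathrm{GL}_n$-valued cocycles coming from $\mathcal{F}$. The functor of points $V(A)$ of the geometric object sends an $R$-algebra $A$ to the set of pairs $(x, s)$ where $x \colon \Spec A \to X$ and $s$ is a section of the pullback $x^* V \to \Spec A$; on the locally free sheaf side, $\mathcal{F}(A)$ was defined in the excerpt as $\bigsqcup_{x \in X(A)} (x^*\mathcal{F})(\Spec A)$. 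So the whole content is that sections of the geometric vector bundle $x^* V$ over the affine scheme $\Spec A$ are the same as global sections of the locally free sheaf $x^*\mathcal{F}$ over $\Spec A$.

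First I would reduce to the affine case: since both functors are defined fiberwise over $x \in X(A)$, and $x^* V$ (resp. $x^*\mathcal{F}$) is the geometric vector bundle (resp. locally free sheaf) on $\Spec A$ obtained by pulling back the gluing/transition data along $x$, it suffices to prove the statement when $X = \Spec A$ is affine and $x = \mathrm{id}$. So I must show: for a locally free sheaf $\mathcal{G}$ of finite rank on $\Spec A$ with associated geometric vector bundle $W = \Spec(\mathrm{Sym}\,\mathcal{G}^\vee) \to \Spec A$ (or the appropriate semiring analogue; see below), the set of sections $\Spec A \to W$ over $\Spec A$ is naturally in bijection with $\mathcal{G}(\Spec A)$, functorially in $A$. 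Next I would handle this by a further reduction along a trivializing cover of $\Spec A$: over an open $U$ on which $\mathcal{G}|_U \cong \mathcal{O}_U^n$, a section into $W|_U = \mathbb{A}^n_U$ is an $n$-tuple of elements of $\mathcal{O}_X(U)$, i.e. an element of $\mathcal{G}(U)$, and one checks this identification is compatible with the transition maps on overlaps — so the local identifications glue to the desired bijection by the sheaf axiom. Finally I would verify functoriality in $A$: an $R$-algebra map $f \colon A \to B$ induces $f^* \colon \Spec B \to \Spec A$, and the pullback of a section along $f^*$ corresponds under the bijection to the map $\mathcal{G}(\Spec A) \to \mathcal{G}(\Spec B)$ appearing in the definition of the functor of points of $\mathcal{F}$ — this is the content of Remark~\ref{remark: pullback global section}.

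The main obstacle is making sure the construction of the geometric vector bundle associated to a locally free sheaf, and in particular the dual/symmetric-algebra construction $W = \Spec(\mathrm{Sym}\,\mathcal{G}^\vee)$, behaves correctly over semirings, since over $\mathbb{N}$-algebras one does not have additive inverses and the usual linear-algebra manipulations (e.g. identifying $\mathcal{G}$ with $(\mathcal{G}^\vee)^\vee$, or freely using kernels/cokernels of transition matrices) can fail. In the semiring setting the cleanest route is to define the geometric vector bundle directly via the $\mathrm{GL}_n$-cocycle gluing rather than via a symmetric-algebra construction, so that the functor-of-points computation over a trivializing patch is literally "an $n$-tuple of ring elements" and the only thing to check is cocycle-compatibility, which is formal. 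I expect the bulk of the work — and the place where a little care is genuinely needed — is confirming that the locally free sheaves appearing here admit such an honest $\mathrm{GL}_n$-cocycle description Zariski-locally over a semiring (this is essentially the statement, recalled in the excerpt from \cite{BJ24}, that the relevant notions of local triviality coincide), after which the bijection and its functoriality follow by the sheaf axiom exactly as in the classical case.
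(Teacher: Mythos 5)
Your proposal is correct and follows essentially the same route as the paper: Proposition \ref{proposition: Functor of points of geometric vector bundle via pullbacks} identifies the fiber $E_y$ with sections of the pulled-back geometric bundle by the universal property of fiber products, and Lemma \ref{lemma: equivalence} identifies the sheaf of sections of that pullback with $x^*\mathcal{F}$ by constructing the adjunction map $f^*\mathcal{F}\to\mathcal{G}$ and checking it is an isomorphism on a trivializing cover — the same local computation you describe, just packaged as a globally defined map checked locally rather than as local bijections glued via the sheaf axiom. The one worry you flag — whether locally free sheaves admit a Zariski-local $\mathrm{GL}_n$-cocycle description over a semiring — is vacuous here, since the paper's locally free sheaves are Zariski-locally trivial by definition, so no appeal to \cite{BJ24} is needed.
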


We define equivariant vector bundles on a scheme $X$ over a semiring $R$ by using the functor of points of a locally free sheaf (Definition \ref{definition: equivariant vector bundle}). It is well-known that any vector bundle on an affine toric variety is trivial \cite{Gub87} and that any toric vector bundle on an affine toric variety is equivariantly trivializable \cite[Proposition 2.2]{payne2008moduli}. We prove that a similar result holds for schemes over an idempotent semifield under irreducibility assumption.

\begin{nothmb}[Theorem \ref{theorem: torus-equivariant}]
Let $X$ be an irreducible scheme over an idempotent semifield $K$ and $G$ be an irreducible algebraic group over $K$ acting on $X$.  Let $E$ be a $G$-equivariant vector bundle on $X$ which is trivial as a vector bundle.  Then $E$ is a direct sum of equivariant line bundles. 
\end{nothmb}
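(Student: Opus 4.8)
The plan is to reduce the statement to a computation with a matrix-valued cocycle and then exploit the fact, special to characteristic one, that over a semiring with no zero divisors the invertible matrices are exactly the monomial ones.

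First I would set up the cocycle. Since $E$ is trivial as a vector bundle, fix an isomorphism $E\cong\mathcal{O}_X^{\oplus n}$ with frame $e_1,\dots,e_n$. Using the correspondence of Theorem~A (Corollary~\ref{corollary: equivalence of functor of points}) one translates the $G$-equivariant structure of Definition~\ref{definition: equivariant vector bundle} into an isomorphism $\phi\colon a^*E\xrightarrow{\ \sim\ }p^*E$ on $G\times_K X$, satisfying the usual cocycle identity on $G\times_K G\times_K X$ and restricting to the identity along $\{e\}\times X\hookrightarrow G\times_K X$, where $a,p\colon G\times_K X\to X$ are the action and the projection. In the chosen frame, $\phi$ is a matrix $\rho\in GL_n(S)$ with $S:=\Gamma(G\times_K X,\mathcal{O}_{G\times X})$; the cocycle identity reads $\rho(gh,x)=\rho(g,hx)\,\rho(h,x)$ and the normalization reads $\rho|_{\{e\}\times X}=I_n$. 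The key preliminary point is that, by the irreducibility hypotheses on $G$ and $X$ (together with the properties of $K$ recorded in Section~\ref{section: preliminaries}), the semiring $S$ has no zero divisors; and since $S$ is a nonzero $K$-algebra it is additively idempotent, hence zerosumfree.

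Next I would prove the linear-algebra lemma: \emph{if $S$ is a zerosumfree semiring with no zero divisors and $1_S\neq 0_S$, then every matrix in $GL_n(S)$ is monomial}, i.e.\ of the form $PD$ with $P$ a permutation matrix and $D$ an invertible diagonal matrix. This is elementary: expanding $\rho\rho^{-1}=\rho^{-1}\rho=I_n$ entrywise, zerosumfreeness forces every product occurring in an off-diagonal entry to vanish, absence of zero divisors forces one factor of each such product to vanish, and a short combinatorial argument on the supports of the rows and columns of $\rho$ and $\rho^{-1}$ shows that each row and each column of $\rho$ has exactly one nonzero entry, necessarily a unit. Applying this to our cocycle, $\rho=PD$ for a uniquely determined permutation matrix $P$ and a diagonal $D\in GL_n(S)$. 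Now restrict along $\{e\}\times X$: the entries of $P$ are $0_S,1_S$, so $P$ is unchanged, while $D$ maps to a diagonal matrix $\bar D$ and $\rho$ maps to $I_n$; thus $P\bar D=I_n$ with $\bar D$ diagonal, which forces $P=I_n$ (and $\bar D=I_n$). Hence $\rho=D$ is diagonal.

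A diagonal $\rho$ means precisely that $\phi$ preserves the decomposition $\mathcal{O}_X^{\oplus n}=\bigoplus_{i=1}^n\mathcal{O}_X e_i$: its $i$-th component $\phi_i\colon a^*(\mathcal{O}_X e_i)\to p^*(\mathcal{O}_X e_i)$ is multiplication by the unit $D_{ii}\in S^\times$, and the cocycle and normalization conditions for $\phi$ restrict to the corresponding conditions for each $\phi_i$. Therefore $L_i:=(\mathcal{O}_X e_i,\phi_i)$ is a $G$-equivariant line bundle and $E\cong\bigoplus_{i=1}^n L_i$ as $G$-equivariant vector bundles, which is the assertion. I expect the main obstacle to lie not in this formal argument but in establishing that $S=\Gamma(G\times_K X,\mathcal{O}_{G\times X})$ has no zero divisors: over a field a tensor product of two domains need not be a domain, so this is exactly where the irreducibility of both $G$ and $X$ (and the relevant geometric-irreducibility behaviour of $K$) must be used, and if $S$ itself cannot be controlled directly a fallback is to base change to the generic points of $G$ and $X$, run the argument over the resulting residue semifields, and spread the diagonal frame back out using irreducibility. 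A secondary, routine, point is the verification that the functor-of-points definition of a $G$-equivariant structure indeed yields the isomorphism $\phi$ with the stated cocycle and normalization conditions, which follows from Theorem~A and standard manipulations.
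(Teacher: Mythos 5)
Your argument is correct in outline and lands at the same place as the paper's proof, but it is packaged differently: you globalize first (a single cocycle matrix $\rho$ over $S=\Gamma(G\times_K X,\mathcal{O}_{G\times_K X})$, one application of the monomial-matrix lemma to $S$, then killing the permutation part by restricting along the identity section), whereas the paper stays at the level of functors of points throughout. There, for each irreducible test algebra $A$ and each $(g,x)$ one extracts a permutation $\phi_A(g,x)\in S_n$ from the uniqueness of bases of free modules (Lemma \ref{settheoreticbundledecomposition}), proves naturality of $\phi_A$ by a uniqueness trick, invokes the restricted Yoneda lemma (Proposition \ref{irreduciblefunctorofpoints}) together with irreducibility of $G\times_K X$ (Proposition \ref{proposition: irreducible product}) to promote $\phi$ to a morphism of schemes $G\times_K X\to S_n$, and then uses connectedness plus the description of $S_n$ as the spectrum of $n!$ copies of $K$ to see that $\phi$ is constant, equal to the identity upon evaluating at $g=1$. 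Your normalization $\rho|_{\{e\}\times X}=I_n$ plays exactly the role of that evaluation, and your monomial decomposition of $\mathrm{GL}_n(S)$ is the global-sections incarnation of the paper's $S_n$-valued obstruction. What your version buys is a cleaner constancy step (a permutation matrix over one semiring is already ``constant''); what the paper's version buys is never having to control the semiring $\Gamma(G\times_K X,\mathcal{O}_{G\times_K X})$ beyond what follows formally from irreducibility of the scheme. Note also that your ``routine'' translation of Definition \ref{definition: equivariant vector bundle} into a single matrix $\rho\in\mathrm{GL}_n(S)$ is itself an application of Proposition \ref{irreduciblefunctorofpoints}, so the irreducibility hypotheses are consumed there too, not only in controlling $S$.

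The one place you overreach is the claim that $S$ has no zero divisors. That is not established anywhere in the paper and is not obviously true for the global sections of a non-affine product: irreducibility of the scheme only yields, affine-locally, that a product of non-nilpotents is non-nilpotent, which is strictly weaker. Fortunately the claim is also unnecessary: the decomposition $\mathrm{GL}_n(S)\cong (S^\times)^n\rtimes S_n$ holds whenever $S$ is zero-sum-free with only trivial idempotent pairs (Propositions \ref{proposition: exact sequence R, GL, S_n} and \ref{corollary: semidirect}), and both hypotheses are available here. Indeed $S$ is a $K$-algebra with $K$ additively idempotent, hence additively idempotent and zero-sum-free; and a nontrivial idempotent pair in $S$ would disconnect $G\times_K X$ (globalize Proposition \ref{proposition: connected implies no idempotet paris} over an affine cover and use connectedness to make the local choices agree), contradicting Proposition \ref{proposition: irreducible product}. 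With that substitution, which also lets you discard your own elementary proof of the lemma in favour of the paper's, your argument closes.
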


The above theorem is a consequence on several results. The first such result is that the functor of points of a scheme over an idempotent semiring is characterized by its values at irreducible idempotent semirings (Proposition \ref{irreduciblefunctorofpoints}). 
Another aspect of the theory that heavily relies on working over idempotent semifields is irreducibility. More precisely, $X$ and $Y$ are irreducible schemes over an idempotent semifield $R$, then $X\times_R Y$ is irreducible (Proposition \ref{proposition: irreducible product}). This clearly fails for schemes over $\mathbb{Z}$. We also make use of the fact that every invertible matrix with entries in a zero-sum-free semiring is a product of a diagonal matrix and a permutation matrix (Corollary \ref{corollary: semidirect}). As a consequence, we prove that any equivariant vector bundle on a toric scheme $X$ over an idempotent semifield, which is trivial as a vector bundle, equivariantly splits as a sum of equivariant line bundles.

In Section \ref{section: classification of equivariant vb}, we classify equivariant vector bundles, which will be used later to prove a version of Klyachko's classification of toric vector bundles in the setting of an idempotent semifield. We first show that analogously to the results for schemes over $\mathbb{Z}$ we have a natural isomorphism for schemes over a semiring or a monoid $K$:
\begin{equation}\label{eq: maps}
\Hom_{\Spec K}(X,\text{GL}_1) \simeq \Gamma(X,\mathcal{O}_X^\times),
\end{equation}
 where $\text{GL}_1=\Spec K[t,t^{-1}]$. This allows us to go back and forth between morphisms and global sections. 

Let $K$ be a semiring or monoid, $X$ be a scheme over $K$, and $G$ be a group scheme over $K$ acting on $X$. We say a map $u:G \times_KX \to \text{GL}_1$ is \emph{principal} if for any $K$-algebra $A$ and for $(g,h,x) \in (G\times_KG\times_KX)(A)$, the following holds:
\[
u_A(gh,x)=u_A(h,x)u_A(g,hx).
\]
Then, we prove the following correspondence.

\begin{nothmc}[Proposition \ref{Equivariant structures on O_X}]
With the same notation as above, there is a one-to-one correspondence between
\[
\{\textrm{$G$-actions on $\mathcal{O}_X$ which make $\mathcal{O}_X$ an equivariant line bundle}  \}
\]
and
\[
\{\textrm{principal morphisms $u: G \times_K X \rightarrow GL_1$.}\}
\]
Moreover, this correspondence is a group isomorphism, where the first set is viewed as a group under tensor product\footnote{Proposition \ref{proposition: equivariant Picard group} ensures that the first set is a group.}, and the second is a group under point-wise multiplication.  
\end{nothmc}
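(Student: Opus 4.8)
The plan is to build the correspondence in two directions and then check it respects the group structures. The starting point is the isomorphism \eqref{eq: maps}, which lets me freely pass between morphisms to $\mathrm{GL}_1$ and global units. So an equivariant structure on $\mathcal{O}_X$ amounts to an isomorphism of $\mathcal{O}_X$-modules $\sigma^*\mathcal{O}_X \xrightarrow{\sim} \mathrm{pr}_X^*\mathcal{O}_X$ on $G\times_K X$ (where $\sigma$ is the action and $\mathrm{pr}_X$ the projection), i.e.\ an element of $\Gamma(G\times_K X, \mathcal{O}_{G\times_K X}^\times)$, which by \eqref{eq: maps} is the same as a morphism $u: G\times_K X \to \mathrm{GL}_1$. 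The content of the proposition is that the cocycle/associativity condition defining an \emph{equivariant} structure translates \emph{exactly} into the principal-morphism identity $u_A(gh,x) = u_A(h,x)\,u_A(g,hx)$.

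First I would unwind, on the functor-of-points level, what it means for a $G$-action on the line bundle $\mathcal{O}_X$ (i.e.\ on its functor of points $\mathcal{O}_X(-) = \bigsqcup_{x\in X(-)} (\mathcal{O}_X)_x$) to be linear over the $G$-action on $X$ and to be genuinely an \emph{action} (compatible with composition in $G$). Linearity forces the action of $g\in G(A)$ on the fiber $(\mathcal{O}_X)_x \cong A$ over $x\in X(A)$ to be multiplication by some unit $u_A(g,x)\in A^\times$, and this assignment $(g,x)\mapsto u_A(g,x)$, being functorial in $A$, is precisely a morphism $u:G\times_K X\to\mathrm{GL}_1$ by the Yoneda-type identification coming from \eqref{eq: maps}. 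Then the axiom that $gh$ acts as $g$ after $h$ — tracing the fiber over $x$ through $h$ to the fiber over $hx$ and then through $g$ — gives $u_A(gh,x) = u_A(g,hx)\,u_A(h,x)$, which is the principality condition (commutativity of $A^\times$ lets me write the two factors in either order). Conversely, given a principal $u$, I define the $G$-action fiberwise by multiplication by $u_A(g,x)$ and check it is well-defined, functorial, linear, and satisfies the action axioms — all of which are forced by principality together with the evident requirement $u_A(e,x)=1$, which I should note follows from principality by setting $g=h=e$.

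Next I would verify the two constructions are mutually inverse: starting from an equivariant structure, extracting $u$, and rebuilding the action returns the original (because the action on a rank-one module is determined by the scalar it multiplies by), and similarly in the other direction. Finally, for the group-isomorphism claim, I would appeal to Proposition~\ref{proposition: equivariant Picard group} to know the source is a group, observe that the tensor product of two equivariant structures on $\mathcal{O}_X$ acts on $(\mathcal{O}_X\otimes\mathcal{O}_X)_x\cong A$ by the product of the two scalars $u_A(g,x)\,u'_A(g,x)$, and note that the pointwise product of two principal morphisms is principal (this is an immediate check from the defining identity, using commutativity of $A^\times$). Hence the bijection carries $\otimes$ to pointwise multiplication, and the identity element (the trivial equivariant structure, $u\equiv 1$) to the identity.

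**The main obstacle** I expect is not any single hard estimate but the bookkeeping of pullbacks of global sections over the various products $G\times_K X$ and $G\times_K G\times_K X$ in the semiring setting — making sure that the identification of a linear automorphism of a locally free rank-one sheaf with a global unit (via \eqref{eq: maps}) is natural enough that the associativity square for the $G$-action on $\mathcal{O}_X$ really does pull back to the three-variable identity defining principality. One must also be mildly careful that \eqref{eq: maps} and all the fiberwise manipulations are valid over a semiring or monoid $K$ and not just over a ring; but since units in commutative semirings still form an abelian group and the functor-of-points formalism of Definition~\ref{definition: functor of points locally free sheaf} is set up in exactly this generality, no genuinely new input beyond \eqref{eq: maps} and Proposition~\ref{proposition: equivariant Picard group} should be needed.
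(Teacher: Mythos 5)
Your proposal is correct and follows essentially the same route as the paper: the paper's proof is exactly the combination of extracting the unit $u_A(g,x)$ from the fiberwise action on the canonical basis (its Lemmas on constructing the unit and on principality), reconstructing the action from a principal $u$, checking the two constructions are inverse via linearity on rank-one fibers, and matching tensor product with pointwise multiplication of units. The only cosmetic difference is that the paper distributes these steps across three preparatory lemmas rather than a single argument, and it likewise records that $u_A(1,x)=1$ follows from the principality identity.
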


In Section \ref{section: Gluing equivariant vector bundles}, we explore how we can glue equivariant vector bundles on affine subschemes to obtain an equivariant vector bundles on a scheme. It turns out that the only obstruction to gluing is each affine subscheme has to be closed under the $G$-action (Lemma \ref{lemma: restriction of equivariant vector bundle} and Proposition \ref{proposition: gluing equivariant vector bundles}). As a consequence, we prove the following, an equivariant version of our previous result on vector bundles in \cite{JMT20}.
  
\begin{nothmd}[Theorem \ref{theorem: equivariantly split theorem}]
Let $X$ be a toric scheme over an idempotent semifield $R$, and let $G$ be the corresponding torus.   Let $E$ be a $G$-equivariant vector bundle on $X$.  Then there are unique (up to permutation) equivariant line bundles $L_1, \ldots, L_n$ such that $E = L_1 \oplus \ldots \oplus L_n$ (as $G$-equivariant vector bundles).
\end{nothmd}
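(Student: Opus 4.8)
The plan is to reduce to the affine toric charts, apply Theorem~\ref{theorem: torus-equivariant} on each of them, and then glue the resulting local splittings; the point is that over a zero-sum-free semiring the local splitting is essentially canonical, so the local pieces patch together.

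\textbf{Local splitting.} Write $X=\bigcup_\sigma U_\sigma$ as the union of the standard affine charts $U_\sigma=\Spec R[S_\sigma]$ indexed by the cones of the defining fan. Each $U_\sigma$ is closed under the $G$-action, each $U_\sigma$ is irreducible (being the spectrum of a monoid algebra over the idempotent semifield $R$; cf.\ Proposition~\ref{proposition: irreducible product}), and $G$ itself is irreducible. Moreover $E|_{U_\sigma}$ is trivial as a vector bundle: every vector bundle on the affine toric scheme $\Spec R[S_\sigma]$ over the idempotent semifield $R$ is trivial, the semiring analogue of Gubeladze's theorem, which one deduces from the global splitting of vector bundles on toric schemes in \cite{JMT20} together with the triviality of the Picard group of an affine monoid scheme over $R$. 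Hence Theorem~\ref{theorem: torus-equivariant} applies to $E|_{U_\sigma}$ and yields a decomposition $E|_{U_\sigma}\cong L_{\sigma,1}\oplus\dots\oplus L_{\sigma,n}$ into $G$-equivariant line subbundles.

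\textbf{Canonicity of the local splitting.} Any $\mathcal{O}$-module isomorphism between two such decompositions of $E|_{U_\sigma}$, read off in a compatible trivialization $E|_{U_\sigma}\cong\mathcal{O}_{U_\sigma}^{\,n}$, is an element of $\mathrm{GL}_n(R[S_\sigma])$. Since $R$ is an idempotent semifield it is zero-sum-free, hence so is the monoid algebra $R[S_\sigma]$; by Corollary~\ref{corollary: semidirect} this matrix is monomial, i.e.\ a permutation matrix times a diagonal matrix, so it merely permutes the coordinate line subbundles. Therefore the unordered family $\{L_{\sigma,i}\}_i$ of equivariant line subbundles of $E|_{U_\sigma}$ is intrinsic to $E|_{U_\sigma}$, and in particular is compatible with restriction to smaller $G$-stable affine opens.

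\textbf{Gluing and uniqueness.} On an overlap $U_\sigma\cap U_\tau$, the previous step shows that $\{L_{\sigma,i}|_{U_\sigma\cap U_\tau}\}$ and $\{L_{\tau,j}|_{U_\sigma\cap U_\tau}\}$ coincide as families of equivariant subbundles of $E|_{U_\sigma\cap U_\tau}$; since $X$ is connected the labels may be chosen coherently, giving a family of $G$-equivariant line subbundles on each chart that agree on overlaps. Because each $U_\sigma$ is $G$-stable, Lemma~\ref{lemma: restriction of equivariant vector bundle} and Proposition~\ref{proposition: gluing equivariant vector bundles} glue these into $G$-equivariant line bundles $L_1,\dots,L_n$ on $X$ with $E\cong L_1\oplus\dots\oplus L_n$ as $G$-equivariant vector bundles, and uniqueness up to permutation follows by applying the canonicity step to an equivariant isomorphism $\bigoplus_i L_i\cong\bigoplus_j L_j'$ on a single chart. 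I expect the main obstacle to be the canonicity step — pinning down that the local equivariant decomposition is genuinely intrinsic and restriction-compatible — together with the bookkeeping needed to propagate the rank and the coherent labeling over the connected scheme $X$; the affine triviality input, Theorem~\ref{theorem: torus-equivariant}, and the equivariant gluing results are then used essentially as black boxes.
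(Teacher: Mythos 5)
Your overall strategy is the paper's: restrict to the affine charts $U_\sigma$, use affine triviality (Proposition \ref{corollary: vector bundle on affine is trivial}) so that Theorem \ref{theorem: torus-equivariant} splits $E|_{U_\sigma}$ equivariantly, note that the plain (non-equivariant) decomposition into line subbundles is unique up to permutation, and glue via Proposition \ref{proposition: gluing equivariant vector bundles}. The one place where you and the paper genuinely diverge is the order of operations, and it is exactly the spot you flag as ``bookkeeping.'' The paper first invokes the global splitting theorem (Proposition \ref{theorem: bundles split}) to produce the underlying line bundles $L_1,\dots,L_n$ on all of $X$ at once; the only thing left to glue is the equivariant \emph{structure} on each already-global $L_i$, and agreement on overlaps is immediate from the uniqueness statement. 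You instead try to manufacture the global $L_i$ by matching the local families of line subbundles across charts.

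That matching step is where your argument has a real gap as written: ``since $X$ is connected the labels may be chosen coherently'' is not a proof. Connectedness of the chart-intersection graph gives you, on each overlap, an unordered bijection between the two local families, but a priori these bijections could have nontrivial monodromy around cycles of charts, which would obstruct a global coherent labeling; ruling this out is precisely the content of the global splitting result in \cite{JMT20}, not a consequence of connectivity. You can repair this in either of two ways: (a) do what the paper does and quote Proposition \ref{theorem: bundles split} to get the global $L_i$ (and global uniqueness up to permutation) before worrying about equivariance; or (b) use the fact that every chart of a toric scheme contains the common dense torus $U_{\{0\}}$, fix the labeling there, and check (via uniqueness of the decomposition on each $U_{\sigma\cap\tau}$ and the fact that distinct summands restrict to distinct summands on the torus) that the induced labelings agree on every overlap. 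Two minor points: irreducibility of $U_\sigma$ should be cited to Proposition \ref{proposition: irreducible affine} rather than Proposition \ref{proposition: irreducible product}; and your uniqueness argument via a single chart likewise ultimately rests on the global uniqueness in Proposition \ref{theorem: bundles split}, so it is cleaner to invoke that directly.
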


We prove that the $G$-action on an equivariant vector bundle is determined by the action on the torus (Proposition \ref{proposition: action determined by generic point}). To introduce our next result, let $R$ be an idempotent semifield and let $X$ be a toric scheme over $R$. We denote by $G$ the corresponding torus and by $\Lambda$ the dual lattice and $\Delta$ be the fan corresponding to $X$. Based on the isomorphism \eqref{eq: maps}, we further prove (Proposition \ref{proposition: equivariant line bundles and characters} and Lemma \ref{lemma: characters as elements of the dual lattice}) that there is a group homomorphism from $\Lambda$ to $\Pic_G(X)$, the group of isomorphism classes of $G$-equivariant line bundles on $X$, and characterize its kernel (Proposition \ref{proposition: dual lattice points corresponding to the trivial equivariant line bundle}). As a result, we prove the following.
    
\begin{nothme}[Theorem \ref{theorem: exact sequence}]
With the same notation as above, there is an exact sequence of abelian groups
\[ 
0 \rightarrow \bigcap_{\sigma \in \Delta} (\Lambda \cap \sigma^\vee)^\times \rightarrow \Lambda \rightarrow \mathrm{Pic}_G(X) \rightarrow \mathrm{Pic}(X) \rightarrow 0,
\]
where $\Pic_G(X) \to \Pic(X)$ is the forgetful map. 
\end{nothme}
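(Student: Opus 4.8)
The plan is to assemble the sequence from three maps and to verify exactness at each of the three interior nodes; two of these will already be supplied by the preceding results, so the substance lies in the other two. The three maps are: the inclusion $\iota$ of $\bigcap_{\sigma\in\Delta}(\Lambda\cap\sigma^\vee)^\times$ into $\Lambda$ (this is a subgroup, since each $\Lambda\cap\sigma^\vee$ is a submonoid of $\Lambda$, a unit group is a subgroup, and an intersection of subgroups is a subgroup); the homomorphism $c\colon\Lambda\to\Pic_G(X)$ of Proposition \ref{proposition: equivariant line bundles and characters} and Lemma \ref{lemma: characters as elements of the dual lattice}, sending a character $m$ to the class of $\mathcal{O}_X$ with the equivariant structure twisted by $m$ (write $\mathcal{O}_X(m)$ for this equivariant line bundle); and the forgetful homomorphism $f\colon\Pic_G(X)\to\Pic(X)$. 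Exactness at $\bigcap_{\sigma\in\Delta}(\Lambda\cap\sigma^\vee)^\times$ is then just the injectivity of $\iota$, and exactness at $\Lambda$, i.e. $\ker c=\operatorname{im}\iota$, is precisely Proposition \ref{proposition: dual lattice points corresponding to the trivial equivariant line bundle}. It remains to prove $\ker f=\operatorname{im}c$ and that $f$ is surjective.

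For $\ker f=\operatorname{im}c$: the inclusion $\operatorname{im}c\subseteq\ker f$ holds because every $\mathcal{O}_X(m)$ has trivial underlying line bundle. For the reverse, let $L$ be a $G$-equivariant line bundle whose underlying bundle is trivial; fixing a trivialization, $L$ becomes $\mathcal{O}_X$ with an equivariant structure, which by Proposition \ref{Equivariant structures on O_X} is a principal morphism $u\colon G\times_R X\to\mathrm{GL}_1$, i.e. (via \eqref{eq: maps}) a global unit on $G\times_R X$. I would restrict $u$ to the dense torus $G\hookrightarrow X$: over the idempotent semifield $R$ one has $\Gamma(G\times_R G,\mathcal{O}^\times)=R[\Lambda\oplus\Lambda]^\times=R^\times\times(\Lambda\oplus\Lambda)$, and a short computation with the identity $u(gh,x)=u(h,x)u(g,hx)$ shows the resulting principal morphism must be $u|_{G\times_R G}=\chi^m$ for a unique $m\in\Lambda$ (the $R^\times$-coefficient and the $x$-character are forced to be trivial). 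Since the equivariant structure on $\mathcal{O}_X$ defining $c(m)$ also restricts to $\chi^m$ on $G$, Proposition \ref{proposition: action determined by generic point} then forces $L\cong\mathcal{O}_X(m)$, so $[L]=c(m)$.

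For surjectivity of $f$, let $[L]\in\Pic(X)$ and use the $G$-stable affine cover $\{U_\sigma=\Spec R[\sigma^\vee\cap\Lambda]\}_{\sigma\in\Delta}$. A line bundle on an affine toric scheme is trivial (the semiring analogue of the classical fact; cf. \cite{Gub87} and \cite{BJ24}), so $L$ is described by a \v{C}ech $1$-cocycle $\{g_{\sigma\tau}\}$ with $g_{\sigma\tau}\in\Gamma(U_\sigma\cap U_\tau,\mathcal{O}_X^\times)=R[(\sigma\cap\tau)^\vee\cap\Lambda]^\times$. Over an idempotent semifield one has $R[P]^\times=R^\times\cdot\{\chi^p:p\in P^\times\}$ for every monoid $P$ (a short zero-sum-free argument of the kind used for Corollary \ref{corollary: semidirect}), so I would write $g_{\sigma\tau}=r_{\sigma\tau}\chi^{m_{\sigma\tau}}$ uniquely with $r_{\sigma\tau}\in R^\times$ and $m_{\sigma\tau}\in(\sigma\cap\tau)^\perp\cap\Lambda$. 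Since the monomials form a basis of each $R[(\sigma\cap\tau\cap\rho)^\vee\cap\Lambda]$, the cocycle condition splits into cocycles $\{r_{\sigma\tau}\}$ and $\{m_{\sigma\tau}\}$ with values in the constant sheaves $\underline{R^\times}$ and $\underline\Lambda$; and because all triple intersections $U_\sigma\cap U_\tau\cap U_\rho=U_{\sigma\cap\tau\cap\rho}$ contain the common dense torus $G$, hence are nonempty, the usual constant-coefficient coboundary argument makes both of them coboundaries. Absorbing $\{r_{\sigma\tau}\}$ into the trivializations, we may assume $L$ has transition cocycle $\{\chi^{m_{\sigma\tau}}\}$ with $m_{\sigma\tau}=a_\tau-a_\sigma$ for some $(a_\sigma)\in\Lambda^{\Delta}$ (automatically $a_\tau-a_\sigma\in(\sigma\cap\tau)^\perp\cap\Lambda$). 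The equivariant structures $\chi^{a_\sigma}$ on the $\mathcal{O}_{U_\sigma}$ are then intertwined by these transitions, so by Lemma \ref{lemma: restriction of equivariant vector bundle} and Proposition \ref{proposition: gluing equivariant vector bundles} (each $U_\sigma$ being $G$-stable) they glue to a $G$-equivariant structure on $L$; hence $[L]\in\operatorname{im}f$, and $f$ is surjective.

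I expect the main obstacle to be organizing the two semiring-specific inputs cleanly: the computation $R[P]^\times\cong R^\times\times P^\times$ over an idempotent semifield, which feeds both the identification of $\ker c$ and the monomial decomposition of transition functions, and the identification $\Pic(X)=\cHH^1(\{U_\sigma\},\mathcal{O}_X^\times)$, which rests on the triviality of line bundles on the affine toric charts. The vanishing of \v{C}ech $1$-cohomology with constant coefficients and the gluing of equivariant structures are then routine given, respectively, that all multiple intersections of the $U_\sigma$ are nonempty and that the $U_\sigma$ are $G$-stable; the one genuinely geometric ingredient in the first half is Proposition \ref{proposition: action determined by generic point}, which substitutes for the "spreading out from the generic point" step available over fields.
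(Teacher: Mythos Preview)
Your proposal is correct but takes a genuinely different route from the paper's own proof, which is a three-line assembly of earlier results: surjectivity of $f$ is Proposition~\ref{proposition: construction of equivariant structure on line bundle}, the equality $\operatorname{im}c=\ker f$ is read off from Proposition~\ref{proposition: equivariant line bundles and characters} together with Lemma~\ref{lemma: characters as elements of the dual lattice} (transported to the semifield via Proposition~\ref{Correspondence for equivariant structures on O_X}), and $\ker c$ is Proposition~\ref{proposition: dual lattice points corresponding to the trivial equivariant line bundle}.

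The substantive difference is in how you handle the middle and right nodes. For $\operatorname{im}c=\ker f$, the paper passes through $\mathbb{F}_1$: equivariant structures on $\mathcal{O}_{X_R}$ are matched with those on $\mathcal{O}_X$ over $\mathbb{F}_1$, and there Lemma~\ref{lemma: factorization of units} forces every principal unit to be a character. You instead stay over $R$, restrict the principal unit to $G\times_R G$, compute units of $R[\Lambda\oplus\Lambda]$ directly, and then invoke Proposition~\ref{proposition: action determined by generic point} to spread the character back out to $X$; this is valid and arguably more direct, though it duplicates work already packaged in Propositions~\ref{Correspondence for equivariant structures on O_X} and~\ref{proposition: equivariant line bundles and characters}. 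For surjectivity of $f$, the paper's Proposition~\ref{proposition: construction of equivariant structure on line bundle} fixes an equivariant structure on $L|_G$ and uses Lemma~\ref{lemma: extension of equivariant line bundle structure in the affine case} to extend it uniquely to each $U_\sigma$, with gluing automatic because all extensions agree on the torus. Your \v{C}ech argument---splitting transition units as $r_{\sigma\tau}\chi^{m_{\sigma\tau}}$ and killing both pieces as constant-coefficient coboundaries because every multiple intersection $U_{\sigma_0\cap\cdots\cap\sigma_k}$ is nonempty and irreducible---is a correct alternative that makes the combinatorics explicit; the gluing step you sketch is essentially the content of Lemma~\ref{lemma: Gluing equivariant line bundles}. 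In short, your approach buys self-containment over $R$ at the cost of reproving inline several facts the paper has already isolated as lemmas.
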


In Section \ref{section: Tropical Klyachko theorem}, we prove a version of Klyachko's classification on toric vector bundles in our setting. Thanks to Theorem D, classifying toric vector bundles on the toric scheme $X$ over an idempotent  semifield associated to a fan $\Delta$ reduces to classifying toric line bundles on $X$. 

Let $\Lambda$ be the dual lattice. We first prove that toric line bundles on $X$ are in one-to-one correspondence with families of elements $[u_\sigma] \in \Lambda/(\Lambda\cap \sigma^\vee)^\times$ indexed by cones satisfying some compatibility condition (Proposition \ref{proposition: classification toric line bundles}), which we call \emph{Klyachko families} (Definition \ref{definition: Klyachko family}).

To turn this into a version of Klyachko's classification theorem, we define an $n$-dimensional \emph{$\Delta$-Klyachko space} over an idempotent semifield $K$ as a free $K$-module $E$ of rank $n$ with collections of decreasing filtrations $\{E^\rho(i)\}_{i\in \mathbb{Z}}$ indexed by the rays of $\Delta$, satisfying some compatibility condition analogous to Klyachko's theorem (Definition \ref{definition: Delta space}). Then, we prove an equivalence between isomorphism classes of $n$-dimensional $\Delta$-Klyachko spaces over an idempotent semifield $K$ and $S_n$-orbits of $n$-tuples of Klyachko families (Lemma \ref{lemma: correspondence family space}). By appealing to these results, we prove the following. 
    
\begin{nothmf}[Theorem \ref{theorem: tropical Klyachko}]
Let $K$ be an idempotent semifield. Let $X_K$ be the toric scheme over $K$ associated to a fan $\Delta$. The set of isomorphism classes of toric vector bundles on $X_K$ is in one-to-one correspondence with the set of isomorphism classes of $\Delta$-Klyachko spaces over $K$.   
\end{nothmf}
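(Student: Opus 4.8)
The plan is to assemble Theorem F from the structural results developed in the preceding sections, reducing the classification of toric vector bundles to the already-established classification of toric line bundles plus the combinatorial bookkeeping encoded in $\Delta$-Klyachko spaces. First I would invoke Theorem D: every $G$-equivariant vector bundle $E$ on $X_K$ splits uniquely (up to permutation) as a direct sum $L_1 \oplus \cdots \oplus L_n$ of equivariant line bundles. Thus the set of isomorphism classes of toric vector bundles of rank $n$ is in bijection with $S_n$-orbits of $n$-tuples $(L_1, \ldots, L_n)$ of equivariant line bundles, i.e.\ with $S_n$-orbits of $n$-tuples of elements of $\Pic_G(X_K)$. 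Next I would bring in Proposition~\ref{proposition: classification toric line bundles}: each toric line bundle corresponds to a Klyachko family, a compatible system $[u_\sigma] \in \Lambda/(\Lambda \cap \sigma^\vee)^\times$. Composing, the isomorphism classes of rank-$n$ toric vector bundles are identified with $S_n$-orbits of $n$-tuples of Klyachko families.

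The second half of the argument is then purely the correspondence already packaged in Lemma~\ref{lemma: correspondence family space}, which asserts an equivalence between isomorphism classes of $n$-dimensional $\Delta$-Klyachko spaces over $K$ and $S_n$-orbits of $n$-tuples of Klyachko families. Chaining the two bijections gives, for each fixed rank $n$, a bijection between isomorphism classes of rank-$n$ toric vector bundles on $X_K$ and isomorphism classes of $n$-dimensional $\Delta$-Klyachko spaces over $K$. Taking the disjoint union over all $n$ yields the full statement. I would also want to check that this bijection is natural in the expected sense — that it is compatible with the filtration data: the filtration $\{E^\rho(i)\}$ attached to a ray $\rho$ should, under the splitting $E = \bigoplus L_j$, decompose as the direct sum of the rank-one filtrations determined by the characters $[u_{\sigma_\rho}]$ of the $L_j$, with the jump of the $j$-th filtration along $\rho$ occurring at the integer $\langle u_{\sigma_\rho}^{(j)}, v_\rho \rangle$ (the pairing of a representative character with the primitive ray generator). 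This is essentially the content of Definition~\ref{definition: Delta space} and is what makes the two combinatorial models match on the nose rather than merely having the same cardinality.

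The main obstacle — really the only nontrivial point, since the skeleton above is formal — is verifying that the equivalences of Lemma~\ref{lemma: correspondence family space} and Proposition~\ref{proposition: classification toric line bundles} splice together respecting \emph{isomorphisms}, not just objects: an isomorphism of toric vector bundles $E \xrightarrow{\sim} E'$ must induce, via the unique splitting, a permutation-plus-isomorphisms of the line-bundle summands (here one uses that the splitting is unique up to permutation, together with the fact over a zero-sum-free semifield that automorphisms of a trivial bundle are monomial matrices, Corollary~\ref{corollary: semidirect}), and conversely an isomorphism of $\Delta$-Klyachko spaces must be traced back through the filtration data to an isomorphism of bundles. I would handle this by checking that each arrow in the chain
\[
\{\text{toric v.b.}\}/{\simeq} \;\longleftrightarrow\; \{S_n\text{-orbits of } n\text{-tuples of eq.\ line bundles}\}/{\simeq} \;\longleftrightarrow\; \{S_n\text{-orbits of } n\text{-tuples of Klyachko families}\} \;\longleftrightarrow\; \{\Delta\text{-Klyachko spaces}\}/{\simeq}
\]
is a bijection on isomorphism classes, which for the first arrow is exactly Theorem~D (uniqueness clause), for the second is Proposition~\ref{proposition: classification toric line bundles} applied coordinatewise and $S_n$-equivariantly, and for the third is Lemma~\ref{lemma: correspondence family space}. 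Assembling these and observing the rank is preserved throughout completes the proof.
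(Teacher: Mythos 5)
Your proposal is correct and follows essentially the same route as the paper: Theorem~D reduces the problem to $S_n$-orbits of $n$-tuples of toric line bundles, Corollary~\ref{Corollary: Klyachko-like result for line bundles} identifies these with $S_n$-orbits of $n$-tuples of Klyachko families, and Lemma~\ref{lemma: correspondence family space} converts those orbits into isomorphism classes of $\Delta$-Klyachko spaces. The only cosmetic difference is that the paper passes through the semifield version of the line-bundle classification (Corollary~\ref{Corollary: Klyachko-like result for line bundles}) rather than citing Proposition~\ref{proposition: classification toric line bundles} directly.
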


Finally, we note in Remark~\ref{remark: valuated matroids} that a $\Delta$-Klyachko space is a valuated matroid satisfying a certain compatibility condition, and that it is a tropical vector bundle in the sense of Khan and Maclagan in \cite{khan2024tropical} whose underlying matroid is free.

\subsection{Organization of the paper}
We begin in Section \ref{section: preliminaries} by reviewing some basic definitions and propositions needed for the paper. 

In Section \ref{section: Equivalence between locally free sheaves and geometric vector bundles}, we prove that the correspondence between geometric vector bundles and locally free sheaves is still valid in the setting of semirings. We then introduce the functor of points of a locally free sheaf and prove Theorem A. 

In Section \ref{section: equivariant vector bundles}, we define equivariant vector bundles via functor of points perspective.

In Section \ref{section:torus-equivariant}, we study irreducibility and equivariant bundles on irreducible schemes over an idempotent semifield. We then prove Theorem B. 

In Section \ref{section: classification of equivariant vb}, we classify equivariant bundles and prove Theorem C. 

In Section \ref{section: Gluing equivariant vector bundles}, we prove Theorems D and E. 

In Section \ref{section: Tropical Klyachko theorem}, we prove Theorem F, a tropical version of Klyachko's classification theorem.

In Appendix \ref{section: Toric varieties, toric monoid schemes, and tropical toric schemes}, we prove some basic results connecting toric varieties, toric monoid schemes, and toric schemes over $\mathbb{T}$. 

In Appendix \ref{sn as a functor}, we prove that $S_n$ can be seen as a scheme over $\mathbb{N}$.

\bigskip

\textbf{Acknowledgment} J.J. acknowledges the support of a AMS-Simons Research Enhancement Grant for Primarily Undergraduate Institution (PUI) Faculty during the writing of this paper. J.J. is grateful to Alex Borisov for helpful discussions.

\section{Preliminaries}\label{section: preliminaries}

\subsection{Toric vector bundles}

We refer the reader to \cite{Ful93} or \cite{CLS} for standard notation and background on toric varieties. We denote by $N$ a lattice and by $\Lambda= \Hom_\Z(N, \Z)$ its dual, and by $\Lambda_\R$ and $N_\R$ the dual vector spaces $\Lambda\otimes_\mathbb{Z}\R$ and $N\otimes_\mathbb{Z}\R$. We call $\Lambda$ the character lattice. Given an isomorphism $\Lambda  \cong  \Z^n$, every point $u =(u_1, \dots, u_n)$ in $\Lambda$ gives rise to a character (group homomorphism) 
\[
\chi^u: G_k(k) \cong (k^\times)^n \rightarrow k^\times, \quad (t_1,\dots,t_n)\mapsto t_1^{u_1}\dots t_n^{u_n}
\]
where $G_k$ is an algebraic torus over a field $k$. When $k$ is a semifield a point in $\Lambda$ similarly gives rise to a homomorphism of groups $(k^\times)^n \rightarrow k^\times$. 

We will denote by $\sigma$ a strongly convex rational polyhedral cone in $N_\R$, that is,
\[
\sigma = \sum\limits_{i=1}^{r}\R_{\geq 0}v_i, \quad v_i \in N.
\]
All the cones we will consider from now on will be strongly convex rational polyhedral. We denote by $\sigma^\vee$ the dual cone of $\sigma$:
\[
\sigma^\vee = \{u\in \Lambda_\R : \left< u,v \right> \geq 0, \forall v\in \sigma\}.
\]
A fan $\Delta$ is a finite collection of cones $\sigma$ in $N_\R$ such that each face of $\sigma$ is also a cone of $\Delta$ and the intersection of two cones of $\Delta$ is a face of both. 

We recall the definition of a toric vector bundle. Let $X$ be a toric variety (with torus $G$). We say that $\mathcal{E}$ is a \emph{toric vector bundle} on $X$ if it is a vector bundle, together with an action of $G$, compatible with the action on $X$.

We recall a classification of toric vector bundles.

\begin{mytheorem}(Klyachko’s Classification Theorem)
Let $k$ be a field. The category of toric vector bundles on $X(\Delta)_k$ is naturally equivalent to the category of finite-dimensional $k$-vector spaces $E$ with collections of decreasing filtrations $\{E^\rho(n)\}$ indexed by the rays of $\Delta$, satisfying the following compatibility condition:
For each cone $\sigma \in \Delta$, there exists a decomposition $E=\bigoplus \limits_{[u] \in \Lambda_\sigma} E_{[u]}$ such that
\[
E^\rho(i) = \sum_{\left<[u],v_\rho\right> \geq i}E_{[u]},
\]
for every $\rho \preceq \sigma$ and $i \in \mathbb{Z}$, where $\Lambda_\sigma:=\Lambda/(\sigma^\perp \cap \Lambda)$.
\end{mytheorem}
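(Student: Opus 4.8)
The plan is to construct a pair of mutually quasi-inverse functors between the two categories and check compatibility with morphisms; this is Klyachko's original argument \cite{alexander1989equivariant}. Write $G$ for the torus of $X=X(\Delta)_k$, write $U_\sigma=\Spec k[\sigma^\vee\cap\Lambda]$ for the affine chart attached to $\sigma\in\Delta$, and for a ray $\rho$ of $\Delta$ let $v_\rho\in N$ be its primitive generator. Given a toric vector bundle $\mathcal E$ on $X$, restrict the $G$-action to the open orbit $G\subseteq X$, on which $G$ acts simply transitively; then $\mathcal E|_G$ is equivariantly trivial, so with $E:=\mathcal E_e$ the fiber over the identity $e\in G$ we have $\mathcal E|_G\cong\mathcal O_G\otimes_k E$ and hence a $\Lambda$-grading $\Gamma(G,\mathcal E)\cong\bigoplus_{u\in\Lambda}\chi^u\otimes E$ with every graded piece canonically identified with $E$. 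For each cone $\sigma$ the chart $U_\sigma$ is affine, so $\mathcal E|_{U_\sigma}$ is equivariantly trivial by \cite[Proposition 2.2]{payne2008moduli}; consequently $\Gamma(U_\sigma,\mathcal E)\subseteq\Gamma(G,\mathcal E)$ is a free $\Lambda$-graded $k[\sigma^\vee\cap\Lambda]$-module on finitely many homogeneous generators. Taking $\sigma=\rho$ a ray, multiplication by the monomials $\chi^{u'}$ with $\langle u',v_\rho\rangle\ge 0$ shows that the degree-$u$ piece $\Gamma(U_\rho,\mathcal E)_u$, viewed inside $E$, depends only on the integer $\langle u,v_\rho\rangle$ and is monotone in it; after the standard reindexing this is the decreasing filtration $\{E^\rho(i)\}_{i\in\mathbb Z}$, and this assignment is plainly functorial in $\mathcal E$.

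For the compatibility condition, fix $\sigma\in\Delta$ and a homogeneous $k[\sigma^\vee\cap\Lambda]$-basis $x_1,\dots,x_n$ of $\Gamma(U_\sigma,\mathcal E)$ with $\deg x_j=u_j$; the images of the $x_j$ form a $k$-basis of $E$, and grouping them according to the class $[u_j]\in\Lambda_\sigma$ produces the decomposition $E=\bigoplus_{[u]\in\Lambda_\sigma}E_{[u]}$. Localizing this graded module along the faces $\rho\preceq\sigma$ and comparing with the filtrations above yields exactly $E^\rho(i)=\sum_{\langle[u],v_\rho\rangle\ge i}E_{[u]}$. Conversely, from a system $(E,\{E^\rho(i)\})$ satisfying the compatibility condition one builds, for each $\sigma$, the $\Lambda$-graded $k[\sigma^\vee\cap\Lambda]$-module $M_\sigma$ whose degree-$u$ component is $\bigcap_{\rho\preceq\sigma}E^\rho(\langle u,v_\rho\rangle)$ (with the sign conventions fixed above); the existence of the simultaneous decomposition $E=\bigoplus E_{[u]}$ forces $M_\sigma$ to be free of rank $\dim_k E$ with a homogeneous basis, hence to define a vector bundle on $U_\sigma$, and the $M_\sigma$ agree on the overlaps $U_\sigma\cap U_\tau=U_{\sigma\cap\tau}$, so they glue to a $G$-equivariant bundle on $X$. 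One checks directly that the two constructions are mutually quasi-inverse, and that a morphism $\mathcal E\to\mathcal F$, which restricts on $G$ to a $G$-equivariant bundle map — i.e.\ a $k$-linear map $E\to F$ carrying $\Gamma(U_\rho,\mathcal E)$ into $\Gamma(U_\rho,\mathcal F)$, hence respecting the filtrations — corresponds to a filtered linear map, and vice versa. This gives the equivalence of categories.

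The delicate point is the converse direction: showing that the module $M_\sigma$ reconstructed from the filtration data is locally free of the correct rank. This is precisely where the compatibility condition enters essentially — without the simultaneous decomposition $E=\bigoplus_{[u]\in\Lambda_\sigma}E_{[u]}$ the intersections $\bigcap_{\rho\preceq\sigma}E^\rho(\langle u,v_\rho\rangle)$ need not assemble into a free $k[\sigma^\vee\cap\Lambda]$-module. Secondarily, the forward direction rests on affine equivariant triviality (in particular the finite generation and freeness of $\Gamma(U_\sigma,\mathcal E)$), which over a field is classical but is exactly the input whose analogue must be established before any version of this classification can be transported to the semiring setting.
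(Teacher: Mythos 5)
The paper does not prove this statement: it is recalled verbatim in the preliminaries as classical background, with the proof left to Klyachko's original paper \cite{alexander1989equivariant} (see also Payne's exposition). Your sketch is a correct outline of that classical argument — equivariant trivialization on the torus and on each affine chart, the $\Lambda$-grading of sections, the filtrations extracted from the rays via monotonicity of the graded pieces of $\Gamma(U_\rho,\mathcal E)$, and reconstruction of $\Gamma(U_\sigma,\mathcal E)$ as the graded module with degree-$u$ piece $\bigcap_{\rho\preceq\sigma}E^\rho(\langle u,v_\rho\rangle)$ — and you correctly isolate the delicate point, namely that the compatibility condition is exactly what forces the reconstructed module to be free of the right rank. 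It is worth noting that when the paper proves its own semiring analogue (Theorem \ref{theorem: tropical Klyachko}), it does not transport this argument: it first shows every toric vector bundle splits equivariantly into line bundles (Theorem \ref{theorem: equivariantly split theorem}), classifies toric line bundles by Klyachko families, and only then repackages the data as filtrations — a route unavailable over a field, where toric bundles need not split.
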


\subsection{Monoids and monoid schemes}
One may mimic the construction of schemes to define monoid schemes. Alternatively, one may take a functor of points approach to define the category of affine monoid schemes as the opposite category of monoids, and monoid schemes as (Zariski) sheaves on the category of monoids. As in the case of schemes, these two definitions are equivalent.

In this paper, by a monoid, we mean a commutative (multiplicative) monoid $M$.By a \emph{pointed monoid}, which means that there exists an absorbing element $0_M \in M$, that is, $0_M\cdot a=0_M$ for all $a \in M$. 

By a \textit{module} over a monoid $A$ we mean a set $M$ together with a map $\phi: A\times M \to M$, defined by $\phi(a, m) = am$, where $1m = m$ and $(ab)m=a(bm)$. 

The \textit{tensor product} $M \otimes_A N$ of two modules $M, N$ of a monoid $A$ is defined to be the quotient of $M \times N$ by the equivalence relation generated by $(am, n) \sim (m, an)$, for all $a\in A$, $m\in M$ and $n\in N$. As usual we denote the equivalence class on $(m, n)$ by $m\otimes n$. If $M$ and $N$ are two pointed modules of $A$ with distinguished points $m_0$ and $n_0$ respectively, then their tensor product is $$(M, m_0) \otimes_A (N, n_0) = (M \otimes_A N, m_0\otimes n_0).$$
For properties of the tensor product of (pointed) monoids we refer the reader to \cite{deitmar2008f1}.

Denote by $\mathbb{F}_1$ the initial object in the category of (pointed) monoids. We interpret it as a pointed multiplicative monoid with one element which we call 1 and absorbing element 0. A (pointed) monoid is automatically an $\mathbb{F}_1$-module.

By an \textit{$\mathbb{F}_1$-algebra}, we mean a pointed monoid. An $\mathbb{F}_1$-algebra (or a monoid) $M$ is said to be \textit{irreducible} if the product of non-nilpotents is not nilpotent. 

For a monoid $M$, an nonempty subset $I\subseteq M$ is said to be an \emph{ideal} if $MI \subseteq I$. An ideal $I$ is said to be \emph{prime} if $M- I$ is a multiplicative nonempty subset of $M$, and \emph{maximal} if $I$ is a proper ideal (i.e., $I \neq M$) which is not contained in any other proper ideal.

We can define the \emph{prime spectrum} $\Spec M$ of a monoid $M$ as the set of all prime ideals of $M$ equipped with the Zariski topology. The set $\{D(f)\}_{f \in M}$ forms an open basis of $\Spec M$, where $D(f):=\{\mathfrak{p} \in \Spec M \mid f \not\in \mathfrak{p}\}$. 

A \emph{monoid scheme} (or \textit{$\mathbb{F}_1$-scheme}) is defined to be a topological space equipped with a structure sheaf of monoids locally isomorphic to affine monoid schemes $(\Spec M, \mathcal{O}_{\Spec M})$. We refer the reader to \cite[\S 2]{jun2019picard} for the precise definitions and details.

A special class of monoid schemes called \emph{toric monoid schemes} (or \textit{toric $\mathbb{F}_1$-schemes) } can be obtained from fans. A toric monoid scheme is locally isomorphic to $\Spec M$, where $M$ is a toric monoid.\footnote{By a toric monoid, we mean the monoid obtained from a cone giving rise to an affine toric variety.} The fan tells us how to glue the affine pieces corresponding to each cone of the fan. For more details we refer the reader to \cite{cortinas2015toric}.

\subsection{Semirings, toric schemes over semirings, and vector bundles} \label{section: tropical toric schemes}

\begin{mydef}
By {\it semiring} we mean a commutative semiring with multiplicative unit, that is a nonempty set $R$ with two binary operations $(+,\cdot)$ satisfying:

\begin{itemize}
\item $(R,+)$ is a commutative monoid with identity element $0_R$
\item $(R,\cdot)$ is a commutative monoid with identity element $1_R$
\item For any $a,b,c \in R$: $a(b+c) = ab+ac$
\item $1_R \neq 0_R$ and $a\cdot 0_R = 0_R$ for all $a \in R$
\end{itemize}

$R$ is said to be \emph{zero-sum free} if $a+b=0$ implies $a=b=0$ for all $a,b \in R$. 

A {\it semifield} is a semiring in which all nonzero elements have a multiplicative inverse. 

\end{mydef}

We will denote by $\mathbb{B}$ the semifield with two elements $\{1,0\}$, where $1$ is the multiplicative identity, $0$ is the additive identity and $1+1 = 1$. 
The {\it tropical semifield}, denoted $\mathbb{T}$, is the set $\mathbb{R}  \cup \{-\infty\} $ with the $+$ operation to be the maximum and the $\cdot$ operation to be the usual addition, with $-\infty = 0_\mathbb{T}$.

\begin{mydef}
An \emph{affine semiring scheme} is the prime spectrum $X=\Spec A$ of some semiring $A$, equipped with a structure sheaf $\mathcal{O}_X$. A locally semiringed space is a topological space with a sheaf of semirings such that the stalk at each point has a unique maximal ideal. A \emph{semiring scheme} is a locally semiringed space which is locally isomorphic to an affine semiring scheme.\footnote{As we noted in the introduction, this is equivalent to the functor of points approach.}
\end{mydef}

\begin{rmk}
In \cite{giansiracusa2016equations} J. Giansiracusa and N. Giansiracusa propose a special case of the semiring schemes called tropical schemes in such as way that the tropicalization of an algebraic variety can be understood as a set of $\TT$-rational points of a tropical scheme. 
\end{rmk}

Toric schemes over idempotent semirings are intimately related to monoid schemes, in fact they arise from monoid schemes via base change. We make this relation precise. 

\begin{mydef}\label{definition: based change functor}
 For any ring (resp. semiring) $R$ we define $M\otimes_{\mathbb{F}_1}R$ to be the monoid ring (resp. semiring) $R[M]$, where we identify $0_M$ and $0_R$. This defines a functor from the category of monoids to the category of $R$-algebras, sending $M$ to $ R[M]$. There is also a natural functor (forgetful functor) $\mathcal{F}$ from the category of rings (resp. semirings) to the category of monoids sending a ring (resp. semiring) $A$ to $(A,\times)$, the underlying multiplicative monoid with $0_A$. There is an adjunction:
	\[
	\Hom_{\textbf{Monoids}}(M,\mathcal{F}(A)) \simeq \Hom_{\textbf{R-algebras}}(M\otimes_{\mathbb{F}_1}R,A).
	\]
	This gives rise to a functor $-\otimes_{\mathbb{F}_1}R$ from the category of monoid schemes to the category of schemes (resp. semiring schemes) sending $X$ to $X_R:=X\otimes_{\mathbb{F}_1}R$. One can find the details of that construction in \cite[Section 2.3]{giansiracusa2016equations}. 
\end{mydef}

\begin{mydef}\label{definition: tropical toric schemes}
By a \emph{toric scheme $X$ over a semiring} $S$, we mean a semiring scheme obtained from a toric monoid scheme via the base change to $S$. 
\end{mydef}

\begin{myeg}
Let $A = \mathbb{T}[x, y]$. Then $\Spec A$ is a toric scheme over $\mathbb{T}$, which is the tropical analogue of the affine 2-space. In this case, $A = \mathbb{T}[M]$, where $M = \mathbb{N}^2 \cup \{0\}$.  
\end{myeg}

\begin{mydef}\label{definition: linearly independent}
		Let $R$ be a semiring and $M$ be an $R$-module. 
		\begin{enumerate}
			\item 
			A subset $\{x_1,\dots,x_n\} \subseteq M$ is \emph{linearly independent} if
			\[
			\sum_{i=1}^na_ix_i = \sum_{i=1}^n b_ix_i
			\]
			implies that $a_i=b_i$ for all $i=1,\dots,n$. 
			\item 
			$M$ is \emph{free of rank $n$} if there exists a set $\{x_1,\dots,x_n\} $ of linearly independent generators of $M$ over $R$. 
		\end{enumerate}
	\end{mydef}
	
	One can easily see that if $M$ is a free $R$-module of rank $n$, then $M\simeq R^n$ (as an $R$-module).

We recall some results from \cite{JMT20} which will be used in this paper. We first recall the definition of idempotent pairs. 

\begin{mydef}\cite[Definition 3.6]{JMT20}\label{definition: idempotent pair}
Let $R$ be a semiring. By an \emph{idempotent pair} of $R$, we mean a pair $(e,f)$ such that $e+f=1$ and $ef=0$. An idempotent pair $(e,f)$ is said to be \emph{trivial} if $\{e,f\} = \{0,1\}$.
\end{mydef}

\begin{pro}[Proposition 3.10 \cite{JMT20}]\label{proposition: connected implies no idempotet paris}
		Let $R$ be a semiring, $\Spec R$ is connected if and only if any idempotent pair of $R$ is trivial.
	\end{pro}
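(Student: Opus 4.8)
The statement to prove is Proposition~\ref{proposition: connected implies no idempotet paris}: for a semiring $R$, $\Spec R$ is connected if and only if every idempotent pair of $R$ is trivial.

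The plan is to exploit the standard correspondence between idempotent pairs and clopen decompositions of the prime spectrum, mirroring the classical commutative algebra argument but being careful about which features survive over semirings. An idempotent pair $(e,f)$ with $e+f=1$ and $ef=0$ behaves much like a pair of complementary idempotents: from $e = e(e+f) = e^2 + ef = e^2$ one sees $e$ is idempotent, and similarly $f$ is idempotent, so $(e,f)$ really is a complementary pair of idempotents in the multiplicative sense. The key geometric translation is then that $D(e)$ and $D(f)$ partition $\Spec R$ into two disjoint open sets.

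First I would establish the disjointness and covering: a prime $\mathfrak p$ contains $ef = 0$, hence contains $e$ or $f$, so $D(e) \cap D(f) = \emptyset$; and since $e + f = 1 \notin \mathfrak p$, at least one of $e,f$ is not in $\mathfrak p$, so $D(e) \cup D(f) = \Spec R$. Next I would check $D(e)$ is also closed, equivalently that its complement $D(f) = V(e)$ — this requires that $\mathfrak p \in D(f)$ iff $e \in \mathfrak p$. The forward direction is the partition just shown; for the reverse, if $e \in \mathfrak p$ then $f \notin \mathfrak p$ (else $1 = e+f \in \mathfrak p$), so $\mathfrak p \in D(f)$. Thus $D(e)$ and $D(f)$ are complementary clopen subsets. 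Consequently, if $(e,f)$ is a nontrivial idempotent pair, then both $e$ and $f$ are nonzero, so neither $D(e)$ nor $D(f)$ is empty (a nonzero element is not contained in every prime — or more carefully, one should note $D(e) = \emptyset$ would force $e$ nilpotent, and then from $e^2 = e$ one gets $e = 0$); hence $\Spec R$ is disconnected. This gives the contrapositive of one direction.

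For the converse, suppose $\Spec R = U \sqcup V$ with $U, V$ nonempty open. The task is to manufacture a nontrivial idempotent pair. Here I expect the main obstacle: over a ring one uses that the global sections sheaf evaluated on a disjoint union splits as a product, producing the idempotent $(1,0)$ in $\mathcal O_X(U) \times \mathcal O_X(V) = \mathcal O_X(\Spec R) = R$. I would like to run the same argument: the structure sheaf $\mathcal O_X$ on $\Spec R$ satisfies the sheaf axiom, so $R = \mathcal O_X(\Spec R) \cong \mathcal O_X(U) \times \mathcal O_X(V)$, and the element corresponding to $(1_{\mathcal O_X(U)}, 0_{\mathcal O_X(V)})$ is an element $e \in R$ with a complementary $f$ corresponding to $(0,1)$, and one checks $e + f = 1$, $ef = 0$ directly in the product, while nontriviality follows because $U,V$ are nonempty (so $\mathcal O_X(U), \mathcal O_X(V)$ are not the zero semiring). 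The delicate points to verify — and the reason this is the crux — are that the semiring structure sheaf genuinely satisfies gluing on the clopen cover $\{U,V\}$ (this is part of the definition of a semiring scheme, so it is available), and that $\mathcal O_X(U) = 0$ would force $U = \emptyset$; the latter holds because if $1 = 0$ in $\mathcal O_X(U)$ then every stalk over $U$ is zero, contradicting that points of $\Spec R$ correspond to primes (proper ideals) so stalks are nonzero local semirings. Since this is a result quoted verbatim from \cite{JMT20} as Proposition 3.10, I would in practice simply cite it; but the sketch above is the proof I would reconstruct, with the sheaf-theoretic splitting over the clopen cover being the one step that needs the semiring analogue of the classical fact rather than a purely formal manipulation.
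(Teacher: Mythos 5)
The paper does not actually prove this statement; it is quoted verbatim as Proposition 3.10 of \cite{JMT20}, so there is no in-text proof to compare against. Your reconstruction is correct. The forward direction is clean and complete: from $e+f=1$, $ef=0$ you correctly deduce $e=e^2$, that $D(e)$ and $D(f)$ are complementary open sets covering $\Spec R$ (using that a prime is proper and closed under addition, so it cannot contain both $e$ and $f$), and that nontriviality forces both pieces nonempty because a nilpotent idempotent is zero and the nilradical is the intersection of all primes (a fact whose usual Zorn's-lemma proof needs no subtraction). For the converse you route through the sheaf axiom, $R=\Gamma(\Spec R,\mathcal O)\cong \mathcal O(U)\times\mathcal O(V)$, and read off the pair $(1,0)$, $(0,1)$. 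This works, but it leans on the foundational facts that global sections of the structure sheaf recover $R$ and that the sheaf glues over a clopen cover in the semiring setting — true, but not free, since the classical proof of $\Gamma(\Spec A,\mathcal O)=A$ uses subtraction at one step and must be redone for semirings. A more self-contained alternative for this direction, which is what one would expect in \cite{JMT20}, is purely ideal-theoretic: write the two clopen pieces as $V(I_1)$ and $V(I_2)$, deduce $I_1+I_2=R$ (every proper ideal lies in a maximal, hence prime, ideal) and $I_1I_2\subseteq\mathrm{nil}(R)$, pick $x_1+x_2=1$ with $x_i\in I_i$ and $(x_1x_2)^n=0$, and expand $(x_1+x_2)^{2n}=1$, grouping the terms with $x_1$-exponent at least $n$ into $e$ and the rest into $f$; then $e+f=1$ and every term of $ef$ is divisible by $(x_1x_2)^n=0$, all without subtraction. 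Either route is legitimate; yours buys brevity at the cost of importing sheaf-theoretic foundations, the elementary one keeps the proposition at the level of commutative semiring algebra where it is stated.
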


\begin{lem}[Lemma 3.14 \cite{JMT20}]\label{lemma: direct summand of free module}
		Let $R$ be a zero-sum free semiring with only trivial idempotent pairs, and $M$ be a free module over $R$ with basis $S\subseteq M$.  Suppose $M=P\oplus Q$.  Then there exists a subset $S'\subseteq S$ such that $P$ is free with basis $S'$ and $Q$ is free with basis $S \setminus S'$.
	\end{lem}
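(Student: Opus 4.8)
The plan is to prove Lemma~\ref{lemma: direct summand of free module} by reducing everything to the behavior of idempotent matrices over $R$, using the zero-sum-free hypothesis to force such matrices to be ``combinatorially trivial''. First I would set up the standard dictionary between direct sum decompositions and idempotents: the decomposition $M = P \oplus Q$ of the free module $M \simeq R^n$ (take $S = \{s_1,\dots,s_n\}$, so $n = |S|$, assuming $S$ finite — the general case follows since any element of $P$ or $Q$ involves only finitely many basis elements) corresponds to an idempotent endomorphism $\pi \in \mathrm{End}_R(M) \simeq M_n(R)$ with $\pi(M) = P$ and $(1-\pi)$ making sense only formally; more carefully, $P$ and $Q$ are the images of the two ``coordinate projections'' $\pi_P, \pi_Q$ which satisfy $\pi_P + \pi_Q = \mathrm{id}$ and $\pi_P \pi_Q = \pi_Q \pi_P = 0$. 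Writing $\pi_P$ as a matrix $(e_{ij})$ and $\pi_Q = (f_{ij})$ with respect to $S$, the relations become $e_{ij} + f_{ij} = \delta_{ij}$ and $\sum_k e_{ik} e_{kj} = e_{ij}$, $\sum_k e_{ik} f_{kj} = 0$, together with the symmetric statements.

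The key step is then to exploit zero-sum-freeness. From $\sum_k e_{ik} f_{kj} = 0$ and zero-sum-freeness, each summand $e_{ik} f_{kj} = 0$; similarly from the diagonal relation $e_{ii} + f_{ii} = 1$ applied with the idempotent relations one extracts, for each index $i$, that $(e_{ii}, f_{ii})$ is an idempotent pair of $R$, hence trivial by hypothesis: so $e_{ii} \in \{0,1\}$ and correspondingly $f_{ii} = 1 - e_{ii}$. Let $S' = \{s_i \in S : e_{ii} = 1\}$. The plan is to show $e_{ij} = 0$ whenever $i \notin S'$ or $j \notin S'$, and $f_{ij} = 0$ whenever $i \in S'$ or $j \in S'$, so that $\pi_P$ is literally the projection onto the coordinate subspace spanned by $S'$ and $\pi_Q$ the projection onto the complement; this immediately gives that $P$ is free with basis $\{s_i : i \in S'\}$ (its image under $\pi_P$) and $Q$ is free with basis $S \setminus S'$. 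To get the off-diagonal vanishing: if $e_{ii} = 1$ then $f_{ii} = 0$, and using $e_{ii} f_{ii} + \sum_{k \neq i} e_{ik} f_{ki} = 0$ — wait, the relevant relation is $\sum_k e_{ik} f_{ki} = (\pi_P \pi_Q)_{ii} = 0$, so every $e_{ik}f_{ki} = 0$; combined with $e_{ii} = 1$, nothing follows directly for $e_{ik}$, so instead I would use the relation $(\pi_P)^2 = \pi_P$ entrywise at $(i,i)$: $\sum_k e_{ik}e_{ki} = e_{ii} = 1$, and since $e_{ii}e_{ii} = 1$ is already one of the summands and the sum is zero-sum-free... this needs $1 + (\text{stuff}) = 1$, which in a zero-sum-free semiring does NOT force the stuff to vanish. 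So the actual argument must be more careful.

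The main obstacle, and the point where I expect to spend real effort, is precisely that zero-sum-freeness gives vanishing of summands only when the \emph{total sum} is $0$, not when it equals some nonzero element; so I cannot naively conclude coordinates vanish from idempotency alone. The fix is to work with $\pi_Q = \mathrm{id} - \pi_P$ more cleverly: from $e_{ij} + f_{ij} = \delta_{ij}$, for $i \neq j$ we get $e_{ij} + f_{ij} = 0$, hence by zero-sum-freeness $e_{ij} = f_{ij} = 0$ for all $i \neq j$! So both $\pi_P$ and $\pi_Q$ are already \emph{diagonal} matrices, with diagonal entries the idempotent pairs $(e_{ii}, f_{ii})$, which are trivial. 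Thus $\pi_P = \mathrm{diag}(\epsilon_1,\dots,\epsilon_n)$ with $\epsilon_i \in \{0,1\}$, and $S' := \{s_i : \epsilon_i = 1\}$ does the job: $P = \pi_P(M) = \bigoplus_{i \in S'} R s_i$ and $Q = \bigoplus_{i \notin S'} R s_i$, both free with the claimed bases. I would then handle the possibly-infinite $S$ by noting the same argument applies: write each basis element $s \in S$ as $s = p_s + q_s$ with $p_s \in P$, $q_s \in Q$; each of $p_s, q_s$ is a finite $R$-combination of basis elements, and the relation $s = p_s + q_s$ read off in coordinate $s'$ for $s' \neq s$ gives (coefficient of $s'$ in $p_s$) $+$ (coefficient of $s'$ in $q_s$) $= 0$, forcing both to be $0$; in coordinate $s$ it gives an idempotent pair. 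So $p_s \in Rs$, $q_s \in Rs$, and we partition $S$ according to whether $p_s = s$ or $p_s = 0$. The only remaining check is that these coordinate-subspaces really are all of $P$ and $Q$ respectively and that linear independence is inherited — both routine given Definition~\ref{definition: linearly independent} and the fact that a subset of a linearly independent set is linearly independent.
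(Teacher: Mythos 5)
Your argument is correct and is essentially the intended proof (the paper only recalls this lemma from \cite{JMT20}, but this is the standard route): the decisive step is that for $i\neq j$ the identity $\pi_P+\pi_Q=\mathrm{id}$, read off in the coordinates of the basis $S$, gives $e_{ij}+f_{ij}=0$, so zero-sum-freeness forces both projections to be diagonal, and then each $(e_{ii},f_{ii})$ is an idempotent pair --- the product vanishes because $(\pi_P\pi_Q)_{ii}=0$ and $R$ is zero-sum-free --- hence trivial, yielding $S'=\{s_i : e_{ii}=1\}$. The exploratory dead end about extracting information from $\pi_P^2=\pi_P$ is correctly abandoned and should simply be deleted from a final write-up; the remaining checks (that a subset of a linearly independent set is linearly independent in the sense of Definition~\ref{definition: linearly independent}, and that $P=\pi_P(M)$ is spanned by $S'$) are routine as you indicate.
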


 \begin{pro}[Proposition 3.18 \cite{JMT20}]\label{proposition: exact sequence R, GL, S_n}
		Let $R$ be a zero-sum free semiring. If $R$ has only trivial idempotent pairs, then one has the following split short exact sequence of groups which is natural in $R$:
		\begin{equation}\label{eq: exact sequence R,GL, S_n}
			\begin{tikzcd}
				0 \arrow[r] &
				(R^\times)^n \arrow[r,"f"]&
				\emph{GL}_n(R) \arrow[r,"g"]
				& S_n\arrow[r]
				& 0
			\end{tikzcd}
		\end{equation}
		where $f$ is the diagonal map and $g$ sends a matrix $A$ to the unique permutation $\sigma$ such that $A_{\sigma(i)i}\neq 0$ for all $i$. 
\end{pro}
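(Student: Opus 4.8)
\textbf{Proof proposal for Proposition~\ref{proposition: exact sequence R, GL, S_n}.}
The plan is to exploit the fact that over a zero-sum free semiring with only trivial idempotent pairs, invertibility forces a very rigid combinatorial shape on matrices. First I would establish that if $A \in \text{GL}_n(R)$ with inverse $B$, then for each row index $i$ there is \emph{exactly one} column index $j$ with $A_{ij} \neq 0$, and symmetrically for columns. The starting point is the relation $\sum_k A_{ik} B_{kj} = \delta_{ij}$. For $i = j$ this gives $\sum_k A_{ik}B_{ki} = 1 \neq 0$, so some $A_{ik} \neq 0$; zero-sum freeness will be used repeatedly to conclude that in a sum equal to $0$ every term vanishes. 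The key tension is: if two distinct entries $A_{ij}, A_{ij'}$ in the same row were nonzero, one combines the $n$ identities $\sum_k A_{ik}B_{k\ell} = \delta_{i\ell}$ to derive a nontrivial idempotent pair (the candidate idempotents being built from products like $A_{ij}B_{j\ell}$), contradicting triviality. I expect this bookkeeping — extracting an idempotent pair from a hypothetical ``branching'' in a row — to be the main obstacle, and I would isolate it as the crux lemma (this is presumably where \cite{JMT20}'s earlier structural lemmas, e.g.\ the analogue of Lemma~\ref{lemma: direct summand of free module}, get used, since a free module decomposition respecting the basis is exactly the statement that the ``support function'' $i \mapsto j$ is well defined).

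Once that rigidity is in hand, the map $g$ is defined: to $A$ assign the function $\sigma$ sending $i$ to the unique $j = \sigma(i)$ with $A_{ji} \neq 0$ (using the column version so that $\sigma$ records which row of column $i$ is nonzero). Next I would check $\sigma$ is a bijection — if $\sigma(i) = \sigma(i')$ with $i \neq i'$, then rows $i$ and $i'$ of $B$ would be forced into a relation violating $\sum_k B_{\sigma(i)k}A_{k\ell} = \delta_{\sigma(i)\ell}$ again via zero-sum freeness — so $\sigma \in S_n$. Then $g$ is a group homomorphism: given $A, A'$ with supports $\sigma, \sigma'$, the product $A A'$ has $(AA')_{ij} = \sum_k A_{ik}A'_{kj}$, and the only $k$ contributing is the one with $A_{ik} \neq 0$ and $A'_{kj} \neq 0$, which pins down $j$ in terms of $i$ as the composite permutation; a short computation gives $g(AA') = g(A)g(A')$ (with the convention matching ``$A_{\sigma(i)i} \neq 0$''). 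Surjectivity of $g$ is immediate since permutation matrices have entries in $\{0,1\} \subseteq R$ and map to the corresponding permutation.

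The kernel of $g$ consists of $A$ whose unique nonzero entry in each column (and row) lies on the diagonal, i.e.\ diagonal matrices $\text{diag}(a_1,\dots,a_n)$; such $A$ is invertible iff each $a_i \in R^\times$ (its inverse being $\text{diag}(a_1^{-1},\dots,a_n^{-1})$, and conversely an inverse forces $a_i b_i = 1$). This identifies $\ker g$ with $(R^\times)^n$ via the diagonal embedding $f$, giving exactness at $(R^\times)^n$ and at $\text{GL}_n(R)$; $f$ is visibly injective, so the sequence $0 \to (R^\times)^n \xrightarrow{f} \text{GL}_n(R) \xrightarrow{g} S_n \to 0$ is short exact. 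For the splitting, I would exhibit the section $S_n \to \text{GL}_n(R)$ sending a permutation to its permutation matrix; this is a group homomorphism (composition of permutations corresponds to multiplication of permutation matrices) and $g$ composed with it is the identity, so the sequence splits. Finally, naturality in $R$: all the maps $f$, $g$, and the section are defined entry-wise by operations ($0$, $1$, diagonal placement, reading off supports) that commute with any semiring homomorphism $R \to R'$, since such a homomorphism preserves $0$, $1$, addition and multiplication and hence preserves the ``nonzero support'' pattern of an invertible matrix (it sends $\text{GL}_n(R)$ into $\text{GL}_n(R')$ and cannot create or destroy the uniqueness of supports once we know the image matrix is still invertible with the transported inverse). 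This gives the naturality square for each of the three maps, completing the proof.
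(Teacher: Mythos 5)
First, a caveat: this paper does not actually prove Proposition~\ref{proposition: exact sequence R, GL, S_n}; it is imported verbatim from \cite{JMT20} (Proposition 3.18 there), so there is no in-text proof to compare against. Judged on its own merits, your overall architecture is the right one and matches the standard argument: use zero-sum-freeness to force every summand of an off-diagonal entry of $AB$ and $BA$ to vanish, use triviality of idempotent pairs to pin down a monomial structure, and then the homomorphism property of $g$, the identification of $\ker g$ with $(R^\times)^n$, the permutation-matrix section, and naturality are all routine and correctly handled in your write-up.

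The genuine gap is in what you yourself flag as the crux. Your proposed mechanism --- ``if $A_{ij},A_{ij'}\neq 0$ in the same row, the products $A_{ij}B_{j\ell}$ assemble into a nontrivial idempotent pair'' --- does not work as stated. For $\ell\neq i$ those products are automatically $0$ (they are summands of $(AB)_{i\ell}=0$), and for $\ell=i$ you cannot conclude $A_{ij}B_{ji}\neq 0$ from $A_{ij}\neq 0$, since a zero-sum-free semiring may well have zero divisors; so two nonzero entries in a row do not by themselves produce a nontrivial idempotent pair. The argument has to be run in two separate steps. Step 1: from $\sum_k A_{ik}B_{ki}=1$ and $A_{ik}B_{kj}=0$ for $i \neq j$ (zero-sum-freeness applied to $(AB)_{ij}=0$, and likewise for $BA$), the elements $e_k=A_{ik}B_{ki}$ satisfy $\sum_k e_k=1$ and $e_ke_l=A_{ik}(B_{ki}A_{il})B_{li}=0$ for $k\neq l$; hence each $(e_k,\sum_{l\neq k}e_l)$ is an idempotent pair, triviality forces $e_k\in\{0,1\}$, and exactly one $e_{\tau(i)}=1$. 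In particular $A_{i\tau(i)}$ and $B_{\tau(i)i}$ are units. Step 2: only now can you kill the other entries: for $k\neq\tau(i)$, the relation $(BA)_{\tau(i)k}=0$ gives $B_{\tau(i)i}A_{ik}=0$, and since $B_{\tau(i)i}$ is a unit, $A_{ik}=0$. Injectivity of $\tau$ then follows as you indicate. With Step 1 and Step 2 supplied, the rest of your proposal (homomorphism property of $g$, kernel, splitting, naturality via the transported inverse) goes through; but as written the crux lemma is not established, and the specific idempotents you propose are the wrong ones.
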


\begin{pro}[Corollary 3.19 \cite{JMT20}]\label{corollary: semidirect}
		Let $R$ be a zero-sum free semiring. If $R$ has only trivial idempotent pairs, then one has the following isomorphism of groups:
		\[
		(R^\times)^n \rtimes S_n \simeq \emph{GL}_n(R),
		\]
  	where $S_n$ acts on $(R^\times)^n$ by permuting factors. 
\end{pro}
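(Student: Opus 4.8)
The plan is to deduce this statement as a purely formal consequence of the split short exact sequence already established in Proposition~\ref{proposition: exact sequence R, GL, S_n}, using the standard recognition theorem for semidirect products. Recall that recognition theorem: if $1 \to N \xrightarrow{f} G \xrightarrow{g} Q \to 1$ is a short exact sequence of groups equipped with a section $s \colon Q \to G$ of $g$ (so $g \circ s = \mathrm{id}_Q$), then $G$ is an internal semidirect product --- every element of $G$ is uniquely of the form $f(n)\cdot s(q)$ --- and the reconstructed abstract group is $N \rtimes_\varphi Q$, where $\varphi \colon Q \to \Aut(N)$ is the conjugation action $\varphi(q)(n) = f^{-1}\bigl(s(q)\,f(n)\,s(q)^{-1}\bigr)$. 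This is well-defined because $f(N) = \Ker g$ is normal in $G$. So the whole proof reduces to unwinding what $s$ and $\varphi$ are in our situation.

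First I would pin down the splitting in Proposition~\ref{proposition: exact sequence R, GL, S_n}: the natural section $s \colon S_n \to \mathrm{GL}_n(R)$ sends a permutation $\sigma$ to its permutation matrix $P_\sigma$, the matrix with $(P_\sigma)_{ij} = 1_R$ when $i = \sigma(j)$ and $0_R$ otherwise. By the explicit description of $g$ (it sends a matrix $A$ to the unique $\sigma$ with $A_{\sigma(i)i} \neq 0$), we have $g(P_\sigma) = \sigma$, so $s$ is indeed a section. Applying the recognition theorem with $N = (R^\times)^n$ (embedded diagonally via $f$) and $Q = S_n$ then yields an isomorphism of groups $\mathrm{GL}_n(R) \simeq (R^\times)^n \rtimes_\varphi S_n$ for the conjugation action $\varphi$.

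The last step is to check that $\varphi$ is exactly the permutation-of-factors action. For a diagonal matrix $D = f(a_1,\dots,a_n) = \mathrm{diag}(a_1,\dots,a_n)$ one computes, using only the monoid structure of $(R,\cdot)$ and the combinatorics of permutation matrices (so that no subtraction is involved and the identity is valid over an arbitrary semiring), that $P_\sigma\, D\, P_\sigma^{-1} = \mathrm{diag}\bigl(a_{\sigma^{-1}(1)},\dots,a_{\sigma^{-1}(n)}\bigr)$; hence $\varphi(\sigma)(a_1,\dots,a_n) = (a_{\sigma^{-1}(1)},\dots,a_{\sigma^{-1}(n)})$, which is precisely $S_n$ permuting the factors of $(R^\times)^n$. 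This finishes the argument.

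I do not expect a genuine obstacle here: all of the substance (where zero-sum-freeness and triviality of idempotent pairs are actually used) is already contained in Proposition~\ref{proposition: exact sequence R, GL, S_n}, and the present statement is a formal corollary. The only care needed is bookkeeping of conventions --- the direction of the $S_n$-action and whether $\sigma$ or $\sigma^{-1}$ appears in the conjugation formula --- and any such discrepancy can be absorbed by replacing the section $\sigma \mapsto P_\sigma$ with $\sigma \mapsto P_{\sigma^{-1}}$.
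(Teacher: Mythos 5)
Your proposal is correct and is essentially the paper's own route: the statement is stated as a corollary of the split short exact sequence in Proposition~\ref{proposition: exact sequence R, GL, S_n}, and the intended derivation is exactly the recognition theorem applied to the section $\sigma \mapsto P_\sigma$, with the conjugation computation $P_\sigma\,\mathrm{diag}(a_1,\dots,a_n)\,P_\sigma^{-1} = \mathrm{diag}(a_{\sigma^{-1}(1)},\dots,a_{\sigma^{-1}(n)})$ identifying the action as permutation of factors. All the semiring-specific content is indeed already in the split exact sequence, so nothing further is needed.
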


\begin{pro}[Theorem 4.7 \cite{JMT20}]\label{theorem: bundles split}
Let $X$ be an irreducible semiring scheme which is locally isomorphic to $\Spec R$, where $R$ is a zero-sum free semiring. Then any vector bundle of rank $n$ on $X$ is a direct sum of $n$ copies of line bundles on $X$. Moreover, this decomposition is unique up to permuting summands.
\end{pro}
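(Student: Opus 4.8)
The plan is to present a rank-$n$ vector bundle by a transition $1$-cocycle valued in $\mathrm{GL}_n$ and then use the split exact sequence $1 \to (R^\times)^n \to \mathrm{GL}_n(R) \to S_n \to 1$ of Proposition \ref{proposition: exact sequence R, GL, S_n} to peel off the ``permutation part'' of the cocycle from its ``diagonal part''. Irreducibility of $X$ will let me trivialize the permutation part, and then the resulting diagonal cocycle visibly splits $E$ as a sum of line bundles. To set up, I would fix a cover $\{U_i\}$ of $X$ by nonempty opens trivializing $E$, each contained in an affine chart $\Spec R_i$ with $R_i$ zero-sum free. For any open $U \subseteq X$ the section semiring $\Gamma(U,\mathcal{O}_X)$ is zero-sum free, since a section is zero as soon as all its restrictions to affine charts are and those charts are zero-sum free; and if $U$ is moreover connected it has only trivial idempotent pairs by (the argument behind) Proposition \ref{proposition: connected implies no idempotet paris}. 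In our situation each $U_i\cap U_j$ is open in the irreducible space $X$, hence irreducible, hence connected, so Proposition \ref{proposition: exact sequence R, GL, S_n} applies to $R_{ij}:=\Gamma(U_i\cap U_j,\mathcal{O}_X)$, with $f$ the diagonal embedding, $g$ the quotient, and $s(\pi)=P_\pi$ (the permutation matrix of $\pi$) a splitting of $g$.

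For existence, let $g_{ij}\in\mathrm{GL}_n(R_{ij})$ be transition matrices for $E$, so $g_{ij}g_{jk}=g_{ik}$. I would push these forward to $\sigma_{ij}:=g(g_{ij})\in S_n$, which satisfy the same cocycle identity wherever the triple overlap is nonempty; in particular $\sigma_{ii}=e$ and $\sigma_{ji}=\sigma_{ij}^{-1}$. Here is the key use of irreducibility: any two nonempty opens of $X$ meet, so fixing one index $i_0$ and setting $\tau_i:=\sigma_{ii_0}$, the cocycle relation on $U_i\cap U_{i_0}\cap U_j$ gives $\tau_i\tau_j^{-1}=\sigma_{ii_0}\sigma_{i_0 j}=\sigma_{ij}$; that is, the $S_n$-valued cocycle is a coboundary. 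Twisting the trivializations by $h_i:=P_{\tau_i}^{-1}$ replaces $g_{ij}$ by $g_{ij}'=h_i g_{ij} h_j^{-1}$, which still presents $E$ but now has $g(g_{ij}')=\tau_i^{-1}\sigma_{ij}\tau_j=e$, hence lies in $\ker g = f((R_{ij}^\times)^n)$, i.e. $g_{ij}'=\mathrm{diag}(a_{ij}^{(1)},\dots,a_{ij}^{(n)})$ with $a_{ij}^{(k)}\in R_{ij}^\times$. The cocycle condition then holds coordinatewise, so each $(a_{ij}^{(k)})_{ij}$ is a $1$-cocycle in $\mathcal{O}_X^\times$ defining a line bundle $L_k$, and a diagonal gluing cocycle exhibits $E\cong L_1\oplus\dots\oplus L_n$.

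For uniqueness, suppose $E\cong L_1\oplus\dots\oplus L_n\cong L_1'\oplus\dots\oplus L_n'$. I would refine the cover so that it is affine, trivializes all the $L_i$ and all the $L_j'$, and supports both isomorphisms; then $E$ is given by a diagonal cocycle $\mathrm{diag}(a_{\alpha\beta}^{(k)})$ coming from $\bigoplus L_i$ and by a diagonal cocycle $\mathrm{diag}(b_{\alpha\beta}^{(k)})$ coming from $\bigoplus L_j'$, and these are cohomologous: $\mathrm{diag}(b_{\alpha\beta}^{(k)})=h_\alpha\,\mathrm{diag}(a_{\alpha\beta}^{(k)})\,h_\beta^{-1}$ for some $h_\alpha\in\mathrm{GL}_n(R_\alpha)$. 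Applying $g$ and using again that nonempty opens of $X$ meet forces the permutation parts $g(h_\alpha)$ to be a single $\pi\in S_n$; writing $h_\alpha=f(d_\alpha)P_\pi$ with $d_\alpha\in(R_\alpha^\times)^n$ and using $P_\pi\,\mathrm{diag}(a^{(1)},\dots,a^{(n)})\,P_\pi^{-1}=\mathrm{diag}(a^{(\pi^{-1}(1))},\dots,a^{(\pi^{-1}(n))})$, I get $b_{\alpha\beta}^{(k)}=(d_\alpha)_k\,a_{\alpha\beta}^{(\pi^{-1}(k))}\,(d_\beta)_k^{-1}$ for each $k$; thus $L_k'$ and $L_{\pi^{-1}(k)}$ have cohomologous transition cocycles, so $L_k'\cong L_{\pi^{-1}(k)}$. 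Hence the multiset $\{L_1,\dots,L_n\}$ depends only on $E$.

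I expect the only genuinely nonformal point to be the step that the $S_n$-valued cocycle is a coboundary, i.e. the vanishing of the non-abelian $\cHH^1$ with constant coefficients on an irreducible space; this is exactly the elementary fact that in an irreducible space any two nonempty opens intersect, so the real subtlety is just recognizing that this is where irreducibility enters, both for existence and for uniqueness. Everything else — the zero-sum-freeness and idempotent-pair statements for section semirings, and the bookkeeping with the semidirect product $\mathrm{GL}_n(R)\cong(R^\times)^n\rtimes S_n$ of Proposition \ref{corollary: semidirect} — is routine. Invariantly, the argument shows that $\cHH^1(X,\mathrm{GL}_n)$ surjects onto the one-point set $\cHH^1(X,\underline{S_n})$ with fibre identified, via the splitting and the permutation action of $S_n$ on $\cHH^1(X,(\mathcal{O}_X^\times)^n)=\Pic(X)^n$, with $\Pic(X)^n/S_n$, which is precisely the asserted bijection between rank-$n$ bundles and unordered $n$-tuples of line bundles.
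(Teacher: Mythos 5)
Your proof is correct, but note that the paper itself does not prove Proposition \ref{theorem: bundles split}: it is quoted without proof from \cite{JMT20} (Theorem 4.7 there), so there is no in-paper argument to compare against. That said, your \v{C}ech-cocycle argument is precisely the natural proof built from the ingredients the paper does recall: you present $E$ by transition matrices, apply the natural exact sequence of Proposition \ref{proposition: exact sequence R, GL, S_n} over the section semirings of the overlaps (which are irreducible, hence connected, hence have only trivial idempotent pairs), observe that the resulting constant $S_n$-valued cocycle is a coboundary because any two nonempty opens of an irreducible space meet, and twist by permutation matrices to reach a diagonal cocycle; uniqueness is the same computation run in reverse, using the semidirect product decomposition of Proposition \ref{corollary: semidirect}. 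You correctly isolate the two places irreducibility enters (trivializing the non-abelian first \v{C}ech cohomology with constant coefficients for existence, and forcing the permutation parts $g(h_\alpha)$ to be a single $\pi\in S_n$ for uniqueness), which is consistent with the paper's remark that the trivial-idempotent-pair hypothesis of the original statement in \cite{JMT20} follows from irreducibility. The only compressed step is the assertion that $\Gamma(U,\mathcal{O}_X)$ is zero-sum free for an arbitrary open $U$: before the sheaf-gluing step one needs that localizations of zero-sum-free semirings are zero-sum free (if $a/s+b/t=0$ then $u(at+bs)=0$ for some $u$ in the multiplicative set, whence $uat=ubs=0$ by zero-sum-freeness), but this is routine and does not affect the validity of the argument. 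You should also note, as you implicitly do via naturality of the sequence in $R$, that the permutation $g(A)$ attached to an invertible matrix is preserved under restriction maps, which is what makes $\sigma_{ij}=g(g_{ij})$ a genuine constant cocycle.
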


\begin{rmk}
In \cite{JMT20}, Proposition \ref{theorem: bundles split} assumes that $R$ has only trivial idempotent pairs, however, this follows from the irreducibility of $X$.
\end{rmk}

\begin{condition}\label{condition: condition on open cover}
	Let $X$ be an irreducible monoid scheme. Suppose that $X$ has an open affine cover $\mathcal{U}=\{U_\alpha\}$ such that any finite intersection of the sets $U_\alpha$ is isomorphic to the prime spectrum of a cancellative monoid.\footnote{This condition is always satisfied with toric monoid schemes as in this case monoids are subsets of dual lattices. Also, all of our monoids are finitely generated by Gordan's lemma.}
\end{condition}

For instance, when $X$ is a toric scheme over $\mathbb{T}$, then the above condition holds.

\begin{pro}[Corollary 4.8 \cite{JMT20}]\label{corollary: vector bundle on affine is trivial}
Let $M$ be a cancellative monoid and $K$ be an idempotent semifield. Then any vector bundle on $X=\Spec K[M]$ is trivial. 
\end{pro}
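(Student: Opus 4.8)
The plan is to reduce to the case where $M$ is a group, exploiting the fact that a cancellative monoid $M$ embeds in its associated group $M^{\mathrm{gp}}$, and that geometrically this corresponds to a localization $\Spec K[M^{\mathrm{gp}}] \hookrightarrow \Spec K[M]$ whose image is dense (indeed $\Spec K[M]$ is irreducible since $K[M]$ is zero-sum free and cancellative as $M$ is cancellative and $K$ is a semifield, so it has no nontrivial idempotent pairs). First I would invoke Proposition~\ref{theorem: bundles split}: since $X=\Spec K[M]$ is irreducible and $K[M]$ is zero-sum free, any vector bundle $E$ of rank $n$ on $X$ splits as a direct sum $L_1 \oplus \cdots \oplus L_n$ of line bundles. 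So it suffices to show that every line bundle on $X$ is trivial, i.e.\ that $\Pic(\Spec K[M]) = 0$ for $M$ cancellative and $K$ an idempotent semifield.

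For the Picard group computation, I would argue as follows. A line bundle on $\Spec K[M]$ corresponds to an invertible $K[M]$-module, equivalently (by \cite{BJ24}, cited in the introduction, which says all notions of line bundle agree over semirings) a rank-one Zariski-locally free module. The key input is that $K$ is an idempotent semifield, so every nonzero element is a unit, and $K[M]$ is a monoid semiring over a particularly rigid base. One expects $K[M]^\times = K^\times \times M$ (the units of a monoid semiring over a semifield when $M$ is cancellative are exactly the "monomials" $c \cdot m$ with $c \in K^\times$, $m \in M^{\mathrm{gp}} \cap \ldots$; for $M$ a group this is clean, and for general cancellative $M$ one checks that a unit must be a monomial whose exponent lies in $M \cap (-M) = M^\times$). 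Granting a suitable description of units, the strategy is: cover $\Spec K[M]$ by principal opens $D(f)$, on each of which the line bundle is trivial; the transition functions are units on overlaps; and one shows any such $1$-cocycle is a coboundary. Alternatively — and this is probably cleaner — reduce first to the group case by noting that $\Pic(\Spec K[M])$ injects into or maps compatibly with $\Pic(\Spec K[M^{\mathrm{gp}}])$ via the dense open immersion, and then handle $\Pic(\Spec K[A])$ for $A$ an abelian group directly: here $\Spec K[A]$ is "like a torus" and one shows it has trivial Picard group by an explicit descent/cocycle argument, using that $K$ is an idempotent semifield so that the structure sheaf has very few units and the Čech $\HH^1$ with values in $\mathcal{O}^\times$ vanishes.

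In more detail, for the group case I would use the standard toric-geometry fact adapted to semirings: $\Spec K[A]$ for $A \cong \Z^r$ is covered by... actually it is already affine, so the issue is purely that "affine $\neq$ trivial Picard" can fail in general — but here one shows directly that every invertible $K[A]$-module is free. Since $K[A]$ is zero-sum free with only trivial idempotent pairs, Lemma~\ref{lemma: direct summand of free module} tells us that any direct summand of a free module is free; so if $L$ is a line bundle then $L$ is a direct summand of some $K[A]^N$ (being finitely generated projective in the appropriate sense), hence free, hence free of rank one, hence trivial. The same argument applies verbatim to $K[M]$ for $M$ merely cancellative, since all we used was that $K[M]$ is zero-sum free with only trivial idempotent pairs, which holds. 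So in fact the cleanest route is: (1) $E$ splits into line bundles by Proposition~\ref{theorem: bundles split}; (2) each line bundle $L$ is a finitely generated projective (= direct summand of a free) $K[M]$-module; (3) by Lemma~\ref{lemma: direct summand of free module}, $L$ is free, necessarily of rank one, hence $L \simeq K[M]$; (4) therefore $E \simeq K[M]^n$ is trivial.

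The main obstacle I anticipate is step (2): justifying that a rank-one locally free sheaf on $\Spec K[M]$ is globally a \emph{direct summand of a finitely generated free module} — i.e.\ that "locally free of finite rank" implies "finitely generated projective" in the semiring setting. Over rings this is immediate, but over semirings the module theory is subtler (this is precisely the point of \cite{BJ24}); one may need to either cite the appropriate result from \cite{BJ24} or \cite{JMT20} that a vector bundle on an affine semiring scheme is a finitely generated projective module, or else run the Čech cocycle argument by hand using an explicit finite principal cover $\{D(f_i)\}$ of $\Spec K[M]$ on which the bundle trivializes, patching the local trivializations via units of the $K[M]_{f_i}$. Once the projectivity/summand input is in place, Lemma~\ref{lemma: direct summand of free module} does all the remaining work and the proof is short.
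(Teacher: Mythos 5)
This proposition is imported verbatim from \cite{JMT20} (Corollary 4.8 there), so the present paper contains no proof to compare against; I will judge your proposal on its own terms and against the route the cited paper takes. Your step (1) — split $E$ into line bundles via Proposition \ref{theorem: bundles split}, using that $K[M]$ is zero-sum free and irreducible — is correct and is also how the cited argument begins; the problem does reduce to showing $\Pic(\Spec K[M])=0$.

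The gap is exactly where you suspect it is, and it is not a routine verification but the actual content of the statement. Your mechanism is: a line bundle $L$ is a direct summand of a finitely generated free module, hence free by Lemma \ref{lemma: direct summand of free module}. But over semirings, ``Zariski-locally free of rank $1$'' is not known to imply ``direct summand of a finitely generated free module.'' The result of \cite{BJ24} quoted in the introduction identifies Zariski-locally free of rank $1$ with fpqc-locally free of rank $1$ and with invertible — \emph{projective} is conspicuously absent from that list, so you cannot simply cite it. The standard ring-theoretic argument for splitting a surjection $A^N \twoheadrightarrow L$ by gluing local splittings $s_i$ over $D(f_i)$ with weights $g_i f_i^N$ also does not transfer cleanly: localization maps $A^N \to A^N_{f_i}$ need not be injective over semirings, so ``clearing denominators'' in $f_i^N s_i$ requires choosing lifts whose compatibility with the congruence presenting $L$ is unclear. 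Your alternative fallback (a hands-on \v{C}ech cocycle computation) is viable in principle but is left entirely unexecuted, and it is essentially the content of the Picard-group computations in \cite{jun2019picard}. The cited proof avoids all of this by working on the monoid-scheme side: for a cancellative monoid $M$, the unique closed point $M\setminus M^\times$ of $\Spec M$ has no proper open neighborhood, so every locally free sheaf on $\Spec M$ is free, and the comparison between bundles on $X$ and on $X_K$ (cf.\ Proposition \ref{theorem: bundles stable under scalar extension} and the \v{C}ech computation of units in \cite{jun2019picard}) transports this triviality to $\Spec K[M]$. To repair your write-up you would either have to prove the projectivity claim over semirings or replace steps (2)--(3) by this monoid-scheme descent.
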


\begin{pro}[Theorem 4.11 \cite{JMT20}]\label{theorem: bundles stable under scalar extension}
	Let $X$ be an irreducible monoid scheme satisfying Condition \ref{condition: condition on open cover}, and $K$ be an idempotent semifield. Then, there exists a natural bijection between $Vect_n(X)$ and $Vect_n(X_K)$. 
\end{pro}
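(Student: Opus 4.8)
The idea is to realize both sides as Čech cohomology with respect to the given cover $\mathcal{U}=\{U_\alpha\}$ and to identify the base-change map $E\mapsto E_K$ with the map induced on $\check{H}^1$ by a split surjection of sheaves of groups. Write $U_\alpha=\Spec M_\alpha$ and, more generally, $U_{\alpha_0\cdots\alpha_k}=\Spec M_{\alpha_0\cdots\alpha_k}$ with each $M_{\alpha_0\cdots\alpha_k}$ cancellative, so that $U_{\alpha_0\cdots\alpha_k,K}=\Spec K[M_{\alpha_0\cdots\alpha_k}]$. Since $K$ is an idempotent semifield it is zero-sum free, and a short support argument shows that $K[M]$ has no zero divisors for any cancellative $M$ and, consequently, that its units are exactly the monomials: $K[M]^\times=\{c[m]\mid c\in K^\times,\ m\in M^\times\}\cong K^\times\times M^\times$. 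In particular each $K[M_{\alpha_0\cdots\alpha_k}]$ is zero-sum free with only trivial idempotent pairs (equivalently, its spectrum is connected, cf.\ Proposition~\ref{proposition: connected implies no idempotet paris}), and by Proposition~\ref{corollary: vector bundle on affine is trivial} every rank-$n$ bundle on $X_K$ is trivial on each $U_{\alpha,K}$; the same holds over $\F_1$. Hence both $Vect_n(X)$ and $Vect_n(X_K)$ are computed by $\mathrm{GL}_n$-valued Čech $1$-cocycles on $\mathcal{U}$, resp.\ on $\mathcal{U}_K$.

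Applying Corollary~\ref{corollary: semidirect} to $R=K[M_{\alpha_0\cdots\alpha_k}]$ and combining it with $K[M]^\times=K^\times\times M^\times$ and the monomial description $\mathrm{GL}_n(M)=(M^\times)^n\rtimes S_n$ of the invertible endomorphisms of the free module $M^n$, one obtains a natural identification
\[
\mathrm{GL}_n(K[M_{\alpha_0\cdots\alpha_k}])\;=\;(K^\times)^n\rtimes\mathrm{GL}_n(M_{\alpha_0\cdots\alpha_k}),
\]
in which $\mathrm{GL}_n(M_{\alpha_0\cdots\alpha_k})$ acts on $(K^\times)^n$ through its projection to $S_n$ from Proposition~\ref{proposition: exact sequence R, GL, S_n}. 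These fit together into a split surjection of sheaves of groups $p\colon\mathrm{GL}_n(\mathcal{O}_{X_K})\twoheadrightarrow\mathrm{GL}_n(\mathcal{O}_X)$ whose kernel $\mathcal{K}$ is the sheaf obtained from the constant sheaf $(\underline{K^\times})^n$ by twisting along the permutation parts, and the base-change map $Vect_n(X)\to Vect_n(X_K)$ is the map on $\check{H}^1$ induced by the section $s$ of $p$. Since $p\circ s=\mathrm{id}$, base change is split injective (indeed, applying $p$ to any isomorphism $E_K\cong E'_K$ yields $E\cong E'$), so it suffices to prove that $\check{H}^1(p)$ is bijective.

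Surjectivity of $\check{H}^1(p)$ is where irreducibility of $X$ enters. Given a cocycle $g_{\alpha\beta}\in\mathrm{GL}_n(K[M_{\alpha\beta}])$, put $h_{\alpha\beta}=p(g_{\alpha\beta})$. Because $X$ is irreducible, every finite intersection of the nonempty opens $U_\alpha$ is nonempty and connected, so the Čech complex of any constant sheaf on $\mathcal{U}$ is the cochain complex of a simplex and is therefore acyclic in positive degrees; in particular $\check{H}^1(\mathcal{U},\underline{S_n})$ is trivial, so after replacing $h_{\alpha\beta}$ by a cohomologous cocycle (lifting a coboundary through the section $S_n\hookrightarrow\mathrm{GL}_n(M_{\alpha\beta})$, and modifying $g_{\alpha\beta}$ accordingly) we may assume that the permutation parts of $h_{\alpha\beta}$ vanish, i.e.\ that each $h_{\alpha\beta}$ is diagonal. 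Then the twisting of $\mathcal{K}$ is trivial, $\mathcal{K}=(\underline{K^\times})^n$, and the same simplex argument gives $\check{H}^1(\mathcal{U},(\underline{K^\times})^n)=0$; hence $g_{\alpha\beta}$ is cohomologous to $s(h_{\alpha\beta})$, proving $\check{H}^1(p)$ bijective and thus $Vect_n(X)\xrightarrow{\ \sim\ }Vect_n(X_K)$. (Alternatively, one can first reduce to $n=1$ using the splitting of bundles into line bundles from Proposition~\ref{theorem: bundles split}, base change being additive, and then run the $n=1$ case, which is just $\Pic(X)\cong\Pic(X_K)$.) Naturality in $X$ is inherited from that of the base-change functor and of all the identifications used. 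The main obstacle is the local algebra underlying this---the no-zero-divisor/monomial-units statement for monoid semirings over an idempotent semifield, and the ensuing decomposition of $\mathrm{GL}_n(K[M])$---together with the bookkeeping needed to see that the extra $(K^\times)^n$-direction is a twisted constant sheaf, so that irreducibility of $X$ renders it invisible to $\check{H}^1$.
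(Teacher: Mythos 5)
The paper does not actually prove this statement---it is imported verbatim from [JMT20]---so there is no in-text argument to compare against; judged on its own, your proof is correct and is assembled from precisely the local ingredients this paper records for the purpose: affine triviality of bundles over $\Spec K[M]$ (Proposition~\ref{corollary: vector bundle on affine is trivial}), the unit computation $K[M]^\times\cong K^\times\times M^\times$ for cancellative $M$, the decomposition $\mathrm{GL}_n(R)\cong (R^\times)^n\rtimes S_n$ (Proposition~\ref{proposition: exact sequence R, GL, S_n} and Corollary~\ref{corollary: semidirect}), and the vanishing of $\check{H}^1$ of constant sheaves on a cover of an irreducible space all of whose finite intersections are nonempty and connected. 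Two small caveats. First, you assert rather than prove that rank-$n$ bundles on the affine monoid schemes $\Spec M_\alpha$ themselves are trivial; this is the $\mathbb{F}_1$-side analogue of Proposition~\ref{corollary: vector bundle on affine is trivial} and needs its own (short) argument via Lemma~\ref{lemma: direct summand of free module}. Second, the kernel of $p$ is the honest constant sheaf $(K^\times)^n$, not a twisted one---the twisting only enters when you form the difference cocycle $g_{\alpha\beta}\,s(h_{\alpha\beta})^{-1}$---but your actual manipulation (first trivialize the $S_n$-part using $\check{H}^1(\mathcal{U},\underline{S_n})=\ast$, then observe the remaining $(K^\times)^n$-cocycle is untwisted because diagonal matrices commute, and kill it by the same simplex argument) is sound. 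Your parenthetical alternative---split into line bundles via Proposition~\ref{theorem: bundles split} so that $Vect_n\cong \Pic^{\,n}/S_n$ on both sides, then invoke $\Pic(X)\cong\Pic(X_K)$---is the route most directly suggested by the ordering of results in [JMT20] and [jun2019picard], and trades the nonabelian cocycle bookkeeping for the (already established) uniqueness-up-to-permutation of the splitting; either way the content is the same unit/\,$\mathrm{GL}_n$ decomposition plus irreducibility.
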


\section{Equivalence between locally free sheaves and geometric vector bundles}\label{section: Equivalence between locally free sheaves and geometric vector bundles}

In this section, we show that the one-to-one correspondence between isomorphism classes of geometric vector bundles and locally free sheaves is still valid for semirings. The proofs are almost identical to the case of rings, but we include them here for completeness. We then provide a definition of the functor of points of a locally free sheaf, which is equivalent to the functor of points of the corresponding geometric vector bundle. 

In what follows, instead of saying a semiring scheme, we will simply say a \emph{scheme over $\mathbb{N}$}. Equivalently, one can think of a semiring scheme as a scheme $X$ with a structure map $X \to \Spec \mathbb{N}$.

Let $X$ be a scheme over $\mathbb{N}$. By $\mathbb{A}^n_X$, we mean the scheme $X\times_{ \mathbb{N}} \mathbb{A}^n_{\mathbb{N}}$, where $\mathbb{A}^n_{\mathbb{N}}=\Spec \mathbb{N}[x_1,\dots,x_n]$.

\begin{mydef}\label{definition: geometric vector bundle}
Let $Y$ be a scheme over $\mathbb{N}$. 
A \emph{geometric vector bundle of rank $n$} over $Y$ is a scheme $X$ over $Y$ and a morphism $f:X \to Y$, together with an open covering $\{U_i\}$ of $Y$, and isomorphisms:
\begin{equation}\label{eq: local iso}
\psi_i:f^{-1}(U_i)=X\times_YU_i \to \mathbb{A}^n_{U_i},
\end{equation}
such that for any $i, j$ and for any open affine subset $V=\Spec A \subseteq U_i \cap U_j$, the automorphism $\psi=\psi_j \circ \psi_i^{-1}$ of $\mathbb{A}_V^n=\Spec A[x_1,\dots,x_n]$ is given by a linear automorphism $\theta$ of $A[x_1,\dots,x_n]$. 
\end{mydef}

Let $(X,f,\{U_i\},\{\psi_i\})$ and $(X',f',\{U_j'\},\{\psi_j\})$ be geometric vector bundles on a scheme $Y$ over $\mathbb{N}$. An isomorphism is an isomorphism $g:X \to X'$ of underlying schemes $X$ and $X'$ such that 
\begin{equation}\label{eq: vb iso}
f'\circ g =f, 
\end{equation}
and $X$ and $f:X \to Y$ together with the covering of $Y$ consisting of $\{U_i\}\cup\{U_j'\}$ and the isomorphisms $\psi_i$, $\psi'_j\circ g$ also gives a geometric vector bundle structure on $X$. 

\begin{lem}\label{lemma: geo to locally free}
Let $Y$ be a scheme over $\mathbb{N}$ and $f:X \to Y$ be a geometric vector bundle of rank $n$. Then the following defines a locally free sheaf of rank $n$ on $Y$: for each open $U \subseteq Y$
\[
\mathcal{X}(U):= \{\text{morphisms } s:U \to X \mid f\circ s = \text{id}_U\}. 
\]
\end{lem}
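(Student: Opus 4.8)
The plan is to verify that the assignment $U \mapsto \mathcal{X}(U)$ is a sheaf of $\mathcal{O}_Y$-modules and then check the local freeness using the trivializing cover $\{U_i\}$ and the fact that sections of $\mathbb{A}^n_{U_i} \to U_i$ over an open $V \subseteq U_i$ are precisely $\mathcal{O}_Y(V)^n$. The sheaf axioms for $\mathcal{X}$ are essentially formal: a section $s : U \to X$ with $f \circ s = \mathrm{id}_U$ restricts to a section over any open $V \subseteq U$, and morphisms of schemes over $\mathbb{N}$ glue on an open cover, so gluing a compatible family of local sections $s_\alpha : U_\alpha \to X$ of $f$ produces a unique morphism $s : U \to X$, which still satisfies $f \circ s = \mathrm{id}_U$ because this can be checked locally. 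So $\mathcal{X}$ is a sheaf of sets.

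Next I would put an $\mathcal{O}_Y$-module structure on $\mathcal{X}$. Over the trivializing opens this is clear: for $V = \Spec A \subseteq U_i$ affine, the isomorphism $\psi_i$ identifies sections of $X$ over $V$ with $\Hom_{\Spec A}(\Spec A, \mathbb{A}^n_{\Spec A}) = \Hom_{A\text{-alg}}(A[x_1,\dots,x_n], A) \cong A^n$, so $\mathcal{X}|_V \cong \mathcal{O}_Y^{\,n}|_V$ as sheaves of sets, and I transport the $A$-module structure across this bijection. The content is that this structure is independent of the choice of trivialization: on an affine $V = \Spec A \subseteq U_i \cap U_j$ the transition is $\psi = \psi_j \circ \psi_i^{-1}$, an automorphism of $\mathbb{A}^n_V$ induced by a \emph{linear} automorphism $\theta$ of $A[x_1,\dots,x_n]$ (this is exactly what is built into Definition \ref{definition: geometric vector bundle}); such a $\theta$ is given by a matrix in $\mathrm{GL}_n(A)$, hence the induced bijection on $A$-points $A^n \to A^n$ is $A$-linear. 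Therefore the two $A$-module structures on $\mathcal{X}(V)$ agree, and since affine opens form a basis, the $\mathcal{O}_Y$-module structures on $\mathcal{X}|_{U_i}$ and $\mathcal{X}|_{U_j}$ agree on the overlap. By the gluing lemma for sheaves on a basis (which holds over $\mathbb{N}$ just as over $\mathbb{Z}$, being purely formal), these local module structures glue to a well-defined $\mathcal{O}_Y$-module structure on $\mathcal{X}$. With this structure, $\mathcal{X}|_{U_i} \cong \mathcal{O}_{U_i}^{\,n}$ for each $i$, so $\mathcal{X}$ is locally free of rank $n$.

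The one subtlety — and what I expect to be the main point requiring care rather than a genuine obstacle — is the identification $\Hom_{A\text{-alg}}(A[x_1,\dots,x_n], A) \cong A^n$ together with the claim that a linear automorphism $\theta$ acts $A$-linearly on this set. Over rings this is standard; over semirings one should note that an $A$-algebra map $A[x_1,\dots,x_n]\to A$ is determined by the images $a_i$ of the $x_i$ with no relations imposed (the polynomial semiring is still free on the $x_i$), so the bijection with $A^n$ is valid, and "linear automorphism'' of $A[x_1,\dots,x_n]$ means precisely an $A$-algebra automorphism sending each $x_i$ to an $A$-linear combination $\sum_j \theta_{ij} x_j$ with $(\theta_{ij}) \in \mathrm{GL}_n(A)$; precomposition by such a map sends the point $(a_1,\dots,a_n)$ to $(\sum_j \theta_{1j} a_j, \dots, \sum_j \theta_{nj} a_j)$, which is manifestly $A$-linear. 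I would also remark that coherence of the module structures across triple overlaps is automatic since it is checked on affine opens where everything reduces to the associativity of matrix multiplication in $\mathrm{GL}_n$. This completes the proof that $\mathcal{X}$ is a locally free sheaf of rank $n$.
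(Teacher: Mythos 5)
Your proof is correct and is exactly the standard argument that the paper invokes with "mutatis mutandis as for schemes over rings": sections form a sheaf of sets by gluing of morphisms, the trivializations give local identifications with $\mathcal{O}_Y^n$ via $\Hom_{A\text{-alg}}(A[x_1,\dots,x_n],A)\cong A^n$, and the linearity of the transition automorphisms (a matrix in $\mathrm{GL}_n(A)$ acting $A$-linearly on $A$-points) makes the module structures agree on overlaps. You have also correctly isolated the only semiring-specific points (freeness of the polynomial semiring on the $x_i$, and linearity of the transition action), so nothing further is needed.
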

\begin{proof}
The proof follows mutatis mutandis as the one for schemes over rings.
\end{proof}

Tensor algebras and symmetric algebras can be also constructed for semirings in the same way of those for rings. More precisely, let $R$ be a semiring. Then tensor product $M\otimes_RN$ of $R$-modules $M$, $N$ are defined as for rings. For more details, see \cite{borger2016witt} or \cite{giansiracusa2018grassmann}. Now, we let $\text{T}^0(M)=R$ and we inductively define
\[
\text{T}^d(M)=\text{T}^{d-1}(M)\otimes_R M. 
\]
Define the tensor algebra of $M$ to be 
$\text{T} (M) = \bigoplus_{d\geq 0} \text{T}^d(M)$, where multiplication is determined by the following natural isomorphism (concatenation):
\[
\text{T}^n(M)\otimes_R\text{T}^m(M) \to \text{T}^{n+m}(M). 
\]
We proceed to define the symmetric algebra $\text{S} (M)$ as the quotient of $ \text{T} (M)$ by the equivalence relation generated by $ m_i \otimes m_j \sim m_j \otimes m_i$, for all $m_i$, $m_j \in M$. As in the case of rings we have that $\text{Sym}^d(M)$ is the quotient of $\text{T}^d(M)$ by the equivalence relation generated by the elements $m_1 \otimes \dots \otimes m_d \sim m_{\sigma(1)} \otimes \dots \otimes m_{\sigma(d)}$, for all $m_i\in M$ and $\sigma$ a permutation on $d$ elements. Thus we have $\text{Sym} (M) = \bigoplus_{d\geq 0} \text{Sym}^d(M)$.

Likewise, for an $\mathcal{O}_Y$-module $\mathcal{F}$ of a scheme $Y$ over $\mathbb{N}$, the tensor algebra $\text{T}(\mathcal{F})$ and symmetric algebra $\text{Sym}(\mathcal{F})$ can be defined similarly for rings, as the sheafification of the obvious presheaves.

Next, let $\mathcal{F}$ be a quasi-coherent sheaf of algebras on a scheme $Y$ over $\mathbb{N}$. We let $\Spec \mathcal{F}$ be the scheme over $\mathbb{N}$ defined as follows: for each open subset $U \subseteq Y$, we have an affine scheme $\Spec \mathcal{F}(U)$. The scheme $\Spec \mathcal{F}$ is obtained by gluing these affine pieces. Note that there is a natural projection map 
\begin{equation}\label{eq: projection sheaves}
f:X \to Y \quad \textrm{such that} \quad f^{-1}(U)\simeq \Spec \mathcal{F}(U).
\end{equation}
By $\mathbf{V}(\mathcal{F})$, we will mean $\Spec(\text{Sym}(\mathcal{F}))$.

\begin{rmk}
Let $\mathcal{F}$ be a locally free sheaf of rank $n$ on a scheme $Y$ over $\mathbb{N}$. Then the symmetric algebra $\text{Sym}(\mathcal{F})$ is also a locally free sheaf on $Y$. In fact, the question is local. Hence it is enough to show it when $M=R^n$ for some semiring $R$. But, in this case, clearly the symmetric algebra $\text{Sym}(M)$ is a free $R$-module.
\end{rmk}

In what follows by a scheme, we always mean a scheme over $\mathbb{N}$ (equivalently semiring schemes) unless otherwise stated.

Let $\mathcal{F}$ be a locally free sheaf of rank $n$ on a scheme $Y$. Let $X=\mathbf{V}(\mathcal{F})$, with projection morphism $f:X \to Y$ as in \eqref{eq: projection sheaves}. For each open affine subset $U \subseteq Y$ for which $\mathcal{F}|_U$ is free, choose a basis of $\mathcal{F}$, and let 
\begin{equation}\label{eq: local iso sheaves}
\psi_U:f^{-1}(U) \to \mathbb{A}_U^n
\end{equation}
be the isomorphism resulting from the identification of $\text{Sym}(\mathcal{F}(U))$ with $\mathcal{O}_Y(U)[x_1,\dots,x_n]$. 

Let $R$ be a semiring, $T$ be a multiplicative subset of $R$, and $M$ be an $R$-module. As in the case for rings, one has the following isomorphism:
\begin{equation}\label{eq: sym eq}
T^{-1}\text{Sym}(M) \simeq \text{Sym}(T^{-1}M),
\end{equation}
Now, the following Lemma \ref{lemma: covering lemma} and Proposition \ref{proposition: sheaf to geometric} boil down to \eqref{eq: sym eq}.

\begin{lem}\label{lemma: covering lemma}
Let $(X,f,\{U_i\},\{\psi_i\})$ be a geometric vector bundles on a scheme $Y$ {over $\mathbb{N}$}. Let $\{U_j'\}$ be another covering of $Y$. Let $U_{ij}:=U_i \cap U_j'$ and $\psi_{ij}$ be the restriction of $\psi_i$ to $f^{-1}(U_{ij})$. Then, $(X,f,\{U_{ij}\},\{\psi_{ij}\}\}$ is a geometric vector bundle which is isomorphic to $(X,f,\{U_i\},\{\psi_i\})$.
\end{lem}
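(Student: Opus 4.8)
The plan is to show that the new data $(X,f,\{U_{ij}\},\{\psi_{ij}\})$ is a legitimate geometric vector bundle structure on $X$ in the sense of Definition~\ref{definition: geometric vector bundle}, and that it is isomorphic to the original one in the sense of the isomorphism of geometric vector bundles defined just after that definition (via the identity map $\mathrm{id}_X$). First I would observe that $\{U_{ij}\}_{i,j}$ is genuinely an open cover of $Y$: each $U_{ij}=U_i\cap U_j'$ is open, and since $\bigcup_i U_i = Y$ and $\bigcup_j U_j' = Y$, any point $y\in Y$ lies in some $U_i$ and some $U_j'$, hence in $U_{ij}$. Next, since $\psi_i$ is an isomorphism $f^{-1}(U_i)\to \mathbb{A}^n_{U_i}$, its restriction $\psi_{ij}$ to the open subscheme $f^{-1}(U_{ij}) = f^{-1}(U_i)\cap f^{-1}(U_j') \subseteq f^{-1}(U_i)$ is an isomorphism onto $\mathbb{A}^n_{U_{ij}}$ (restriction of an isomorphism of schemes to an open subscheme of the source maps isomorphically onto the corresponding open subscheme of the target).

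Then I would check the cocycle/linearity condition for the cover $\{U_{ij}\}$. Fix indices $(i,j)$ and $(k,l)$ and an open affine $V=\Spec A \subseteq U_{ij}\cap U_{kl}$. Since $U_{ij}\subseteq U_i$ and $U_{kl}\subseteq U_k$, we have $V\subseteq U_i\cap U_k$, so by the hypothesis that $(X,f,\{U_i\},\{\psi_i\})$ is a geometric vector bundle, the transition automorphism $\psi_k\circ\psi_i^{-1}$ of $\mathbb{A}^n_V$ is induced by a linear automorphism of $A[x_1,\dots,x_n]$. But on $f^{-1}(V)$ (which is contained in $f^{-1}(U_{ij})\cap f^{-1}(U_{kl})$, hence in all four relevant preimages) we have $\psi_{ij} = \psi_i|$ and $\psi_{kl}=\psi_k|$, so $\psi_{kl}\circ\psi_{ij}^{-1}$ agrees with $\psi_k\circ\psi_i^{-1}$ on $\mathbb{A}^n_V$; hence it is given by the same linear automorphism $\theta$. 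This establishes that $(X,f,\{U_{ij}\},\{\psi_{ij}\})$ satisfies Definition~\ref{definition: geometric vector bundle}.

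Finally, for the isomorphism of geometric vector bundles I would take $g=\mathrm{id}_X$, which trivially satisfies $f\circ g = f$ as in \eqref{eq: vb iso}. It remains to verify that $X$ with $f$, the combined cover $\{U_i\}\cup\{U_{ij}\}$, and the trivializations $\{\psi_i\}\cup\{\psi_{ij}\circ g\} = \{\psi_i\}\cup\{\psi_{ij}\}$ still forms a geometric vector bundle; this is the same type of check as above. Given an open affine $V=\Spec A$ contained in the intersection of two members of the combined cover, in every case ($U_i\cap U_k$, $U_i\cap U_{kl}$, or $U_{ij}\cap U_{kl}$) the relevant trivializations are restrictions of the original $\psi_i$'s, so the transition map is a restriction of some $\psi_k\circ\psi_i^{-1}$ and is therefore linear by hypothesis. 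I do not anticipate a real obstacle here — the content is entirely bookkeeping — but the one point requiring a little care is making sure that the transition map on $f^{-1}(V)$ for the refined cover literally coincides (not merely "is conjugate to") the transition map for the original cover, which follows because $\psi_{ij}$ is by definition the restriction of $\psi_i$ and the two open subschemes $f^{-1}(U_{ij})$, $f^{-1}(U_i)$ both contain $f^{-1}(V)$.
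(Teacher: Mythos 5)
Your proof is correct and follows essentially the same route as the paper's: both reduce to the observation that the restricted trivializations are still isomorphisms onto the corresponding affine bundles and that all transition maps on the refined cover are restrictions of the original (linear) ones. The only difference in emphasis is that the paper explicitly justifies the identification $\mathbb{A}^n_{U_i}\mid_{U_{ij}}\simeq\mathbb{A}^n_{U_{ij}}$ via $(\mathrm{Sym}(M_i))_{f_j}\simeq\mathrm{Sym}((M_i)_{f_j})$, a standard base-change fact you use implicitly, while you spell out the linearity and isomorphism checks that the paper declares "clear."
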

\begin{proof}
To show that $(X,f,\{U_{ij}\},\{\psi_{ij}\}\}$ is a geometric vector bundle, it suffices to prove that $\psi_{ij}:f^{-1}(U_{ij}) \to \mathbb{A}^n_{U_{ij}}$ is an isomorphism. In fact, we may further assume that $U_{i}=\Spec R_i$ and $U_{ij}=\Spec (R_i)_{f_j}$ for some $f_j \in R_i$. Then, it reduces to show that for a free $R_i$-module $M_i$, 
\begin{equation}\label{eq: sym commutes}
(\text{Sym}(M_i))_{f_j}\simeq \text{Sym}((M_i)_{f_j}),
\end{equation}
which is true for modules over semirings. 

Finally, it is clear that $(X,f,\{U_{ij}\},\{\psi_{ij}\}\}$ is isomorphic to $(X,f,\{U_i\},\{\psi_i\})$.
\end{proof}

\begin{pro}\label{proposition: sheaf to geometric}
Let $\mathcal{F}$ be a locally free sheaf of rank $n$ on a scheme $Y$ over $\mathbb{N}$ and $X=\Spec (\text{Sym}(\mathcal{F}))$. With $f$ and $U_i$ as in \eqref{eq: projection sheaves}, $(X,f,\{U_i\},\{\psi_i\})$ is a geometric vector bundle of rank $n$ over $Y$, which (up to isomorphism) does not depend on the bases of $\mathcal{F}|_{U_i}$ chosen. 
\end{pro}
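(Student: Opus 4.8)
The plan is to reduce the claim to a purely local and affine statement and then invoke the compatibility isomorphism \eqref{eq: sym eq} (equivalently \eqref{eq: sym commutes}) together with Lemma \ref{lemma: covering lemma}. Since $\mathcal{F}$ is locally free of rank $n$, there is an open cover $\{U_i\}$ of $Y$ on which $\mathcal{F}|_{U_i}$ is free; after refining we may assume each $U_i = \Spec R_i$ is affine, so $\mathcal{F}(U_i) \simeq R_i^n$. Choosing such a basis gives the isomorphism $\text{Sym}(\mathcal{F}(U_i)) \simeq R_i[x_1,\dots,x_n]$, and hence, by the description of $\Spec$ of a sheaf of algebras in \eqref{eq: projection sheaves} and the relation $\mathbf{V}(\mathcal{F}) = \Spec(\text{Sym}(\mathcal{F}))$, an isomorphism $\psi_i \colon f^{-1}(U_i) \to \mathbb{A}^n_{U_i}$ as in \eqref{eq: local iso sheaves}. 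This produces the data $(X, f, \{U_i\}, \{\psi_i\})$.

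The first thing to check is that this data genuinely defines a geometric vector bundle in the sense of Definition \ref{definition: geometric vector bundle}, i.e.\ that for any open affine $V = \Spec A \subseteq U_i \cap U_j$ the transition automorphism $\psi_j \circ \psi_i^{-1}$ of $\mathbb{A}^n_V$ is induced by a \emph{linear} automorphism $\theta$ of $A[x_1,\dots,x_n]$. This follows because on $V$ both bases of $\mathcal{F}(U_i)|_V$ and $\mathcal{F}(U_j)|_V$ are bases of the free module $\mathcal{F}(V) \simeq A^n$, and the change-of-basis is an element of $\text{GL}_n(A)$; the induced map on $\text{Sym}$ sends each $x_k$ to a linear form in the $x_\ell$ with coefficients in $A$, which is precisely a linear automorphism $\theta$. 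One must be mildly careful that passing from $U_i$ to the smaller open set $V$ commutes with $\text{Sym}$ and with localization — this is exactly \eqref{eq: sym commutes}, valid for modules over semirings — so that restricting $\psi_i$ to $f^{-1}(V)$ still realizes the identification $\text{Sym}(\mathcal{F}(V)) \simeq A[x_1,\dots,x_n]$. Lemma \ref{lemma: covering lemma} then licenses replacing the cover $\{U_i\}$ by any refinement without changing the isomorphism class of the resulting geometric vector bundle.

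For the independence statement, suppose we pick a different basis of each $\mathcal{F}|_{U_i}$, yielding trivializations $\psi_i'$ and a geometric vector bundle $(X, f, \{U_i\}, \{\psi_i'\})$. Taking $g = \text{id}_X$, we have $f \circ g = f$ as required by \eqref{eq: vb iso}, and we must verify that the combined cover $\{U_i\}$ with the two families $\{\psi_i\}$ and $\{\psi_i' \circ g\} = \{\psi_i'\}$ still satisfies the linearity condition of Definition \ref{definition: geometric vector bundle}; this is the same change-of-basis-in-$\text{GL}_n$ argument as above, now comparing $\psi_i$ with $\psi_i'$ over $U_i \cap U_j$ (including $i = j$). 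Hence $(X, f, \{U_i\}, \{\psi_i\})$ and $(X, f, \{U_i\}, \{\psi_i'\})$ are isomorphic geometric vector bundles, completing the proof.

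I expect the only genuinely substantive point to be bookkeeping: ensuring that every occurrence of $\text{Sym}$, localization, and restriction to a smaller affine open commutes as needed, so that the abstractly-glued scheme $\Spec(\text{Sym}(\mathcal{F}))$ really is covered by the $\mathbb{A}^n_{U_i}$ with the stated transition maps. All of this rests on the single algebraic fact \eqref{eq: sym eq}, which the excerpt has already granted us, so there is no real obstacle beyond careful phrasing. The rank-$n$ condition and the linearity of transitions are then immediate from the free-module change-of-basis group being $\text{GL}_n$.
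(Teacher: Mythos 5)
Your argument is correct and is exactly the standard ring-theoretic proof that the paper itself invokes (its proof is literally ``mutatis mutandis as the one for schemes over rings''); you have simply spelled out the details — local trivialization from a chosen basis, linearity of transition maps via change of basis in $\mathrm{GL}_n(A)$, compatibility of $\mathrm{Sym}$ with localization via \eqref{eq: sym eq}, and independence of bases via the identity map. No gaps.
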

\begin{proof}
The proof follows mutatis mutandis as the one for schemes over rings.
\end{proof}

\begin{lem}\label{lemma: universal property sym}(Universal property of symmetric algebras)
Let $Y$ be a scheme and $\mathcal{F}$ be an $\mathcal{O}_Y$-module. Let $i:\mathcal{F} \to \text{Sym}(\mathcal{F})$ be a natural map. For each sheaf $\mathcal{G}$ of $\mathcal{O}_Y$-algebras and map $f:\mathcal{F} \to \mathcal{G}$ of $\mathcal{O}_Y$-modules, there exists a unique map $g:\text{Sym}(\mathcal{F}) \to \mathcal{G}$ of $\mathcal{O}_Y$-algebras such that $g=f\circ i$. 
\end{lem}
\begin{proof} 
The proof follows mutatis mutandis as the one for schemes over rings.
\end{proof}

Let $Y$ be a scheme. For $\mathcal{O}_Y$-modules $\mathcal{F}$ and $\mathcal{G}$, we let $\underline{\mathrm{Hom}}_{\mathcal{O}_Y}(\mathcal{F},\mathcal{G})$ be the sheaf: for each open $U \subseteq Y$
\[
\underline{\mathrm{Hom}}_{\mathcal{O}_Y}(\mathcal{F},\mathcal{G})(U):=\mathrm{Hom}_{\mathcal{O}_Y(U)}(\mathcal{F}(U),\mathcal{G}(U)).
\]
Note that as in the case for rings, $\underline{\mathrm{Hom}}_{\mathcal{O}_Y}(\mathcal{F},\mathcal{G})$ is indeed a sheaf. We let $\mathrm{Hom}_{\mathcal{O}_Y}(\mathcal{F},\mathcal{G})$ be the monoid of maps of $\mathcal{O}_Y$-modules. 

We will later need an explicit description of the functor of points of a tensor product of locally free sheaves.  For this we will need the Hom-tensor adjunction. 

\begin{lem}
Let $(X, \mathcal{O}_X)$ be a semiringed space.  Let $\mathcal{E}, \mathcal{F}, \mathcal{G}$ be $\mathcal{O}_X$-modules.  Then there is an isomorphism
\[
\mathrm{Hom}_{\mathcal{O}_X}(\mathcal{E}\otimes_{\mathcal{O}_X}\mathcal{F}, \mathcal{G})\cong \mathrm{Hom}_{\mathcal{O}_X}(\mathcal{E}, \underline{\mathrm{Hom}}_{\mathcal{O}_X}(\mathcal{F}, \mathcal{G}))
\]
which is natural in all arguments, where $\underline{\mathrm{Hom}}$ is the sheaf-valued Hom.
\end{lem}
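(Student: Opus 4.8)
The plan is to prove the Hom-tensor adjunction for sheaves of modules over a semiringed space by reducing it to the local (affine) statement for modules over a semiring, then sheafifying. First I would recall that for a semiring $R$ and $R$-modules $M, N, P$ there is a natural bijection $\mathrm{Hom}_R(M\otimes_R N, P)\cong \mathrm{Hom}_R(M, \mathrm{Hom}_R(N, P))$. This is proved exactly as for rings: a bilinear map $M\times N\to P$ is the same datum as an $R$-linear map $M\otimes_R N\to P$ (by the universal property of the tensor product of modules over a monoid/semiring, already used in the excerpt), and currying identifies bilinear maps $M\times N\to P$ with linear maps $M\to\mathrm{Hom}_R(N,P)$. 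One checks the maps in both directions are $R$-linear and mutually inverse, and that the bijection is natural in $M$, $N$, $P$; none of these verifications use subtraction, so they go through verbatim for semirings.

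Next I would promote this to presheaves of modules on $(X,\mathcal{O}_X)$: for each open $U\subseteq X$, apply the affine statement with $R=\mathcal{O}_X(U)$, $M=\mathcal{E}(U)$, etc., to get
\[
\mathrm{Hom}_{\mathcal{O}_X(U)}\big((\mathcal{E}\otimes^{\mathrm{pre}}\mathcal{F})(U),\mathcal{G}(U)\big)\cong \mathrm{Hom}_{\mathcal{O}_X(U)}\big(\mathcal{E}(U),\underline{\mathrm{Hom}}_{\mathcal{O}_X}(\mathcal{F},\mathcal{G})(U)\big),
\]
compatibly with restriction. Since $\underline{\mathrm{Hom}}_{\mathcal{O}_X}(\mathcal{F},\mathcal{G})$ is already a sheaf (noted in the excerpt), and $\mathcal{E}\otimes_{\mathcal{O}_X}\mathcal{F}$ is the sheafification of the presheaf tensor product $\mathcal{E}\otimes^{\mathrm{pre}}\mathcal{F}$, I would then use the universal property of sheafification: a morphism of sheaves out of $\mathcal{E}\otimes_{\mathcal{O}_X}\mathcal{F}$ is the same as a morphism of presheaves out of $\mathcal{E}\otimes^{\mathrm{pre}}\mathcal{F}$ into the (sheaf) target $\mathcal{G}$. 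Concretely,
\[
\mathrm{Hom}_{\mathcal{O}_X}(\mathcal{E}\otimes_{\mathcal{O}_X}\mathcal{F},\mathcal{G})\cong \mathrm{Hom}^{\mathrm{pre}}_{\mathcal{O}_X}(\mathcal{E}\otimes^{\mathrm{pre}}\mathcal{F},\mathcal{G})\cong \mathrm{Hom}^{\mathrm{pre}}_{\mathcal{O}_X}(\mathcal{E},\underline{\mathrm{Hom}}_{\mathcal{O}_X}(\mathcal{F},\mathcal{G}))= \mathrm{Hom}_{\mathcal{O}_X}(\mathcal{E},\underline{\mathrm{Hom}}_{\mathcal{O}_X}(\mathcal{F},\mathcal{G})),
\]
the last equality because both $\mathcal{E}$ and $\underline{\mathrm{Hom}}_{\mathcal{O}_X}(\mathcal{F},\mathcal{G})$ are sheaves so presheaf and sheaf morphisms coincide. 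Naturality in all three arguments follows from naturality at each stage (affine bijection, passage to presheaves, sheafification adjunction).

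The step I expect to require the most care is not any single computation but checking that the presheaf tensor product and sheafification behave as in the classical case over a semiringed — rather than ringed — space; in particular that sheafification of a presheaf of $\mathcal{O}_X$-modules exists and retains its module structure, and that its universal property holds with respect to sheaf targets. These facts hold because sheafification is purely a matter of the underlying category being reasonable (it has all small limits and filtered colimits, and colimits of $\mathcal{O}_X(U)$-modules along restriction maps make sense), and none of the arguments invoke additive inverses. So the only genuinely ``new'' input is the affine Hom-tensor adjunction for semiring modules, which itself is elementary; the rest is the standard formal machinery, applied verbatim.
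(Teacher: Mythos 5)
Your proposal is correct and is exactly the argument the paper has in mind: the paper's proof of this lemma is literally ``the proof follows mutatis mutandis as the one for schemes over rings,'' and you have spelled out what that classical proof is (sectionwise Hom-tensor adjunction for modules over a semiring, assembled into a presheaf statement compatible with restriction, then the universal property of sheafification applied to the presheaf tensor product), correctly observing that none of the steps require additive inverses. No gaps.
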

\begin{proof}
The proof follows mutatis mutandis as the one for schemes over rings.
\end{proof}

As in the case for rings, one also has the following. 

\begin{lem}\label{lemma: pullback pushforward adjunction}
 Let $f:X \to Y$ be a morphism of schemes over $\mathbb{N}$. Let $\mathcal{F}$ (resp.~$\mathcal{G}$) be an $\mathcal{O}_X$-module (resp.~$\mathcal{O}_Y$-module). Then one has the following pullback-pushforward adjunction:
 \[
 \Hom_{\mathcal{O}_X}(f^*\mathcal{G},\mathcal{F})\simeq \Hom_{\mathcal{O}_Y}(\mathcal{G},f_*\mathcal{F}).
 \]
 \end{lem}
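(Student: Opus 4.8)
The statement to prove is the pullback-pushforward adjunction
\[
\Hom_{\mathcal{O}_X}(f^*\mathcal{G},\mathcal{F})\simeq \Hom_{\mathcal{O}_Y}(\mathcal{G},f_*\mathcal{F})
\]
for a morphism $f:X\to Y$ of schemes over $\mathbb{N}$. The plan is to follow the classical construction verbatim, checking at each step that nothing beyond the existence of tensor products, pushouts/colimits, and sheafification over semirings is used — all of which have already been set up in the excerpt.

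First I would recall the two ingredients. On the one hand, for an $\mathcal{O}_Y$-module $\mathcal{G}$, the inverse image presheaf $f^{-1}\mathcal{G}$ is defined by $U\mapsto \varinjlim_{V\supseteq f(U)}\mathcal{G}(V)$, and these colimits exist in the category of $\mathcal{O}_Y(V)$-modules over semirings exactly as over rings; then $f^*\mathcal{G}=f^{-1}\mathcal{G}\otimes_{f^{-1}\mathcal{O}_Y}\mathcal{O}_X$, the tensor product being the sheafification of the obvious presheaf, which makes sense by the tensor-product-of-semimodules construction already invoked in the excerpt. On the other hand, $f_*\mathcal{F}(V)=\mathcal{F}(f^{-1}(V))$ is an $\mathcal{O}_Y(V)$-module via the ring (semiring) map $\mathcal{O}_Y(V)\to\mathcal{O}_X(f^{-1}(V))$.

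The key steps, in order, are: (1) establish the adjunction $\Hom_{f^{-1}\mathcal{O}_Y}(f^{-1}\mathcal{G},\mathcal{H})\simeq\Hom_{\mathcal{O}_Y}(\mathcal{G},f_*\mathcal{H})$ for an $f^{-1}\mathcal{O}_Y$-module $\mathcal{H}$ — this is the sheaf-theoretic $(f^{-1},f_*)$ adjunction, built from the universal property of the colimit defining $f^{-1}$ and naturality of the stalk/section maps; nothing here is ring-specific. (2) Apply the Hom-tensor adjunction over the sheaf of semirings $\mathcal{O}_X$ (the lemma immediately preceding this one, which in turn relies on the analogous adjunction for semimodules over a semiring): $\Hom_{\mathcal{O}_X}(f^{-1}\mathcal{G}\otimes_{f^{-1}\mathcal{O}_Y}\mathcal{O}_X,\mathcal{F})\simeq\Hom_{f^{-1}\mathcal{O}_Y}(f^{-1}\mathcal{G},\underline{\mathrm{Hom}}_{\mathcal{O}_X}(\mathcal{O}_X,\mathcal{F}))$. (3) Identify $\underline{\mathrm{Hom}}_{\mathcal{O}_X}(\mathcal{O}_X,\mathcal{F})\simeq\mathcal{F}$ (a section-wise statement: a semimodule homomorphism out of the free rank-one semimodule is determined by the image of $1$), and then feed this into step (1) with $\mathcal{H}=\mathcal{F}$ regarded as an $f^{-1}\mathcal{O}_Y$-module via restriction of scalars. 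Composing the three natural isomorphisms gives the claim, and naturality in $\mathcal{F}$ and $\mathcal{G}$ is inherited from the naturality of each piece.

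I do not expect a genuine obstacle here: every categorical input — colimits of semimodules, tensor products and the Hom-tensor adjunction for semimodules, sheafification — has been recorded earlier in the excerpt or follows formally, and the classical argument for schemes over rings never uses subtraction. The one point deserving a sentence of care is step (3): over a semiring, a homomorphism $\varphi:R\to M$ of $R$-semimodules is still determined by $\varphi(1)$ and any $m\in M$ arises as some $\varphi$, so $\underline{\mathrm{Hom}}_{\mathcal{O}_X}(\mathcal{O}_X,\mathcal{F})\simeq\mathcal{F}$ holds verbatim; likewise restriction of scalars along $f^{-1}\mathcal{O}_Y\to\mathcal{O}_X$ behaves as expected. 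Hence the proof is, as the paper puts it, "mutatis mutandis as the one for schemes over rings," and I would simply spell out the three-step composition above to make that explicit.
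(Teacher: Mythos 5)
Your proposal is correct and is precisely the classical argument carried over to semirings, which is exactly what the paper does (its proof is literally "mutatis mutandis as the one for schemes over rings"). The only point worth noting is that your step (2) uses the extension-of-scalars form of the Hom-tensor adjunction (tensoring over $f^{-1}\mathcal{O}_Y$ up to $\mathcal{O}_X$) rather than the two-modules-over-one-sheaf form stated in the paper, but that variant holds for semimodules by the same formal argument, so there is no gap.
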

\begin{proof}
The proof follows mutatis mutandis as the one for schemes over rings.
\end{proof}

\begin{pro}\label{proposition: naturality}
Let $Y$ be a scheme and $\mathcal{F}$ be a locally free $\mathcal{O}_Y$-module of finite rank. The dual $\mathcal{F}^\vee$ is defined to be the sheaf $\underline{\mathrm{Hom}}_{\mathcal{O}_Y}(\mathcal{F},\mathcal{O}_Y)$. Let $X=\mathbf{V}(\mathcal{F})$ be the geometric vector bundle over $Y$ associated to $\mathcal{F}$ with $f:X \to Y$. Let $\mathcal{E}$ be the locally free sheaf of sections of $X$. Then $\mathcal{E}$ is isomorphic to $\mathcal{F}^\vee$.
\end{pro}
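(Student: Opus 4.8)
The plan is to unwind both sides of the claimed isomorphism down to the affine situation, where everything becomes the standard identification of a free module with the double dual (or rather the dual) via symmetric algebras, and then argue that the local identifications glue. First I would recall the description of $\mathbf{V}(\mathcal{F})$ from \eqref{eq: projection sheaves}: locally on an open affine $U=\Spec R$ over which $\mathcal{F}|_U$ is free with basis $e_1,\dots,e_n$, we have $f^{-1}(U) = \Spec R[x_1,\dots,x_n]$, where $R[x_1,\dots,x_n] = \mathrm{Sym}(\mathcal{F}(U))$ and $x_i$ corresponds to $e_i$. A section $s:U\to X$ with $f\circ s=\mathrm{id}_U$ is then the same as an $R$-algebra map $R[x_1,\dots,x_n]\to R$, i.e. (by the universal property of the symmetric algebra, Lemma \ref{lemma: universal property sym}) the same as an $R$-module map $\mathcal{F}(U)\to R = \mathcal{O}_Y(U)$. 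So $\mathcal{E}(U) = \Hom_R(\mathcal{F}(U),R) = \mathcal{F}^\vee(U)$, and I would write out this chain of bijections explicitly.

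Next I would check that this identification is compatible with restriction maps, so that it defines an isomorphism of sheaves and not merely of sections over each $U$. The key point is that for a localization $R\to R_g$ one has $\mathrm{Sym}(M)_g \simeq \mathrm{Sym}(M_g)$ — this is exactly \eqref{eq: sym commutes}, already used in Lemma \ref{lemma: covering lemma} — so the bijection ``$R$-algebra maps out of $\mathrm{Sym}(\mathcal{F}(U))$'' $\leftrightarrow$ ``$R$-module maps out of $\mathcal{F}(U)$'' behaves well under passing to a smaller basic open, and hence the presheaf identifications assemble into a sheaf isomorphism $\mathcal{E}\xrightarrow{\sim}\mathcal{F}^\vee$. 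One also has to note that the identification is independent of the chosen local basis of $\mathcal{F}|_U$: a change of basis is an element of $\mathrm{GL}_n(R)$ acting on both $\mathrm{Sym}(\mathcal{F}(U))$ and on $\Hom_R(\mathcal{F}(U),R)$ in the transpose-inverse-compatible way, so the bijection is canonical; this is really just the statement that the universal property of $\mathrm{Sym}$ is natural in $\mathcal{F}$.

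Alternatively — and this is probably the cleanest way to organize the writeup — I would phrase the whole thing functorially: for any $\mathcal{O}_Y$-algebra $\mathcal{A}$ (here $\mathcal{A}=\mathrm{Sym}(\mathcal{F})$) the functor of sections $U\mapsto \{\,R\text{-algebra maps }\mathcal{A}(U)\to\mathcal{O}_Y(U)\,\}$ together with Lemma \ref{lemma: universal property sym} and the pullback-pushforward adjunction of Lemma \ref{lemma: pullback pushforward adjunction} immediately yields $\mathcal{E}(U)\cong \Hom_{\mathcal{O}_Y(U)}(\mathcal{F}(U),\mathcal{O}_Y(U)) = \mathcal{F}^\vee(U)$, functorially in $U$. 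Since all the ingredients (symmetric algebras, their universal property, localization commuting with $\mathrm{Sym}$, the Hom-tensor and pullback-pushforward adjunctions) have already been established over semirings earlier in this section, no genuinely new argument is needed.

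The main obstacle is not any single hard step but rather bookkeeping: one must be careful that the correspondence between sections of $\mathbf{V}(\mathcal{F})$ and module maps is set up contravariantly in the right way (sections of $\mathbf{V}(\mathcal{F})=\Spec\mathrm{Sym}(\mathcal{F})$ correspond to maps \emph{out of} $\mathrm{Sym}(\mathcal{F})$, hence to maps out of $\mathcal{F}$, i.e. to $\mathcal{F}^\vee$, not to $\mathcal{F}$), and that the gluing is legitimate, which as noted rests entirely on \eqref{eq: sym commutes}. One mild semiring-specific subtlety worth a sentence: the universal property of $\mathrm{Sym}$ over a semiring (Lemma \ref{lemma: universal property sym}) is exactly what licenses the identification, and since it holds verbatim as over rings, the argument goes through unchanged — so in the actual writeup I would simply say ``the proof follows mutatis mutandis as for schemes over rings, using Lemma \ref{lemma: universal property sym} and \eqref{eq: sym commutes}'' and leave it at that.
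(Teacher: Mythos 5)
Your proposal is correct and is exactly the argument the paper has in mind: the paper's proof of this proposition is literally ``the proof follows mutatis mutandis as the one for schemes over rings,'' and what you write out — sections of $f^{-1}(U)=\Spec\mathrm{Sym}(\mathcal{F}(U))$ over affine $U$ correspond via Lemma \ref{lemma: universal property sym} to module maps $\mathcal{F}(U)\to\mathcal{O}_Y(U)$, glued using \eqref{eq: sym commutes} and checked to be basis-independent — is precisely the classical proof being imported. Your version simply supplies the details the paper leaves implicit.
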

\begin{proof}
The proof follows mutatis mutandis as the one for schemes over rings.
\end{proof}

From Lemma \ref{lemma: geo to locally free} and Propositions \ref{proposition: sheaf to geometric} and \ref{proposition: naturality}, one obtains the following.

\begin{pro}\label{proposition: equivalence of categories}
Let $Y$ be a scheme {over $\mathbb{N}$}. Then there is a one-to-one correspondence between isomorphism classes of geometric vector bundles on $Y$ and isomorphism classes of locally free sheaves on $Y$. 
\end{pro}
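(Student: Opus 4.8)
The plan is to assemble the desired bijection from the three functorial constructions already built in this section. Given a locally free sheaf $\mathcal{F}$ of rank $n$ on $Y$, Proposition \ref{proposition: sheaf to geometric} produces the geometric vector bundle $\mathbf{V}(\mathcal{F}) = \Spec(\text{Sym}(\mathcal{F}))$, and this is well-defined up to isomorphism independently of the choice of local bases. Conversely, given a geometric vector bundle $f:X \to Y$, Lemma \ref{lemma: geo to locally free} produces the locally free sheaf $\mathcal{X}$ of sections of rank $n$. So first I would verify that these two assignments respect isomorphisms, hence descend to well-defined maps between the two sets of isomorphism classes; for the section-sheaf construction this is immediate from the compatibility condition \eqref{eq: vb iso} in the definition of an isomorphism of geometric vector bundles, and for $\mathbf{V}(-)$ it follows from the functoriality of $\text{Sym}$ and $\Spec$.

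Next I would check that the two maps are mutually inverse. For one composite, start with a locally free sheaf $\mathcal{F}$, pass to $X = \mathbf{V}(\mathcal{F})$, and take its sheaf of sections $\mathcal{E}$: Proposition \ref{proposition: naturality} gives $\mathcal{E} \cong \mathcal{F}^\vee$. For the other composite, start with a geometric vector bundle $X$, take its section sheaf $\mathcal{E} = \mathcal{X}$, and form $\mathbf{V}(\mathcal{E})$: again Proposition \ref{proposition: naturality} (applied with $\mathcal{F} = \mathcal{X}$) identifies the section sheaf of $\mathbf{V}(\mathcal{X})$ with $\mathcal{X}^\vee$, and one checks this geometric vector bundle is isomorphic to the original $X$ by matching the local trivializations. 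The upshot is that the naive composite of "take $\mathbf{V}$" and "take sections" is not the identity but the duality functor $\mathcal{F} \mapsto \mathcal{F}^\vee$ on locally free sheaves (and correspondingly the dual vector bundle on the geometric side). Since $\mathcal{F} \mapsto \mathcal{F}^\vee$ is an involution on isomorphism classes of finite-rank locally free sheaves — this holds over semirings just as over rings because the question is local and $(\mathcal{O}_Y^n)^{\vee\vee} \cong \mathcal{O}_Y^n$ canonically — precomposing or postcomposing one of the two maps with $(-)^\vee$ turns the pair into genuine mutual inverses. Concretely, I would define the correspondence by sending a locally free sheaf $\mathcal{F}$ to the geometric vector bundle $\mathbf{V}(\mathcal{F}^\vee)$ (or equivalently send a geometric vector bundle to its sheaf of sections, then these are inverse without any extra dualizing), and record that both directions are well-defined on isomorphism classes.

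I expect the main obstacle to be purely bookkeeping: making sure the duality twist is inserted consistently so that the two assignments are honestly inverse to each other, rather than inverse up to a dualization that one forgets to account for. There is no new semiring-theoretic input needed beyond what has already been established — the key algebraic facts (behavior of $\text{Sym}$ under localization \eqref{eq: sym eq}, Hom–tensor and pullback–pushforward adjunctions, and the self-duality of free modules) are all in place — so the proof is genuinely formal. I would therefore simply cite Lemmas \ref{lemma: geo to locally free}, \ref{lemma: covering lemma} and Propositions \ref{proposition: sheaf to geometric}, \ref{proposition: naturality}, note that the composites are identity up to the involution $(-)^\vee$, and conclude.
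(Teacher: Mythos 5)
Your proposal is correct and follows essentially the same route as the paper, which derives the proposition in one line from Lemma \ref{lemma: geo to locally free} and Propositions \ref{proposition: sheaf to geometric} and \ref{proposition: naturality}. You are in fact more careful than the paper's terse derivation in making the duality twist explicit — the sheaf of sections of $\mathbf{V}(\mathcal{F})$ is $\mathcal{F}^\vee$, not $\mathcal{F}$ — and in noting that the involution $(-)^\vee$ must be inserted for the two assignments to be honest mutual inverses on isomorphism classes.
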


From the above proposition, over semirings, we may identify geometric vector bundles and locally free sheaves. In the following, we provide a description of a locally free sheaf as a functor of points. This perspective will be used in later sections. We start with the following lemma. 

\begin{lem}
Let $f: X\rightarrow Y$ be a morphism of schemes {over $\mathbb{N}$}.  Let $\pi: E\rightarrow Y$ be a geometric vector bundle.  Then the pullback $p: E \times_Y X \rightarrow X$ of $E$ along $f$ is a geometric vector bundle.
\end{lem}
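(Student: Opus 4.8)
The statement is local on $X$, so the plan is to reduce to the case where $X$ and $Y$ are affine and $E$ is trivial, and then exhibit the required local trivializations of $E\times_Y X$ directly. Concretely, let $\{U_i\}$ be the open cover of $Y$ and $\psi_i:\pi^{-1}(U_i)\xrightarrow{\sim}\mathbb{A}^n_{U_i}$ the trivializations witnessing that $\pi:E\to Y$ is a geometric vector bundle of some rank $n$. Set $V_i:=f^{-1}(U_i)$; these form an open cover of $X$. The first step is to check that the base change $p:E\times_Y X\to X$ restricts over each $V_i$ to the pullback of $\pi^{-1}(U_i)\to U_i$ along $f|_{V_i}:V_i\to U_i$, which follows from the fact that fiber products compose: $(E\times_Y X)\times_X V_i \simeq E\times_Y V_i \simeq (\pi^{-1}(U_i))\times_{U_i} V_i$, since $V_i=f^{-1}(U_i)=U_i\times_Y X$ as a scheme over $U_i$.

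\textbf{Producing the trivializations.} The second step is to transport $\psi_i$ through the base change. Pulling back $\psi_i:\pi^{-1}(U_i)\xrightarrow{\sim}\mathbb{A}^n_{U_i}=U_i\times_{\mathbb{N}}\mathbb{A}^n_{\mathbb{N}}$ along $f|_{V_i}$ yields an isomorphism
\[
\varphi_i:\ p^{-1}(V_i)=\pi^{-1}(U_i)\times_{U_i}V_i\ \xrightarrow{\ \sim\ }\ \mathbb{A}^n_{U_i}\times_{U_i}V_i\ \simeq\ V_i\times_{\mathbb{N}}\mathbb{A}^n_{\mathbb{N}}\ =\ \mathbb{A}^n_{V_i},
\]
where the middle identification again uses associativity of fiber products (and $\mathbb{A}^n_{U_i}=U_i\times_{\mathbb{N}}\mathbb{A}^n_{\mathbb{N}}$). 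So $(E\times_Y X,\,p,\,\{V_i\},\,\{\varphi_i\})$ is a candidate geometric vector bundle of rank $n$ over $X$. The third step is to verify the cocycle (linearity) condition from Definition~\ref{definition: geometric vector bundle}: on an affine open $\Spec B\subseteq V_i\cap V_j$, we need $\varphi_j\circ\varphi_i^{-1}$ to come from a linear automorphism of $B[x_1,\dots,x_n]$. Choosing $\Spec B$ inside $f^{-1}(\Spec A)$ for an affine $\Spec A\subseteq U_i\cap U_j$, the transition $\psi_j\circ\psi_i^{-1}$ is given by a linear automorphism $\theta$ of $A[x_1,\dots,x_n]$, i.e.\ by a matrix in $\mathrm{GL}_n(A)$; its pullback under $A\to B$ is the corresponding matrix in $\mathrm{GL}_n(B)$, which defines the linear automorphism $B[x_1,\dots,x_n]\to B[x_1,\dots,x_n]$ inducing $\varphi_j\circ\varphi_i^{-1}$. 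This is exactly the compatibility of base change with the monoid ring / polynomial ring construction used repeatedly in this section.

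\textbf{Main obstacle.} There is no deep difficulty here; the content is entirely the bookkeeping of iterated fiber products and the observation that all constructions involved ($\mathbb{A}^n_{(-)}$, localization, $\mathrm{Sym}$, and hence the transition data) are stable under base change of semirings. The one point requiring a little care is the identification $\mathbb{A}^n_{U_i}\times_{U_i}V_i\simeq \mathbb{A}^n_{V_i}$ and, more generally, checking that the affine-local linearity condition is preserved: one must pick the affine opens compatibly (first choose $\Spec A\subseteq U_i\cap U_j$, then $\Spec B\subseteq f^{-1}(\Spec A)$) so that the transition matrix over $B$ is literally the image of the transition matrix over $A$ under the semiring map $A\to B$. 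Since $\mathrm{GL}_n$ is functorial, a matrix with entries in a semiring $A$ mapping to an invertible matrix over $B$ still defines a linear automorphism of $B[x_1,\dots,x_n]$, so the cocycle condition transports without obstruction. This completes the verification that $p:E\times_Y X\to X$ is a geometric vector bundle.
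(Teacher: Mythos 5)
Your proof is correct and is precisely the standard base-change argument that the paper invokes without writing out (its proof of this lemma reads only ``follows mutatis mutandis as the one for schemes over rings''): restrict to $V_i=f^{-1}(U_i)$, pull back the trivializations via associativity of fiber products, and observe that the transition matrices over $A$ map to transition matrices over $B$. The one step you compress --- passing from linearity of $\varphi_j\circ\varphi_i^{-1}$ on affine opens of the special form $\Spec B\subseteq f^{-1}(\Spec A)$ to linearity on an arbitrary affine open of $V_i\cap V_j$ --- is the routine local-to-global check (linearity of an automorphism of $B[x_1,\dots,x_n]$ can be verified on a cover because the graded pieces of $\mathrm{Sym}$ are sheaves), the same sort of refinement issue the paper deals with in Lemma~\ref{lemma: covering lemma}, so nothing essential is missing.
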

\begin{proof}
The proof follows mutatis mutandis as the one for schemes over rings.
\end{proof}

Let $Y$ be a scheme over a semiring $R$ and $\pi:E \to Y$ be a geometric vector bundle. Let $A$ be an $R$-algebra and $Y(A)=\Hom_{R}(\Spec A, Y)$ and $E(A)=\Hom_{R}(\Spec A, E)$. For each $y:\Spec A \to Y$ of $Y(A)$, we let $E_y$ be the pullback of the following diagram (taken in the category of sets):
\[
\begin{tikzcd}
E_y \arrow[dashed,r] \arrow[dashed,d] & E(A)\arrow[d,"\tilde{\pi}"] \\
\{y\} \arrow[hookrightarrow,r] & Y(A)
\end{tikzcd}
\]
where $\tilde{\pi}(x)=\pi\circ x$.

For an $R$-algebra morphism $\phi:A \to B$ and the induced map $\phi^*:\Spec B \to \Spec A$, we have
\[
\phi_*:E(A) \to E(B), \quad x \mapsto x \circ \phi^*. 
\]

\begin{pro}\label{proposition: Functor of points of geometric vector bundle via pullbacks}
With the same notation as above, the functor of points $E(A)$ has the following properties:
\begin{enumerate}
\item 
The pullback $E_y$ is in canonical bijection with the set of global sections of the pullback bundle $p:E \times_Y \Spec A \to \Spec A$, where $\Spec A$ is viewed as a scheme over $Y$ via $y$. 
\item 
For an $R$-algebra morphism $\phi: A \rightarrow B$ and $y \in Y(A)$, the restriction of $\phi_*:E(A) \to E(B)$ to $E_y$ is the map 
\[
\phi_*: E_y \rightarrow E_{x}, \quad \textrm{where } x=y\circ \phi^*,
\]
given by sending a section $s\in E_y$ (which may be viewed as a map $\Spec A\rightarrow E \times_Y \Spec A$) to the pullback map $\Spec B\rightarrow E \times_Y \Spec B$ viewed as an element of the fiber $E_{x}$.
\end{enumerate}  
\end{pro}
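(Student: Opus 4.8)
The plan is to unwind the definition of the fiber-product $E\times_Y\Spec A$ at the level of functors of points and check that the claimed bijections are exactly the universal property of the pullback, with no new input needed beyond what is already available for schemes over $\mathbb{N}$.

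For part (1), I would first recall that a global section of $p\colon E\times_Y\Spec A\to\Spec A$ is by definition a morphism $s\colon\Spec A\to E\times_Y\Spec A$ over $\Spec A$, i.e.\ with $p\circ s=\mathrm{id}_{\Spec A}$. By the universal property of the fiber product (valid in the category of schemes over $\mathbb{N}$, hence for semiring schemes), such an $s$ is the same datum as a pair $(e,\mathrm{id}_{\Spec A})$ where $e\colon\Spec A\to E$ is a morphism satisfying $\pi\circ e = y\circ\mathrm{id}_{\Spec A}=y$. In other words, the set of global sections of $p$ is canonically identified with $\{e\in E(A)\mid \pi\circ e = y\} = \{e\in E(A)\mid\tilde\pi(e)=y\}$, which is precisely the defining pullback square for $E_y$ (a pullback in $\mathbf{Set}$ of a point along $\tilde\pi$ is exactly this fiber). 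This gives the canonical bijection. I would stress that functoriality of $\Hom_{R}(\Spec A,-)$ makes this identification natural, so no compatibility needs separate verification.

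For part (2), fix $\phi\colon A\to B$ with induced $\phi^*\colon\Spec B\to\Spec A$, and $y\in Y(A)$; set $x=y\circ\phi^*\in Y(B)$. I would first check that $\phi_*\colon E(A)\to E(B)$, $e\mapsto e\circ\phi^*$, restricts to a map $E_y\to E_x$: if $\pi\circ e=y$ then $\pi\circ(e\circ\phi^*)=(\pi\circ e)\circ\phi^* = y\circ\phi^* = x$, so $\phi_*(e)\in E_x$. Then, transporting along the bijection of part (1): a section $s\colon\Spec A\to E\times_Y\Spec A$ corresponds to $e=(\text{first component of }s)\colon\Spec A\to E$, and I want to identify what section of $E\times_Y\Spec B\to\Spec B$ corresponds to $\phi_*(e)=e\circ\phi^*$. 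Using the previous lemma (pullback of a geometric vector bundle along a morphism is a geometric vector bundle) together with the transitivity of fiber products, $E\times_Y\Spec B\cong(E\times_Y\Spec A)\times_{\Spec A}\Spec B$ over $\Spec B$, where $\Spec B\to\Spec A$ is $\phi^*$; under this identification the section attached to $e\circ\phi^*$ is exactly $s\times_{\Spec A}\Spec B$, the base change of $s$ along $\phi^*$, i.e.\ the pullback map $\Spec B\to E\times_Y\Spec B$. This is the assertion in the statement, so I would just record the diagram chase.

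The only mildly delicate point — and the one I would treat with care — is the transitivity isomorphism $E\times_Y\Spec B\cong(E\times_Y\Spec A)\times_{\Spec A}\Spec B$ and the claim that it sends the base change $s\times_{\Spec A}\Spec B$ of a section $s$ to the section corresponding to $e\circ\phi^*$. This is a standard fact about fiber products in any category with fiber products (schemes over $\mathbb{N}$ have all fiber products), so it requires no semiring-specific argument; I would either cite the pasting law for pullback squares or spell out the two-line universal-property check. Everything else is formal, so I do not anticipate a genuine obstacle.
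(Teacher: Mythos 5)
Your proposal is correct and follows essentially the same route as the paper: part (1) is the same unwinding of the universal property of $E\times_Y\Spec A$ identifying sections of $p$ with morphisms $e\colon\Spec A\to E$ satisfying $\pi\circ e=y$, and part (2) is the same base-change argument, where the paper's diagram of pasted pullback squares is exactly your transitivity isomorphism $E\times_Y\Spec B\cong(E\times_Y\Spec A)\times_{\Spec A}\Spec B$ made into a picture. No gaps.
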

\begin{proof}
Let $y\in Y(A)$.  Elements of $E_y$ are morphisms $v: \Spec A\rightarrow E$ such that $\pi\circ v = y$.  These are equivalent to pairs of morphisms $v: \Spec A\rightarrow E$ and $u: \Spec A\rightarrow \Spec A$ such that $u$ is the identity map and $\pi\circ v = y \circ \mathrm{id}$.  Without the condition that $u$ is the identity, such pairs correspond to morphisms $\eta: \Spec A \rightarrow E\times_Y \Spec A$, so with this condition included, such pairs correspond to morphisms $\eta: \Spec A \rightarrow E\times_Y \Spec A$ whose composition with the projection $p: E\times_Y \Spec A \rightarrow \Spec A$ is the identity.  These are the same as global sections of the sheaf associated with the pullback bundle $p: E\times_Y \Spec A \rightarrow \Spec A$.  This establishes the first claim.

Let $v\in E_y$.  We identify $v$ with the map $s: \Spec A\rightarrow E \times_Y \Spec A$ obtained by applying the universal property of the pullback to $v$ and $\mathrm{id}$. By applying $- \times_{\Spec A} \Spec B$ we obtain a map $t: \Spec B\rightarrow E \times_Y \Spec B$. So, we have the following diagram:
\[
\begin{tikzcd}
\Spec B \arrow[r,"\phi^*"] \arrow[dd,dashed, bend right=70,swap,"\text{id}"] \arrow[d,swap,"t"] & \Spec A \arrow[d,"s"] \arrow[dr,"v"] &  \\
E\times_Y \Spec B \arrow[r,"p_1"] \arrow[d,swap,"p_2"] & E\times_Y\Spec A \arrow[d,"p"]  \arrow[r]& E \arrow[d,"\pi"]  \\
\Spec B \arrow[r,"\phi^*"] \arrow[rr,bend right=15,swap,"x"] & \Spec A \arrow[r,"y"] & Y
\end{tikzcd}
\]
Since all squares are pullback squares, one can easily see that $t$ is the global section corresponding to the element $\phi_*(v)=v\circ \phi^*$. 
\end{proof}

Proposition \ref{proposition: Functor of points of geometric vector bundle via pullbacks} motivates our definition of the functor of points of a locally free sheaf, which will be given shortly.  But first we need to define the pullback of a global section of a sheaf.

\begin{rmk}\label{remark: pullback global section}
Let $f: Y\rightarrow X$ be a morphism of schemes over $\mathbb{N}$ and $\mathcal{F}$ be an $\mathcal{O}_X$-module.  Then any global section $s\in \Gamma(X, \mathcal{F})$ induces a global section of $f^*\mathcal{F}$ by functoriality of pullbacks: Specifying the global section $s$ is equivalent to specifying a morphism $\mathcal{O}_X\rightarrow \mathcal{F}$, which may be pulled back to a morphism $\mathcal{O}_Y = f^*\mathcal{O}_X \rightarrow f^*\mathcal{F}$.
\end{rmk}

Let $\mathcal{F}$ be a locally free sheaf on a scheme $X$ over a semiring $R$. For an $R$-algebra $A$ and a morphism $x: \Spec A\rightarrow X$, the fiber $\mathcal{F}_x$ is defined as the global sections of the pullback sheaf $x^*\mathcal{F}$, i.e., 
\[
\mathcal{F}_x:=(x^*\mathcal{F})(\Spec A)
\]
The set $\mathcal{F}(A)$ of $A$-valued points is defined as the union of $\mathcal{F}_x$ over all $x\in X(A)$. 

Given an $R$-algebra morphism $f: A\rightarrow B$ and an element $v\in \mathcal{F}_x$ of some fiber, let $f^*(v) \in \mathcal{F}_{f^*(x)}$ as the pullback of the global section $v$ along $f^*:\Spec B \to \Spec A$ as explained in Remark \ref{remark: pullback global section}. Now, the functor of points of $\mathcal{F}$ is defined as follows.

\begin{mydef}\label{definition: functor of points locally free sheaf}
Let $\mathcal{F}$ be a locally free sheaf on a scheme $X$ over a semiring $R$. The functor of points of $\mathcal{F}$ is the functor sending an $R$-algebra $A$ to $\mathcal{F}(A)$ and an $R$-algebra morphism $f:A\rightarrow B$ to the morphism $\mathcal{F}(A)\rightarrow\mathcal{F}(B)$ given by $v\mapsto f^*(v)$.
\end{mydef}

We show that the functor of points of a locally free sheaf is equal to the functor of points of the corresponding geometric vector bundle.  The only difficulty is that the definition of the functor of points of a locally free sheaf involves the pullback sheaf rather than the sheaf corresponding to the pullback of the geometric vector bundle.

\begin{lem}\label{lemma: equivalence}
Let $\pi: E\rightarrow X$ be a geometric vector bundle, and let $f:Y\rightarrow X$ be a morphism of schemes.  Let $\mathcal{F}$ be the sheaf of sections of $E$.  Then the sheaf $\mathcal{G}$ of sections of $E\times_X Y\rightarrow Y$ is canonically isomorphic to $f^*\mathcal{F}$.
\end{lem}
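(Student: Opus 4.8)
The plan is to produce a canonical $\mathcal{O}_Y$-linear morphism $\Phi\colon f^*\mathcal{F}\to\mathcal{G}$ defined without any choices, and then to check that it is an isomorphism after restricting $Y$ to the pullback of a trivializing cover of $E$ on $X$. Since ``being an isomorphism'' is a local condition, this suffices.

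First I would unwind the sheaf $\mathcal{G}$. Writing $p\colon E\times_X Y\to Y$ and $q\colon E\times_X Y\to E$ for the two projections, the universal property of the fiber product identifies, for an open $V\subseteq Y$, the set $\mathcal{G}(V)$ of sections of $p$ over $V$ with the set of morphisms $g\colon V\to E$ satisfying $\pi\circ g=f|_V$; the bijection sends $s$ to $q\circ s$. By the lemma immediately preceding this one, $E\times_X Y\to Y$ is again a geometric vector bundle, so $\mathcal{G}$ is a locally free $\mathcal{O}_Y$-module, with addition and scalar multiplication of sections carried out fiberwise. Next I would build a morphism $\mathcal{F}\to f_*\mathcal{G}$ of $\mathcal{O}_X$-modules: for an open $U\subseteq X$, a section $t\in\mathcal{F}(U)$ is a morphism $t\colon U\to E$ with $\pi\circ t=\mathrm{id}_U$, and I send it to $t\circ f|_{f^{-1}(U)}\colon f^{-1}(U)\to E$, which satisfies $\pi\circ(t\circ f|_{f^{-1}(U)})=f|_{f^{-1}(U)}$ and hence lies in $\mathcal{G}(f^{-1}(U))=(f_*\mathcal{G})(U)$. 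This assignment is compatible with restriction and is linear over the structure map $\mathcal{O}_X\to f_*\mathcal{O}_Y$, because precomposition with $f$ respects the fiberwise linear structures. Applying the pullback--pushforward adjunction (Lemma \ref{lemma: pullback pushforward adjunction}) to the resulting map $\mathcal{F}\to f_*\mathcal{G}$ then yields the desired $\mathcal{O}_Y$-linear morphism $\Phi\colon f^*\mathcal{F}\to\mathcal{G}$, which depends on no choices and is natural in $f$ and $E$.

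To see $\Phi$ is an isomorphism, I would restrict to a convenient cover. Choose an open cover $\{U_i\}$ of $X$ with trivializations $E|_{U_i}\cong\mathbb{A}^n_{U_i}$; then the sheaf of sections satisfies $\mathcal{F}|_{U_i}\cong\mathcal{O}_{U_i}^n$ (a section of the trivial bundle over an open set is an $n$-tuple of functions). Set $V_i:=f^{-1}(U_i)$, so $\{V_i\}$ covers $Y$. Base changing the trivialization along $f$ and using associativity of fiber products gives $(E\times_X Y)|_{V_i}\cong E|_{U_i}\times_{U_i}V_i\cong\mathbb{A}^n_{V_i}$, so $\mathcal{G}|_{V_i}\cong\mathcal{O}_{V_i}^n$. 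On the other side, $f^*\mathcal{F}|_{V_i}\cong(f|_{V_i})^*(\mathcal{O}_{U_i}^n)\cong\mathcal{O}_{V_i}^n$, since $f^*\mathcal{O}_X=\mathcal{O}_Y$ and pullback commutes with finite direct sums; over semirings these facts hold verbatim as for rings, since they involve only free modules and their localizations. Tracing the construction of $\Phi$ through these identifications, it becomes the identity map $\mathcal{O}_{V_i}^n\to\mathcal{O}_{V_i}^n$ on each $V_i$, and since the $V_i$ cover $Y$, $\Phi$ is an isomorphism; the canonicity of $\Phi$ noted above upgrades this to the claimed canonical isomorphism.

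The only step with any real content is the last one: matching the geometric pullback $E\times_X Y$ with the sheaf-theoretic pullback $f^*\mathcal{F}$ through $\Phi$, i.e.\ checking that the trivializations on both sides are identified compatibly (in particular that the transition cocycle of $E$ over $U_i\cap U_j$ pulls back to the transition cocycles of both $\mathcal{G}$ and $f^*\mathcal{F}$ over $V_i\cap V_j$). This is a direct computation in the local models, and no obstruction specific to the semiring setting arises, because everything in sight is locally free and hence behaves exactly as over rings.
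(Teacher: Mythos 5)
Your proposal is correct and follows essentially the same route as the paper's proof: construct the canonical map $\mathcal{F}\to f_*\mathcal{G}$ by precomposing sections with $f$, obtain $f^*\mathcal{F}\to\mathcal{G}$ via the pullback--pushforward adjunction, and then verify it is an isomorphism locally on a trivializing cover, where both sides become $\mathcal{O}^n$ and the map is the canonical identification. The only difference is presentational: you spell out the identification of $\mathcal{G}(V)$ with morphisms $V\to E$ over $f$ and the cocycle compatibility, which the paper leaves implicit.
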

\begin{proof}
Let $U\subseteq X$ be an open subset. A section of $E$ over $U$ may be pulled back to a section of $E\times_X Y$ over $f^{-1}(U)$.  Since $f_*\mathcal{G}$ sends $U$ to the sections of $E\times_X Y$ over $f^{-1}(U)$, we have obtained a homomorphism $\mathcal{F}\rightarrow f_*\mathcal{G}$.  By adjunction we obtain a homomorphism $f^*\mathcal{F}\rightarrow \mathcal{G}$.  

To check this is an isomorphism, we may work locally (or even on stalks), and so we may assume $E$ is the trivial bundle.  Choose a basis $v_1,\ldots, v_n$ of sections of $E$, and equip $E\times_X Y$ with the pulled back basis $w_1, \ldots, w_n$.  We may now identify $\mathcal{F}$ with $\mathcal{O}_X^n$ and $\mathcal{G}$ with $\mathcal{O}_Y^n$.  The map $\mathcal{F}\rightarrow f_*\mathcal{G}$ we constructed above is the map $\mathcal{O}_X^n \rightarrow f_*\mathcal{O}_Y^n$ sending one basis to the other (i.e. the direct sum of many copies of the canonical map $\mathcal{O}_X\rightarrow f_*\mathcal{O}_Y$.  The map $f^*\mathcal{F}\rightarrow \mathcal{G}$ is then the map $f^*\mathcal{O}_X^n\rightarrow \mathcal{O}_Y^n$ obtained as the direct sum of copies of the canonical isomorphism $f^*\mathcal{O}_X\rightarrow \mathcal{O}_Y$. 
\end{proof}

Note that Proposition \ref{proposition: Functor of points of geometric vector bundle via pullbacks} looks almost identical to the definition of the functor of points of a locally free sheaf in Definition \ref{definition: functor of points locally free sheaf}; the only major difference is that it involves pullbacks of schemes instead of pullbacks of sheaves, which is shown to be equivalent by Lemma \ref{lemma: equivalence}. To be precise, we have following.

\begin{cor}\label{corollary: equivalence of functor of points}
The functor of points of a geometric vector bundle is the same as that of the corresponding locally free sheaf.
\end{cor}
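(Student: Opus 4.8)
The plan is to assemble Corollary~\ref{corollary: equivalence of functor of points} directly from the pieces established earlier, with no new computation needed. Fix a scheme $X$ over a semiring $R$ and let $\mathcal{F}$ be a locally free sheaf of finite rank on $X$, with $E = \mathbf{V}(\mathcal{F})$ the associated geometric vector bundle and $\pi: E \to X$ the projection. By Proposition~\ref{proposition: naturality} the sheaf of sections of $E$ recovers $\mathcal{F}^\vee$, and after replacing $\mathcal{F}$ by $\mathcal{F}^\vee$ (or equivalently starting from the locally free sheaf of sections of a given geometric vector bundle), we may assume throughout that $\mathcal{F}$ \emph{is} the sheaf of sections of $E$. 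The goal is then to exhibit, for each $R$-algebra $A$, a bijection $\mathcal{F}(A) \cong E(A)$ that is natural in $A$.

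First I would unwind both sides fiberwise over a point $x \in X(A)$. On the geometric side, Proposition~\ref{proposition: Functor of points of geometric vector bundle via pullbacks}(1) identifies the fiber $E_x$ with the set of global sections of the pullback bundle $E \times_X \Spec A \to \Spec A$, i.e.\ with $\Gamma(\Spec A, \mathcal{G})$ where $\mathcal{G}$ is the sheaf of sections of $E \times_X \Spec A$. On the sheaf side, the fiber $\mathcal{F}_x$ is by Definition~\ref{definition: functor of points locally free sheaf} equal to $(x^*\mathcal{F})(\Spec A) = \Gamma(\Spec A, x^*\mathcal{F})$. Now Lemma~\ref{lemma: equivalence}, applied to the morphism $x: \Spec A \to X$, gives a canonical isomorphism $x^*\mathcal{F} \cong \mathcal{G}$ of $\mathcal{O}_{\Spec A}$-modules, hence a canonical bijection $\mathcal{F}_x \cong E_x$. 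Taking the disjoint union over all $x \in X(A)$ yields the desired bijection $\mathcal{F}(A) = \bigsqcup_{x} \mathcal{F}_x \cong \bigsqcup_x E_x = E(A)$.

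It remains to check naturality in $A$: for an $R$-algebra morphism $f: A \to B$ the square relating $\mathcal{F}(A) \to \mathcal{F}(B)$ (given by $v \mapsto f^*(v)$, the pullback of global sections along $f^*: \Spec B \to \Spec A$ as in Remark~\ref{remark: pullback global section}) to $E(A) \to E(B)$ (given by $v \mapsto v \circ f^*$) commutes. This is exactly the content of Proposition~\ref{proposition: Functor of points of geometric vector bundle via pullbacks}(2) combined with the functoriality of the isomorphism in Lemma~\ref{lemma: equivalence}: part~(2) describes the transition map on the geometric side in terms of pullback of sections of pullback bundles, and Lemma~\ref{lemma: equivalence} (being a canonical, hence natural, isomorphism) intertwines that description with the pullback-of-sheaf-sections description on the $\mathcal{F}$ side. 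The only mild subtlety is bookkeeping the various pullback squares and making sure the canonical isomorphism $x^*\mathcal{F} \cong \mathcal{G}$ is compatible with base change from $A$ to $B$; but since every isomorphism produced by Lemma~\ref{lemma: equivalence} arises by adjunction from the functorial pullback of sections, this compatibility is automatic and requires only chasing the commuting diagram in the proof of Proposition~\ref{proposition: Functor of points of geometric vector bundle via pullbacks}. I expect this diagram-compatibility step to be the only real (and still routine) obstacle; the fiberwise bijection itself is immediate from Lemma~\ref{lemma: equivalence}.
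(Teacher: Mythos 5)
Your proof is correct and follows essentially the same route as the paper's: identify each fiber $E_x$ with the sections of the pullback bundle via Proposition~\ref{proposition: Functor of points of geometric vector bundle via pullbacks}, use Lemma~\ref{lemma: equivalence} to match this with $\Gamma(\Spec A, x^*\mathcal{F})=\mathcal{F}_x$, take the disjoint union over $x\in X(A)$, and handle morphisms by the same comparison. The only difference is that you spell out the naturality check and the dualization bookkeeping slightly more explicitly than the paper does.
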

\begin{proof}
First, we consider what the functor of points looks like on objects via the first statement of Proposition {\ref{proposition: Functor of points of geometric vector bundle via pullbacks}}. 

Let $R$ be a semiring and $A$ be an $R$-algebra. For any geometric vector bundle $E$ on a scheme $Y$ over $R$, the set $E(A)$ is the disjoint union of $E_y$ over all $ y \in Y(A)$. Also, for each $y \in Y(A)$, Proposition \ref{proposition: Functor of points of geometric vector bundle via pullbacks} says that $E_y$ is the set of global sections of the pullback of $E$ along $y$. 

Moreover, from Lemma \ref{lemma: equivalence}, we know that the sections of this pullback bundle are just global sections of the pullback sheaf. It follows that $E_y$ is the set of global sections of $y^*\mathcal{F}$ where $\mathcal{F}$ is the sheaf of sections of $E$.  By the definition of the functor of points of a locally free sheaf, we can write this as $E_y = \mathcal{F}_y$.  Then taking the union over all $y \in Y(A)$ gives 
\[
E(A) = \mathcal{F}(A).
\]
The case of morphisms follow by a similar argument. 
\end{proof}

Because the functor of points is defined in terms of the pullback sheaf, it will be useful to understand the pullback of a tensor product.  For this, the following lemma will be helpful.
\begin{lem}Let $f:X\rightarrow Y$ be a morphism of schemes over $\mathbb{N}$. Let $U\subseteq Y$ be an open subset.
\begin{enumerate}
    \item 
    Let $\mathcal{F}$ be an $\mathcal{O}_Y$-module and $\mathcal{G}$ be a module over $\mathcal{O}_Y\mid_U$.  Define $\mathcal{H}$ on $Y$ by $\mathcal{H}(V) = \mathcal{G}(U\cap V)$.  Then $\Hom_{\mathcal{O}_Y\mid_U}(\mathcal{F}\mid_U, \mathcal{G}) \cong \Hom_{\mathcal{O}_Y}(\mathcal{F}, \mathcal{H})$.
    \item 
    Let $\mathcal{F}$ be an $\mathcal{O}_X$-module.  Let $j: f^{-1}(U)\rightarrow X$ be the inclusion.  Then $f_*j_*(\mathcal{F}\mid_{f^{-1}(U)})$ sends an open subset $V\subseteq X$ to $(f_*\mathcal{F})(U\cap V)$.
    \item Let $\mathcal{F}$ be an $\mathcal{O}_Y$-module and $\mathcal{G}$ be an $\mathcal{O}_X$-module. There is a natural isomorphism 
    \begin{equation}\label{eq: sheaf version of pushforward-pullback adjunction}
        \underline{\Hom}_{\mathcal{O}_Y}(\mathcal{F}, f_*\mathcal{G}) \cong f_*\underline{\Hom}_{\mathcal{O}_X}(f^*\mathcal{F}, \mathcal{G}).
    \end{equation}
\end{enumerate}
\end{lem}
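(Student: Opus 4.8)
The three statements are best handled in order, with (1) and (2) serving as bookkeeping lemmas that feed the proof of (3). For (1), I would first observe that the presheaf $\mathcal{H}$ with $\mathcal{H}(V)=\mathcal{G}(U\cap V)$ is precisely $i_*\mathcal{G}$ for the open immersion $i\colon U\hookrightarrow Y$, and that $\mathcal{H}\mid_U=\mathcal{G}$ because $U\cap V=V$ for $V\subseteq U$. The asserted bijection is then the pullback--pushforward adjunction of Lemma \ref{lemma: pullback pushforward adjunction} applied to $i$, using $i^*\mathcal{F}=\mathcal{F}\mid_U$. Equivalently one writes the bijection down by hand: a morphism $\varphi\colon\mathcal{F}\mid_U\to\mathcal{G}$ is sent to the morphism $\mathcal{F}\to\mathcal{H}$ whose component over an open $V$ is the composite $\mathcal{F}(V)\to\mathcal{F}(U\cap V)\xrightarrow{\varphi_{U\cap V}}\mathcal{G}(U\cap V)=\mathcal{H}(V)$, and a morphism $\psi\colon\mathcal{F}\to\mathcal{H}$ is sent to $\psi\mid_U\colon\mathcal{F}\mid_U\to\mathcal{H}\mid_U=\mathcal{G}$; compatibility with restriction maps shows these are mutually inverse, and since the argument never uses subtraction it is valid over a semiring.

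Statement (2) is a direct unwinding of the definitions of $f_*$, $j_*$, and restriction of a sheaf: for an open $V\subseteq Y$,
\[
\bigl(f_*j_*(\mathcal{F}\mid_{f^{-1}(U)})\bigr)(V)=\mathcal{F}\bigl(f^{-1}(V)\cap f^{-1}(U)\bigr)=\mathcal{F}\bigl(f^{-1}(U\cap V)\bigr)=(f_*\mathcal{F})(U\cap V),
\]
and the induced restriction maps on the two sides agree on the nose. As a consequence, taking $U=V$ and restricting the resulting sheaf to $V$ gives $(f_*\mathcal{G})\mid_V\cong g_*(\mathcal{G}\mid_{f^{-1}(V)})$, where $g\colon f^{-1}(V)\to V$ is the restriction of $f$; this is the form in which (2) enters (3).

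For (3), the plan is to compare the two sheaves on $Y$ section by section and then check that the comparison is natural in the open set. Fix an open $V\subseteq Y$ and set $g\colon f^{-1}(V)\to V$. Unwinding $\underline{\Hom}$ and $f_*$, the $V$-sections of the right-hand side are the $\mathcal{O}_X\mid_{f^{-1}(V)}$-module maps $(f^*\mathcal{F})\mid_{f^{-1}(V)}\to\mathcal{G}\mid_{f^{-1}(V)}$, while those of the left-hand side are the $\mathcal{O}_Y\mid_V$-module maps $\mathcal{F}\mid_V\to(f_*\mathcal{G})\mid_V$. I would then invoke three facts: that $f^*$ commutes with restriction to an open subset, i.e. $(f^*\mathcal{F})\mid_{f^{-1}(V)}\cong g^*(\mathcal{F}\mid_V)$ (restriction commutes with sheafification and with the tensor-product presheaf defining the pullback); statement (2) in the form recorded above, to rewrite $(f_*\mathcal{G})\mid_V$ as $g_*(\mathcal{G}\mid_{f^{-1}(V)})$; and the module-level adjunction of Lemma \ref{lemma: pullback pushforward adjunction} applied to $g$, which yields $\Hom(g^*(\mathcal{F}\mid_V),\mathcal{G}\mid_{f^{-1}(V)})\cong\Hom(\mathcal{F}\mid_V,g_*(\mathcal{G}\mid_{f^{-1}(V)}))$, statement (1) being used along the way to pass between Hom-sets of restricted sheaves and Hom-sets over the ambient spaces as needed. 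Composing these identifications produces a bijection between the $V$-sections of the two sides; finally I would verify that it commutes with the restriction maps attached to an inclusion $V'\subseteq V$ of opens of $Y$, which amounts to chasing the naturality of the adjunction through the identifications above, and this promotes the family of local bijections to an isomorphism of sheaves on $Y$.

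The step I expect to be the actual work is this last naturality check in (3): because $f^*$ is built from a sheafification, one must confirm that the identification $(f^*\mathcal{F})\mid_{f^{-1}(V)}\cong g^*(\mathcal{F}\mid_V)$ and the rewriting of $(f_*\mathcal{G})\mid_V$ are natural in $V$, not merely available for each fixed $V$, so that the section-wise bijections genuinely glue; this is the kind of diagram chase that is conceptually routine but easy to get wrong. By contrast, (1) and (2) are essentially immediate once the definitions are spelled out and demand only notational care.
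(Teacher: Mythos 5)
Your proof is correct and follows exactly the standard ring-theoretic argument that the paper invokes with ``mutatis mutandis'': (1) is the pullback--pushforward adjunction for the open immersion $i\colon U\hookrightarrow Y$ (or the explicit mutually inverse maps you write down), (2) is a direct unwinding of definitions, and (3) is obtained by localizing to each open $V\subseteq Y$, using $(f^*\mathcal{F})\mid_{f^{-1}(V)}\cong g^*(\mathcal{F}\mid_V)$ and $(f_*\mathcal{G})\mid_V\cong g_*(\mathcal{G}\mid_{f^{-1}(V)})$ and then applying the module-level adjunction of Lemma \ref{lemma: pullback pushforward adjunction} to $g$, with the naturality-in-$V$ check you flag being the only real work. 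The one caveat is that you (rightly) read $\underline{\Hom}_{\mathcal{O}_Y}(\mathcal{F},\mathcal{G})(V)$ as $\Hom_{\mathcal{O}_Y\mid_V}(\mathcal{F}\mid_V,\mathcal{G}\mid_V)$ rather than the paper's literal sections-wise $\Hom_{\mathcal{O}_Y(V)}(\mathcal{F}(V),\mathcal{G}(V))$; that is the convention under which (3) is the standard true statement, and the two agree in the locally free setting the paper actually uses.
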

\begin{proof}
The proof follows mutatis mutandis as the one for schemes over rings.
\end{proof}

\begin{lem}\label{lemma: pullback of tensor}
Let $f:X\rightarrow Y$ be a morphism of schemes over $\mathbb{N}$.  Let $\mathcal{F}, \mathcal{G}$ be locally free sheaves on $Y$.  Then there is a natural isomorphism $f^*(\mathcal{F})\otimes_{\mathcal{O}_X} f^*(\mathcal{G})\rightarrow f^*(\mathcal{F}\otimes_{\mathcal{O}_Y} \mathcal{G})$.
\end{lem}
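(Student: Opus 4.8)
The plan is to produce a natural morphism
\[
\theta\colon f^*(\mathcal{F})\otimes_{\mathcal{O}_X} f^*(\mathcal{G})\longrightarrow f^*(\mathcal{F}\otimes_{\mathcal{O}_Y}\mathcal{G})
\]
purely formally from the adjunctions established earlier in this section, and then to check that it is an isomorphism by restricting to an open cover of $Y$ on which both $\mathcal{F}$ and $\mathcal{G}$ are free of finite rank, where the statement reduces to an elementary identity of free modules over semirings.

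To construct $\theta$, it suffices, by the Hom--tensor adjunction, to give an $\mathcal{O}_X$-linear map $f^*\mathcal{F}\to\underline{\Hom}_{\mathcal{O}_X}(f^*\mathcal{G},\,f^*(\mathcal{F}\otimes_{\mathcal{O}_Y}\mathcal{G}))$. By the pullback--pushforward adjunction (Lemma \ref{lemma: pullback pushforward adjunction}) such a map is the same as an $\mathcal{O}_Y$-linear map
\[
\mathcal{F}\longrightarrow f_*\underline{\Hom}_{\mathcal{O}_X}(f^*\mathcal{G},\,f^*(\mathcal{F}\otimes_{\mathcal{O}_Y}\mathcal{G}))\;\cong\;\underline{\Hom}_{\mathcal{O}_Y}(\mathcal{G},\,f_*f^*(\mathcal{F}\otimes_{\mathcal{O}_Y}\mathcal{G})),
\]
where the last isomorphism is \eqref{eq: sheaf version of pushforward-pullback adjunction}. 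For this I would take the composite of the canonical map $\mathcal{F}\to\underline{\Hom}_{\mathcal{O}_Y}(\mathcal{G},\,\mathcal{F}\otimes_{\mathcal{O}_Y}\mathcal{G})$ with the map induced by the unit $\mathcal{F}\otimes_{\mathcal{O}_Y}\mathcal{G}\to f_*f^*(\mathcal{F}\otimes_{\mathcal{O}_Y}\mathcal{G})$ of the pullback--pushforward adjunction. Unwinding the adjunctions, $\theta$ sends the pullback of a local section $s\otimes t$ of $\mathcal{F}\otimes_{\mathcal{O}_Y}\mathcal{G}$ to itself, and naturality in $\mathcal{F}$ and $\mathcal{G}$ is automatic because every arrow used is natural. (Alternatively, and more directly, one may build $\theta$ at the level of inverse image sheaves, since $f^{-1}$ commutes with tensor products of sheaves stalkwise and $f^*(-)=f^{-1}(-)\otimes_{f^{-1}\mathcal{O}_Y}\mathcal{O}_X$; then $\theta$ is the base-change isomorphism $(M\otimes_{f^{-1}\mathcal{O}_Y}\mathcal{O}_X)\otimes_{\mathcal{O}_X}(N\otimes_{f^{-1}\mathcal{O}_Y}\mathcal{O}_X)\cong(M\otimes_{f^{-1}\mathcal{O}_Y}N)\otimes_{f^{-1}\mathcal{O}_Y}\mathcal{O}_X$, valid for modules over semirings.)

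To see $\theta$ is an isomorphism, I would argue locally on $Y$: choose affine opens $\Spec B\subseteq Y$ over which $\mathcal{F}$ and $\mathcal{G}$ are free, say of ranks $m$ and $n$, and affine opens $\Spec A\subseteq f^{-1}(\Spec B)$. Since $f^*$ is a left adjoint it commutes with finite direct sums and carries $\mathcal{O}_Y$ to $\mathcal{O}_X$, so $f^*\mathcal{F}$ and $f^*\mathcal{G}$ become free of ranks $m$ and $n$; on the module level $\theta$ is then the standard identification $A^m\otimes_A A^n\cong A^{mn}\cong B^{mn}\otimes_B A$, carrying the evident basis to the evident basis, hence bijective. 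Since $\theta$ was built from natural transformations, the locally defined inverses glue to a global inverse. The only genuine subtlety — really bookkeeping rather than an obstacle — is that $\mathcal{O}_X$-modules over a semiring do not form an abelian category, so one cannot check bijectivity of $\theta$ via kernels and cokernels on stalks for arbitrary sheaves; this is exactly why the reduction to the locally free, finite-rank case is needed, and where the hypotheses are used. Everything else is a formal consequence of the adjunction lemmas already proved.
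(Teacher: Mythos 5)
Your proof is correct, but it takes a somewhat different route from the paper's. The paper never constructs the comparison morphism explicitly and never does a local computation: it shows that the two chains of adjunctions identify $\Hom_{\mathcal{O}_X}(f^*\mathcal{F}\otimes_{\mathcal{O}_X} f^*\mathcal{G}, \mathcal{H})$ and $\Hom_{\mathcal{O}_X}(f^*(\mathcal{F}\otimes_{\mathcal{O}_Y}\mathcal{G}), \mathcal{H})$ naturally in $\mathcal{H}$ (using the same three lemmas you cite, with \eqref{eq: sheaf version of pushforward-pullback adjunction} as the bridge), and then concludes by Yoneda that the two objects are isomorphic. You instead extract the canonical map $f^*s\otimes f^*t\mapsto f^*(s\otimes t)$ from the unit of the adjunction and verify it is an isomorphism by reducing to free modules of finite rank over an affine cover. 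The trade-off: the paper's Yoneda argument is shorter and does not use local freeness at all, so it proves the statement for arbitrary $\mathcal{O}_Y$-modules (to the extent the adjunction lemmas hold in that generality); your argument genuinely uses the locally free hypothesis but has the virtue of exhibiting the isomorphism concretely as the basis-to-basis map, which is what is actually invoked later (e.g.\ in Proposition \ref{proposition: functor of points of tensor product}). One small quibble: your parenthetical that one "cannot check bijectivity on stalks" in the non-abelian setting is not quite the right diagnosis — a morphism of sheaves is an isomorphism iff it is so on stalks regardless of any abelian structure; the honest reason your route needs local freeness is that without it the local comparison map of modules need not be bijective. This does not affect the validity of your proof, since local freeness is part of the hypothesis.
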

\begin{proof}

Let $\mathcal{H}$ be a sheaf on $X$.  Then by the hom-tensor adjunction and the pullback-pushforward adjunction, we obtain
\begin{align}
    &\mathrm{Hom}_{\mathcal{O}_X}(f^*\mathcal{F}\otimes_{\mathcal{O}_X} f^*\mathcal{G}, \mathcal{H}) \cong \mathrm{Hom}_{\mathcal{O}_X}(f^*\mathcal{F}, \underline{\mathrm{Hom}}_{\mathcal{O}_X}(f^*\mathcal{G}, \mathcal{H})) \cong \\
    &\mathrm{Hom}_{\mathcal{O}_Y}(\mathcal{F}, f_*\underline{\mathrm{Hom}}_{\mathcal{O}_X}(f^*\mathcal{G}, \mathcal{H}))
\end{align}
and
\begin{equation}
\mathrm{Hom}_{\mathcal{O}_X}(f^*(\mathcal{F}\otimes_{\mathcal{O}_Y} \mathcal{G}), \mathcal{H}) \cong \mathrm{Hom}_{\mathcal{O}_Y}(\mathcal{F}\otimes_{\mathcal{O}_Y} \mathcal{G}, f_*\mathcal{H}) \cong \mathrm{Hom}_{\mathcal{O}_Y}(\mathcal{F}, \underline{\mathrm{Hom}}_{\mathcal{O}_Y}(\mathcal{G}, f_*\mathcal{H})).
\end{equation}
The result follows by using \eqref{eq: sheaf version of pushforward-pullback adjunction} (with $\mathcal{G}, \mathcal{H}$ in place of $\mathcal{F}, \mathcal{G}$) to compare the right sides of the two equations above.
\end{proof}

\begin{pro}\label{proposition: functor of points of tensor product}Let $X$ be a scheme over a semiring $R$ and $\mathcal{F}, \mathcal{G}$ be locally free sheaves on $X$.  The functor of points of $\mathcal{F}\otimes_{\mathcal{O}_X}\mathcal{G}$ is given by $A\mapsto \bigsqcup_{x\in X(A)}\mathcal{F}_x\otimes_{A}\mathcal{G}_x$.
\end{pro}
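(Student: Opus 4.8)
The plan is to unwind the definitions on both sides and reduce the statement to an affine, purely module-theoretic computation, invoking the compatibility of pullback with tensor products from Lemma \ref{lemma: pullback of tensor}.

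First I would recall that, by Definition \ref{definition: functor of points locally free sheaf}, for an $R$-algebra $A$ one has $(\mathcal{F}\otimes_{\mathcal{O}_X}\mathcal{G})(A) = \bigsqcup_{x\in X(A)} (\mathcal{F}\otimes_{\mathcal{O}_X}\mathcal{G})_x$, where $(\mathcal{F}\otimes_{\mathcal{O}_X}\mathcal{G})_x = \Gamma\bigl(\Spec A,\, x^*(\mathcal{F}\otimes_{\mathcal{O}_X}\mathcal{G})\bigr)$, and similarly $\mathcal{F}_x = \Gamma(\Spec A, x^*\mathcal{F})$ and $\mathcal{G}_x = \Gamma(\Spec A, x^*\mathcal{G})$. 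Since the index set $X(A)$ is literally the same on both sides, it suffices to construct, for each $x\in X(A)$, a bijection $(\mathcal{F}\otimes_{\mathcal{O}_X}\mathcal{G})_x \cong \mathcal{F}_x\otimes_A\mathcal{G}_x$ that is natural in $A$ and compatible with the pullback maps of Definition \ref{definition: functor of points locally free sheaf} and Remark \ref{remark: pullback global section}. Applying Lemma \ref{lemma: pullback of tensor} to $x:\Spec A\to X$ gives a natural isomorphism $x^*(\mathcal{F}\otimes_{\mathcal{O}_X}\mathcal{G}) \cong x^*\mathcal{F}\otimes_{\mathcal{O}_{\Spec A}} x^*\mathcal{G}$, so taking global sections reduces the problem to the affine statement: for locally free sheaves $\mathcal{M}, \mathcal{N}$ of finite rank on $\Spec A$, one has $\Gamma(\Spec A,\, \mathcal{M}\otimes_{\mathcal{O}_{\Spec A}}\mathcal{N}) \cong \Gamma(\Spec A,\mathcal{M})\otimes_A\Gamma(\Spec A,\mathcal{N})$, applied to $\mathcal{M} = x^*\mathcal{F}$ and $\mathcal{N} = x^*\mathcal{G}$.

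To prove this affine statement I would argue as in the classical case. A locally free sheaf of finite rank on $\Spec A$ is quasi-coherent, hence of the form $\widetilde{M}$ for the $A$-module $M$ of its global sections, with $\widetilde{M}(D(f)) = M_f$; this is verified exactly as over rings, using a cover by basic opens $D(f_i)$ on which $\mathcal{M}\cong\mathcal{O}^{m}$ together with the sheaf axiom. One then has $\widetilde{M}\otimes_{\mathcal{O}_{\Spec A}}\widetilde{N}\cong\widetilde{M\otimes_A N}$: on each $D(f_i)$ of a common trivializing cover both sides restrict to $\mathcal{O}^{m_in_i}$, using that $\otimes$ commutes with restriction and with finite direct sums, that $\mathcal{O}\otimes_{\mathcal{O}}\mathcal{O}\cong\mathcal{O}$, and that localization of modules over a semiring commutes with tensor product, so that $(M\otimes_A N)_{f_i}\cong M_{f_i}\otimes_{A_{f_i}} N_{f_i}$. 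Taking global sections then yields $\Gamma(\Spec A,\mathcal{M}\otimes\mathcal{N}) = \Gamma(\Spec A, \widetilde{M\otimes_A N}) = M\otimes_A N = \Gamma(\Spec A,\mathcal{M})\otimes_A\Gamma(\Spec A,\mathcal{N})$.

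Finally I would check naturality: the isomorphism of Lemma \ref{lemma: pullback of tensor} is natural, pullback of sheaves is functorial, and the maps in the affine step are the evident ones, so for an $R$-algebra morphism $f:A\to B$ the square relating the bijection over $x\in X(A)$ with the bijection over $f^*(x)\in X(B)$ commutes; this shows the two functors of points also agree on morphisms, which finishes the identification. I expect the main obstacle to be the affine statement of the third paragraph: one must confirm that the standard ring-theoretic facts — that finite-rank locally free sheaves are of the form $\widetilde{M}$, that $\widetilde{(-)}$ is a sheaf on $\Spec A$, and that $\widetilde{M}\otimes\widetilde{N}\cong\widetilde{M\otimes_A N}$ — transfer to semirings. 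They do, essentially because tensor product is a colimit and hence insensitive to the absence of additive inverses, but this is the point at which the semiring setting genuinely has to be checked rather than merely cited.
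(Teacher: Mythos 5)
Your proof is correct and follows essentially the same route as the paper: the paper's proof is a one-line application of Lemma \ref{lemma: pullback of tensor} to the morphism $x:\Spec A\to X$, identifying the fiber $(\mathcal{F}\otimes_{\mathcal{O}_X}\mathcal{G})_x$ with $\mathcal{F}_x\otimes_A\mathcal{G}_x$. The only difference is that you explicitly verify the affine step $\Gamma(\Spec A,\mathcal{M}\otimes\mathcal{N})\cong\Gamma(\Spec A,\mathcal{M})\otimes_A\Gamma(\Spec A,\mathcal{N})$ and the naturality in $A$, which the paper treats as immediate; your added detail is sound and does not change the argument.
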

\begin{proof}This is just a restatement of Lemma \ref{lemma: pullback of tensor}.  Let $A$ be an $R$-algebra and let $x\in X(A)$.  Then by definition, the fiber of $(\mathcal{F}\otimes_{\mathcal{O}_X}\mathcal{G})(A)\rightarrow X(A)$ over $x$ is the $A$-module corresponding to $x^*(\mathcal{F}\otimes_{\mathcal{O}_X}\mathcal{G})$, which is isomorphic to $x^*(\mathcal{F})\otimes_A x^*(\mathcal{G})$.  This is the tensor product of the fibers of $\mathcal{F}$ and $\mathcal{G}$.
\end{proof}

\begin{pro}\label{proposition: addition morphism}
Let $E$ be a locally free sheaf on a scheme $X$ over a semiring $R$. Consider the map $+:E\times_X E \to E$ which is simply vector addition on the level of functors of points. Then, $+$ is a morphism of schemes when $E$ is viewed as a geometric vector bundle. Similarly there is a morphism of schemes $\mathbb{A}^1_X \times_XE \to E$ whose functor of points is the scalar multiplication map $(A \times X(A))\times_{X(A)} E(A) \cong A \times E(A) \to E(A)$.
\end{pro}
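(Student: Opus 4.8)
The plan is to reduce to a trivial bundle over an affine base, write the two maps explicitly as spectra of semiring homomorphisms, identify their functors of points, and then glue. Being a morphism of schemes is local on the target, and the formation of $E\times_X E$, of $\mathbb{A}^1_X\times_X E$, and of the maps in question all commute with restriction to opens of $X$; so I would fix an open cover $\{U_i\}$ of $X$ trivializing $E$, treat each $U_i$ separately, and check at the end that the local pieces patch. Shrinking, I may take $U_i=\Spec A$ with $E|_{U_i}=\mathbb{A}^n_{U_i}=\Spec A[x_1,\dots,x_n]$, so that $(E\times_X E)|_{U_i}=\Spec A[x_1,\dots,x_n,y_1,\dots,y_n]$ and $(\mathbb{A}^1_X\times_X E)|_{U_i}=\Spec A[t,x_1,\dots,x_n]$.

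First I would take the candidate maps to be $\Spec$ of the $A$-algebra homomorphisms
\[
\alpha\colon A[x_1,\dots,x_n]\longrightarrow A[x_1,\dots,x_n,y_1,\dots,y_n],\quad x_i\mapsto x_i+y_i,
\]
\[
\mu\colon A[x_1,\dots,x_n]\longrightarrow A[t,x_1,\dots,x_n],\quad x_i\mapsto t\,x_i,
\]
which are well defined since a polynomial semiring is the free commutative $A$-algebra on its variables, so prescribing the images of the $x_i$ suffices. To see these have the right functor of points, evaluate on an $A$-algebra $B$: a $B$-point of $\Spec A[x_1,\dots,x_n,y_1,\dots,y_n]$ is a tuple $(a_1,\dots,a_n,b_1,\dots,b_n)\in B^{2n}$ over a fixed $A\to B$, and precomposition with $\alpha$ sends it to $(a_1+b_1,\dots,a_n+b_n)$; likewise $\Spec\mu$ sends $(b,a_1,\dots,a_n)$ to $(ba_1,\dots,ba_n)$. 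Under the identification of the functor of points of $E$ with $A\mapsto\bigsqcup_{x\in X(A)}\mathcal{F}_x$ from Corollary~\ref{corollary: equivalence of functor of points} (where $\mathcal{F}$ is the sheaf of sections, the fiber of $E\times_X E$ over $x$ is $\mathcal{F}_x\times\mathcal{F}_x$, and that of $\mathbb{A}^1_X\times_X E$ over $x$ is $A\times\mathcal{F}_x$), these are exactly fiberwise vector addition and fiberwise scalar multiplication.

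Next I would verify that $\Spec\alpha$ and $\Spec\mu$ do not depend on the chosen trivialization, so that they glue over $\{U_i\}$. On an affine $\Spec A'\subseteq U_i\cap U_j$ a transition function is $\Spec$ of a linear automorphism $\theta$ of $A'[x_1,\dots,x_n]$, say $\theta(x_i)=\sum_j c_{ji}x_j$; the corresponding transitions on the two fiber products act by $\theta$ on the $x_i$ and in the same way on the $y_i$ (resp.\ fix $t$). Then $\alpha\circ\theta$ and $(\theta\oplus\theta)\circ\alpha$ both send $x_i\mapsto\sum_j c_{ji}(x_j+y_j)$, and $\mu\circ\theta$ and the analogue precomposed with $\mu$ both send $x_i\mapsto t\sum_j c_{ji}x_j$; so the relevant squares commute. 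Hence the local morphisms patch to global morphisms $+\colon E\times_X E\to E$ and $\mathbb{A}^1_X\times_X E\to E$, whose functors of points are fiberwise addition and scalar multiplication by construction.

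The step requiring care is the glueing, i.e.\ compatibility on overlaps; but this is precisely where the defining property of a geometric vector bundle enters — transition functions are \emph{linear}, hence commute with addition and scaling, which is the content of the commuting squares above. (Alternatively, one can avoid coordinates altogether: fiberwise addition is a natural transformation of functors of points, $E\times_X E$ represents the fiber product of the functors of points of the two copies of $E$ over $X(-)$, and a morphism of schemes over $\mathbb{N}$ is the same thing as a natural transformation of functors of points; scalar multiplication is handled the same way.)
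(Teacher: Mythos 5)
Your proof is correct, but it takes a genuinely different route from the paper's. The paper never leaves the functor-of-points world: it defines $+$ on $(E\times_X E)(A)\cong E(A)\times_{X(A)}E(A)$ using the $A$-module structure on fibers, checks that this is natural in $A$ (the only real computation being that pullback of sections along $\Spec B\to\Spec A$ is additive, resp.\ compatible with scalars via $v\mapsto v\otimes 1$), and then invokes the principle that a natural transformation of functors of points is the functor of points of a morphism of schemes. You instead trivialize $E$ over an affine cover, realize the two maps as $\Spec$ of the explicit homomorphisms $x_i\mapsto x_i+y_i$ and $x_i\mapsto t\,x_i$, and glue, with the linearity of the transition automorphisms doing the work in the cocycle check $\alpha\circ\theta=(\theta\oplus\theta)\circ\alpha$. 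Your approach is more self-contained — it does not lean on the (implicit) full faithfulness of the functor-of-points embedding for semiring schemes — and it isolates exactly where the defining ``linear transition functions'' condition of Definition~\ref{definition: geometric vector bundle} is used; the paper's argument is shorter, coordinate-free, and reduces everything to one naturality check. Your closing parenthetical is essentially the paper's proof. One small point worth a sentence in a final write-up: after gluing, you verify the functor-of-points description only for $A$-points $x:\Spec A\to X$ that factor through a trivializing open; for general $x$ you should cover $\Spec A$ by opens mapping into the $U_i$ and use that sections of $x^*\mathcal{F}$ are determined by their restrictions — routine, but it closes the loop between your constructed morphism and the map named in the statement.
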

\begin{proof}
Since the fibers are $A$-modules, we have an addition operation
\[
+: E(A)\times_{X(A)} E(A) \to E(A),
\]
which is natural in $A$ because pulling back sections yields an additive map $E(A)\rightarrow E(B)$ for any $f: A\rightarrow B$.  The universal property of pullback yields a natural isomorphism 
\[
(E\times_X E)(A) \cong E(A)\times_{X(A)} E(A).
\]
Composing gives a natural addition operation $(E\times_X E)(A) \rightarrow E(A)$.  By naturality, this must be the functor of points of a morphism of schemes.  

The proof for scalar multiplication is similar; the only substantial difference is that we need to check that $A \times E(A)\rightarrow E(A)$ is natural.  If $x\in X(A)$ and $f:A\rightarrow B$, this amounts to showing that the composition of $A\times E_x \rightarrow E_x$ with the map $E_x \rightarrow E_{f(x)} \cong E_x \otimes_A B$ (given by $v\mapsto v\otimes 1$)  is equal to the composition of the map $A\times E_x \rightarrow B \times E_{f(x)}$ (given by $a,v \mapsto f(a), v\otimes 1$) with $B \times E_{f(x)}\rightarrow E_{f(x)}$, which is clear.
\end{proof}

\section{Equivariant vector bundles}\label{section: equivariant vector bundles}

From now on, by a vector bundle, we mean a locally free sheaf. 

Let $E$ be a vector bundle on a scheme $X$ over a semiring $R$. Let $A$ be an $R$-algebra. Consider any element of $X(A)$:
\[
x:\Spec A \to X.
\]
As we noted in Remark \ref{remark: pullback global section}, any global section $s$ of $E$ determines a global section of the pullback bundle $x^*E$. Since $E_x:=x^*(E)(\Spec A)$, the element $s$ determines an element $s(x)$ of $E_x$. Hence, $s$ also determines a section of the projection $\pi_A:E(A) \to X(A)$.

Let $R$ be a semiring. By a group scheme over $R$, we mean a group object in the category of schemes over $R$. In particular, for any $R$-algebra $A$, the set $G(A)$ of $A$-points of $G$ is a group. Also, recall that for any vector bundle $E$ on $X$ and $x \in X(A)$, since $E_x:=x^*E(\Spec A)$, the set $E_x$ is indeed an $A$-module.

\begin{mydef} \label{definition: equivariant vector bundle}
Let $X$ be a scheme over a semiring $R$ and $G$ be a group scheme over $R$ acting on $X$. We define an \textit{equivariant vector bundle} to be a vector bundle $E$ on $X$ together with an action of $G(A)$ on $E(A)$ for each $R$-algebra $A$ satisfying the following:
\begin{enumerate}
    \item
the action is natural in $A$. \label{item:naturality of action on bundle}
\item 
the action makes $\pi_A$ equivariant. 
\item
the action makes the induced map $E_x \to E_{gx}$ $A$-linear.
\end{enumerate}
\end{mydef}

The first condition means the following: for $R$-algebras $A$ and $B$ with a map $f^*:\Spec B \to \Spec A$ induced by an $R$-algebra map $f:A \to B$, the following diagram commutes:
\[
\begin{tikzcd}
G(A) \times E(A) \arrow[r,"\rho_A"] \arrow[d," \varphi \times \psi",swap] & E(A) \arrow[d,"\psi"] \\
G(B) \times E(B) \arrow[r,"\rho_B"] & E(B)
\end{tikzcd}
\]
where $\varphi(\alpha)=\alpha \circ f^*$ and $\psi(\beta)=f^*(\beta)$ (as in Definition \ref{definition: functor of points locally free sheaf}) and $\rho_A$ 
(resp.~$\rho_B$) is an action of $G(A)$ (resp.~$G(B)$) on $E(A)$ (resp.~$E(B)$). 

The second condition means that the following diagram commutes:
\[
\begin{tikzcd}
G(A) \times E(A) \arrow[r,"\rho_A"] \arrow[d,"\textrm{id} \times \pi_A",swap] & E(A) \arrow[d,"\pi_A"] \\
G(A) \times X(A) \arrow[r,"\rho'_A"] & X(A)
\end{tikzcd}
\]
where the top row is the action of $G(A)$ on $E(A)$ and the bottom row is the action of $G(A)$ on $X(A)$. 

The third condition is clear: each $g \in X(A)$ induces an automorphism of $E(A)$ and it has to be $A$-linear for each fiber.

\begin{myeg}\label{example: trivial eq}
Let $X$ be a scheme over a semiring $R$ with a $G$-action, where $G$ is a group scheme over $R$. Consider a free sheaf $\mathcal{F} = \mathcal{O}_X^n$.  Let $v_1, \ldots, v_n$ be a basis for $\mathcal{F}$.  Let $x\in X(A)$ for some $R$-algebra $A$.  Then $\mathcal{F}_x$ is free with basis $(v_1)_x, \ldots, (v_n)_x$ where $(v_i)_x\in \mathcal{F}_x$ corresponds to $x^*v_i\in \Gamma(\Spec A, x^*\mathcal{F})$.

Define an action $\rho_A: G(A)\times \mathcal{F}(A)\rightarrow \mathcal{F}(A)$ by the condition that for $g\in G(A)$, $x\in X(A)$ and $v\in \mathcal{F}_x$,
\[
\rho_A((g,v)) = \sum_i a_i (v_i)_{gx},
\]
where $v = \sum_i a_i (v_i)_x$ is the basis expansion of $v$.  $\mathcal{O}_X^n$ with this action is called the \emph{trivial $G$-equivariant vector bundle}.  It is clear that $\rho_A$ is compatible with the projection since it maps the fiber over $x$ to the fiber over $gx$. It is also clear that $\rho_A$ is an action and that it induces linear isomorphisms between fibers.  To show the trivial $G$-equivariant vector bundle is in fact an equivariant vector bundle, we only need to check naturality.

Let $f^*:\Spec B \to \Spec A$ be a map induced by a map $f:A \to B$ of $R$-algebras. We want the following diagram to commute:
\[
\begin{tikzcd}
G(A) \times \mathcal{F}(A) \arrow[r,"\rho_A"] \arrow[d," \varphi \times \psi",swap] & \mathcal{F}(A) \arrow[d,"\psi"] \\
G(B) \times \mathcal{F}(B) \arrow[r,"\rho_B"] & \mathcal{F}(B)
\end{tikzcd}
\]
For $(g,v) \in G(A) \times \mathcal{F}(A)$, we have
\[
\psi\circ\rho_A((g,v))=\psi(\sum_i a_i (v_i)_{gx})= \sum_i f(a_i) \psi((v_i)_{gx}) = \sum_i f(a_i) (f^*v_i)_{f^*(gx)}
\]
On the other hand, with $v=\sum_i a_i(v_i)_x$, we have
\[
\psi(v)=\psi(\sum_ia_i(v_i)_x) = \sum_i f(a_i) \psi((v_i)_x) = \sum_i f(a_i)f^*((v_i)_x).
\]
Hence, we have
\[
\rho_B\circ (\varphi \times \psi)(g,v)= \rho_B(f^*(g), \sum_i f(a_i)(f^*v_i)_{f^*x}))=\sum_i f(a_i) (f^*v_i)_{f^*(g)f^*x}=\sum_i f(a_i)(f^*v_i)_{f^*(gx)},
\]
showing that $\psi\circ \rho_A = \rho_B\circ (\varphi \times \psi)$.
\end{myeg}

Obviously a direct sum of equivariant vector bundles is an equivariant vector bundle.  We now show that the same is true for tensor products.

\begin{lem}\label{lemma: pullback of dual sheaf}
Let $f: Y\rightarrow X$ be a morphism of schemes over a semiring $R$ and let $\mathcal{F}$ be a locally free sheaf on $X$.  Then there is a canonical isomorphism $f^*\underline{\Hom}_{\mathcal{O}_X}(\mathcal{F}, \mathcal{O}_X) \simeq \underline{\Hom}_{\mathcal{O}_Y}(f^*\mathcal{F}, \mathcal{O}_Y)$.
\end{lem}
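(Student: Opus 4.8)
The plan is to reduce the statement to the dualizing (Hom into structure sheaf) case of the adjunction-type isomorphism already available in the excerpt. Recall that the previous lemma provides a natural isomorphism $\underline{\Hom}_{\mathcal{O}_Y}(\mathcal{F}, f_*\mathcal{G}) \cong f_*\underline{\Hom}_{\mathcal{O}_X}(f^*\mathcal{F}, \mathcal{G})$ of sheaves on $Y$, and Lemma \ref{lemma: pullback of tensor} (together with the hom-tensor adjunction) gives us the tools to compare pullbacks with internal Homs locally. So I would first try to extract the desired statement by a purely formal manipulation, and only if that fails descend to local computations.

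First I would observe that the claim is local on $Y$, and in fact may be checked on stalks, since both sides are sheaves and a morphism of sheaves is an isomorphism precisely when it is so on every stalk. Hence I may assume $X = \Spec R$ is affine and $\mathcal{F} = \mathcal{O}_X^n$ is free of finite rank; then $f^*\mathcal{F} = \mathcal{O}_Y^n$ is free of the same rank. In this situation $\underline{\Hom}_{\mathcal{O}_X}(\mathcal{F}, \mathcal{O}_X)$ is canonically $\mathcal{O}_X^n$ (the dual basis), so its pullback is $\mathcal{O}_Y^n$; on the other side, $\underline{\Hom}_{\mathcal{O}_Y}(f^*\mathcal{F}, \mathcal{O}_Y)$ is also canonically $\mathcal{O}_Y^n$. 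The content is then to check that the canonical comparison map — built from the evaluation pairing $\mathcal{F}\otimes_{\mathcal{O}_X}\underline{\Hom}_{\mathcal{O}_X}(\mathcal{F},\mathcal{O}_X)\to\mathcal{O}_X$ pulled back via Lemma \ref{lemma: pullback of tensor} — sends the pulled-back dual basis to the dual basis of $f^*\mathcal{F}$. This is a direct check once the map is written down.

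Concretely, I would construct the map as follows: apply $f^*$ to the evaluation morphism $\mathcal{F}\otimes_{\mathcal{O}_X}\mathcal{F}^\vee \to \mathcal{O}_X$, use Lemma \ref{lemma: pullback of tensor} to identify $f^*(\mathcal{F}\otimes_{\mathcal{O}_X}\mathcal{F}^\vee)$ with $f^*\mathcal{F}\otimes_{\mathcal{O}_Y}f^*\mathcal{F}^\vee$, and note $f^*\mathcal{O}_X = \mathcal{O}_Y$, yielding a pairing $f^*\mathcal{F}\otimes_{\mathcal{O}_Y}f^*(\mathcal{F}^\vee)\to\mathcal{O}_Y$. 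By the hom-tensor adjunction on $Y$ this corresponds to a morphism $f^*(\mathcal{F}^\vee)\to\underline{\Hom}_{\mathcal{O}_Y}(f^*\mathcal{F},\mathcal{O}_Y)$, which is the desired canonical map; naturality in $\mathcal{F}$ is automatic from the naturality of all the ingredients. To see it is an isomorphism, reduce to the free case as above and verify it carries a dual basis to a dual basis.

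The main obstacle I anticipate is bookkeeping rather than substance: making sure that the evaluation pairing, the pullback, and the adjunction are all genuinely natural and that their composite is the "obvious" map one expects, so that checking it is an isomorphism may be done on the free local model without worrying that a non-canonical choice has crept in. Since all the constituent isomorphisms (hom-tensor adjunction, pullback-pushforward adjunction, and Lemma \ref{lemma: pullback of tensor}) are stated as natural in the excerpt, this should go through. Accordingly, the proof is short, and in keeping with the style of the surrounding lemmas it may simply read: the proof follows mutatis mutandis as the one for schemes over rings, via the reduction to the free case just described.
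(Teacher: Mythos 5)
Your proposal is correct and matches the paper's approach: the paper simply states that the proof carries over mutatis mutandis from the ring case, and your argument — building the canonical map from the pulled-back evaluation pairing via Lemma \ref{lemma: pullback of tensor} and the hom-tensor adjunction, then checking it is an isomorphism on the local free model by tracking dual bases — is exactly that standard argument, with every ingredient already established in the semiring setting. (Implicitly you use that $\mathcal{F}$ has finite rank so that $\mathcal{F}^\vee$ is again locally free; that is the standing convention in the paper.)
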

\begin{proof}
The proof follows mutatis mutandis as the one for schemes over rings.
\end{proof}

Let $X$ be a scheme over a semiring $R$ with an action of a group scheme $G$ over $R$.  Let $\mathcal{E},\mathcal{F}$ be equivariant vector bundles on $X$. Let $A$ be an $R$-algebra. We apply Proposition \ref{proposition: functor of points of tensor product} to identify $(\mathcal{E}\otimes\mathcal{F})_x$ with $\mathcal{E}_x \otimes_A \mathcal{F}_x$ for each $x\in X(A)$. 

\begin{pro}\label{proposition: equivariant Picard group}
With the same notation as above, $\mathcal{E}\otimes\mathcal{F}$ is an equivariant vector bundle via the action obtained by linearly extending the following:
\[
g(v\otimes w) = gv \otimes gw, \quad \textrm{where } g\in G(A),~x\in X(A), ~v\in \mathcal{E}_x,~ w\in \mathcal{F}_x.
\]
Moreover, the set of isomorphism classes of equivariant line bundles is a group under tensor product.
\end{pro}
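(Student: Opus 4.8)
The plan is to verify the four conditions of Definition \ref{definition: equivariant vector bundle} for the proposed action on $\mathcal{E}\otimes\mathcal{F}$, and then to build the group structure on isomorphism classes of equivariant line bundles out of the standard structural isomorphisms of the tensor product, checking at each stage that those isomorphisms are equivariant.

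For the first assertion, I would first observe that $g(v\otimes w)=gv\otimes gw$ is well defined: since $\mathcal{E}$ and $\mathcal{F}$ are equivariant, condition (3) of Definition \ref{definition: equivariant vector bundle} makes each $g\in G(A)$ act $A$-linearly on fibers, so $(v,w)\mapsto gv\otimes gw$ is $A$-bilinear and factors through $\mathcal{E}_x\otimes_A\mathcal{F}_x$, which by Proposition \ref{proposition: functor of points of tensor product} is $(\mathcal{E}\otimes\mathcal{F})_x$; and since $gv\otimes gw\in\mathcal{E}_{gx}\otimes_A\mathcal{F}_{gx}=(\mathcal{E}\otimes\mathcal{F})_{gx}$, the fiber over $x$ is carried to the fiber over $gx$, giving condition (2). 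The action axioms $1(v\otimes w)=v\otimes w$ and $(gh)(v\otimes w)=g(h(v\otimes w))$ follow by linear extension from the corresponding identities on $\mathcal{E}$ and $\mathcal{F}$, and $A$-linearity of the induced fiber map (condition (3)) is immediate from $g(a(v\otimes w))=g((av)\otimes w)=(a\cdot gv)\otimes gw=a(gv\otimes gw)$. For naturality (condition (1)), given an $R$-algebra map $f:A\to B$ one uses that the canonical isomorphism of Lemma \ref{lemma: pullback of tensor} identifies $f^*(v\otimes w)$ with $f^*v\otimes f^*w$; combining this with the naturality of the actions on $\mathcal{E}$ and $\mathcal{F}$ yields $f^*(g(v\otimes w))=f^*(gv)\otimes f^*(gw)=(f^*g)(f^*v)\otimes(f^*g)(f^*w)=(f^*g)(f^*(v\otimes w))$.

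For the group structure, note first that the tensor product of two line bundles is again a line bundle — the claim is local and reduces to $\mathcal{O}_U\otimes_{\mathcal{O}_U}\mathcal{O}_U\cong\mathcal{O}_U$ — so by the first assertion tensor product is a well-defined binary operation on isomorphism classes of equivariant line bundles; it descends to isomorphism classes because the tensor product of two equivariant isomorphisms is again equivariant, as one checks directly on functors of points. The canonical associativity and commutativity isomorphisms act on fibers by $(v\otimes w)\otimes u\mapsto v\otimes(w\otimes u)$ and $v\otimes w\mapsto w\otimes v$, and these visibly commute with the actions defined by the formula above, so they are isomorphisms of equivariant bundles. The identity element is the trivial equivariant line bundle $\mathcal{O}_X$ of Example \ref{example: trivial eq} (case $n=1$): under the canonical isomorphism $\mathcal{O}_X\otimes\mathcal{L}\cong\mathcal{L}$ the element $1_x\otimes v$ goes to $v$, and since $g$ sends the generator $1_x$ of $(\mathcal{O}_X)_x=A$ to the generator $1_{gx}$ of $(\mathcal{O}_X)_{gx}$, we get $g(1_x\otimes v)=1_{gx}\otimes gv\mapsto gv$, so this isomorphism is equivariant.

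Finally, for inverses I would dualize. For an equivariant line bundle $\mathcal{L}$, Lemma \ref{lemma: pullback of dual sheaf} identifies the fiber $(\mathcal{L}^\vee)_x$ of $\mathcal{L}^\vee=\underline{\Hom}_{\mathcal{O}_X}(\mathcal{L},\mathcal{O}_X)$ with $\Hom_A(\mathcal{L}_x,A)$. Each $g\in G(A)$ induces an $A$-linear map $\mathcal{L}_x\to\mathcal{L}_{gx}$ whose inverse is the action of $g^{-1}$ (since $g^{-1}g=1$ acts as the identity), so I can define $g\cdot\phi:=\phi\circ g^{-1}\in\Hom_A(\mathcal{L}_{gx},A)$; this is an $A$-linear action compatible with the projection, and naturality follows from the naturality of the action on $\mathcal{L}$ and of the identification in Lemma \ref{lemma: pullback of dual sheaf}. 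The evaluation morphism $\mathcal{L}\otimes\mathcal{L}^\vee\to\mathcal{O}_X$ is locally the multiplication $\mathcal{O}_U\otimes_{\mathcal{O}_U}\mathcal{O}_U\to\mathcal{O}_U$, hence an isomorphism — this is precisely the invertibility of line bundles recalled from \cite{BJ24} — and it is equivariant, since on fibers $g(v\otimes\phi)=gv\otimes(\phi\circ g^{-1})$ is sent to $(\phi\circ g^{-1})(gv)=\phi(v)$, which matches the trivial action on $\mathcal{O}_X$. Hence $[\mathcal{L}^\vee]$ is inverse to $[\mathcal{L}]$, and the set of isomorphism classes of equivariant line bundles is a group. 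The conceptual content here is light; the one recurring point requiring care is naturality in the test algebra $A$ — for the tensor product action, for the dual action, and for the assertion that each structural isomorphism is a morphism of equivariant bundles — which in every instance reduces to combining the naturality built into $\mathcal{E},\mathcal{F},\mathcal{L}$ with the functoriality of the identifications in Lemma \ref{lemma: pullback of tensor} and Lemma \ref{lemma: pullback of dual sheaf}, and I expect that bookkeeping to be the main (and essentially only) obstacle.
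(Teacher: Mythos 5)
Your proposal is correct and follows essentially the same route as the paper: the fiberwise action $g(v\otimes w)=gv\otimes gw$ verified against the three conditions of Definition \ref{definition: equivariant vector bundle} (with naturality reduced to Lemma \ref{lemma: pullback of tensor} and the naturality of the actions on $\mathcal{E}$ and $\mathcal{F}$), the trivial bundle $\mathcal{O}_X$ as identity, and the dual $\mathcal{L}^\vee$ with action $\phi\mapsto\phi\circ g^{-1}$ (which coincides with the paper's $(g\ell)(v)=g(\ell(g^{-1}v))$ since $g$ acts trivially on the fibers of $\mathcal{O}_X$) together with the equivariant evaluation map as inverse. The only difference is that you additionally spell out associativity, commutativity, and descent to isomorphism classes, which the paper leaves implicit.
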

\begin{proof}If $v\otimes w$ is in the fiber over $x$, then it is clear that $gv\otimes gw$ is in the fiber over $gx$, which establishes compatibility with the projections, showing (2) of Definition \ref{definition: equivariant vector bundle}.  

The linearity of the actions on $\mathcal{E}_x$ and $\mathcal{F}_x$ implies $gv\otimes gw$ depends bilinearly on $v$ and $w$.  By the universal property of the tensor product, there is a unique linear map $\mathcal{E}_x \otimes \mathcal{F}_x \rightarrow \mathcal{E}_{gx} \otimes \mathcal{F}_{gx} $ sending $v\otimes w$ to $gv\otimes gw$.  This establishes that the map of interest is well defined and linear on fibers. This shows (3) of Definition \ref{definition: equivariant vector bundle}.

One can easily observe that (1) of Definition \ref{definition: equivariant vector bundle} follows immediately from the naturality of the actions on $\mathcal{E}$ and $\mathcal{F}$ plus naturality of the multiplication map $G(A)\times G(A)\rightarrow G(A)$.  Thus the tensor product is an equivariant vector bundle.

Now let $\mathcal{O}_X$ be the trivial equivariant line bundle (with basis element denoted $1$).  We have an isomorphism of vector bundles 
\[
\mathcal{O}_X\otimes \mathcal{F}\rightarrow\mathcal{F}
\]
which on the level of functors of points is given by $1_x \otimes v \mapsto v$. To check it is an isomorphism of $G$-equivariant vector bundles (so that $\mathcal{O}_X$ is the tensor product identity), we need to check that $g(1_x \otimes v)$ maps to $gv$.  But this follows immediately from $g(1_x \otimes v) = g1_x \otimes gv = 1_{gx} \otimes gv$.

It remains to show that equivariant line bundles are invertible under tensor product.  Let $L$ be an equivariant line bundle.  There is a canonical morphism $\underline{\Hom}_{\mathcal{O}_X}(L, \mathcal{O}_X) \otimes L \rightarrow \mathcal{O}_X$.  This is clearly an isomorphism for $L = \mathcal{O}_X$, and so for any line bundle it induces isomorphisms on stalks, and hence is an isomorphism.

By Lemma \ref{lemma: pullback of dual sheaf}, the functor of points of $\underline{\Hom}_{\mathcal{O}_X}(L, \mathcal{O}_X)$ has fiber over a point $x\in X(A)$ given by $\Hom_A(L_x, A)$.  We may then define the action by saying $g\in G(A)$ acts on $\ell\in \Hom_A(L_x, A)$ by
\[
(g\ell)(v) = g(\ell(g^{-1}v)) \textrm{ for } v\in L_{gx}.
\]
Let $L^\vee=\underline{\Hom}_{\mathcal{O}_X}(L, \mathcal{O}_X)$. We claim that $L^\vee$ is an equivariant line bundle. In fact, from the definition, one can easily check that $g\ell$ is in the fiber over $gx$ and $(L^\vee)_{x} \to (L^\vee)_{gx}$ is $A$-linear. The only nontrivial part is the naturality. In other words, for $f:A \to B$, we have to show that the following diagram commutes 
\[
\begin{tikzcd}
G(A) \times L^\vee(A) \arrow[r,"\rho_A"] \arrow[d," \varphi \times \psi",swap] & L^\vee(A) \arrow[d,"\psi"] \\
G(B) \times L^\vee(B) \arrow[r,"\rho_B"] & L^\vee(B)
\end{tikzcd}
\]
First, for $\ell \in \Hom_A(L_x,A)$, we have
\[
L_{f^*x}=L_x\otimes_A B \textrm{ and }\psi(\ell)=f^*\ell=\ell\otimes \text{id} :L_x\otimes_AB \to B. 
\]
Hence, we have
\[
\psi\circ\rho_A (g,\ell)=f^*(g\ell)=(g\ell)\otimes \text{id}.
\]
So, for $w=\sum v_i\otimes b_i \in L_{f^*x}$ from the linearity, we have
\[
\psi\circ\rho_A (g,\ell)(w)=\sum_i g(\ell(g^{-1}v_i)) \otimes b_i.
\]
On the other hand, we have
\[
f^*g(w)=f^*g(\sum_iv_i\otimes b_i)=\sum_i gv_i \otimes b_i.
\]
Hence, we have
\[
\rho_B\circ (\varphi\times\psi)(g,\ell)(w) = (f^*gf^*\ell)(w) = f^*g(f^*\ell(f^*g^{-1}w))=f^*g(f^*\ell(\sum_ig^{-1}v_i\otimes b_i))
\]
\[
=f^*g (\sum_i \ell(g^{-1}v_i)\otimes b_i) = \sum_i g(\ell(g^{-1}v_i))\otimes b_i.
\]
This shows that $\psi\circ\rho_A =\rho_B\circ (\varphi\times\psi)$.

The canonical isomorphism
\[
L^\vee \otimes L\rightarrow \mathcal{O}_X
\]
sends $(\ell, v)$ to $\ell(v)$.  Since $(g\ell)(gv) = g(\ell(v))$, this isomorphism is equivariant, so the line bundle $L^\vee$ is the inverse under tensor product. 
\end{proof}

Later, we will consider toric vector bundles in the affine setting and then glue them to obtain toric vector bundles in the general setting. An arbitrary vector bundle is trivial over an affine toric variety \cite{Gub87}. For toric vector bundles one has the following statement.

\begin{pro}\cite[Proposition~2.2]{payne2008moduli}
Every toric vector bundle on an affine toric variety splits equivariantly as a sum of toric line bundles whose underlying line bundles are trivial. 
\end{pro}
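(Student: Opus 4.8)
The plan is to run the classical argument over the ground field $k$. Write the affine toric variety as $U_\sigma=\Spec k[S_\sigma]$ with $S_\sigma=\sigma^\vee\cap\Lambda$, and recall that $k[S_\sigma]=\bigoplus_{u\in S_\sigma}k\chi^u$ is $\Lambda$-graded, the grading encoding the torus action. A toric vector bundle $\mathcal{E}$ on $U_\sigma$ is the same data as a finitely generated $\Lambda$-graded $k[S_\sigma]$-module $M=\Gamma(U_\sigma,\mathcal{E})$ that is locally free; by \cite{Gub87} it is in fact free, so $M\cong k[S_\sigma]^{n}$ after forgetting the grading. The goal is to produce a homogeneous basis of $M$.

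First I would fix the distinguished point $x_\sigma\in U_\sigma$ lying in the closed orbit: when $\sigma$ is full dimensional this is the torus-fixed point cut out by the homogeneous maximal ideal $\mathfrak{m}=\bigoplus_{0\neq u\in S_\sigma}k\chi^u$, and in general one restricts to the closed orbit, equivalently splits off the torus factor $U_\sigma\cong\mathbb{A}^r\times T'$. The fiber $\mathcal{E}(x_\sigma)=M/\mathfrak{m}M$ carries a linear torus action, hence a weight decomposition $\mathcal{E}(x_\sigma)=\bigoplus_u\mathcal{E}(x_\sigma)_u$. One then chooses a $k$-basis $\bar s_1,\dots,\bar s_n$ of $\mathcal{E}(x_\sigma)$ consisting of weight vectors, of weights $u_1,\dots,u_n\in\Lambda$. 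Since the evaluation map $M\to M/\mathfrak{m}M$ is a morphism of $\Lambda$-graded modules, each $\bar s_i$ lifts to a homogeneous section $s_i\in M_{u_i}$.

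Next I would check that $s_1,\dots,s_n$ is a basis of $M$: by the graded Nakayama lemma the $s_i$ generate $M$, and since $M$ is free of rank $n$, any surjection $k[S_\sigma]^{n}\twoheadrightarrow M$ between free modules of equal rank is an isomorphism, so $M=\bigoplus_i k[S_\sigma]s_i$ as $\Lambda$-graded modules. Finally, each summand $k[S_\sigma]s_i=\bigoplus_u(k\chi^{u-u_i})s_i$ is a rank-one free module, so its underlying line bundle is the trivial bundle $\mathcal{O}_{U_\sigma}$, equipped with the equivariant structure obtained from the standard one on $\mathcal{O}_{U_\sigma}$ by twisting with the character $\chi^{u_i}$. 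Calling this equivariant line bundle $\mathcal{L}_i$, one obtains $\mathcal{E}\cong\mathcal{L}_1\oplus\cdots\oplus\mathcal{L}_n$ as toric vector bundles, with every $\mathcal{L}_i$ trivial as a line bundle.

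The main obstacle is the treatment of a non-full-dimensional cone $\sigma$: then the ``fixed point'' is really a whole orbit and $k[S_\sigma]$ has a nontrivial group-of-units grading, so the argument must be run on the closed orbit (or after the splitting $U_\sigma\cong\mathbb{A}^r\times T'$), and one must verify that the ideal generated by the non-unit monomials is irrelevant enough for the graded Nakayama step to go through. The remaining points — naturality and equivariance of the evaluation map, and the bookkeeping of weights — are routine.
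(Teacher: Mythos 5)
This proposition is not proved in the paper at all: it is quoted from Payne with a citation, and the sentence immediately following it in the paper stresses that the classical argument does \emph{not} transfer to semirings. Your argument is, in substance, the proof in the cited source: identify the equivariant bundle with a $\Lambda$-graded $k[S_\sigma]$-module, use Gubeladze to see the underlying module is free, decompose the fiber over (a point of) the closed orbit into weight spaces, lift a homogeneous basis, and finish with graded Nakayama together with the fact that a surjection between free modules of equal finite rank over a commutative ring is an isomorphism. This is correct, and the one step you flag — the non-full-dimensional case — does go through: an integral functional $\ell$ in the relative interior of $\sigma$ satisfies $\ell>0$ on $S_\sigma\setminus S_\sigma^\times$, which is exactly the positivity needed for the Nakayama step, and $M/\mathfrak{m}M$ is a $\Lambda$-graded module over $k[S_\sigma^\times]$, in which every nonzero homogeneous element is a unit, so it is graded free and admits a homogeneous basis whose weights are well defined modulo $\sigma^\perp\cap\Lambda$ (which is all the conclusion requires). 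The contrast worth internalizing is that every tool you use — the residue field at a fixed point, semisimplicity of the fiber representation, Nakayama — is unavailable over an idempotent semifield, which is why the paper's semiring analogue (Theorem \ref{theorem: torus-equivariant}) is proved by an entirely different mechanism: a functor-of-points argument that packages the obstruction to splitting as a morphism $G\times_K X\to S_n$ and kills it by irreducibility/connectedness of $G\times_K X$.
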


The statement also holds in the semiring case, however, one cannot use a similar argument as in the classical proof. In fact, we prove an analogous theorem (Theorem \ref{theorem: torus-equivariant}) in Section \ref{section:torus-equivariant} saying that under the irreducibility assumption, any equivariant vector bundle which is trivial as a vector bundle splits equivariantly over an idempotent semifield.

Note that we can understand $S_n$ in our exact sequence in Proposition \ref{corollary: semidirect} as a scheme over $\mathbb{N}$. See Appendix \ref{sn as a functor}. This fact be used in the next section.

\section{Splitting of $G$-equivariant vector bundles on irreducible schemes}\label{section:torus-equivariant}

In this section, we study properties of $G$-equivariant vector bundles on irreducible schemes over an idempotent semifield $K$, where $G$ is an irreducible algebraic group over $K$. In \cite{JMT20}, we proved that any vector bundle on a scheme over an idempotent semifield, satisfying a certain local condition splits. In this section, we prove that for an irreducible algebraic group $G$ over an idempotent semifield $K$, any $G$-equivariant vector bundle, which is trivial as a vector bundle, on an irreducible scheme $X$ over $K$ equivariantly splits. Along with other results, this result will be used to study toric vector bundles on toric schemes in later sections.

In what follows, by a scheme we mean a scheme over $\mathbb{N}$, i.e., semiring scheme, unless otherwise stated. 

\begin{mydef}\label{definition: irreducible}
We say a semiring $A$ is \textit{irreducible} if the following condition holds: for any $x,y\in A$ if $xy$ is nilpotent, then $x$ or $y$ is nilpotent.
\end{mydef}

The usual argument for rings can be modified to prove the following. 

\begin{lem}
A semiring $A$ is irreducible 
if and only if $\Spec A$ is irreducible. 
\end{lem}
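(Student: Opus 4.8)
The plan is to translate the classical equivalence ``$\Spec A$ irreducible $\iff$ the nilradical is prime'' into the semiring setting, where the nilradical $\mathrm{nil}(A)$ is the set of nilpotent elements. First I would check that $\mathrm{nil}(A)$ is an ideal of $A$: it is closed under multiplication by arbitrary elements since $(ra)^n = r^n a^n$, and closed under addition because if $a^n = 0$ and $b^m = 0$ then $(a+b)^{n+m-1}$ expands (using commutativity and distributivity, exactly as for rings — note we do \emph{not} need additive cancellation here, only that every monomial in the expansion contains a factor $a^i b^j$ with $i \ge n$ or $j \ge m$) into a sum of terms each of which is $0$. So $\mathrm{nil}(A)$ is an ideal, and by Definition \ref{definition: irreducible}, $A$ is irreducible precisely when $\mathrm{nil}(A)$ is a prime ideal (its complement is multiplicatively closed and nonempty since $1$ is not nilpotent, as $1 \neq 0$).

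Next I would recall the basic point-set topology of $\Spec A$: a space is irreducible iff every nonempty open is dense, equivalently iff it is not the union of two proper closed subsets, equivalently iff every pair of nonempty opens intersects. Using the basic opens $D(f) = \{\mathfrak{p} \mid f \notin \mathfrak{p}\}$, one has $D(f) \cap D(g) = D(fg)$ and $D(f) = \emptyset \iff f \in \mathrm{nil}(A)$ (the latter because the intersection of all prime ideals is the nilradical — this is the one genuinely nontrivial ingredient and is stated in the excerpt's discussion of prime spectra; it follows from the standard localization-at-$f$ argument, which goes through for semirings). Then: if $A$ is irreducible, i.e. $\mathrm{nil}(A)$ is prime, then for nonempty $D(f), D(g)$ we have $f, g \notin \mathrm{nil}(A)$, hence $fg \notin \mathrm{nil}(A)$, so $D(fg) = D(f) \cap D(g) \neq \emptyset$; since basic opens form a base, $\Spec A$ is irreducible. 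Conversely, if $\mathrm{nil}(A)$ is not prime, pick $f, g \notin \mathrm{nil}(A)$ with $fg \in \mathrm{nil}(A)$; then $D(f), D(g)$ are nonempty but $D(f) \cap D(g) = D(fg) = \emptyset$, so $\Spec A$ is reducible.

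The main obstacle — really the only place one must be careful rather than just copying the ring proof — is verifying that the classical facts $\mathrm{nil}(A) = \bigcap_{\mathfrak{p}} \mathfrak{p}$ and the behavior of localization and basic opens carry over without additive cancellation; but these are already implicitly available from the preliminary discussion of $\Spec M$ and $\Spec A$ in the excerpt (and the same arguments for $\mathbb{N}$-algebras are standard). Everything else is formal. I expect the write-up to be short: establish $\mathrm{nil}(A)$ is an ideal, identify irreducibility of $A$ with primeness of $\mathrm{nil}(A)$, and run the $D(f) \cap D(g) = D(fg)$ argument in both directions.
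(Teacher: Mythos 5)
Your proof is correct and is exactly the adaptation of the classical argument that the paper has in mind (the paper itself gives no proof, only the remark that the ring-theoretic argument can be modified). You correctly isolate the two places where the absence of subtraction must be checked — that $\mathrm{nil}(A)$ is closed under addition via the binomial expansion, and that $\mathrm{nil}(A)=\bigcap_{\mathfrak{p}}\mathfrak{p}$ via the Zorn's-lemma/localization argument — and both indeed go through verbatim for semirings, after which the $D(f)\cap D(g)=D(fg)$ argument is formal.
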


Because we prefer to work with irreducible schemes, we will need the following variant of Yoneda's lemma.  To any scheme $X$, we associate a functor of points $X(A) = \Hom(\mathrm{Spec} A, X)$ mapping the category of irreducible idempotent semirings to the category of sets.

\begin{pro}\label{irreduciblefunctorofpoints}
Let $X, Y$ be schemes over $\mathbb{B}$ (i.e., over some an idempotent semiring). Suppose that $X$ irreducible. There is a bijective correspondence between morphisms $f: X \rightarrow Y$ and natural transformations $f_A: X(A) \rightarrow Y(A)$ of functors of points on the category of irreducible idempotent semirings.
\end{pro}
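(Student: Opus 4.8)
The plan is to mimic the proof of the classical Yoneda lemma, but being careful that we are only allowed to test morphisms against irreducible idempotent semirings rather than all rings. The key point is that, even though we restrict the class of test objects, every open affine in a semiring scheme can still be ``seen'' by irreducible idempotent semirings: given an affine open $\Spec B \subseteq X$ and a prime $\mathfrak p \subseteq B$, the quotient $B/\mathfrak p$ (in the semiring sense, i.e. the image of $B$ in the localization, or more simply the semiring $B$ modulo the congruence $x \sim y \iff$ both lie in $\mathfrak p$ or neither does and they become equal after inverting the complement of $\mathfrak p$) is an idempotent semiring whose spectrum is irreducible, and the canonical map $\Spec(B/\mathfrak p) \to X$ hits $\mathfrak p$. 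So the functor of points restricted to irreducible idempotent semirings still has enough points to detect scheme morphisms locally.

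First I would prove injectivity: if $f, g: X \to Y$ induce the same natural transformation on irreducible-idempotent-semiring-valued points, then $f = g$. For each point $\mathfrak p$ of $X$, choose an affine open $\Spec B \ni \mathfrak p$, form the irreducible idempotent semiring $C_{\mathfrak p}$ as above together with its canonical morphism $x_{\mathfrak p}: \Spec C_{\mathfrak p} \to X$ landing in $\Spec B$ and sending the closed point to $\mathfrak p$. By hypothesis $f \circ x_{\mathfrak p} = g \circ x_{\mathfrak p}$, which forces $f(\mathfrak p) = g(\mathfrak p)$ as points of $Y$ and pins down the stalk maps. Running over all $\mathfrak p$ and gluing shows $f = g$; here I would use that a morphism of affine semiring schemes $\Spec B \to Y$ is determined by the collection of its compositions with the $x_{\mathfrak p}$ — this is where I expect I will need a small lemma (the ``enough points'' statement), proved by reducing to $Y$ affine and checking that a semiring homomorphism $D \to B$ is determined by its composites $D \to B \to C_{\mathfrak p}$ for all primes $\mathfrak p$ of $B$, which in turn follows from the fact that $B$ injects into $\prod_{\mathfrak p} C_{\mathfrak p}$ (an element killed in every $C_{\mathfrak p}$ is both nilpotent-related and generically trivial, hence $0$ — this is exactly where irreducibility of $X$, giving a unique generic point, and the idempotent hypothesis get used to control nilpotents).

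Next I would prove surjectivity: given a natural transformation $\eta = (\eta_A)$, construct a morphism $f: X \to Y$ realizing it. The idea is: since $X$ is irreducible it has a unique generic point $\xi$; let $\kappa$ be the associated irreducible idempotent semiring (the ``residue semifield'' at $\xi$, or just $\mathcal O_{X,\xi}$), with canonical $j: \Spec \kappa \to X$. Then $\eta_\kappa(j) \in Y(\kappa)$ is a morphism $\Spec\kappa \to Y$; I claim it extends to a morphism $X \to Y$, and that this extension induces $\eta$. To build the extension, cover $X$ by affine opens $\Spec B_i$; on each, the restriction of $\eta$ to $B_i$-algebras-that-happen-to-be-irreducible-idempotent should be realized by a homomorphism $D_i \to B_i$ (where $\Spec D_i$ is an affine open of $Y$), obtained by taking the limit/compatible family of the $\eta_{C}$ over all irreducible idempotent quotients $C$ of $B_i$ and using that $B_i \hookrightarrow \prod_{\mathfrak p} C_{\mathfrak p}$ with the image characterized by explicit congruence conditions that $\eta$ respects by naturality. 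Patching the $D_i \to B_i$ on overlaps (again using injectivity, already established) yields $f$, and a final naturality chase shows $f_A = \eta_A$ for every irreducible idempotent semiring $A$.

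The main obstacle, and the place I would spend the most care, is the surjectivity step — specifically, showing that a compatible family of homomorphisms $D \to C_{\mathfrak p}$ indexed by the primes $\mathfrak p$ of an affine piece $\Spec B$ actually descends to a single homomorphism $D \to B$. Over rings this is false in general (the analogue would need something like a sheaf condition), so the proof must genuinely exploit the idempotent structure: in an idempotent (indeed zero-sum-free) semiring the only nilpotent is $0$, $\Spec B$ irreducible means $B$ has no ``extra'' generic behavior, and one can check that $B$ is recovered as a suitable equalizer inside $\prod_{\mathfrak p} C_{\mathfrak p}$ cut out by conditions of the form ``the images in $C_{\mathfrak p}$ and $C_{\mathfrak q}$ agree in $C_{\mathfrak p \cap \mathfrak q}$'', which naturality of $\eta$ automatically preserves. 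I would isolate this as a standalone lemma before starting the main argument, and flag that it is the one place the restriction to irreducible idempotent semirings (rather than all semirings) is essential — the statement is simply false over $\mathbb Z$, as the paper itself notes for related phenomena.
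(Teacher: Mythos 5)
Your proposal has a genuine gap, and it stems from overlooking the one observation that makes this proposition easy. Since $X$ is irreducible, every open affine $U_i=\Spec A_i\subseteq X$ is an irreducible topological space, so each $A_i$ is itself an irreducible idempotent semiring. Hence the tautological points $\iota_i\in X(A_i)$ (the inclusions $U_i\hookrightarrow X$) are already legitimate test points for the restricted functor of points, and the standard Yoneda argument goes through verbatim: $f|_{U_i}=f_{A_i}(\iota_i)$ pins down $f$ on a cover (injectivity), and setting $a_i:=f_{A_i}(\iota_i)$, checking agreement on overlaps via affine refinements of $U_i\cap U_j$, and gluing produces the morphism realizing a given natural transformation (surjectivity). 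This is exactly the paper's proof. Your route instead tests only against ``residue semirings'' $C_{\mathfrak p}$ at individual primes, which forces you into two claims you do not establish: that $B\hookrightarrow\prod_{\mathfrak p}C_{\mathfrak p}$ (so that a homomorphism $D\to B$ is detected by its composites to the $C_{\mathfrak p}$), and that a compatible family of homomorphisms $D\to C_{\mathfrak p}$ descends to a single homomorphism $D\to B$. You flag the second as ``the main obstacle'' and sketch only a hoped-for equalizer description; the first is also doubtful, since semiring quotients are by congruences and the map to a product of pointwise quotients can collapse information that a homomorphism out of $B$ sees. Neither claim is needed once one uses $\Spec A_i$ itself as the test object.

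Two smaller corrections. First, irreducibility and idempotence enter this proposition only through the topological fact that open subsets of an irreducible space are irreducible; there is no need to ``control nilpotents'' here (that machinery is used elsewhere in the paper, e.g.\ for Proposition \ref{proposition: irreducible affine} and Lemma \ref{lemma: nilradical}). Second, your closing remark that ``the statement is simply false over $\mathbb Z$'' is not right: the analogue for ordinary schemes (morphisms out of an irreducible scheme are determined by the functor of points restricted to irreducible rings) holds by the identical argument. What fails over $\mathbb Z$ is a different result in this paper, namely that products of irreducible schemes are irreducible (Proposition \ref{proposition: irreducible product}).
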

\begin{proof}
Suppose $f, g: X \rightarrow Y$ are such that $f_A = g_A$ for all irreducible idempotent semirings $A$.  Take an open affine cover $\{U_i=\Spec A_i\}_{i \in I}$ of $X$, where $A_i$ are idempotent semirings. Observe that $A_i$ is irreducible, because $X$ is irreducible.  The inclusion $\iota_i: U_i \rightarrow X$ is an element $\iota_i \in X(A_i)$.  Then
\[ 
f|_{U_i} = f_{A_i}(\iota_i) = g_{A_i}(\iota_i) = g|_{U_i} \]
Since this holds for all $i$, we have $f = g$.

Now suppose we are given functions $f_A: X(A) \rightarrow Y(A)$ for all irreducible idempotent semirings $A$, and that this is natural in $A$.  Let $U_i$, $A_i$, and $\iota_{i}$ be as before.  For each $i, j$, let $\{V_{ijk}\}$ be a cover of $U_i \cap U_j$ by affine open sets, and write $V_{ijk} = \mathrm{Spec}(B_{ijk})$.  Let $\kappa_{ijk} \in X(B_{ijk})$ be the inclusion $V_{ijk}\rightarrow X$.  Let $\phi_{ijk}: V_{ijk} \rightarrow U_i$ be the inclusion.  Note that $\iota_{i} \circ \phi_{ijk} = \kappa_{ijk}$. 

We obtain maps 
\[
a_i := f_{A_i}(\iota_i): U_i \rightarrow Y, \quad b_{ijk}:= f_{B_{ijk}}(\kappa_{ijk}): V_{ijk} \rightarrow Y.
\]
Naturality implies that
\[ 
a_i |_{V_{ijk}} = a_i \circ \phi_{ijk} = f_{B_{ijk}}(\iota_i \circ \phi_{ijk}) = b_{ijk}. 
\]
Similarly, the restriction of $a_j$ to $V_{ijk}$ is $b_{ijk}$.  So $a_i$ and $a_j$ agree on $V_{ijk}$ for all $k$, which implies they agree on $U_i \cap U_j$.  Since we have a collection of morphisms $a_i: U_i \rightarrow Y$ that agree on overlaps, we obtain a morphism $g:X \to Y$ such that $g|_{U_i} = a_i$. In particular, we have
\[
g_{A_i}=f_{A_i}.
\]
It remains to show that $g_A = f_A$ for all irreducible $A$.  We fix a morphism $x: \Spec A \rightarrow X$ and show 
that
\[
f_A(x) = g_A(x).
\]
Consider first the case where $x$ factors through some $U_i$, i.e., $x = \iota_i \circ y$ for some $y \in U_i(A)$.  By naturality, we have
\[
f_A(x) = f_A(\iota_i \circ y) = f_{A_i}(\iota_i) \circ y, 
\]
and similarly for $g$.  Since $g_{A_i} = f_{A_i}$ by construction, $f_A(x) = g_A(x)$ for such $x$.

We now consider the general case.  Cover each $x^{-1}(U_i)$ by open affines, to get a cover $\{W_{ij}=\Spec C_{ij}\}$ of $\Spec A$ such that $x(W_{ij})\subseteq U_i$.  Let $\mu_{ij}: W_{ij} \rightarrow \Spec A$ be the inclusion.  Let $x_{ij}: W_{ij} \rightarrow X$ be given by
\[
x_{ij} = x \circ \mu_{ij},\quad \text{i.e., } x_{ij} = x|_{W_{ij}}.
\]
  We have that $x_{ij}$ factors through $U_i$, and hence 
 \[
 f_{C_{ij}}(x_{ij}) = g_{C_{ij}}(x_{ij}).
 \] 
By rewriting in terms of $x$ and $\mu_{ij}$ and by using naturality, we have
\[ 
f_A(x) \circ \mu_{ij} = f_{C_{ij}}(x \circ \mu_{ij}) = g_{C_{ij}}(x \circ \mu_{ij}) = g_A(x) \circ \mu_{ij}.
\]
It follows that the maps $f_A(x), g_A(x): \Spec A \rightarrow Y$ agree on each $W_{ij}$, and thus are equal.
\end{proof}

Our next task is to construct some examples of irreducible schemes over any additively idempotent semiring.  We will start with some affine examples.  For this, we will need the following lemma.
\begin{lem}\label{lemma: nilradical}
Let $A$ be an idempotent semiring and $I$ be the nilradical of $A$, i.e., 
\[
I=\{x \in A \mid x^n=0\}.
\]
Then $I$ is a subtractive ideal, i.e. if $a+b \in I$, then $a \in I$.  Moreover, if $A$ is irreducible then $I$ is prime.
\end{lem}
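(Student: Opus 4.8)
The plan is to prove the two assertions of Lemma \ref{lemma: nilradical} separately, both by direct computation exploiting idempotency of addition.

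\textbf{Subtractivity of $I$.} Suppose $a + b \in I$, say $(a+b)^n = 0$. Expanding $(a+b)^n$ using the binomial-type expansion valid in any commutative semiring, we get a sum of terms $a^i b^{n-i}$ (with natural-number coefficients, but since $A$ is idempotent all positive integer coefficients collapse to $1$). Thus $(a+b)^n = \sum_{i=0}^n a^i b^{n-i}$. Since $A$ is zero-sum free (idempotent semirings are zero-sum free: if $x+y=0$ then $x = x + (x+y)\cdot\text{something}$... more directly, $x = x+0 = x + x + y = x+y = 0$ using $x+x=x$), a sum of elements of $A$ equals $0$ only if each summand is $0$. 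In particular $a^n = 0$, so $a \in I$. (Symmetrically $b \in I$.) The key step here is just recording that idempotency forces all the binomial coefficients to be $1$ and that zero-sum-freeness lets us conclude termwise vanishing; I expect this to be routine.

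\textbf{Primality of $I$ when $A$ is irreducible.} I must show $I$ is a proper ideal with $A \setminus I$ multiplicatively closed. Properness: $1 \notin I$ since $1^n = 1 \neq 0$. That it is an ideal is immediate ($Aa \subseteq I$ when $a \in I$, and it is closed under addition by the subtractivity argument or directly since $(a+b)^{m+n}$ expands into terms each divisible by $a^n$ or $b^m$, hence $0$). For the multiplicative complement: suppose $x, y \notin I$, i.e., $x$ and $y$ are not nilpotent; then by Definition \ref{definition: irreducible} (irreducibility of $A$), $xy$ is not nilpotent, so $xy \notin I$. Hence $A \setminus I$ is multiplicatively closed, and it is nonempty since $1 \in A \setminus I$. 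Therefore $I$ is prime.

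The only mild subtlety — and the step I would be most careful about — is making sure the ideal axioms for $I$ (closure under addition and under multiplication by $A$) are cleanly in hand before invoking primality, and confirming that "irreducible semiring" is being used in exactly the sense of Definition \ref{definition: irreducible}, namely that a product of non-nilpotents is non-nilpotent. Both of these are essentially formal once the binomial expansion and zero-sum-freeness of idempotent semirings are noted, so I do not anticipate a genuine obstacle; the whole proof is a short unwinding of definitions.
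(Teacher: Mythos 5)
Your proof is correct and follows essentially the same route as the paper: expand $(a+b)^n$ with all coefficients collapsing to $1$ by additive idempotency, invoke zero-sum-freeness of idempotent semirings to get termwise vanishing (so $a^n=0$), and deduce primality directly from the definition of an irreducible semiring. Your extra justifications (that idempotent semirings are zero-sum-free, and that $I$ is an ideal) are details the paper leaves to the reader, and they check out.
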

\begin{proof}
$I$ is an ideal by the same proof as the classical case.

Since $a+b \in I$, there exists $n \in \mathbb{N}$ such that $(a+b)^n=0$. Since $A$ is idempotent, we have
\[
(a+b)^n = \sum a^kb^{n-k} =0, 
\]
which implies that $a^kb^{n-k}=0$ for all $k$, as any idempotent semiring is zero-sum-free. In particular $a^n=0$, showing that $a \in I$.  Thus $I$ is subtractive.

The claim about primality follows immediately from the definition of an irreducible semiring.
\end{proof}

\begin{rmk}For many of our results on irreducible idempotent semirings, the only place we need idempotence is to establish that if $I$ is the nilradical then $A / I$ is zero-sum-free and $x + y \in I$ implies $x \in I$.  These facts are trivially true for zero-sum-free semirings with no nontrivial nilpotents.  Thus many of the below results apply in this setting as well.  In particular if $A$ is zero-sum-free (e.g. $A = \mathbb{N}$), then any monoid algebra $A[M]$ has no nontrivial zero-divisors.
\end{rmk}

The only tool for establishing a semiring is irreducible that will be needed for our focus on the toric case is Proposition~\ref{proposition: irreducible affine}. However, we will prove other irreducibility results for the sake of making the theory more general.

Recall that for an $\mathbb{F}_1$-algebra $M$ and a semiring $A$, by the base change $A\otimes_{\mathbb{F}_1}M$ we mean the monoid semiring $A[M]/\sim$ where $\sim$ identifies $0 \in M$ and $0 \in A$. See Section \ref{section: tropical toric schemes}. 

\begin{pro}\label{proposition: irreducible affine}
Let $A$ be an irreducible idempotent semiring, and let $M$ be an irreducible $\mathbb{F}_1$-algebra.  Then the semiring $A \otimes_{\mathbb{F}_1} M$  is irreducible.   Moreover, if neither $A$ nor $M$ have nontrivial zero-divisors, then neither does $A \otimes_{\mathbb{F}_1} M$.  In particular for any monoid $M$, $A[M]$ is irreducible, and if $A$ has no nontrivial zero-divisors, $A[M]$ does not either. 
\end{pro}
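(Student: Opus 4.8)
The plan is to reduce the statement about the semiring $A \otimes_{\mathbb{F}_1} M = A[M]$ (with $0_M$ and $0_A$ identified) to a concrete computation with monomials, using the fact that an element of $A[M]$ is a finite formal $A$-linear combination $\sum_{m} a_m m$ of nonzero elements $m \in M$. First I would set up notation: write $I_A$ for the nilradical of $A$ and $I_M$ for the nilradical of $M$ (the set of nilpotent elements of the monoid, which is an ideal since $M$ is pointed), and recall from Lemma \ref{lemma: nilradical} that $I_A$ is a subtractive prime ideal (using irreducibility of $A$), and from the hypothesis that $M$ is irreducible that $M \setminus I_M$ is multiplicatively closed. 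The key structural observation is that $A[M]$ has a natural ``coefficientwise'' filtration: the subset $J$ consisting of those $f = \sum a_m m$ with $a_m \in I_A$ for every $m$, together with those $f$ all of whose monomials $m$ lie in $I_M$. I would first argue $A[M]/J$ is isomorphic to $(A/I_A)[M/I_M]$ (quotienting coefficients by $I_A$ and killing nilpotent monomials), and that this quotient is zero-sum-free because $A/I_A$ is zero-sum-free (Lemma \ref{lemma: nilradical}) and $M/I_M$ has no nilpotents.

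The heart of the argument is to show $J$ is contained in the nilradical of $A[M]$: given $f \in J$, since $f$ has finitely many monomials, each either with nilpotent coefficient or nilpotent monomial, and since $A$ is idempotent (so $A[M]$ is zero-sum-free, hence $(f)^n$ expands without cancellation into a sum of products of monomials of $f$), I can choose $n$ large enough that in every term of $f^n$ either some coefficient-factor is killed (a high power of a nilpotent element of $A$, which vanishes) or some monomial-factor is nilpotent to a high enough power; using idempotence again to collapse repeated summands, $f^n = 0$. This is essentially the same bookkeeping as in the proof of Lemma \ref{lemma: nilradical}, pushed through the monoid algebra. Combined with the previous paragraph, the nilradical of $A[M]$ equals $J$, the quotient is zero-sum-free with no nilpotents, hence irreducible (trivially), which gives that $A[M]$ is irreducible by Definition \ref{definition: irreducible} (a product being nilpotent means its image in $A[M]/J$ is $0$, i.e. one factor's image is $0$, i.e. that factor lies in $J$, hence is nilpotent).

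For the ``no nontrivial zero-divisors'' refinement, I would instead work directly: suppose neither $A$ nor $M$ has nontrivial zero-divisors, and let $f = \sum a_m m$, $g = \sum b_p p$ be nonzero elements of $A[M]$ with $fg = 0$. Because $M$ has no zero-divisors, multiplication $M \to M$ is injective on nonzero elements, so I can use a total (or just well-chosen) ordering trick on the monomials: pick a group completion / ordering so that $M$ embeds, monomial-wise, into something where leading terms behave well — or more simply, pick monomials $m_0, p_0$ appearing in $f, g$ such that the product $m_0 p_0$ is ``extremal'' and is hit by no other monomial pair (here one uses that $M$ injects into its group completion when $M$ is a cancellative — no zero-divisor — monoid, reducing to the classical argument that a monoid algebra over a domain is a domain). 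Then the coefficient of $m_0 p_0$ in $fg$ is $a_{m_0} b_{p_0}$ with no other contributions, forcing $a_{m_0} b_{p_0} = 0$, contradicting that $A$ has no zero-divisors.

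The main obstacle I anticipate is the zero-divisor step: over a semiring one cannot subtract, so the classical ``compare leading coefficients, cancel, induct'' argument for polynomial rings over a domain needs to be recast purely in terms of which monomial products are uniquely achieved. The clean fix is to reduce to the case where $M$ is cancellative (equivalently, embeds in an abelian group), observe $A[M]$ is then a sub-semiring of $A[G]$ for $G$ the group completion, choose a total order on $G$ compatible with the group structure (possible since $G$ is a torsion-free abelian group when... — in general one orders a $\mathbb{Q}$-vector space containing the torsion-free part, handling torsion separately), and then the top-degree coefficient of a product of nonzero elements is a product of nonzero coefficients, hence nonzero since $A$ has no zero-divisors. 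The final ``in particular'' clause for arbitrary monoids $M$ (not just irreducible ones) follows because, as observed in the remark preceding the proposition, these statements only used zero-sum-freeness of $A/I_A$ and the absence of nilpotents there, and for the irreducibility claim one does not actually need $M$ to be irreducible once one quotients by $I_M$; for the no-zero-divisor claim one applies the cancellative reduction directly.
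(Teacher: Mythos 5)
There are two genuine gaps. First, in your reduction to the quotient $A[M]/J \cong (A/I_A)[M/I_M]$: even granting that $J$ is the nilradical and that the quotient is reduced and zero-sum-free, the step ``a product being nilpotent means its image in the quotient is $0$, i.e.\ one factor's image is $0$'' requires the quotient to have no nontrivial zero-divisors. Reduced plus zero-sum-free does not give this (e.g.\ $\mathbb{B}\times\mathbb{B}$ is reduced and zero-sum-free but $(1,0)\cdot(0,1)=0$), and ``no zero-divisors in the reduced quotient'' is precisely the content of irreducibility of $A\otimes_{\mathbb{F}_1}M$ --- so this part of your argument assumes what is to be proved.

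Second, your zero-divisor argument does not go through as stated. Having no nontrivial zero-divisors is not equivalent to cancellativity for a monoid (e.g.\ $\{1,a,a^2\}$ with $a^3=a^2$ and a zero adjoined has no zero-divisors but is not cancellative), so you cannot in general embed $M$ into a group. Even when $M$ is cancellative, its group completion may have torsion (e.g.\ $\mathbb{Z}/2\mathbb{Z}$), which admits no compatible total order, so the leading-coefficient argument cannot be run; the paper's own observation that the result fails for $\mathbb{C}\otimes_{\mathbb{F}_1}\mathbb{Z}/2\mathbb{Z}$ shows that any correct proof must use zero-sum-freeness in an essential way rather than an ordering trick. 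The paper's proof avoids all of this: expand $xy=\sum_{m,n}x_m y_n[mn]$ and use Lemma \ref{lemma: nilradical} (subtractivity of the nilradical in an idempotent semiring) to conclude that each summand $x_m y_n[mn]$ is nilpotent (respectively zero, by zero-sum-freeness, in the zero-divisor case); a short case analysis using irreducibility of $A$ and of $M$ separately then finishes the argument. Your computation of the nilradical of $A[M]$ in the first two paragraphs is essentially sound, but it is not needed once one argues termwise as the paper does.
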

\begin{proof}In the monoid case, we can adjoin a zero element to get to the case of an $\mathbb{F}_1$-algebra with no non-trivial zero-divisors.

Let $x, y\in A \otimes_{\mathbb{F}_1} M$ be such that $xy$ is nilpotent.  We wish to show either $x$ or $y$ is nilpotent.  Write
\[ 
x = \sum_{m\in M} x_m [m], 
\]
where $x_m \in A$, and we have a similar expansion for $y$.  Then
\[ \sum_{m, n \in M} x_m y_n [mn] \]
is nilpotent. Thus, from Lemma \ref{lemma: nilradical}, $x_m y_n [mn]$ is nilpotent for all $m, n\in M$.  Then there is some $k$ such that $(x_m y_n)^k [(mn)^k] = 0$.  Since non-zero elements of $M$ form a basis for $A \otimes_{\mathbb{F}_1} M$, either $[(mn)^k] = 0$ or $(x_m y_n)^k = 0$.  

If for all $n\in M$ either $y_n$ is nilpotent  or $[n]$ is nilpotent, then $y = \sum y_n [n]$ lies in the nilradical, and we are done.  Otherwise choose $n\in M$ such that neither $y_n$ nor $[n]$ is nilpotent and choose $m\in M$ arbitrarily.  In the case that $x_m y_n$ is nilpotent, since $y_n$ is not, irreducibility of $A$ tells us $x_m$ is nilpotent.  In the other case where $[mn]$ is nilpotent, irreducibility of $M$ tells us $[m]$ is nilpotent.  Thus for all $m\in M$, $x_m [m]$ is nilpotent and thus $x = \sum x_m [m]$ is nilpotent. This shows irreducibility of $A\otimes_{\mathbb{F}_1}M$.

For the second assertion, if neither $A$ nor $M$ have non-trivial zero-divisors (in particular all nilpotents are trivial), suppose $xy = 0$ (in particular $xy$ is nilpotent).  We may follow the argument above, and in first case of the previous paragraph we obtain that for all $n\in M$, either $y_n$ is nilpotent (so $y_n = 0$) or $[n]$ is nilpotent (so $[n] = 0$).  Either way $y = \sum y_n [n] = 0$.  In the remaining case, as in the previous paragraph we get that for each $m$, $x_m$ or $[m]$ is nilpotent, and as in the previous case we get $x = \sum x_m [m] = 0$.
\end{proof}

Note that the above result is false over rings, as seen in the case $A = \mathbb{C}$ and $M = \mathbb{Z}/2\mathbb{Z}$.  Also note that while we generally assume commutativity, the above result holds for noncommutative $\mathbb{F}_1$-algebras as well.

The following lemma will be useful for studying irreducibility of quotient semirings.
\begin{lem}\label{lemma: irreducible quotient}
Let $A$ be an irreducible idempotent semiring.  Let $\sim$ be a congruence generated by a collection of pairs $(x_i, y_i)$ with the property that for each $i$ in the index set, either both $x_i$ and $y_i$ are nilpotent or both are non-nilpotent.  Then if $x \sim y$, either both $x$ and $y$ are nilpotent or both are non-nilpotent.  Moreover, the quotient $A / \sim$ is irreducible.  If $A$ has no nontrivial zero-divisors, then neither does $A / \sim$.
\end{lem}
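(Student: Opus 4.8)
The plan is to factor everything through a single homomorphism to $\mathbb{B}$ and then invoke the universal property of a quotient by a congruence. Let $I$ be the nilradical of $A$. By Lemma \ref{lemma: nilradical}, $I$ is a subtractive ideal and, since $A$ is irreducible, it is prime; also $1\notin I$ since $1\neq 0$. Define $f:A\to\mathbb{B}$ by $f(x)=0$ if $x\in I$ and $f(x)=1$ otherwise, so that $x$ is nilpotent if and only if $f(x)=0$. First I would check that $f$ is a homomorphism of semirings: multiplicativity $f(xy)=f(x)f(y)$ is the statement ``$xy\in I$ iff $x\in I$ or $y\in I$'', which is precisely primality of $I$; additivity $f(x+y)=f(x)+f(y)$ is the statement ``$x+y\in I$ iff $x\in I$ and $y\in I$'', whose forward implication is subtractivity of $I$ and whose reverse implication is that $I$ is an ideal; and $f(0)=0$, $f(1)=1$ are immediate.

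The hypothesis on the generating pairs says precisely that $f(x_i)=f(y_i)$ for all $i$. Since $\{(a,b):f(a)=f(b)\}$ is a congruence on $A$ containing every pair $(x_i,y_i)$, it contains the congruence $\sim$ generated by those pairs; equivalently, $f$ descends to a homomorphism $A/\!\sim\,\to\mathbb{B}$. Hence $x\sim y$ implies $f(x)=f(y)$, i.e. $x$ and $y$ are either both nilpotent or both non-nilpotent, which is the first assertion. It also follows that for any $z\in A$ the class $\bar z$ is nilpotent in $A/\!\sim$ if and only if $z$ is nilpotent in $A$: if $z^n\sim 0$ then $f(z^n)=f(0)=0$, so $z^n$ and hence $z$ is nilpotent in $A$, while the converse is clear.

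Given this, irreducibility of $A/\!\sim$ is immediate: if $\bar x\bar y=\overline{xy}$ is nilpotent in $A/\!\sim$, then $xy$ is nilpotent in $A$, so by irreducibility of $A$ either $x$ or $y$ is nilpotent in $A$, and therefore $\bar x$ or $\bar y$ is nilpotent in $A/\!\sim$. For the final assertion, if $A$ has no nontrivial zero-divisors then $I=\{0\}$ (a minimal exponent killing a nonzero element would have to be $1$), so $\bar x\bar y=\bar 0$ means $xy\sim 0$, whence $f(xy)=0$, that is $xy\in I=\{0\}$; thus $xy=0$ in $A$ and so $x=0$ or $y=0$, giving $\bar x=\bar 0$ or $\bar y=\bar 0$.

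I expect the only non-formal step to be the verification that $f$ is additive, which is exactly where idempotence of $A$ enters, via the subtractivity of the nilradical established in Lemma \ref{lemma: nilradical}; as the Remark above indicates, the argument therefore applies more generally to zero-sum-free semirings with no nontrivial nilpotents. Everything else is a formal consequence of the universal property of quotient congruences together with the irreducibility of $A$ itself.
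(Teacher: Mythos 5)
Your proof is correct and follows essentially the same route as the paper's: both arguments construct the nilpotence-detecting homomorphism $A\to\mathbb{B}$ (via the subtractive prime nilradical from Lemma \ref{lemma: nilradical}), observe that its kernel congruence contains all generating pairs and hence contains $\sim$, and then deduce irreducibility of $A/\!\sim$ by lifting a nilpotent product and applying irreducibility of $A$. The only differences are cosmetic — you verify additivity and multiplicativity of the map to $\mathbb{B}$ directly rather than factoring through $A/I$, and in the zero-divisor case you note $I=\{0\}$ instead of invoking "no nontrivial nilpotents" — so no further changes are needed.
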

\begin{proof}
Let $P$ be the nilradical, which is a subtractive prime ideal by Lemma \ref{lemma: nilradical}.  Then $A / P$ is a zero-sum-free semiring with no nontrivial zero-divisors.  This implies the map $\phi: A/ P \rightarrow \mathbb{B}$ given by $\phi(0) = 0$ and $\phi(x) = 1$ for $x \neq 0$ is a homomorphism.  We consider instead the epimorphism $\psi: A \rightarrow \mathbb{B}$ obtained by composing with the quotient map; explicitly $\psi(x)$ is $0$ or $1$ according to whether $x$ is nilpotent.

Write $x \equiv y$ if $x$ and $y$ are both nilpotent or both non-nilpotent.  Since this is equivalent to $\psi(x) = \psi(y)$, $\equiv$ is a congruence.  By assumption it contains all generators of the congruence $\sim$.  Thus if $x \sim y$ then $x \equiv y$, which is the first part of the result.

For the second part of the result, suppose $\bar{x} \bar{y} \in A / \sim$ is nilpotent.  Pick lifts $x, y\in A$, and observe that for some $k$, $(xy)^k \sim 0$.  Thus $(xy)^k$ is nilpotent, which implies $xy$ is nilpotent.  By irreducibility, either $x$ or $y$ is nilpotent, which implies the same for $\bar{x}$ or $\bar{y}$.

If $A$ has no nontrivial zero-divisors, suppose $\bar{x} \bar{y} = 0$.  It is nilpotent, so we may follow the previous paragraph to obtain that $x$ or $y$ is nilpotent.  But $A$ has no nontrivial nilpotents, so $x = 0$ or $y = 0$ and hence $\bar{x} = 0$ or $\bar{y} = 0$.
\end{proof}

One application of the above result is to the tropicalization of a variety not contained in the union of the coordinate hyperplanes. Note that a monoid does not have an absorbing element $0$, and hence it is always irreducible as it cannot have nilpotent elements. 

\begin{pro}\label{proposition: tropical hyperplane}
Let $K$ be a valued field, $M$ be a monoid, and $I\subseteq K[M]$ be an ideal.  Let $R = K[M] / I$, where $I$ does not contain any element of $M$.  Let $A$ be the tropicalization of $R$, i.e. the quotient of $\mathbb{T}[M]$ by bend relations corresponding to elements of $I$.  Then $A$ has no nontrivial zero-divisors, in particular $A$ is irreducible.
\end{pro}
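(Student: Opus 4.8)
The plan is to exhibit $A$ as a quotient of the irreducible idempotent semiring $\mathbb{T}[M]$ by a congruence of the kind controlled by Lemma~\ref{lemma: irreducible quotient}. First I would record that $\mathbb{T}[M]$ is an irreducible idempotent semiring with no nontrivial zero-divisors: this is the ``in particular'' clause of Proposition~\ref{proposition: irreducible affine}, since $\mathbb{T}$ is an idempotent semifield and hence has no nontrivial zero-divisors. In particular $\mathbb{T}[M]$ is reduced, so an element of $\mathbb{T}[M]$ is nilpotent if and only if it is $0$.

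Next I would unwind the definition of the tropicalization $A$. By construction $A = \mathbb{T}[M]/\mathrm{B}(I)$, where $\mathrm{B}(I)$ is the congruence on $\mathbb{T}[M]$ generated by the bend relations of the tropicalized polynomials $\mathrm{trop}(f)$ for $f \in I$; writing $f = \sum_m a_m[m]$ with $a_m \in K$, we have $\mathrm{trop}(f) = \sum_m v(a_m)[m] \in \mathbb{T}[M]$, where $v : K \to \mathbb{T}$ is the valuation, so that $v(a_m) \neq 0_\mathbb{T}$ whenever $a_m \neq 0$, and hence $\mathrm{supp}(\mathrm{trop}(f)) = \mathrm{supp}(f)$. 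Recall that the bend congruence of a polynomial $g = \sum_{m \in \mathrm{supp}(g)} b_m[m]$ is generated by the pairs $(g,\ g \setminus \{m_0\})$ for $m_0 \in \mathrm{supp}(g)$, where $g \setminus \{m_0\}$ denotes $g$ with its $m_0$-term deleted. Thus $\mathrm{B}(I)$ is generated by the pairs $(\mathrm{trop}(f),\ \mathrm{trop}(f) \setminus \{m_0\})$ as $f$ ranges over $I$ and $m_0$ over $\mathrm{supp}(f)$.

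The one real input is that $I$ contains no nonzero monomial: if $a[m] \in I$ with $a \in K^\times$, then $[m] = a^{-1}(a[m]) \in I$, contradicting the hypothesis that $I$ contains no element of $M$. Consequently every nonzero $f \in I$ has $|\mathrm{supp}(f)| \geq 2$, so $\mathrm{trop}(f)$ likewise has at least two terms, whence both $\mathrm{trop}(f)$ and each $\mathrm{trop}(f) \setminus \{m_0\}$ are nonzero --- hence non-nilpotent, $\mathbb{T}[M]$ being reduced. (For $f = 0$ the bend relation is trivial, both entries being $0$ and so both nilpotent.) Therefore $\mathrm{B}(I)$ is generated by pairs each of whose two entries is nilpotent, or each of whose two entries is non-nilpotent. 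Applying Lemma~\ref{lemma: irreducible quotient} to the irreducible idempotent semiring $\mathbb{T}[M]$ and this congruence then yields that $A = \mathbb{T}[M]/\mathrm{B}(I)$ has no nontrivial zero-divisors, and in particular is irreducible.

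The argument is bookkeeping once Lemma~\ref{lemma: irreducible quotient} is available; the step needing the most care --- and the nearest thing to an obstacle --- is checking that the generators of $\mathrm{B}(I)$ meet the ``both nilpotent or both non-nilpotent'' hypothesis, which is precisely where the assumption that $I$ avoids $M$ is used (through the absence of monomials in $I$, forcing each relevant polynomial to have at least two terms). One should also match the description of the generators of the bend congruence to the one in \cite{giansiracusa2016equations}; this is harmless for the argument, since any such generator has support inside that of some $\mathrm{trop}(f)$ and is nonzero as soon as $\mathrm{trop}(f)$ has at least two terms.
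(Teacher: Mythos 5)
Your proof is correct and follows essentially the same route as the paper: reduce to Lemma~\ref{lemma: irreducible quotient} by checking that each bend-relation generator identifies two non-nilpotents of the reduced semiring $\mathbb{T}[M]$, with the hypothesis that $I$ avoids $M$ used exactly where you use it, to rule out one-term elements of $I$. The only cosmetic difference is that you spell out why $I$ contains no nonzero monomial ($[m] = a^{-1}(a[m])$), which the paper states more tersely.
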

\begin{proof}
Observe that $\mathbb{T}[M]$ has no nontrivial zero-divisors by Proposition \ref{proposition: irreducible affine}.  Let $\sim$ be the congruence of $\mathbb{T}[M]$ generated by bend relations.  Let $v_K$ be the valuation on $K$.

Let $f \in I \subseteq K[M]$.  Write $f = \sum a_i [m_i]$ with the $a_i$ all nonzero. If $f$ has no terms (i.e., $f = 0$), then the set of bend relations for $f$ is empty and the bend congruence is just the diagonal congruence. If there is only one term then $[m_1] = f / a_1 \in I \cap M$ contradicting the hypothesis. Thus we may assume there are at least two nonzero terms.  

For each $i$, consider the corresponding bend relation
\[ 
\sum_j v_K(a_j) [m_j] \sim \sum_{j \neq i} v_K(a_j) [m_j]. 
\]
Since $f$ contains more than one term, both sums are nonempty.  Moreover each term is nonzero.  So the bend relation identifies a non-zero element with another non-zero element.  Since $\mathbb{T}[M]$ is reduced, the bend relation identifies two non-nilpotents.  Thus the result follows from Lemma \ref{lemma: irreducible quotient}.
\end{proof}

\begin{rmk}
Note that one may also prove Proposition \ref{proposition: tropical hyperplane} by using \cite[Corollary 6.11]{JMT20}.    
\end{rmk}

Next we show that a tensor product of irreducible idempotent semirings is irreducible.  We do so by viewing the tensor product as a quotient of a monoid algebra.

\begin{pro}\label{proposition: tensor product is irreducible}Let $R$ be an idempotent semiring, and let $A, B$ be irreducible $R$-algebras.   Assume that the maps $R \rightarrow A$ and $R \rightarrow B$ have trivial kernels (which is automatic if $R$ is a semifield).\footnote{Here, a kernel does not mean an equalizer, but it means simply the inverse image of $0$.} Then $A \otimes_R B$ is irreducible. 
\end{pro}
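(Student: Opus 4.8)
The plan is to reduce the statement about the tensor product $A \otimes_R B$ to the already-established irreducibility results for monoid algebras and their quotients, namely Proposition~\ref{proposition: irreducible affine} and Lemma~\ref{lemma: irreducible quotient}. First I would pick generating sets: write $A$ as a quotient of a monoid algebra $R[M]$ and $B$ as a quotient of a monoid algebra $R[N]$, where $M$ and $N$ are free commutative monoids (with zero adjoined) on chosen generating sets of $A$ and $B$ over $R$. Then $A \otimes_R B$ is a quotient of $R[M] \otimes_R R[N] \cong R[M \times N]$, which is itself a quotient of $R[M \times N]$ over the idempotent semiring $R$; since $R$ is idempotent and $M \times N$ is a monoid, $R[M\times N]$ is irreducible by Proposition~\ref{proposition: irreducible affine}. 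So it suffices to show that the congruence cutting out $A \otimes_R B$ inside $R[M \times N]$ satisfies the hypothesis of Lemma~\ref{lemma: irreducible quotient}: each generating relation identifies two elements that are either both nilpotent or both non-nilpotent.

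The key point is to identify the generators of that congruence and check the nilpotency condition. The congruence presenting $A \otimes_R B$ is generated by (i) the relations pulled back from the presentation of $A$ (relations among the $M$-variables, with $N$-variables as ``constants''), (ii) the relations pulled back from the presentation of $B$ symmetrically, and (iii) the ``scalar'' relations $r \cdot x \sim x'$ coming from the $R$-module structure — but these last ones are exactly the relations already present in $R[M]$ and $R[N]$ individually, so they are subsumed. For a relation of type (i), it comes from an identity $f(\underline{m}) \sim g(\underline{m})$ holding in $A$; I claim $f$ is nilpotent in $R[M\times N]$ iff $g$ is. Here I would use the homomorphism $\psi: C \to \mathbb{B}$ (sending nilpotents to $0$, everything else to $1$) that exists for any irreducible idempotent semiring $C$, as in the proof of Lemma~\ref{lemma: irreducible quotient}: applying $\psi_A : A \to \mathbb{B}$ to $f \sim g$ (which holds in $A$) gives $\psi_A(\bar f) = \psi_A(\bar g)$, and this can be matched with $\psi_{R[M\times N]}$ because both factor through the map to $\mathbb{B}$ determined by the nilradical. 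This is where the hypothesis that $R \to A$ and $R \to B$ have trivial kernel is needed: it guarantees that the map $R[M\times N] \to A$ (and to $B$) does not collapse a nonzero monomial to $0$ for spurious reasons, so that non-nilpotent monomials stay non-nilpotent, keeping the bookkeeping of $\psi$ consistent across the presentations.

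Once the congruence is checked to satisfy the hypothesis of Lemma~\ref{lemma: irreducible quotient}, that lemma immediately yields that $A \otimes_R B = R[M\times N]/\!\sim$ is irreducible, completing the proof. I expect the main obstacle to be the careful verification that the generating relations of the tensor-product congruence really do preserve the nilpotent/non-nilpotent dichotomy — in particular, handling the interaction between the two sets of relations (those from $A$ and those from $B$) and making precise why the trivial-kernel hypothesis prevents a relation that is ``trivial'' in $A$ (or $B$) from becoming a nilpotent-vs-non-nilpotent relation in $R[M\times N]$. A clean way to organize this is to observe that $\psi_{A\otimes_R B}$ should be the composite $R[M\times N] \to A \otimes_R B \to \mathbb{B}$, and to check directly that this agrees on generators with the ``record whether you are nilpotent'' map on $R[M\times N]$; the trivial-kernel condition is exactly what makes the two $\mathbb{B}$-valued maps agree on the scalar relations.
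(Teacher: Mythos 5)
Your overall strategy --- realize $A\otimes_R B$ as a quotient of an irreducible monoid algebra and invoke Lemma~\ref{lemma: irreducible quotient} --- is the right one, and it is the same strategy the paper uses. But your specific choice of presentation breaks the key hypothesis, and the gap is not repairable by the bookkeeping you sketch. If $M$ is a \emph{free} monoid on a generating set of $A$, then no nonzero monomial of $R[M]$ is nilpotent, so any relation in the presentation of $A$ of the form $f \sim 0$ with $f$ a nonzero monomial (e.g.\ $x^2 \sim 0$ in $A = \mathbb{B}[x]/\langle x^2 \sim 0\rangle$, which is an irreducible $\mathbb{B}$-algebra) identifies a non-nilpotent of $R[M]$ with a nilpotent, violating the hypothesis of Lemma~\ref{lemma: irreducible quotient}. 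Your proposed fix --- transporting $\psi_A(\bar f)=\psi_A(\bar g)$ back to $R[M\times N]$ --- fails precisely here: the presentation map $R[M]\to A$ genuinely sends non-nilpotents to nilpotents, so $\psi_{R[M\times N]}$ and $\psi_A\circ(\text{quotient})$ do \emph{not} agree; the trivial-kernel hypothesis only controls the structure map $R\to A$, not the presentation map. Worse, the first assertion of Lemma~\ref{lemma: irreducible quotient} (a congruence generated by nilpotence-preserving pairs only identifies elements of the same nilpotence status) shows that \emph{no} generating set of the congruence for such an $A$ over a free monoid algebra can satisfy the hypothesis, since the congruence itself identifies the non-nilpotent $x^2$ with $0$. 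So the approach as written cannot be completed.

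The paper avoids this by choosing the monoid asymmetrically and maximally: it forms $A[B] = A\otimes_{\mathbb{F}_1} B$ where $B$ enters through its \emph{entire} multiplicative monoid. Then a monomial $[b]$ is nilpotent in $A[B]$ exactly when $b$ is nilpotent in $B$, and the only relations needed to cut out $A\otimes_R B$ are the additivity relations $[b_1]+[b_2]\sim[b_1+b_2]$ and the scalar identifications $r\sim[r]$. The first kind preserves nilpotence because the nilradical of an idempotent semiring is subtractive (Lemma~\ref{lemma: nilradical}), and the second kind does so by the trivial-kernel hypothesis --- this is the (only) place that hypothesis is used. If you want to keep a symmetric picture, you could take $M$ and $N$ to be the full multiplicative monoids of $A$ and $B$; but free monoids on generating sets do not work.
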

\begin{proof}If we forget the addition structure on $B$, we can view it as an irreducible $\mathbb{F}_1$-algebra.  We can then construct the $A$-algebra $A \otimes_{\mathbb{F}_1} B$.  This is irreducible by Proposition \ref{proposition: irreducible affine}. By abuse of notation we denote it $A[B]$, since this makes the asymmetry in how we treat $A$ and $B$ more clear.

Define a congruence $\sim$ on $A[B]$ generated by $[b_1] + [b_2] \sim [b_1 + b_2]$ for $b_1, b_2 \in B$ and $r \sim [r]$ for $r \in R$ (i.e., $\sim$ identifies the copy of $R$ for $B$ with the one for $A$).  Because the structure maps have trivial kernel, $r^k = 0$ if and only if $[r]^k = 0$, so $r$ is nilpotent if and only if $[r]$ is nilpotent.  If $[b_1 + b_2]$ is nilpotent, there is some $k$ such that $[(b_1 + b_2)^k] = 0$ and hence $(b_1 + b_2)^k = 0$, i.e., $b_1 + b_2$ is nilpotent.  By Lemma \ref{lemma: nilradical}, this implies $b_1$ and $b_2$ are nilpotents in $B$ so $[b_1], [b_2] \in A[B]$ are nilpotent, and hence $[b_1] + [b_2]$ is nilpotent.  Conversely if $[b_1] + [b_2]$ is nilpotent, Lemma \ref{lemma: nilradical} implies each term is nilpotent, so $b_1, b_2$ are nilpotent, and hence $[b_1 + b_2]$ is nilpotent.  So no generator of $\sim$ identifies a nilpotent element with a non-nilpotent.  Thus $A[B] / \sim$ is irreducible by Lemma \ref{lemma: irreducible quotient}.

$A[B]$ has the universal property that a homomorphism $A[B] \rightarrow S$ corresponds to a pair of a semiring homomorphism $A \rightarrow S$ and an $\mathbb{F}_1$-algebra homomorphism $B \rightarrow S$ (viewed $S$ as a multiplicative monoid with $0$).  A homomorphism $(A[B] / \sim) \rightarrow S$ corresponds to a homomorphism $\phi: A[B] \rightarrow S$ such that $\phi(r) = \phi([r])$ for all $r\in R$ and $\phi([b_1]) + \phi([b_2]) = \phi([b_1 + b_2])$ for all $b_1, b_2 \in B$.  This in turn corresponds to a pair of a homomorphism $\phi_A: A \rightarrow S$ and an $\mathbb{F}_1$ algebra homomorphism $\phi_B: B\rightarrow S$ such that $\phi_A$ and $\phi_B$ agree on $R$ and such that $\phi_B$ preserves addition (i.e. $\phi_B$ is in fact a semiring homomorphism).

The universal property of tensor products of semirings says that a homomorphism $A \otimes_R B \rightarrow S$ corresponds to a pair of homomorphisms $A \rightarrow S$ and $B\rightarrow S$ which agree on $R$.  Thus $A \otimes_R B \cong A[B] / \sim$.  Since $A[B] / \sim$ is irreducible, so is $A \otimes_R B$.
\end{proof}

\begin{rmk}
    The above result fails for classical schemes, e.g., $\mathbb{C} \otimes_\mathbb{R} \mathbb{C}$ is not even connected.
\end{rmk}

The following lemma is useful in order to prove similar irreducibility results for non-affine schemes.

\begin{lem}\label{lemma: irreducible graph}
Let $X$ be a sober space with an open cover $\{U_i\}$ such that each $U_i$ is irreducible. Construct a graph whose vertices correspond to sets of the open cover. Two vertices $i$ and $j$ are connected by an edge if $U_i \cap U_j \neq \emptyset$. This graph is connected if and only if $X$ is irreducible. 
\end{lem}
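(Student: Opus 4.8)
The plan is to prove both implications via the standard dictionary between irreducibility of a sober space and the non-existence of a separation into two nonempty open sets whose union is the whole space and which do not intersect densely. First I would establish notation: for each subset $S$ of the index set, write $U_S := \bigcup_{i\in S} U_i$, which is open. The crucial observation, which I would record as a preliminary step, is that if $S$ and $T$ partition the index set and no edge of the graph joins $S$ to $T$, then $U_S \cap U_T = \emptyset$ (because an edge is present precisely when the corresponding $U_i$ and $U_j$ meet). Conversely, connectedness of the graph should be translated into the statement that one cannot partition the $U_i$ into two nonempty families whose unions are disjoint.

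For the direction ``graph connected $\Rightarrow$ $X$ irreducible'': I would argue by contradiction. Suppose $X$ is reducible; since $X$ is sober (hence, in particular, the empty set is not irreducible and a reducible space admits a decomposition as a union of two proper closed subsets), we can write $X = C_1 \cup C_2$ with $C_1, C_2$ proper closed subsets, equivalently $X$ contains two nonempty disjoint open subsets $V_1, V_2$ — wait, more carefully: reducibility gives two nonempty \emph{open} sets $V_1, V_2$ with $V_1 \cap V_2 = \emptyset$. Now partition the index set: put $i$ into $S$ if $U_i \cap V_1 \neq \emptyset$ and into $T$ otherwise. Here I use that each $U_i$ is irreducible: an irreducible space cannot contain two nonempty disjoint open subsets, so $U_i$ cannot meet both $V_1$ and $V_2$; hence $i \in T$ forces $U_i \cap V_1 = \emptyset$, and since $\{U_i\}$ covers $X \supseteq V_1 \neq \emptyset$, there is at least one index in $S$; symmetrically (using $V_2 \neq\emptyset$) there is an index in $T$, so the partition is into two nonempty parts. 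If $i \in S$ and $j\in T$, then $U_i \subseteq $ (the open set $X \setminus \overline{V_2}$? no) — instead I argue directly: $U_i$ meets $V_1$ and, being irreducible, $V_1 \cap U_i$ is dense in $U_i$; since $U_j \cap V_1 = \emptyset$, we get... hmm, this needs the sober/irreducible structure. The clean way: $U_i \cap U_j$ is open in the irreducible space $U_i$, and $U_i \cap V_1$ is a nonempty open subset of the irreducible space $U_i$, so if $U_i\cap U_j$ were nonempty they would have to intersect; but $U_i \cap U_j \subseteq U_j$ is disjoint from $V_1$. Hence $U_i \cap U_j = \emptyset$, so no edge joins $S$ and $T$, contradicting connectedness of the graph.

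For the converse ``$X$ irreducible $\Rightarrow$ graph connected'': again by contradiction, suppose the graph is disconnected, so the index set partitions into nonempty parts $S, T$ with no edge between them, i.e. $U_i \cap U_j = \emptyset$ whenever $i \in S, j \in T$. Then $U_S = \bigcup_{i\in S} U_i$ and $U_T = \bigcup_{j\in T} U_j$ are nonempty open sets, $U_S \cup U_T = X$, and $U_S \cap U_T = \bigcup_{i\in S, j\in T}(U_i \cap U_j) = \emptyset$. Two nonempty disjoint open sets covering $X$ contradict irreducibility of $X$ (an irreducible space is in particular connected, but more strongly cannot contain any two nonempty disjoint open sets). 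This completes the proof. I expect the main obstacle to be the first implication, specifically the bookkeeping needed to turn ``$X$ reducible'' into a genuine partition of the open cover and then to verify that the partition respects the edge relation; the soberness hypothesis and the irreducibility of each $U_i$ are exactly what make that bookkeeping go through, and the role of soberness (ensuring, e.g., that reducibility is detected by open sets in the way we want, and that generic points behave well) should be spelled out carefully.
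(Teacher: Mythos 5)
Your proof is correct, but it takes a genuinely different route from the paper's. The paper argues via generic points: since $X$ is sober, each irreducible $U_i$ has a unique generic point; if $U_i\cap U_j\neq\emptyset$ then $U_i\cap U_j$ is a nonempty open subset of both irreducible spaces and hence contains (and, being irreducible itself, equals the closure of) the generic point of each, forcing $U_i$ and $U_j$ to share their generic point; connectedness of the graph then gives a single point $\eta$ lying in every nonempty open set, whence irreducibility. Your argument instead takes a hypothetical pair of disjoint nonempty open sets $V_1,V_2$, partitions the index set according to whether $U_i$ meets $V_1$, and shows this partition disconnects the graph; the key step --- that $U_i\cap U_j$ and $U_i\cap V_1$ are two open subsets of the irreducible space $U_i$, so they must meet if both are nonempty --- is exactly right, as is the easy converse. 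Notably, your proof never actually invokes soberness, despite your closing worry that its role ``should be spelled out carefully'': the lemma as you prove it holds for arbitrary topological spaces, so your argument is both more elementary and slightly more general than the paper's (which genuinely uses soberness to manufacture generic points, and reuses that generic-point picture later in Proposition \ref{proposition: irreducible product}). The only cosmetic criticism is that the write-up retains several false starts (``wait, more carefully'', ``no'') that should be edited out of a final version.
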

\begin{proof}
If $X$ is irreducible, clearly the graph is connected. Conversely, suppose that the graph is connected. If $U_i$ and $U_j$ are nondisjoint, then $U_i \cap U_j$ is an open subset of the irreducible space $U_i$, and therefore contains the generic point of $U_i$.  Similarly, it contains the generic point of $U_j$.  But $U_i \cap U_j$ is irreducible (as it is an open subset of an irreducible space), so contains a unique generic point.  Thus, if $i$ and $j$ are connected by an edge, $U_i$ and $U_j$ share the same generic point.  Since the graph is connected, all of the $U_i$ share the same generic point $\eta$.  

Let $U$ be an open set.  There is some $i$ such that $U \cap U_i \neq \emptyset$.  Since $U \cap U_i$ is an open set in the irreducible space $U_i$, it contains the generic point $\eta$.  Since all open sets contain $\eta$, no two open sets can be disjoint.
\end{proof}

As an application of this result we show that the product of irreducible schemes is irreducible.

\begin{pro}\label{proposition: irreducible product}
Let $X$ and $Y$ be irreducible schemes over an idempotent semifield $K$.  Then $X \times_K Y$ is irreducible.
\end{pro}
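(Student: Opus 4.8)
The plan is to reduce to the affine situation via compatible open covers, and then combine Proposition~\ref{proposition: tensor product is irreducible} (tensor products of irreducible algebras over a semifield stay irreducible) with Lemma~\ref{lemma: irreducible graph} (the nerve criterion for irreducibility of a sober space).

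First I would fix affine open covers $X = \bigcup_i U_i$ with $U_i = \Spec A_i$ and $Y = \bigcup_j V_j$ with $V_j = \Spec B_j$. An open subset of an irreducible space is irreducible, so each $U_i$ and each $V_j$ is irreducible, hence each $A_i$ and each $B_j$ is an irreducible $K$-algebra. The subschemes $U_i \times_K V_j = \Spec(A_i \otimes_K B_j)$ form an open cover of $X \times_K Y$, and since $K$ is a semifield Proposition~\ref{proposition: tensor product is irreducible} applies and shows that each $A_i \otimes_K B_j$ is irreducible (in particular not the zero semiring); thus each $U_i \times_K V_j$ is an irreducible, nonempty open subscheme of $X \times_K Y$.

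Next I would verify the hypotheses of Lemma~\ref{lemma: irreducible graph} for the cover $\{U_i \times_K V_j\}$. The underlying space of $X \times_K Y$ is sober, being that of a semiring scheme. For the intersection graph, write $p, q$ for the two projections of $X \times_K Y$; then
\[
(U_i \times_K V_j) \cap (U_{i'} \times_K V_{j'}) = p^{-1}(U_i \cap U_{i'}) \cap q^{-1}(V_j \cap V_{j'}) = (U_i \cap U_{i'}) \times_K (V_j \cap V_{j'}).
\]
Since $X$ is irreducible, $U_i \cap U_{i'}$ is a nonempty --- hence irreducible --- open subset of $X$, and likewise $V_j \cap V_{j'}$; covering these by affine opens $\Spec A'$ and $\Spec B'$ with $A', B'$ irreducible $K$-algebras, Proposition~\ref{proposition: tensor product is irreducible} gives that $A' \otimes_K B'$ is irreducible, in particular $A' \otimes_K B' \neq \{0\}$, so $(U_i \cap U_{i'}) \times_K (V_j \cap V_{j'}) \supseteq \Spec(A' \otimes_K B') \neq \emptyset$. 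Hence \emph{every} pair of charts in the cover meets, so the intersection graph is complete, and Lemma~\ref{lemma: irreducible graph} yields that $X \times_K Y$ is irreducible.

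The real content here is entirely absorbed into Proposition~\ref{proposition: tensor product is irreducible}, which is precisely where the failure of the analogous statement over rings (e.g.\ $\mathbb{C} \otimes_\mathbb{R} \mathbb{C}$) is avoided; after that input the argument is purely formal bookkeeping about charts of a product and their pairwise overlaps. I do not expect a genuine obstacle; the one point worth spelling out is the non-degeneracy $A' \otimes_K B' \neq \{0\}$, which ultimately rests on the fact that every $K$-algebra is idempotent (from $1+1=1$ in $K$ one gets $a+a = a(1+1) = a$) and hence zero-sum-free.
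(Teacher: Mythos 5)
Your proof is correct and takes essentially the same route as the paper: cover $X\times_K Y$ by the charts $U_i\times_K V_j=\Spec(A_i\otimes_K B_j)$, invoke Proposition~\ref{proposition: tensor product is irreducible} to make each chart irreducible (hence nonempty), and conclude with Lemma~\ref{lemma: irreducible graph}. The only difference is in the connectivity step, where you show the intersection graph is actually complete (using that any two nonempty opens of the irreducible $X$, resp.\ $Y$, meet), while the paper settles for exhibiting the Cartesian product of the two nerve graphs as a connected spanning subgraph; both close the argument in the same way.
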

\begin{proof}
First, we note that the construction of the fibered product in the semiring setting is the same as in the classical case of rings. One can check the same arguments go through mutatis mutandis.

Now, from the construction of the fibered product $X\times_KY$, if $U$ is an open subset of $X$, then the preimage of $U$ under $\pi_1: X \times_K Y \rightarrow X$ is isomorphic to $U \times_K Y$.  Moreover the product of affine schemes corresponds to the tensor product of semirings.

Let $U_i = \Spec A_i$ be an open affine cover of $X$ and $V_j = \Spec B_j$ be an open affine cover of $Y$. $X \times_K Y$ has an open cover by open subschemes of the form $\pi_1^{-1}(U_i) = U_i \times_K Y$, and these in turn may be covered by open subschemes of the form $U_i \times_K V_j = \Spec A_i \otimes_K B_j$.  By Proposition \ref{proposition: tensor product is irreducible}, each $U_i \times_K V_j$ is irreducible.

Next, we consider the graph associated to this open cover (with an edge linking open subsets if they are not disjoint).  This graph has vertices of the form $(i, j)$ where $i$ and $j$ belong to the index sets of the covers $\{ U_i \}$ and $\{ V_j \}$ respectively.

If $U_i$ and $U_k$ are not disjoint, then $\pi_1^{-1}(U_i \cap U_k) \cong (U_i \cap U_j) \times_K Y$, so the left side is nonempty.  In particular, $U_i \times_K Y$ and $U_j \times_K Y$ are not disjoint open subschemes.  We may apply this with $V_j$ in place of $Y$ to obtain that $U_i \times_K V_j$ and $U_k \times_K V_j$ are not disjoint open subschemes of $X \times_K V_j \subseteq X \times_K Y$.     So for each $j$ we get an edge linking $(i, j)$ to $(k, j)$. Similarly if $V_j$ and $V_l$ are not disjoint then for each $i$, $U_i \times_K V_j$ intersects $U_i \times_K V_l$.  So we get edges linking $(i, j)$ to $(i, l)$.  

Thus the product graph of the graphs associated to the covers $\{ U_i \}$ and $\{ V_j \}$ is a spanning subgraph of the graph associated to the product cover.  Since the product of connected graphs is connected, the result follows follows from Lemma \ref{lemma: irreducible graph}.
\end{proof}

Next we will establish irreducibility of toric schemes over an idempotent semiring. 

Let $\Delta$ be a fan. Construct a graph $G_\Delta$ whose vertices correspond to cones in $\Delta$ in which two vertices $\sigma, \tau \in \Delta$ are linked by an edge if $\sigma$ is a face of $\tau$. Then, $G_\Delta$ is connected since $\{0\}$ is a face of any cone in $\Delta$.

\begin{pro}\label{proposition: irreducible toric}
Let $X$ be the toric scheme associated to a fan $\Delta$ over an irreducible idempotent semiring $R$.   Then $X$ is irreducible.
\end{pro}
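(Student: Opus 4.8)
\subsection*{Proof proposal}

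The plan is to cover $X$ by the standard torus-invariant affine opens, argue each is irreducible using the affine results already established, and then transfer irreducibility to $X$ via the graph criterion of Lemma~\ref{lemma: irreducible graph}.

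First I would recall that, by construction (Definition~\ref{definition: based change functor} together with the description of toric monoid schemes), $X$ is glued from the affine pieces $U_\sigma = \Spec R[M_\sigma]$ as $\sigma$ ranges over $\Delta$, where $M_\sigma = \sigma^\vee \cap \Lambda$ is the toric monoid of $\sigma$. Viewing $M_\sigma$ simply as a monoid, Proposition~\ref{proposition: irreducible affine} tells us that $R[M_\sigma]$ is irreducible; by the lemma identifying irreducibility of a semiring with irreducibility of its spectrum, each $U_\sigma$ is an irreducible open subset of $X$.

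Next I would verify the hypotheses of Lemma~\ref{lemma: irreducible graph} for the cover $\{U_\sigma\}_{\sigma \in \Delta}$. The space $X$ is sober, being locally the prime spectrum of a semiring (hence a spectral space). For the incidence condition: whenever $\sigma$ is a face of $\tau$ one has $M_\tau \subseteq M_\sigma$, and this inclusion realizes $R[M_\sigma]$ as a localization of $R[M_\tau]$, so $U_\sigma$ is an open subscheme of $U_\tau$; in particular $U_{\{0\}} = \Spec R[\Lambda]$ is a nonempty open subscheme contained in every $U_\sigma$. Hence $U_\sigma \cap U_\tau \supseteq U_{\{0\}} \neq \emptyset$ for all $\sigma, \tau \in \Delta$, so the graph $G_\Delta$ (which is connected because $\{0\}$ is a face of every cone) embeds as a spanning subgraph of the intersection graph of the cover $\{U_\sigma\}$; therefore that intersection graph is connected. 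Lemma~\ref{lemma: irreducible graph} then yields that $X$ is irreducible.

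Most of this is bookkeeping: the explicit cover, the soberness of $X$, and the face--localization relations among the $U_\sigma$ are all standard and are either recalled in the preliminaries or immediate. The only point where I would take a little care is confirming that the base-change construction of the toric scheme over $R$ really produces the expected cover by $\Spec R[M_\sigma]$ with the usual gluing data; once that is in hand the argument above is essentially formal, and I do not anticipate a genuine obstacle.
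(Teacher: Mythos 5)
Your proof is correct and follows essentially the same route as the paper: cover $X$ by the affine pieces $U_\sigma = \Spec R[M_\sigma]$, invoke Proposition~\ref{proposition: irreducible affine} for irreducibility of each piece, and conclude via the connectedness of the incidence graph and Lemma~\ref{lemma: irreducible graph}. The paper's version is just terser, relying on the graph $G_\Delta$ defined immediately before the proposition; your extra bookkeeping (the torus $U_{\{0\}}$ lying in every $U_\sigma$, soberness) makes explicit what the paper leaves implicit.
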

\begin{proof}
$X$ has an open cover $\{ U_\sigma \}$ (by nonempty sets) indexed by the cones in $\Delta$ in which $U_\sigma \subseteq U_\tau$ whenever $\sigma$ is a face of $\tau$. Now, the result directly follows from Lemma \ref{lemma: irreducible graph} applied to $G_\Delta$. 
\end{proof}

\begin{lem}
Let $X$ be a connected scheme over $\mathbb{N}$.  Let $R$ and $S$ be semirings.  Then any morphism $X \to \Spec (R\oplus S)$ factors through $\Spec R$ or $\Spec S$.
\end{lem}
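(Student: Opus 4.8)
The plan is to pass through the affine adjunction and exploit the canonical idempotent pair inside $R\oplus S$. Since $\Spec(R\oplus S)$ is affine, a morphism $\varphi\colon X\to\Spec(R\oplus S)$ is the same datum as a semiring homomorphism $\psi\colon R\oplus S\to\Gamma(X,\mathcal{O}_X)$. Put $e=(1_R,0_S)$ and $f=(0_R,1_S)$; then $e+f=1$ and $ef=0$, so $(e,f)$ is an idempotent pair of $R\oplus S$ and $(\bar e,\bar f):=(\psi(e),\psi(f))$ is an idempotent pair of $\Gamma(X,\mathcal{O}_X)$, hence also of every stalk $\mathcal{O}_{X,x}$; in particular $\bar e_x+\bar f_x=1$ and $\bar e_x\bar f_x=0$ in each $\mathcal{O}_{X,x}$.

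Next I would argue locally. Fix $x\in X$ and work in the local semiring $\mathcal{O}_{X,x}$ with maximal ideal $\mathfrak m_x$. Since $\mathfrak m_x$ is a proper (additively closed) ideal and $\bar e_x+\bar f_x=1$, the elements $\bar e_x,\bar f_x$ cannot both lie in $\mathfrak m_x$; whichever one lies outside is a unit, and then $\bar e_x\bar f_x=0$ forces the other one to be $0$. Set $U=\{x\in X:\bar f_x=0\}$ and $V=\{x\in X:\bar e_x=0\}$. A global section that vanishes in the stalk at a point vanishes on a neighbourhood of it, so $U$ and $V$ are open; by the previous sentence $U\cup V=X$; and if $x\in U\cap V$ then $1=\bar e_x+\bar f_x=0$ in $\mathcal{O}_{X,x}$, which is absurd, so $U\cap V=\emptyset$. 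As $X$ is connected, $U=X$ or $V=X$.

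Finally, suppose $U=X$ (the case $V=X$ being symmetric). Then $\bar f_x=0$ for all $x$, so $\bar f=\psi(f)=0$ by the sheaf axioms, whence $\bar e=\bar e+\bar f=1$. For every $(a,b)\in R\oplus S$ we have $(0,b)=f\cdot(0,b)$, so $\psi(0,b)=\psi(f)\psi(0,b)=0$ and therefore $\psi(a,b)=\psi(a,0)+\psi(0,b)=\psi(a,0)$. The map $\psi'\colon R\to\Gamma(X,\mathcal{O}_X)$, $a\mapsto\psi(a,0)$, is then a semiring homomorphism (it sends $1_R=e\mapsto\bar e=1$ and is additive and multiplicative because $a\mapsto(a,0)$ is), and $\psi=\psi'\circ p_1$ where $p_1\colon R\oplus S\to R$ is the first projection. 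Dualizing, $\varphi$ factors as $X\to\Spec R\xrightarrow{\Spec(p_1)}\Spec(R\oplus S)$, i.e.\ through $\Spec R$. The argument is formal apart from the local dichotomy in the middle paragraph, whose only inputs are that a non-unit of a local semiring lies in its maximal ideal and that stalks of $\mathcal{O}_X$ are honest semirings, so $1\neq0$ there; the geometric picture underneath is simply that $\Spec(R\oplus S)$ is the disjoint union of the two clopen pieces $V(f)\cong\Spec R$ and $V(e)\cong\Spec S$, which we pull back along $\varphi$ and separate using connectedness of $X$. I phrase it at the level of stalks to avoid first having to establish those two scheme isomorphisms.
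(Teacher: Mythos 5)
Your proof is correct, but it takes a genuinely different route from the paper. The paper first handles the affine case $X=\Spec A$ directly (the images of $(1,0)$ and $(0,1)$ form an idempotent pair of $A$, which must be trivial by Proposition \ref{proposition: connected implies no idempotet paris}), and then treats the general case by covering $X$ with affine opens, showing that restrictions to overlapping opens cannot factor through different sides, and partitioning the cover accordingly. You instead work globally: you pass through the adjunction $\Hom(X,\Spec(R\oplus S))\cong\Hom(R\oplus S,\Gamma(X,\mathcal{O}_X))$, push the idempotent pair into every stalk, use that in a local semiring the complement of the maximal ideal consists of units to get the pointwise dichotomy $\bar e_x=0$ or $\bar f_x=0$, and read off a clopen decomposition $X=U\sqcup V$. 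Both arguments are sound. What the paper's route buys is that it leans only on the already-proved affine statement and on gluing of morphisms, never invoking the global-sections adjunction for semiring schemes (which the paper nowhere states, though it does hold mutatis mutandis for locally semiringed spaces; note the authors even prove the special case $\Hom(X,\mathrm{GL}_1)\cong\Gamma(X,\mathcal{O}_X^\times)$ in Lemma \ref{units as morphisms} by hand rather than citing such an adjunction). What your route buys is a cleaner isolation of where connectedness enters (a single clopen decomposition rather than a two-stage gluing over overlaps) and an explicit identification of $U$ and $V$ as the pullbacks of the two clopen pieces of $\Spec(R\oplus S)$. The two facts you import — that every non-unit of a local semiring lies in the maximal ideal (the principal ideal of a non-unit is proper, then Zorn), and that a global section with all germs zero is zero — are both valid in this setting, so there is no gap; if this were to be inserted into the paper one would just want to record the global-sections adjunction for semiring schemes explicitly.
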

\begin{proof}
We begin with the affine case where $X = \Spec A$, where we are looking at a homomorphism $\phi: R\oplus S\rightarrow A$.  Let $e = \phi(1, 0)$ and $f = \phi(0, 1)$.  Since $(1, 0) + (0, 1) = (1, 1)$ and $(1, 0)(0, 1) = (0, 0)$, it follows that $(e, f)$ forms an idempotent pair.  By connectedness either $e = 0$ or $f = 0$.  Without loss of generality consider the latter case.  Then $\phi(r, s) = re + sf = re$ so $\phi$ factors through the projection $R\oplus S\rightarrow R$.  Note that unless $X$ is the empty scheme, $\phi$ cannot factor through both projections because then we would have $\phi(1, 1) = \phi(0, 0)$ contradicting that homomorphisms are unital.

Now consider the non-affine case.  Construct a cover by affine open subsets $\{U_i\}$. Let $\phi:X \to \Spec(R\oplus S)$. Then, the restriction $\phi|_{U_i}$ factors through either $\Spec R$ or $\Spec S$. Suppose that for two indices $i$ and $j$, $\phi|_{U_i}$ factors through $\Spec R$, $\phi|_{U_j}$ factors through $\Spec S$, and $U_i\cap U_j \neq \emptyset$. Then, we obtain a contradiction by letting $W$ be an affine open subset of $U_i\cap U_j$ and observing that $\phi|_W$ factors through both $\Spec R$ and $\Spec S$.  

Let $U$ be the union of all $U_i$ such that $\phi|_{U_i}$ factors through $\Spec R$ and $V$ be the union of all $U_i$ such that $\phi|_{U_i}$ factors through $\Spec S$.  We just showed $U$ and $V$ are disjoint, and clearly they cover $X$.  So one of them is empty; without loss of generality it is $V$, and so all $\phi|_{U_i}$ factor through $R$.  Hence so does $\phi$.
\end{proof}

With these preliminaries on irreducible sets out of the way, we turn to studying the obstruction to an equivariant vector bundle being a sum of one-dimensional bundles equivariantly. We will start by considering a sort of a ``vector bundle over a set'' rather than a scheme, and will later use the functor of points viewpoint to reduce the scheme-theoretic case to this one.

Let $A$ be an irreducible idempotent semiring, $G$ a group, and $X$ a $G$-set, and let $n \in \mathbb{N}$.  Let $E = X \times A^n$, and for each $x \in X$, let $E_x = \{x\} \times A^n$, which is an $A$-module in the obvious way.  Let $e_i\in A^n$ be the standard basis and $e_i(x) = (x, e_i)$ be the corresponding basis of $E_x$. Fix a $G$-action on $E$ such that the projection $E\rightarrow X$ is equivariant and for each $g\in G$ and $x\in X$, the induced maps $g: E_x\rightarrow E_{gx}$ on fibers are $A$-linear.

\begin{lem}\label{settheoreticbundledecomposition}
With the same notation as above, there is a unique map $\phi: G \times X\rightarrow S_n$ such that $g e_i(x)$ is a scalar multiple of $e_{\phi(g, x)(i)}(gx)$ for all $(g, x)\in G\times X$.
\end{lem}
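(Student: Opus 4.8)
The plan is to work fiber by fiber and use the $A$-linearity together with the fact that $A$ is zero-sum-free (being irreducible idempotent) to pin down how $g$ must permute the rays spanned by the basis vectors. Fix $g \in G$ and $x \in X$. The induced map $g \colon E_x \to E_{gx}$ is an $A$-linear isomorphism of free rank-$n$ modules; choosing the bases $e_1(x), \ldots, e_n(x)$ and $e_1(gx), \ldots, e_n(gx)$ it is represented by an invertible matrix $M(g,x) \in \mathrm{GL}_n(A)$. First I would observe that $A$ has only trivial idempotent pairs: its nilradical $I$ is a subtractive prime ideal by Lemma~\ref{lemma: nilradical}, so $A/I$ is zero-sum-free with no nontrivial idempotents, and idempotents of $A$ lift uniquely, so any idempotent pair of $A$ is trivial. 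Hence Proposition~\ref{corollary: semidirect} applies: $M(g,x)$ factors uniquely as a diagonal matrix times a permutation matrix, equivalently (via the map $g$ of Proposition~\ref{proposition: exact sequence R, GL, S_n}) there is a unique permutation $\sigma = \sigma(g,x) \in S_n$ with $M(g,x)_{\sigma(i)\,i} \ne 0$ for all $i$, and this is exactly the condition that $g\,e_i(x)$ is a (nonzero) scalar multiple of $e_{\sigma(i)}(gx)$.

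Next I would set $\phi(g,x) := \sigma(g,x)$ and check this is the unique map with the stated property. Uniqueness is immediate: if $g\,e_i(x)$ is a scalar multiple of $e_{\tau(i)}(gx)$, then writing $g\,e_i(x) = \sum_j M(g,x)_{ji}\,e_j(gx)$ forces $M(g,x)_{ji} = 0$ for $j \ne \tau(i)$, and since the column is nonzero (the matrix is invertible, so no column vanishes) we get $M(g,x)_{\tau(i)\,i} \ne 0$, whence $\tau = g(M(g,x)) = \sigma(g,x)$ by the uniqueness in Proposition~\ref{proposition: exact sequence R, GL, S_n}. I should also note that the scalar multiple is automatically a unit: since $M(g,x)$ is invertible and equals a diagonal times a permutation matrix, its nonzero entries are units of $A$; but strictly the statement only asks for "a scalar multiple", so this is a bonus remark rather than something required.

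The only remaining point — and the one I expect to be the main obstacle, though it is mild here — is that the statement asks for a map $\phi \colon G \times X \to S_n$ with no continuity, measurability, or naturality hypothesis; $G$ and $X$ are a bare group and a bare set. So there is genuinely nothing to prove beyond the pointwise construction above: for each pair $(g,x)$ we have produced a well-defined element $\phi(g,x) \in S_n$, and well-definedness of a set map is automatic once the value at each point is unambiguously specified, which uniqueness gives us. Thus the "hard part" is purely bookkeeping: making sure that the invertibility of $M(g,x)$ (which follows because $g \colon E_x \to E_{gx}$ has inverse $g^{-1} \colon E_{gx} \to E_x$, both $A$-linear) is correctly invoked so that Proposition~\ref{corollary: semidirect} is applicable, and that one does not need any compatibility of $\phi$ in $g$ or $x$ at this stage (such cocycle-type relations, e.g. $\phi(gh,x) = \phi(g,hx)\phi(h,x)$, would follow from functoriality of the $\mathrm{GL}_n$-decomposition but are not claimed in this lemma). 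I would conclude by assembling these observations into a short paragraph: define $\phi$ pointwise via the permutation extracted from $M(g,x) \in \mathrm{GL}_n(A)$, cite Proposition~\ref{corollary: semidirect} for existence and Proposition~\ref{proposition: exact sequence R, GL, S_n} for uniqueness of the permutation, and note that well-definedness as a set map requires nothing further.
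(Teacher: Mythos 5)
Your proposal is correct and follows essentially the same route as the paper: both extract the permutation from the fact that a basis of a free module over a connected zero-sum-free semiring is unique up to reordering and rescaling — i.e.\ the $\mathrm{GL}_n(A)\cong (A^\times)^n\rtimes S_n$ decomposition of Proposition~\ref{corollary: semidirect} — and both get uniqueness of $\phi$ from the observation that a basis vector cannot be a scalar multiple of a different basis vector. The only cosmetic difference is that you re-derive the fact that an irreducible idempotent semiring has only trivial idempotent pairs via the nilradical, whereas the paper simply notes that irreducible implies connected and invokes Proposition~\ref{proposition: connected implies no idempotet paris}.
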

\begin{proof}
Fix $g\in G$ and $x\in X$.  We consider two bases of $E_{gx}$: we have the basis $\{e_i(gx)\}$ and the basis $\{g e_i(x)\}$.  Because $A$ is an irreducible (hence connected) zero-sum-free semiring and $E_{gx}$ is a free $A$-module, the basis is unique up to reordering and scalar multiplication (see Remark \ref{remark: exact sequence remark}). Thus there is some permutation $\sigma$ and some units $a_i\in A^\times$ such that $g e_i(x) = a_i e_{\sigma(i)}(gx)$.  The desired map $\phi$ is the map that sends $(g, x)$ to this $\sigma \in S_n$. Note that $\phi$ is unique because a basis vector cannot be a multiple of another element of the basis.
\end{proof}

To understand the significance of Lemma \ref{settheoreticbundledecomposition}, note that if the map $\phi$ in the lemma sends every element to $1\in S_n$, then $g e_i(x)$ is a scalar multiple of $e_i(gx)$.  Thinking of $E$ as essentially an equivariant vector bundle this would mean that $X \times \mathrm{span}(e_i)$ is an equivariant line subbundle, and $E$ would decompose into equivariant line bundles.  Of course this is all taking place over sets rather than schemes, so our definition of equivariant vector bundle does not quite apply.

Morally we expect that in the scheme-theoretic case, we should have a similar map $G \times X \rightarrow S_n$ acting as the obstruction to decomposing an equivariant vector bundle which is trivial as a vector bundle into line bundles.  We will do this only in the case where $X$ and $G$ are irreducible.  Note that furthermore, under these irreducibility assumptions, we should intuitively expect the map to be trivial for connectedness reasons.

Now, fix an irreducible scheme $X$ over an idempotent semifield $K$ and a vector bundle $E$ on $X$. For any irreducible idempotent semiring $A$ and $x:Y=\Spec A \to X$, we let $E_x=x^*E(\Spec A)$ as before. Let's recall some more notations. We have
\[
E(A)=\bigsqcup_{x \in X(A)}E_x. 
\]
Also, there is a canonical projection
\[
\pi_A:E(A) \to X(A), 
\]
where $\pi_A(y)=x$ if and only if $y \in E_x$.

\begin{mythm}\label{theorem: torus-equivariant}
Let $X$ be an irreducible scheme over an idempotent semifield $K$ and $G$ be an irreducible algebraic group over $K$ acting on $X$.  Let $E$ be a $G$-equivariant vector bundle on $X$ which is trivial as a vector bundle.  Then $E$ is a direct sum of equivariant line bundles.
\end{mythm}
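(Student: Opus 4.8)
The plan is to reduce the statement to the purely set-theoretic Lemma~\ref{settheoreticbundledecomposition} via the functor-of-points description of $E$, and then to use irreducibility to upgrade the resulting combinatorial datum to a morphism of schemes and show that this morphism is trivial. First I would fix a trivialization $E\cong\mathcal{O}_X^n$ and let $v_1,\dots,v_n\in\Gamma(X,E)$ be the corresponding global basis. For every irreducible idempotent semiring $A$ and every $x\in X(A)=\Hom(\Spec A,X)$, the fiber $E_x=x^*E(\Spec A)$ is a free $A$-module with basis $e_i(x):=(v_i)_x$, so that $E(A)=\bigsqcup_{x\in X(A)}E_x$ is identified with the trivial rank-$n$ bundle $X(A)\times A^n$ over the set $X(A)$; the equivariant structure makes the projection $E(A)\to X(A)$ into a $G(A)$-equivariant map whose fiber maps are $A$-linear. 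Lemma~\ref{settheoreticbundledecomposition} (applied to the group $G(A)$ and the $G(A)$-set $X(A)$) then produces a map $\phi_A\colon G(A)\times X(A)\to S_n$ uniquely determined by the requirement that $g\cdot e_i(x)$ be a unit multiple of $e_{\phi_A(g,x)(i)}(gx)$ for every $i$.

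The second step is to verify that $(\phi_A)_A$ is natural in $A$. For an $R$-algebra map $f\colon A\to B$, the base-change map $E(A)\to E(B)$ sends $e_i(x)$ to $e_i(f_*x)$ and carries $A^\times$ into $B^\times$; combined with naturality of the $G$-action (Definition~\ref{definition: equivariant vector bundle}(1)) and the uniqueness clause of Lemma~\ref{settheoreticbundledecomposition}, this gives $\phi_B(f_*g,f_*x)=\phi_A(g,x)$. Since $S_n$, regarded as a scheme over $\N$ (Appendix~\ref{sn as a functor}) and then base-changed to $K$, has the constant functor of points $A\mapsto S_n$ on connected semirings, this is exactly the assertion that $\phi:=(\phi_A)_A$ is a natural transformation of functors of points over the category of irreducible idempotent semirings. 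By Proposition~\ref{proposition: irreducible product}, $G\times_K X$ is irreducible; hence Proposition~\ref{irreduciblefunctorofpoints} yields a morphism of schemes $\Phi\colon G\times_K X\to S_n$ inducing $\phi$.

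Next I would show $\Phi$ is constant equal to the identity permutation. As $G\times_K X$ is irreducible, hence connected, and $S_n$ (over $K$) is a disjoint union of copies of $\Spec K$, the morphism $\Phi$ factors through a single component; equivalently, $\phi_A(g,x)$ equals a fixed $\sigma_0\in S_n$ for all $A$ and all $(g,x)$. Pick a nonempty affine open $\Spec B\subseteq X$; then $B$ is a $K$-algebra, hence idempotent, and it is irreducible as an open subscheme of $X$. Taking $g$ to be the identity of $G(B)$ and $\iota\in X(B)$ the inclusion, the identity axiom for group actions gives $g\cdot e_i(\iota)=e_i(\iota)$, so $e_i(\iota)$ is a unit multiple of $e_{\sigma_0(i)}(\iota)$; since distinct standard basis vectors of a free module over a semiring with $1\neq 0$ can never be scalar multiples of one another, $\sigma_0=\mathrm{id}$. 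Thus $g\cdot e_i(x)$ is always a scalar multiple of $e_i(gx)$. Letting $L_i\subseteq E$ be the line subbundle spanned by $v_i$ (so $E=L_1\oplus\cdots\oplus L_n$ as vector bundles, with $(L_i)_x=A\cdot e_i(x)$), this shows the $G(A)$-action on $E(A)$ carries each $L_i(A)=\bigsqcup_x(L_i)_x$ into itself; the restricted action is natural, compatible with the projection, and $A$-linear on fibers, so each $L_i$ is a $G$-equivariant line bundle, and since every $g\colon E_x\to E_{gx}$ respects the decomposition $E_x=\bigoplus_i(L_i)_x$, we conclude that $E=L_1\oplus\cdots\oplus L_n$ as $G$-equivariant vector bundles.

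I expect the main obstacle to be the second step together with the first half of the third: passing from the pointwise data $(\phi_A)$ to a genuine morphism of schemes and then forcing it to be trivial. This is precisely where the hypotheses are indispensable. Irreducibility of both $X$ and $G$ over an idempotent \emph{semifield} $K$ is what makes $G\times_K X$ irreducible (Proposition~\ref{proposition: irreducible product}, which has no analogue over $\Z$), so that the Yoneda-type statement Proposition~\ref{irreduciblefunctorofpoints} can be invoked; and connectedness of $G\times_K X$ is what forces the ``obstruction'' $\Phi\colon G\times_K X\to S_n$ into a single component. The remaining points --- naturality of the $\phi_A$ and the fact that $G$-stable coordinate lines give equivariant subbundles --- are routine unwindings of the definitions.
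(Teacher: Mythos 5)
Your proposal is correct and follows essentially the same route as the paper: trivialize $E$, apply Lemma~\ref{settheoreticbundledecomposition} fiberwise to get $\phi_A\colon G(A)\times X(A)\to S_n$, check naturality via the uniqueness clause, use Propositions~\ref{proposition: irreducible product} and \ref{irreduciblefunctorofpoints} to promote this to a morphism $G\times_K X\to S_n$, force it to be constant (and equal to the identity by evaluating at $g=1$), and conclude that the coordinate lines are equivariant subbundles. The only cosmetic difference is that you make the "$\sigma_0=\mathrm{id}$" step slightly more explicit by evaluating at the inclusion of an affine open, whereas the paper simply plugs in $g=1$.
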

\begin{proof}
For any irreducible idempotent $K$-algebra $A$, we consider the group $G(A)$ of $A$-valued points of $G$ and its action on the sets $X(A)$ and $E(A)$. Pick a basis $e_i$ for $E$, and observe that this induces a basis $x^*(e_i)$ of $E_x$ for each $x\in X(A)$. Note that this is essentially due to the fact that $x^*$ and $x_*$ are adjoint functors as in the case for rings (Lemma \ref{lemma: pullback pushforward adjunction}), implying that if $E=\mathcal{O}_X^n$, then $x^*E=\mathcal{O}_{\Spec A}^n$ since a left adjoint functor preserves colimits. It follows that $E_x=A^n$, and hence we can identify $E(A)$ with $X(A) \times A^n$. 

Now, equivariance of the bundle $E$ means that $G(A)$ acts on $E(A)$ in a manner that is linear on fibers (Definition \ref{definition: equivariant vector bundle} (3)) and that it makes the projection $\pi_A:E(A)\rightarrow X(A)$ equivariant (Definition \ref{definition: equivariant vector bundle} (2)).  Apply Lemma \ref{settheoreticbundledecomposition} to the $G(A)$-action on $E(A)\rightarrow X(A)$.  We obtain for each $A$, a map $\phi_A: G(A) \times X(A) \rightarrow S_n$ such that $g e_i(x)$ is a scalar multiple of $e_{\phi_A(g, x)(i)}(gx)$ for all $(g, x)\in G(A) \times X(A)$.

First, we want to show this map is natural. Fix a morphism $f: \Spec B \rightarrow \Spec A$.  The induced map $f^*: X(A)\rightarrow X(B)$ sends a point $x:\Spec A\rightarrow X$ to $x\circ f$, and the induced map $f^*: G(A)\rightarrow G(B)$ is described similarly.  

Let $\alpha:G \times_K X \to X$ be a $G$-action on $X$. Then for each $K$-algebra $A$, one has
\[
\alpha_A:G(A) \times X(A) \to X(A).
\]
Since the action $\alpha$ is a morphism of schemes, it induces a natural transformation of functors of points, i.e., for $g \in G(A)$ and $x \in X(A)$, one has
\begin{equation}\label{eq: natural}
f^*(gx) = f^*(\alpha_A(g, x)) = \alpha_B(f^*(g,x)) = \alpha_B(f^*(g), f^*(x)) = f^*(g)f^*(x).
\end{equation}

The induced map $f^*: E(A)\rightarrow E(B)$ sends $e_i(x)$ to
\begin{equation}
f^*(e_i(x))= f^* x^*(e_i) = e_i(x\circ f) = e_i(f^*(x)).
\end{equation}

By \eqref{item:naturality of action on bundle} of Definition \ref{definition: equivariant vector bundle}, similar to \eqref{eq: natural}, for any $e\in E(A)$ and $g\in G(A)$ we have 
\[
f^*(g) f^*(e) = f^*(ge).
\]
Observe that
\begin{equation} \label{eq: eq17}
f^*(g) f^*(e_i(x)) = f^*(g e_i(x))
\end{equation}
is a scalar multiple of
\begin{equation} \label{eq: eq18}
f^*(e_{\phi_A(g, x)(i)}(gx)) = e_{\phi_A(g, x)(i)}(f^*(gx)) = e_{\phi_A(g, x)(i)}(f^*(g)f^*(x))
\end{equation}
because $ge_i(x)$ is a multiple of $e_{\phi_A(g, x)(i)}(gx)$. We claim the following:
\begin{equation}\label{eq: naturality}
\phi_B(f^*(g), f^*(x)) = \phi_A(g, x),
\end{equation}
which is precisely the naturality result we want to prove. Note that $f^*$ does not occur on the right side because $S_n$ is a constant functor, so $f^*$ is the identity here. 

To see \eqref{eq: naturality}, define for $g' \in G(B)$ and $x' \in X(B)$,
\begin{equation}
\psi(g',x')=\begin{cases}
    \phi_B(g',x') & \textrm{ if } (g', x')  \neq   (f^*(g), f^*(x)),\\
    \phi_A(g, x)  & \textrm{ if } (g', x')  =  (f^*(g), f^*(x)).
\end{cases}
\end{equation}
Then for any $g' \in G(B)$ and $x' \in X(B)$, we see that $g' e_i(x')$ is a multiple of $e_{\psi(g', x')(i)}(g'x')$; when $(g', x') = (f^*(g), f^*(x))$ this is because \eqref{eq: eq17} is a multiple of \eqref{eq: eq18}, and otherwise this holds because it is true for $\phi_B$.  Now the uniqueness in the defining property of $\phi_B$ (in Lemma \ref{settheoreticbundledecomposition}) implies $\phi_B = \psi$, so in particular $\phi_B(f^*(g), f^*(x)) = \phi_A(g, x)$, as claimed.

We thus have constructed a natural map $(G \times_K X)(A) \rightarrow S_n = S_n(A)$.  Moreover, it follows from Proposition \ref{proposition: irreducible product} that $G \times_K X$ is irreducible. Therefore, from Lemma \ref{irreduciblefunctorofpoints}, this natural transformation comes from a morphism of schemes $\phi: G \times_K X \rightarrow S_n$.  Since $G\times_K X$ is connected, and $S_n$ is the spectrum of a direct sum of $n!$ copies of $K$, $\phi$ is constant, i.e., there is some $\sigma \in S_n$ such that $\phi$ factors through the inclusion $\Spec K\rightarrow S_n$ corresponding to the point $\sigma$.  The same must be true of the corresponding maps on functors of points.  Thus $\phi_A(g, x) = \sigma$ for all $(g, x)\in G(A) \times X(A)$. So we obtain that for any $(g, x)\in G(A) \times X(A)$, $g e_i(x)$ is a scalar multiple of $e_{\sigma (i)}(gx)$. By applying this to $g = 1$, we must have $\sigma = 1$, so $g e_i(x)$ is a scalar multiple of $e_i(gx)$. 

Let $L_i$ be the line bundle contained in $E$ given as a sheaf by $L_i(U) = \mathrm{span}(e_i|_U)$.  Its functor of points is such that $L_i(A)$ is the set of pairs $(x, e)$ such that $x\in X(A)$, and $e\in \mathrm{span}(e_i(x))\subseteq E_x$.  We saw $g e_i(x)$ is a scalar multiple of $e_i(gx)$ so belongs to $L_i(A)$, and the same is true for $ge$ where $e\in L_i(A)$ since any such $e$ is a multiple of $e_i(x)$ for some $x$.  Since $ge\in L_i(A)$ for $g\in G(A)$ and $e\in L_i(A)$, it follows that the $G(A)$ action on $E(A)$ descends to an action on $L_i(A)$, making $L_i(A)$ equivariant.  Even without equivariance, we of course have $E = \bigoplus L_i$, and since $L_i$ is equivariant, this implies the result.
\end{proof}

\begin{cor}\label{corollary: torus-equivariant trivial vector bundles are sums of line bundles}
Let $X$ be a toric scheme over an idempotent semifield $K$ and $G$ be the algebraic torus over $K$. Let $E$ be an equivariant vector bundle which is trivial as a vector bundle.  Then $E$ is a direct sum of equivariant line bundles.
\end{cor}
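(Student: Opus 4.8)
The plan is to deduce this directly from Theorem \ref{theorem: torus-equivariant}, so that the only real content is checking that its hypotheses are satisfied: that $X$ is an irreducible scheme over an idempotent semifield and that $G$ is an irreducible algebraic group over $K$ acting on $X$.

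First I would note that an idempotent semifield $K$ is in particular an irreducible idempotent semiring: every nonzero element of a semifield is a unit, so $K$ has no nonzero nilpotents, the nilradical is $\{0\}$, and the condition of Definition \ref{definition: irreducible} holds. Since $X$ is by definition a toric scheme over $K$, i.e.\ the base change to $K$ of a toric monoid scheme attached to a fan $\Delta$, Proposition \ref{proposition: irreducible toric} applies and gives that $X$ is irreducible. (Alternatively one can cover $X$ by the affine charts $\Spec K[M_\sigma]$, where $M_\sigma$ is the toric monoid of the cone $\sigma$, observe that each chart is irreducible by Proposition \ref{proposition: irreducible affine}, and then invoke Lemma \ref{lemma: irreducible graph} applied to the fan graph $G_\Delta$.)

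Next I would verify that the algebraic torus $G$ is irreducible. Writing $G = \Spec K[\Lambda]$, where $\Lambda$ is the character lattice — a free abelian group, and in particular a monoid — Proposition \ref{proposition: irreducible affine} shows that $K[\Lambda]$ is irreducible (it has in fact no nontrivial zero-divisors, since $K$ does not either). Hence $G$ is an irreducible scheme, and it is an algebraic group by construction; its action on $X$ is part of the toric structure of $X$. With all hypotheses of Theorem \ref{theorem: torus-equivariant} now in place, that theorem applies to the $G$-equivariant vector bundle $E$, which is trivial as a vector bundle, and yields a decomposition $E = L_1 \oplus \cdots \oplus L_n$ as $G$-equivariant line bundles.

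There is no genuine obstacle here, since everything reduces to citing earlier results; the only point that requires a moment's attention is the irreducibility of the torus $G$ and of the toric scheme $X$, both supplied by Proposition \ref{proposition: irreducible affine} (together with Proposition \ref{proposition: irreducible toric}). If a statement about uniqueness of the summands were wanted as well, it would follow from Proposition \ref{theorem: bundles split}.
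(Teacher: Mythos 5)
Your proposal is correct and is exactly the argument the paper intends: the corollary is stated as an immediate consequence of Theorem \ref{theorem: torus-equivariant}, with the hypotheses supplied by Proposition \ref{proposition: irreducible toric} for $X$ and Proposition \ref{proposition: irreducible affine} for the torus $G = \Spec K[\Lambda]$, just as you verify. No gaps.
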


\begin{rmk}
In the non-affine case there are non-trivial (toric) line bundles both in the classical case \cite{PS14} and the tropical case \cite{jun2019picard}.
\end{rmk}

\section{Classification of equivariant vector bundles}\label{section: classification of equivariant vb}

Throughout this section, unless stated otherwise, we let $K$ denote either a semiring or a monoid (with an absorbing element). As in the case for rings and semirings, by a monoid scheme over $K$, when $K$ is a monoid, we mean a monoid scheme $X$ with a map to $\Spec K$. Most of the results below hold for both semiring schemes and monoid schemes, with the same proof for both cases.  And we will need both cases in order to compare equivariant line bundles on toric schemes over idempotent semirings with those on the underlying monoid schemes.

The following result justifies thinking of morphisms $X\rightarrow \text{GL}_1$ as units of $X$.

\begin{lem}\label{units as morphisms}
Let $X$ be a scheme (resp.~monoid scheme) over a semiring (resp.~monoid) $K$. Let $\text{GL}_1 = \Spec K[t, t^{-1}]$.  Then there is a natural isomorphism $\mathrm{Hom}_{\Spec K}(X, \text{GL}_1) \simeq \Gamma(X, \mathcal{O}_X^\times)$.
\end{lem}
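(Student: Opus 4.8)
The plan is to prove that $\text{GL}_1$ represents the ``units'' functor, doing the affine case by hand and then globalizing by a sheaf-gluing argument. Write $\text{GL}_1 = \Spec K[t,t^{-1}] = \Spec(\mathbb{Z}\otimes_{\mathbb{F}_1}K)$, where $\mathbb{Z}=\langle t\rangle$ is the infinite cyclic group written multiplicatively, regarded as an $\mathbb{F}_1$-algebra. The claimed isomorphism should send a morphism $f\colon X\to\text{GL}_1$ over $\Spec K$ to the pullback $f^{\sharp}(t)$ of the invertible global section $t\in\Gamma(\text{GL}_1,\mathcal{O}^\times)$; I would construct it and its inverse first on affines and then patch.

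\emph{Affine case.} For $X=\Spec A$ with $A$ a $K$-algebra, the adjunction of Definition~\ref{definition: based change functor} gives
\[
\Hom_{\Spec K}(\Spec A,\text{GL}_1)=\Hom_{\textbf{K-algebras}}(\mathbb{Z}\otimes_{\mathbb{F}_1}K,\,A)\simeq\Hom_{\textbf{Monoids}}(\mathbb{Z},\,(A,\times)).
\]
Since $\mathbb{Z}$ is a group, every monoid homomorphism $\mathbb{Z}\to(A,\times)$ has image in $A^\times$ and is determined by the image of $t$; conversely every unit of $A$ arises this way. Hence the right-hand set is canonically $A^\times=\Gamma(\Spec A,\mathcal{O}_X^\times)$, and unwinding the adjunction identifies the composite bijection with $f\mapsto f^{\sharp}(t)$. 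Each bijection in this chain is natural in $A$, so the composite is too. The monoid-scheme case is word-for-word the same, now with $K$ a pointed monoid and $K[t,t^{-1}]=\mathbb{Z}\otimes_{\mathbb{F}_1}K$ as pointed monoids.

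\emph{Gluing and naturality in $X$.} Both $U\mapsto\Hom_{\Spec K}(U,\text{GL}_1)$ and $U\mapsto\Gamma(U,\mathcal{O}_X^\times)$ are sheaves on $X$: morphisms of schemes glue, and $\mathcal{O}_X^\times$ is a subsheaf of $\mathcal{O}_X$. Choosing an affine open cover $\{U_i=\Spec A_i\}$ of $X$, the affine case yields isomorphisms over each $U_i$ which, by naturality, agree on all finite intersections; since both presheaves are sheaves these patch to the desired isomorphism $\Hom_{\Spec K}(X,\text{GL}_1)\simeq\Gamma(X,\mathcal{O}_X^\times)$, still given by $f\mapsto f^{\sharp}(t)$. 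Naturality in $X$ is then immediate from functoriality of pullback of global sections. The main point requiring (minor) care is simply that $K[t,t^{-1}]$ enjoys the expected universal property over semirings and pointed monoids — which is immediate from Definition~\ref{definition: based change functor} — and that the affine bijection is natural enough for the gluing to be coherent; beyond this there is no substantive obstacle.
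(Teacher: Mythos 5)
Your proposal is correct and follows essentially the same route as the paper: establish the affine case by identifying $\Hom_{\Spec K}(\Spec A,\text{GL}_1)$ with $A^\times$ via the universal property of $K[t,t^{-1}]$ (which you justify, a bit more explicitly, through the monoid--semiring adjunction), and then glue over an affine cover using that both $U\mapsto\Hom_{\Spec K}(U,\text{GL}_1)$ and $U\mapsto\Gamma(U,\mathcal{O}_X^\times)$ are sheaves; the paper carries out this last step by hand with affine covers of the pairwise intersections, which is exactly what your sheaf-theoretic phrasing compresses. In both arguments the isomorphism is $f\mapsto f^*t$, so there is nothing substantive to add.
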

\begin{proof}
We prove the case for semirings. The case for monoids is similar. 

Note that we have a canonical global section $t \in \Gamma(GL_1, \mathcal{O}_{\text{GL}_1}^\times)$, so $f: X\rightarrow \text{GL}_1$ induces a global section $f^*t \in  \Gamma(X, \mathcal{O}_X^\times)$.  It is natural, because given $g: Y\rightarrow \text{GL}_1$, the induced map on $\mathrm{Hom}_{\Spec K}(-, \text{GL}_1)$ sends $f$ to $f\circ g$, while the induced map on $\Gamma(-, \mathcal{O}_-^\times)$ sends $f^* t$ to $g^* f^* t = (f\circ g)^* t$.

To show it is an isomorphism, we begin with the affine case, where $X = \Spec A$ for some semiring $A$ and so $\Gamma(X, \mathcal{O}_X^\times) = A^\times$. The universal property of $K[t, t^{-1}]$ asserts that morphisms
\[
f: K[t, t^{-1}]\rightarrow A
\]
are in bijection with units of $A$, and that the corresponding element of $A^\times$ is $f(t) = f^* t$.  So this bijection is the natural map from before.

Now consider the general case.  Fix an open affine cover $\{ U_i \}$ and let $\iota_i: U_i\rightarrow X$ be the inclusion.  For each $i, j$, fix an open affine cover $\{ V_{ijk} \}$ of $U_i \cap U_j$ with $V_{ijk} = V_{jik}$, and let $\phi_{ijk}: V_{ijk} \rightarrow U_i$ be the inclusion.  Given $u \in \Gamma(X, \mathcal{O}_X^\times)$, we obtain elements $u_i = u\mid_{U_i} \in \Gamma(U_i, \mathcal{O}_{U_i}^\times)$ which satisfy the following condition:
\[
\phi_{ikj}^* u_i = \phi_{jik}^* u_j.
\]
We let $f_i$ be the morphism corresponding to $u_i$; note that the restrictions of $f_i$ and $f_j$ to each $V_{ijk}$ agree.  Our goal is to show there is a unique $f: X\rightarrow GL_1$ satisfying $u = f^* t$.

Given a morphism $f: X\rightarrow GL_1$, we have
\[
(f^* t)\mid_{U_i} = \iota_i^* f^* t = (f\mid_{U_i})^* t.
\]
Thus $f^* t = u$ if and only if $(f\mid_{U_i})^* t = u_i$.  By the affine case of this result, this holds if and only if $f\mid_{U_i}$ is the morphism corresponding to $u_i$, i.e., if and only if $f\mid_{U_i} = f_i$.  But since the $f_i$ agree on overlaps they glue together to give a unique morphism satisfying this property.
\end{proof}

Let $K$ be a semiring or a monoid. Let $X$ and $Y$ be schemes over $K$. For each $K$-algebra $A$, there is an one-to-one correspondence between the points $t \in (X\times_KY)(A)$ and the pairs $(x,y) \in X(A) \times Y(A)$ satisfying the following condition
\begin{equation}\label{eq: pairs}
g_X \circ x = g_Y \circ y,    
\end{equation}
where $g_X:X \to \Spec K$ and $g_Y:Y \to \Spec K$ are the structure maps. From now on, we replace each $t \in (X\times_KY)(A)$, with a pair $(x,y) \in X(A) \times Y(A)$ satisfying \eqref{eq: pairs}. 

The following result shows how any equivariant structure on the trivial line bundle $\mathcal{O}_X$ gives rise to a unit on $G \times_K X$.  This will be the key to classifying equivariant structures on $\mathcal{O}_X$.  Note that this unit could potentially depend on the choice of basis vector.  We can just choose the basis vector to be $1\in \mathcal{O}_X$, but since that $1$ is not preserved by line bundle automorphisms, the resulting unit is not completely canonical.

To proceed, we need a notion of equivariant vector bundles on a monoid scheme $X$.
\begin{mydef}
Let $K$ be a monoid, $X$ be a scheme over $K$, and $G$ be a group scheme over $K$ acting on $X$. By an equivariant vector bundle $E$ on $X$, we mean a vector bundle $E$ (locally free sheaf) on $X$ together with an action of $G(A)$ for each $K$-algebra $A$ satisfying the same conditions as in Definition \ref{definition: equivariant vector bundle}. 
\end{mydef}
\begin{rmk}
One can observe that all statements in Section \ref{section: Equivalence between locally free sheaves and geometric vector bundles} except Proposition \ref{proposition: addition morphism} are valid for locally free sheaves on monoid schemes as proofs do not use something specific to semirings. In fact, the proofs for rings can be modified to proof the semiring case and the monoid case. 
\end{rmk}

\begin{lem}\label{construction of unit associated to an equivariant line bundle}
Let $K$ be a semiring or monoid.  Let $X$ be a scheme over $K$ and $G$ be a group scheme over $K$ acting on $X$.  Let $L$ be an equivariant line bundle on $X$ which is trivial as a line bundle and let $v$ be a basis vector for the global sections of  $L$. Then there is a unique morphism $u: G \times_K X \rightarrow GL_1$ such that for each $K$-algebra $A$, each $(g, x) \in (G\times_KX)(A)$, the morphism $u_A: (G\times_KX)(A) \rightarrow GL_1(A)$ satisfies $u_A(g, x) v_{gx} = g v_x$.
\end{lem}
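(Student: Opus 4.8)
The plan is to build $u$ through its functor of points. For a $K$-algebra $A$ I will identify a point of $(G\times_K X)(A)$ with a pair $(g,x)$, $g\in G(A)$, $x\in X(A)$, having the same image in $\Spec K$, produce a canonical $u_A(g,x)\in A^\times = GL_1(A)$ satisfying $u_A(g,x)\,v_{gx}=g\,v_x$, verify that $A\mapsto u_A$ is natural in $A$, and then recover $u$ from this natural transformation.

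Since $L$ is trivial as a line bundle and $v$ is a basis of $\Gamma(X,L)$, for every $x\in X(A)$ the fiber $L_x=(x^*L)(\Spec A)$ is free of rank one over $A$ with basis $v_x$, the pullback of $v$ along $x$ (Remark \ref{remark: pullback global section}); this uses that $x^*$ is a left adjoint and so carries the trivial bundle and its basis to the trivial bundle and the pulled-back basis, exactly as in the proof of Theorem \ref{theorem: torus-equivariant}. For $(g,x)\in(G\times_K X)(A)$, condition (3) of Definition \ref{definition: equivariant vector bundle} makes the map $L_x\to L_{gx}$, $w\mapsto g w$, an $A$-linear isomorphism with inverse $w\mapsto g^{-1}w$, so $g v_x$ is again an $A$-basis of the rank-one free module $L_{gx}$. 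Comparing it with the basis $v_{gx}$ yields a unique $u_A(g,x)\in A$ with $g v_x=u_A(g,x)\,v_{gx}$; writing $g^{-1}v_{gx}=c\,v_x$ and applying $g^{-1}$ to $g v_x=u_A(g,x)v_{gx}$ gives $v_x=u_A(g,x)\,c\,v_x$, so $u_A(g,x)c=1$ and $u_A(g,x)\in A^\times$. Linear independence of $\{v_{gx}\}$ shows $u_A$ is the only function on $(G\times_K X)(A)$ with the required property.

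For naturality, let $f\colon A\to B$ be a $K$-algebra map with induced $f^*\colon\Spec B\to\Spec A$. The pullbacks of $v$ are compatible, i.e. $f^*(v_x)=v_{f^*x}$; the action $G\times_K X\to X$ being a morphism of schemes induces a natural transformation of functors of points, so $f^*(gx)=(f^*g)(f^*x)$; and condition (1) of Definition \ref{definition: equivariant vector bundle} gives $f^*(g v_x)=(f^*g)(f^*v_x)=(f^*g)(v_{f^*x})$. Applying $f^*$ to $g v_x=u_A(g,x)v_{gx}$ and rewriting inside $L_{(f^*g)(f^*x)}$ gives $(f^*g)(v_{f^*x})=f(u_A(g,x))\,v_{(f^*g)(f^*x)}$; comparing with $(f^*g)(v_{f^*x})=u_B(f^*g,f^*x)\,v_{(f^*g)(f^*x)}$ forces $u_B(f^*g,f^*x)=f(u_A(g,x))$ by the uniqueness just established. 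Hence $(u_A)$ is a natural transformation from the functor of points of $G\times_K X$ to that of $GL_1$.

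Finally, this natural transformation comes from a unique morphism of schemes $u\colon G\times_K X\to GL_1$: in the functor-of-points formalism a scheme is the sheaf it represents, so a natural transformation between such functors is a morphism of schemes; concretely, one may evaluate $u_A$ on the inclusions of affine opens $\Spec A\hookrightarrow G\times_K X$, check that the resulting units agree on overlaps by naturality, and apply Lemma \ref{units as morphisms}. Uniqueness of $u$ is immediate, since any morphism with the asserted property has functor of points $(u_A)$, which is uniquely determined. I expect the only delicate point to be the bookkeeping in the naturality step — juggling the compatibility of the pullbacks of $v$, the naturality of the action on $X$, and that of the action on $L$, together with the identification of points of $G\times_K X$ with compatible pairs — while everything else is forced once $L_x$ is identified with the free rank-one module on $v_x$.
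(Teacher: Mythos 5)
Your proposal is correct and follows essentially the same route as the paper: define $u_A(g,x)$ fiberwise by comparing the two bases $gv_x$ and $v_{gx}$ of the free rank-one module $L_{gx}$, verify naturality in $A$ by applying $f^*$ to the defining identity and invoking uniqueness, and recover the morphism $u$ from the natural transformation. The only differences are cosmetic: you spell out explicitly why $u_A(g,x)$ is a unit (via the inverse action of $g^{-1}$) and you make the final Yoneda-type step explicit, both of which the paper leaves implicit.
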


\begin{proof}
We prove the case for semirings. The case for monoids is similar. 

We will construct $u$ via its functor of points. Let $v$ be a basis vector for global sections for $L$.  Since $v_x$ is a basis vector for the free module $L_x$, $gv_x$ is a basis vector for $L_{gx}$.  But $v_{gx}$ is also a basis vector, so there is a unique $a\in GL_1(A)$ such that $a v_{gx} = g v_x$. We let $u_A(g,x)=a$.

We have thus constructed a unique map $u_A$ such that $u_A(g, x) v_{gx} = g v_x$ holds.  To obtain the map $u$ we must show $u_A$ is natural in $A$.  Fix a morphism $f:\Spec B \to \Spec A$. As in the proof of Theorem \ref{theorem: torus-equivariant}, we let the induced maps be $f^*:X(A) \to X(B)$, $f^*:G(A) \to G(B)$, and $f^*:L(A) \to L(B)$. Since $f^*(gx)=f^*(g)f^*(x)$, we have
\[
u_B(f^* g, f^* x)  v_{f^*(gx)} = (f^*g) v_{f^* x}.
\]
Recall that $v_{f^* x}$ is the result of pulling $v$ back along $f^* x : \Spec B \rightarrow X$, while $v_x$ is the pullback along $x: \Spec A \rightarrow X$.  Thus $v_{f^* x}$ is the pullback of $v_x$ along $f: \Spec B \rightarrow \Spec A$.  In other words, $v_{f^*x} = f^* v_x$, where the latter $f^*$ denotes the induced map $f^*:L(A)\rightarrow L(B)$.

Thus we have
\[
u_B(f^* g, f^* x)  f^*v_{gx} = (f^*g) f^* v_{x}.
\]
But applying $f^*$ to both sides of
\[
u_A(g, x) v_{gx} = g v_x
\]
gives 
\[
u_A(g, x) f^* v_{gx} = (f^* g) f^* v_x.
\]
Thus $u_B(f^* g, f^* x) = u_A(g, x)$, which is the desired naturality.
\end{proof}

The next step is to determine which units of $G \times_K X$ arise this way.

\begin{lem}\label{lemma: principal}
With the notation and assumptions of Lemma \ref{construction of unit associated to an equivariant line bundle}, for $(g, h, x) \in (G\times_K G \times_K X)(A)$, we have $u_A(gh, x) = u_A(h, x) u_A(g, hx)$ and $u_A(1, x) = 1$.
\end{lem}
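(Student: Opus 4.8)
The plan is to exploit the defining relation $u_A(g,x)\, v_{gx} = g v_x$ from Lemma \ref{construction of unit associated to an equivariant line bundle}, together with the fact that $v_{(gh)x}$ is a basis vector of the free rank-one $A$-module $L_{(gh)x}$, so that any scalar coefficient standing in front of it is uniquely determined. Fixing $(g,h,x)\in (G\times_K G\times_K X)(A)$, the idea is simply to evaluate $(gh)v_x$ in two different ways and then compare coefficients.

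First I would compute $(gh)v_x$ directly: applying the defining relation to the pair $(gh,x)$ gives $(gh)v_x = u_A(gh,x)\, v_{(gh)x}$. Next, since $G(A)$ acts on $L(A)$ by a group action, $(gh)v_x = g(hv_x)$. Applying the defining relation to $(h,x)$ yields $hv_x = u_A(h,x)\, v_{hx}$, and then using that $g$ acts $A$-linearly on fibers (Definition \ref{definition: equivariant vector bundle} (3)) — recall $u_A(h,x)\in GL_1(A)$ is a scalar — one may pull it out: $g\big(u_A(h,x)\, v_{hx}\big) = u_A(h,x)\,(gv_{hx})$. Applying the defining relation once more to $(g,hx)$, and using $g(hx) = (gh)x$ from the $G(A)$-action on $X(A)$, gives $gv_{hx} = u_A(g,hx)\, v_{(gh)x}$. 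Combining these, $(gh)v_x = u_A(h,x)\,u_A(g,hx)\, v_{(gh)x}$.

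Comparing the two expressions for $(gh)v_x$ and invoking uniqueness of the coefficient of the basis vector $v_{(gh)x}$ yields $u_A(gh,x) = u_A(h,x)\,u_A(g,hx)$, the first claim. For the second, apply the defining relation to $(1,x)$: since $1\in G(A)$ acts as the identity on $L(A)$ and fixes $x\in X(A)$, we get $v_x = 1\cdot v_x = u_A(1,x)\, v_{1\cdot x} = u_A(1,x)\, v_x$, and uniqueness of the coefficient of $v_x$ forces $u_A(1,x) = 1$. There is essentially no obstacle here: the only point requiring any care is that the scalar $u_A(h,x)$ may be moved past the action of $g$, which is exactly the fiberwise $A$-linearity built into the definition of an equivariant vector bundle, together with the bookkeeping identity $g(hx) = (gh)x$. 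The semiring and monoid cases are identical, since the argument uses only the group-action axioms and fiberwise linearity.
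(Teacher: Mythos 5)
Your proof is correct and follows the same route as the paper: compute $(gh)v_x$ two ways using the defining relation $u_A(g,x)v_{gx}=gv_x$, the group-action axiom, and fiberwise $A$-linearity, then compare coefficients of the basis vector $v_{(gh)x}$. The identity $u_A(1,x)v_x = v_x$ for the second claim is also exactly the paper's argument.
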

\begin{proof}
To show the first part of the statement, observe that
\[ 
u_A(gh, x) v_{ghx} = gh v_{x} = u_A(h, x) g v_{hx} = u_A(h, x) u_A(g, hx) v_{ghx}. 
\]
Since $v_{ghx}$ is a basis vector, the first part of the statement follows.
Similarly, the second half of the statement follows from $u_A(1, x) v_x = 1 v_x = v_x$.
\end{proof}

\begin{mydef}\label{definition: principal}
With the same notation as above, a map $u:G \times_K X \to \text{GL}_1$ is said to be \emph{principal} if for any $K$-algebra $A$ and for $(g, h, x) \in (G\times_K G \times_K X)(A)$, one has
\begin{equation}\label{eq: eqimp}
u_A(gh, x) = u_A(h, x) u_A(g, hx).
\end{equation}
\end{mydef}

Note that Equation~\eqref{eq: eqimp} implies that $u_A(1, x) = 1$. 

\begin{lem}\label{lemma: from principal to equivariant}
Let $K$ be either a semiring or a monoid. Let $X$ be a scheme over $K$ and $G$ be a group scheme over $K$ acting on $X$.  Let $u: G \times_K X \rightarrow \text{GL}_1$ be a principal morphism.  Then there is an equivariant line bundle and a basis vector $v$ such that $u$ arises from the construction of Lemma \ref{construction of unit associated to an equivariant line bundle}. In other words,
\[
u_A(g, x) v_{gx} = g v_x.
\]
\end{lem}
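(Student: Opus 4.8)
The plan is to realize $u$ directly on the \emph{trivial} line bundle $\mathcal{O}_X$, equipped with its canonical basis vector $v = 1 \in \Gamma(X,\mathcal{O}_X)$, by twisting the obvious $G$-action by $u$. Recall that the functor of points of $\mathcal{O}_X$ sends a $K$-algebra $A$ to $\bigsqcup_{x\in X(A)} A\cdot 1_x$, where $1_x$ is the pullback of $1$ along $x$ and spans the fiber $(\mathcal{O}_X)_x\cong A$. For $g\in G(A)$, $x\in X(A)$ and $a\in A$ I would set
\[
\rho_A\bigl(g,\ a\,1_x\bigr) \ :=\ a\,u_A(g,x)\,1_{gx},
\]
where $u_A(g,x)\in\text{GL}_1(A)=A^\times$. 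I then need to check that $(\mathcal{O}_X,\rho)$ is a $G$-equivariant line bundle, and that $u$ is recovered from it via the construction of Lemma~\ref{construction of unit associated to an equivariant line bundle}.

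\textbf{Equivariance axioms.} By construction $\rho_A$ carries the fiber over $x$ into the fiber over $gx$, so $\pi_A$ is equivariant (condition (2) of Definition~\ref{definition: equivariant vector bundle}), and the induced map $(\mathcal{O}_X)_x\to(\mathcal{O}_X)_{gx}$ is $A$-linear (condition (3)). That $\rho_A$ is a genuine action of $G(A)$ is where principality enters: since $u_A(1,x)=1$ (noted after Definition~\ref{definition: principal}) we get $\rho_A(1,a\,1_x)=a\,1_x$, and using $g(hx)=(gh)x$ in $X(A)$ together with the identity \eqref{eq: eqimp}, i.e. $u_A(gh,x)=u_A(h,x)\,u_A(g,hx)$, one computes
\[
\rho_A\bigl(g,\rho_A(h,a\,1_x)\bigr) \;=\; a\,u_A(h,x)\,u_A(g,hx)\,1_{(gh)x} \;=\; a\,u_A(gh,x)\,1_{(gh)x} \;=\; \rho_A(gh,a\,1_x).
\]

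\textbf{Naturality and conclusion.} It remains to verify condition (1) of Definition~\ref{definition: equivariant vector bundle}. For a $K$-algebra map $f\colon A\to B$ with $f^*\colon\Spec B\to\Spec A$, the induced map on $\mathcal{O}_X$ sends $a\,1_x\mapsto f(a)\,1_{f^*x}$; the $G$-action on $X$ being a scheme morphism gives $f^*(gx)=(f^*g)(f^*x)$; and $u$ being a scheme morphism means its associated natural transformation satisfies $u_B(f^*g,f^*x)=f\bigl(u_A(g,x)\bigr)$ under $\text{GL}_1(A)=A^\times$ (cf. Lemma~\ref{units as morphisms}). Chasing the naturality square both ways then reduces to $f\bigl(a\,u_A(g,x)\bigr)\,1_{(f^*g)(f^*x)} = f(a)\,u_B(f^*g,f^*x)\,1_{(f^*g)(f^*x)}$, which holds. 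Hence $(\mathcal{O}_X,\rho)$ is an equivariant line bundle, trivial as a line bundle. Finally, with $v=1$ we have $v_x=1_x$ and, straight from the definition, $g v_x = \rho_A(g,1_x) = u_A(g,x)\,1_{gx} = u_A(g,x)\,v_{gx}$; thus the morphism obtained by applying Lemma~\ref{construction of unit associated to an equivariant line bundle} to $(\mathcal{O}_X,\rho)$ and $v$ satisfies the same defining equation as $u$, so by the uniqueness asserted there it \emph{equals} $u$. The only point demanding care is the bookkeeping in the naturality step — matching the induced map on the functor of points of $\mathcal{O}_X$ with the natural transformation attached to the scheme morphism $u$ — but no real obstacle arises.
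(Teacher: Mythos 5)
Your proposal is correct and follows essentially the same route as the paper: define the twisted action $\rho_A(g, a\,v_x) = a\,u_A(g,x)\,v_{gx}$ on a trivial line bundle, use principality and $u_A(1,x)=1$ to verify the action axioms, and use functoriality of $u$ for naturality (your version, with $u_B(f^*g,f^*x)=f(u_A(g,x))$, is if anything slightly more careful about applying $f$ to coefficients than the paper's). The only cosmetic difference is that you fix $L=\mathcal{O}_X$ with $v=1$ and explicitly invoke the uniqueness clause of Lemma~\ref{construction of unit associated to an equivariant line bundle} to identify the resulting morphism with $u$, whereas the paper takes an arbitrary trivial $L$ and simply observes the defining equation holds by construction.
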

\begin{proof}
We choose any trivial line bundle $L$ and basis vector $v$.  To construct the equivariant structure, we define the action $\rho_A:G(A) \times L(A) \rightarrow L(A)$ by
\[
g(av_x) = a u_A(g, x) v_{gx}.
\]
By construction this action is linear and satisfies $u_A(g, x) v_{gx} = g v_x$.  We must check that it is actually a group action, that it is compatible with the action on $X$, and that it is natural in $A$.

To see that it is a group action, observe that
\[ g(h(a v_x)) = g(a u_A(h, x) v_{hx}) = a u_A(g, hx) u_A(h, x) v_{ghx} = a u_A(gh, x) v_{ghx} = (gh)(a v_x), \]
which is the desired homomorphism property.  Also $1$ acts via $1(a v_x) = u_A(1, x) a v_x = a v_x$.  The action is compatible with the action on $X$ because $g(a v_x) = a u_A(g, x) v_{gx}$ lies over $gx$.

To show that the action is natural in $A$, let $f:\Spec B \to \Spec A$, and $f^*$ be as in the proof of Lemma~\ref{construction of unit associated to an equivariant line bundle}. We need to show that the following diagram commutes:
\[
\begin{tikzcd}
G(A) \times L(A) \arrow[r,"\rho_A"] \arrow[d," f^*",swap] & L(A) \arrow[d,"f^*"] \\
G(B) \times L(B) \arrow[r,"\rho_B"] & L(B)
\end{tikzcd}
\]
Now for $(g,av_x) \in G(A) \times L(A)$, we have
\[
f^*\rho_A(g,av_x)=f^*(au_A(g,x)v_{gx})=au_A(g,x)f^*(v_{gx})
\]
\[
=au_A(g,x)v_{f^*({gx})}=au_A(g,x)v_{f^*gf^*x}.
\]
On the other hand, 
\[
\rho_B(f^*g,f^*(av_x))=\rho_B(f^*g,af^*(v_x))=\rho_B(f^*g,av_{f^*x})=au_B(f^*g,f^*x)v_{f^*gf^*x}
\]
From the functoriallity of $u$, we have
\[
u_B(f^*g,f^*x)=u_A(g,x), 
\]
implying that
\[
f^*\rho_A(g,av_x)=\rho_B(f^*g,f^*(av_x))=\rho_Bf^*(g,av_x).
\]
\end{proof}

We can now classify actions on the trivial bundle $\mathcal{O}_X$ which make it into an equivariant line bundle.

\begin{pro}\label{Equivariant structures on O_X}
Let $K$ be a semiring or a monoid. Let $X$ be a scheme over $K$ and $G$ be a group scheme over $K$ acting on $X$.  Then there is a one-to-one correspondence between $G$-actions on $\mathcal{O}_X$ which make $\mathcal{O}_X$ an equivariant line bundle and principal morphisms $u: G \times_K X \rightarrow GL_1$.  Moreover, this correspondence is a group isomorphism, where the first set is viewed as a group under tensor product, and the second is a group under pointwise multiplication.
\end{pro}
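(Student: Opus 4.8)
The plan is to assemble the correspondence directly out of Lemmas \ref{construction of unit associated to an equivariant line bundle}, \ref{lemma: principal} and \ref{lemma: from principal to equivariant}, using the canonical global section $1 \in \Gamma(X, \mathcal{O}_X)$ as the distinguished basis vector $v$ throughout. Given a $G$-action $\rho$ making $\mathcal{O}_X$ an equivariant line bundle, Lemma \ref{construction of unit associated to an equivariant line bundle} produces a morphism $u^\rho: G \times_K X \to GL_1$ characterized by $u^\rho_A(g,x)\cdot 1_{gx} = g\cdot 1_x$ for all $K$-algebras $A$ and $(g,x) \in (G \times_K X)(A)$, and Lemma \ref{lemma: principal} shows $u^\rho$ is principal. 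Conversely, given a principal $u$, Lemma \ref{lemma: from principal to equivariant} applied with $L = \mathcal{O}_X$ and $v = 1$ produces an action $\rho^u$ on $\mathcal{O}_X$, explicitly $\rho^u_A(g, a\cdot 1_x) = a\, u_A(g,x)\cdot 1_{gx}$, which makes $\mathcal{O}_X$ an equivariant line bundle. Since $\rho^u$ satisfies $u_A(g,x)\cdot 1_{gx} = g \cdot 1_x$, the uniqueness clause of Lemma \ref{construction of unit associated to an equivariant line bundle} gives $u^{\rho^u} = u$. For the other composite, both $\rho$ and $\rho^{u^\rho}$ send $(g, a\cdot 1_x)$ to $a\,u^\rho_A(g,x)\cdot 1_{gx}$ — the former by $A$-linearity of the action on fibers together with the defining property of $u^\rho$ — so $\rho = \rho^{u^\rho}$. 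This yields the claimed bijection.

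It remains to check it is a group isomorphism. As in the proof of Proposition \ref{proposition: equivariant Picard group}, we use the canonical isomorphism of sheaves $m: \mathcal{O}_X \otimes_{\mathcal{O}_X} \mathcal{O}_X \to \mathcal{O}_X$, $f \otimes g \mapsto fg$, to identify the tensor product of two equivariant structures on $\mathcal{O}_X$ with an equivariant structure on $\mathcal{O}_X$ itself; on fibers, $m$ sends $1_x \otimes 1_x$ to $1_x$. If $\rho, \rho'$ are equivariant structures with associated principal morphisms $u, u'$, then for $(g,x) \in (G \times_K X)(A)$ the tensor action sends $1_x \otimes 1_x$ to $(g\cdot 1_x) \otimes (g\cdot 1_x) = (u_A(g,x) 1_{gx}) \otimes (u'_A(g,x) 1_{gx}) = u_A(g,x) u'_A(g,x)\,(1_{gx} \otimes 1_{gx})$, so the transported action $\rho \cdot \rho'$ on $\mathcal{O}_X$ sends $1_x$ to $u_A(g,x) u'_A(g,x)\cdot 1_{gx}$. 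By the characterizing property, the principal morphism attached to $\rho \cdot \rho'$ is the pointwise product $uu'$; here I use Lemma \ref{units as morphisms} to identify pointwise multiplication of morphisms $G\times_K X \to GL_1$ with multiplication of the associated units in $\Gamma(G \times_K X, \mathcal{O}^\times)$. The identity of the first group is the trivial equivariant structure of Example \ref{example: trivial eq}, whose associated unit is the constant morphism $1$, matching the identity of the second group; and a bijective group homomorphism automatically respects inverses. Hence the correspondence is a group isomorphism.

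Essentially all of the content is in the three cited lemmas, so the only genuine work here is bookkeeping: fixing $v = 1$ so the constructions become canonical on $\mathcal{O}_X$, and tracking the isomorphism $m$ to see that tensoring actions multiplies the associated units. I expect the compatibility with the group structures — the tensor-product step — to require the most care, since one must apply the identification $1\otimes 1 \mapsto 1$ consistently at the level of both sheaves and fibers and invoke Proposition \ref{proposition: equivariant Picard group} to know that the first set is a group in the first place; the bijectivity argument is a routine application of the uniqueness clauses in the preceding lemmas.
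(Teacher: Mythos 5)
Your proposal is correct and follows essentially the same route as the paper: the bijection is assembled from Lemmas \ref{construction of unit associated to an equivariant line bundle}, \ref{lemma: principal} and \ref{lemma: from principal to equivariant} (your two composite checks are exactly the paper's surjectivity plus its uniqueness argument that an action is determined on $a\cdot 1_x$ by linearity and the defining property of $u$), and the homomorphism property is verified by the same tensor-product computation on basis vectors. Your explicit tracking of the identification $\mathcal{O}_X\otimes\mathcal{O}_X\cong\mathcal{O}_X$ is a minor bookkeeping refinement of what the paper does implicitly.
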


\begin{rmk}
    Proposition \ref{proposition: equivariant Picard group} shows that the tensor product of equivariant line bundles is an equivariant line bundle, that $\mathcal{O}_X$ with the trivial action is the tensor product identity and that the dual line bundle is the inverse.  Since $\mathcal{O}_X \otimes \mathcal{O}_X = \mathcal{O}_X = \Hom(\mathcal{O}_X, \mathcal{O}_X)$, the set of equivariant line bundles whose underlying line bundle is $\mathcal{O}_X$ is closed under tensor products and duals, and hence forms a group.
\end{rmk}

\begin{proof}[Proof of Proposition~\ref{Equivariant structures on O_X}]
We have already proved that equivariant structures on $\mathcal{O}_X$ uniquely give rise to principal morphisms $u$ (Lemmas \ref{construction of unit associated to an equivariant line bundle} and \ref{lemma: principal}), and that every such $u$ arises in this way (Lemma \ref{lemma: from principal to equivariant}).

Next we prove the uniqueness. Consider two actions $\alpha, \beta: G \times_K \mathcal{O}_X \rightarrow \mathcal{O}_X$ which give rise to the same morphism $u$.  Any element of $L(A)$ can be written $a 1_x$ for some $a\in A$ and $x\in X(A)$.  Under both actions, linearity and the definition of $u$ give 
\[
g(a 1_x) = a g(1_x) = a u_A(g, x) 1_x,
\]
and hence so both actions are the same. In other words, such $u$ arises from a unique equivariant structure on $\mathcal{O}_X$.

Finally, we show this bijective correspondence is a homomorphism.  Let $L, L'$ be equivariant line bundles with basis sections $v$ and $w$ respectively.  Let $u, u', \phi: G \times_K X \rightarrow \mathrm{GL}_1$ correspond to equivariant structures of $L, L', L\otimes L'$ respectively.  Then by definition of $\phi$ and of the action on the tensor product,
\begin{equation}\phi_A(g, x)(v_{gx} \otimes w_{gx}) = g(v_x \otimes w_x) = gv_x \otimes gw_x.
\end{equation}
On the other hand, by definition of $u$ and $u'$,
\begin{equation}u_A(g, x)u'_A(g, x)(v_{gx} \otimes w_{gx}) = gv_x \otimes gw_x.
\end{equation}
So $\phi = uu'$, which implies the correspondence in question is a group homomorphism.
\end{proof}

The relationship in Lemma \ref{units as morphisms}, between units of the global sections of $G\times_K X$ and morphisms $G\times_K X \to \text{GL}_1$, lets us phrase Proposition \ref{Equivariant structures on O_X} entirely in terms of units.

Let $K$ be a semiring or a monoid and $X$ be a scheme over $K$ and $G$ be a group scheme over $K$ acting on $X$.   Let $m, a, p: G\times_K G \times_K X \rightarrow X$ be the maps induced by the multiplication on $G$, the action of $G$ on $X$, and the projection which drops the first factor respectively, and let $i: X \rightarrow G \times_K X$ be the map which inserts the identity element in the first component.  Let $\mu: GL_1 \times_K GL_1\rightarrow GL_1$ denote multiplication.

\begin{cor}\label{cor: units homs}
With the same notation as above, there is a group isomorphism between the group of equivariant line bundle structures on $\mathcal{O}_X$ and the group of units $u \in \Gamma(G\times_K X, \mathcal{O}_{G\times_K X}^\times)$ satisfying $i^*u = 1$ and $\mu \circ (a^* u, p^*u) = m^*(u)$.
\end{cor}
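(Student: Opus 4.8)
The plan is to deduce the corollary from Proposition~\ref{Equivariant structures on O_X} by transporting everything through the isomorphism of Lemma~\ref{units as morphisms}, so that the notion of a \emph{principal} morphism gets rewritten as the pair of cocycle-type identities on the associated unit.

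First I would apply Lemma~\ref{units as morphisms} with $G\times_K X$ in place of $X$, obtaining the natural isomorphism $\mathrm{Hom}_{\Spec K}(G\times_K X, GL_1)\simeq\Gamma(G\times_K X, \mathcal{O}_{G\times_K X}^\times)$, $f\mapsto f^{*}t$, where $t$ is the canonical unit on $GL_1$; from then on I identify a unit with the corresponding morphism to $GL_1$. I would check that this identification is a group isomorphism for the appropriate group structures: the source carries a group structure coming from the group-scheme structure $\mu$ on $GL_1$, the target is a group since $\mathcal{O}^\times$ is a sheaf of abelian groups, and since $\mu^{*}t$ is the coordinatewise product on $GL_1\times_K GL_1$ one gets $(\mu\circ(f,g))^{*}t=(f^{*}t)(g^{*}t)$. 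In particular, under this identification $\mu\circ(a^{*}u,p^{*}u)$ is simply the product of the units $a^{*}u$ and $p^{*}u$, and by naturality of the isomorphism, precomposing a morphism with a morphism of schemes corresponds to pulling back its unit.

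Next I would translate the condition ``$u$ is principal'' using functors of points. On $A$-points the maps $m,a,p$ and $i$ are $m_A(g,h,x)=(gh,x)$, $a_A(g,h,x)=(g,hx)$, $p_A(g,h,x)=(h,x)$ and $i_A(x)=(1,x)$, so for $u\colon G\times_K X\to GL_1$ with functor of points $u_A$ the morphisms $m^{*}u$, $\mu\circ(a^{*}u,p^{*}u)$ and $i^{*}u$ have functors of points $(g,h,x)\mapsto u_A(gh,x)$, $(g,h,x)\mapsto u_A(g,hx)\,u_A(h,x)$ and $x\mapsto u_A(1,x)$ respectively. Since a morphism to $GL_1$ is determined by its induced natural transformation of functors of points (Yoneda) and $A^\times$ is commutative, the identity $m^{*}u=\mu\circ(a^{*}u,p^{*}u)$ holds if and only if $u_A(gh,x)=u_A(h,x)\,u_A(g,hx)$ for all $K$-algebras $A$ and all $(g,h,x)\in(G\times_K G\times_K X)(A)$, i.e.\ exactly Equation~\eqref{eq: eqimp}, i.e.\ exactly the statement that $u$ is principal (Definition~\ref{definition: principal}); and as observed right after that definition, this identity already forces $u_A(1,x)=1$, so the condition $i^{*}u=1$ is automatic and is listed only to make the cocycle shape of the conditions transparent. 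Because~\eqref{eq: eqimp} is multiplicative in $u$, the principal morphisms form a subgroup of $\mathrm{Hom}_{\Spec K}(G\times_K X, GL_1)$ under pointwise multiplication, which under Lemma~\ref{units as morphisms} maps isomorphically onto the group of units $u\in\Gamma(G\times_K X,\mathcal{O}_{G\times_K X}^\times)$ with $i^{*}u=1$ and $\mu\circ(a^{*}u,p^{*}u)=m^{*}u$. Composing this with the group isomorphism of Proposition~\ref{Equivariant structures on O_X} between equivariant line bundle structures on $\mathcal{O}_X$ and principal morphisms yields the claim.

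I do not expect any genuine obstacle: the argument is a translation. The only points requiring care are bookkeeping ones---keeping track of which copies of $G$ the maps $m,a,p$ use (so that the cocycle reads in the correct order, which is harmless up to commutativity of $A^\times$) and the fact that pullback of units matches composition of morphisms, which is precisely the naturality already built into Lemma~\ref{units as morphisms}.
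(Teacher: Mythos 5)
Your proposal is correct and follows exactly the route the paper intends: the paper states Corollary \ref{cor: units homs} without a separate proof, treating it as the translation of Proposition \ref{Equivariant structures on O_X} through the isomorphism of Lemma \ref{units as morphisms}, which is precisely what you carry out (including the observation that $i^*u=1$ is forced by the cocycle identity). The only extra content you supply is the routine verification that the identification is a group isomorphism and the functor-of-points unwinding of $m^*u=\mu\circ(a^*u,p^*u)$, both of which are the bookkeeping the paper leaves implicit.
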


In the case of toric varieties, we can use the above result to relate the tropical setting to the $\mathbb{F}_1$-setting as follows. We will interchangeably use the terms $\mathbb{F}_1$-schemes and monoid schemes. By a group scheme over $\mathbb{F}_1$ we mean a group object in the category of $\mathbb{F}_1$-schemes. In the following, for monoid schemes $X, Y$ we simply write $X\times Y$ instead of $X\times_{\mathbb{F}_1}Y$. Likewise, for semiring schemes $X_R$ and $Y_R$ over a semiring $R$, we write $X_R\times Y_R$ for $X_R\times_{R}Y_R$.

\begin{lem}\label{lemma: tropical thing}
Let $X$ be a toric $\mathbb{F}_1$-scheme and $G$ be a group scheme over $\mathbb{F}_1$. Let $K$ be an idempotent semifield. Then, we have
\begin{equation}\label{eq: split}
\Gamma(G_K\times X_K, \mathcal{O}_{G_K\times X_{K}}^\times) = \Gamma(G\times X, K^\times) \times \Gamma(G\times X, \mathcal{O}_{G\times X}^\times).
\end{equation}
\end{lem}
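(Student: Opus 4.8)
The plan is to reduce the identity to a purely local computation of the units of a monoid algebra over $K$, to prove that local statement by pushing the coefficients down to $\mathbb{B}$, and then to re‑glue.

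First I would use that the base change functor $-\otimes_{\mathbb{F}_1}K$ (on affines, $M\mapsto K[M]$) is compatible with products and with open immersions, so that $G_K\times_K X_K=(G\times X)_K$ and, writing $Y:=G\times X$, an affine open cover $\{\Spec M_i\}$ of $Y$ with affine pairwise intersections $\Spec M_i\cap\Spec M_j=\Spec M_{ij}$ base changes to an affine open cover $\{\Spec K[M_i]\}$ of $Y_K$ with intersections $\Spec K[M_{ij}]$. For toric $\mathbb{F}_1$-schemes and for tori all the monoids $M_i,M_{ij}$ occurring are cancellative with no nontrivial zero divisors, and each $\Spec M_i$ is irreducible (hence connected), since $\{0_{M_i}\}$ is its unique minimal prime by the argument of Lemma~\ref{lemma: nilradical}/Proposition~\ref{proposition: irreducible affine}. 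By the sheaf axiom, $\Gamma(Y_K,\mathcal{O}_{Y_K}^\times)$ is then the group of compatible families $(f_i)$ with $f_i\in(K[M_i])^\times$; likewise $\Gamma(Y,\mathcal{O}_Y^\times)$ is the group of compatible families $(m_i)$ with $m_i\in M_i^\times=\Gamma(\Spec M_i,\mathcal{O}^\times)$, and $\Gamma(Y,K^\times)$ is the group of compatible families $(a_i)$ with $a_i\in K^\times$ (using connectedness of $\Spec M_i$ to see that a locally constant $K^\times$-valued function on it is constant).

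The heart of the proof is the following local claim, which I would establish next: if $M$ is a cancellative pointed monoid without nontrivial zero divisors and $K$ is an idempotent semifield, then
\[
(K[M])^\times=\{\,a[m]\ :\ a\in K^\times,\ m\in M^\times\,\},
\]
and $(a,m)\mapsto a[m]$ is a group isomorphism $K^\times\times M^\times\xrightarrow{\ \sim\ }(K[M])^\times$. Writing $\bar M=M\setminus\{0_M\}$, recall that $K[M]$ is the free $K$-module on $\bar M$ and that $\bar M$ is a cancellative monoid. Since $K$ is zero-sum-free (every idempotent semifield is) and has no zero divisors (in a semifield a product of nonzero elements is again a unit, hence nonzero), the map $\phi\colon K\to\mathbb{B}$ sending $0$ to $0$ and every nonzero element to $1$ is a semiring homomorphism; it induces $\phi_*\colon K[M]\to\mathbb{B}[\bar M]$, which preserves supports and sends units to units. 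Identifying $\mathbb{B}[\bar M]$ with the set of finite subsets of $\bar M$ under union (addition) and sumset (multiplication), a unit is a finite subset $S$ with $S\cdot T=\{1_{\bar M}\}$ for some finite $T$; picking $t\in T$ and using cancellativity ($st=s't\Rightarrow s=s'$) forces $|S|=|T|=1$, say $S=\{m_0\}$ with $m_0n_0=1_{\bar M}$, so $m_0\in\bar M^\times=M^\times$. As $\phi_*$ preserves supports, any $f\in(K[M])^\times$ is therefore a monomial $a[m_0]$ with $a\in K\setminus\{0\}=K^\times$ and $m_0\in M^\times$; conversely every such monomial is visibly a unit. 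This proves the claim.

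Finally I would re-glue. Applying the local claim on each chart, write $f_i=a_i[m_i]$ with $a_i\in K^\times$, $m_i\in M_i^\times$. On the overlap $\Spec K[M_{ij}]$ the restriction sends $a_i[m_i]$ and $a_j[m_j]$ to themselves (under $M_i,M_j\hookrightarrow M_{ij}$), and since the nonzero elements of $M_{ij}$ form a $K$-basis of $K[M_{ij}]$, the compatibility $f_i=f_j$ there forces $a_i=a_j$ and $m_i=m_j$. Hence $(a_i)$ is a compatible family of constant $K^\times$-valued functions, i.e. an element of $\Gamma(Y,K^\times)$, and $(m_i)$ is a compatible family of units of the monoid structure sheaf, i.e. an element of $\Gamma(Y,\mathcal{O}_Y^\times)$; conversely a pair of such families reassembles into the compatible family $(a_i[m_i])$. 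This yields a bijection $\Gamma(Y_K,\mathcal{O}_{Y_K}^\times)\cong\Gamma(Y,K^\times)\times\Gamma(Y,\mathcal{O}_Y^\times)$, and since $(a_i[m_i])(a_i'[m_i'])=(a_ia_i')[m_im_i']$ it is a group isomorphism, giving \eqref{eq: split}. I expect the only real obstacle to be the local unit computation — specifically, isolating the exact hypotheses on $M$ (cancellativity, no zero divisors) and on $K$ (zero-sum-freeness, no zero divisors) that make the reduction to $\mathbb{B}$ and the sumset/cancellation argument valid; the gluing and the compatibility of base change with the affine cover and its intersections are routine.
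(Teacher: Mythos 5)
Your proof is correct and follows essentially the same route as the paper: reduce to the affine case, where the key fact is that $(K[M])^\times \cong K^\times \times M^\times$ for the relevant monoids, and then glue via the sheaf condition on an affine cover (the paper phrases the gluing as an isomorphism of \v{C}ech cochains, which amounts to the same equalizer argument in degree $0$). The only real difference is that the paper outsources the local unit computation to \cite[Proposition 3.4]{jun2019picard}, whereas you prove it directly by pushing coefficients down to $\mathbb{B}$ and using cancellativity of the charts — a hypothesis that indeed holds for toric $X$ and a torus $G$, which is the case the paper uses.
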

\begin{proof}
We first consider the case when $X$ and $G$ are affine. Let $X=\Spec M$ and $G=\Spec N$. Then,
\[
G\times X = \Spec (N\otimes_{\mathbb{F}_1}M), \quad G_K=\Spec K[N], \quad X_K=\Spec K[M], 
\]
and since tensor products preserves colimits
\[
G_K \times X_K= (G\times X)_K.
\]
Hence, we have
\[
\Gamma(G_K\times X_K, \mathcal{O}_{G_K\times X_K}^\times) = (K[N\otimes_{\mathbb{F}_1}M])^\times.
\]
It follows from \cite[Proposition 3.4]{jun2019picard} that
\[
(K[N\otimes_{\mathbb{F}_1}M])^\times = K^\times \times (N\otimes_{\mathbb{F}_1}M)^\times.
\]
On the other hand, one can easily check that
\[
\Gamma (G\times X, K^\times)=K^\times, \quad \Gamma(G\times X, \mathcal{O}_{G\times X}^\times) = (N\otimes_{\mathbb{F}_1}M)^\times,
\]
showing the claimed identity for affine $G$ and $X$. 

For general $G$ and $X$, we cover $G_K \times X_K$ by affine open cover $\mathcal{U}_K$ obtained by affine open subsets of $\mathcal{U}=\{U_i\times V_j\}$, where $U_i$ (resp.~$V_j$) are affine open subsets of $G$ (resp.~$X$). Note that this is possible by \cite[Lemma 3.1]{jun2019picard}. Then, as in \cite[Theorem 3.8]{jun2019picard}, by using the affine case, one can obtain isomorphisms of \v{C}ech cochains
\[
\mathbf{C}^k(\mathcal{U}_K,\mathcal{O}_{G_K\times X_K}^\times)    = \mathbf{C}^k(\mathcal{U},K^\times \times \mathcal{O}_{G\times X}^\times),
\]
showing the desired identity \eqref{eq: split}.
\end{proof}

\begin{pro}\label{Correspondence for equivariant structures on O_X}
Let $X$ be a toric $\mathbb{F}_1$-scheme and $G$ be a group scheme over $\mathbb{F}_1$. Let $K$ be an idempotent semifield.  Suppose $G \times X$ is connected.  Then there is an isomorphism between the group of $G$-equivariant line bundle structures on $\mathcal{O}_X$ and the group of $G_K$-equivariant line bundle structures on $\mathcal{O}_{X_K}$.
\end{pro}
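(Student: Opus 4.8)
The plan is to reduce the statement, via Corollary~\ref{cor: units homs}, to a comparison of groups of units and then to invoke the splitting in Lemma~\ref{lemma: tropical thing}. First I would apply Corollary~\ref{cor: units homs} twice: once over $\mathbb{F}_1$ to the pair $(X,G)$, and once over $K$ to the pair $(X_K,G_K)$ with its base-changed action. Since the multiplication of $G$, the action on $X$, the projection, and the insertion of the identity all commute with base change, the structure morphisms $i_K,m_K,a_K,p_K$ that appear in the $K$-version of Corollary~\ref{cor: units homs} are precisely the base changes of $i,m,a,p$. Consequently the group of $G$-equivariant line bundle structures on $\mathcal{O}_X$ is identified with the subgroup of $\Gamma(G\times X,\mathcal{O}_{G\times X}^\times)$ cut out by $i^*u=1$ and $m^*u=(a^*u)(p^*u)$, and the group of $G_K$-equivariant structures on $\mathcal{O}_{X_K}$ with the analogous subgroup of $\Gamma(G_K\times X_K,\mathcal{O}_{G_K\times X_K}^\times)$.

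Next I would feed in Lemma~\ref{lemma: tropical thing}. Applied to the group scheme $G$ and the toric scheme $X$ it yields $\Gamma(G_K\times X_K,\mathcal{O}_{G_K\times X_K}^\times)\cong\Gamma(G\times X,K^\times)\times\Gamma(G\times X,\mathcal{O}_{G\times X}^\times)$; applied with $G\times G$ in place of $G$ it yields the analogous decomposition of the units over $G_K\times G_K\times X_K$, and with trivial group scheme (or directly from $(K[M])^\times=K^\times\times M^\times$) the decomposition $\Gamma(X_K,\mathcal{O}_{X_K}^\times)\cong\Gamma(X,K^\times)\times\Gamma(X,\mathcal{O}_X^\times)$. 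The crucial point to check is that all of these decompositions are natural, since the proof of Lemma~\ref{lemma: tropical thing} rests on the identity $(K[M])^\times=K^\times\times M^\times$, which is natural in the monoid $M$; hence the pullback maps $i_K^*,m_K^*,a_K^*,p_K^*$ respect the two factors. Writing a unit $u$ over $K$ as a pair $(\lambda,w)$ with $\lambda\in\Gamma(G\times X,K^\times)$ and $w\in\Gamma(G\times X,\mathcal{O}_{G\times X}^\times)$, the conditions $i^*u=1$ and $m^*u=(a^*u)(p^*u)$ then decouple into one pair of conditions on $\lambda$ and one pair on $w$. This exhibits the group of $G_K$-equivariant structures on $\mathcal{O}_{X_K}$ as an internal direct product $A\times B$, where $B=\{w:i^*w=1,\ m^*w=(a^*w)(p^*w)\}$ is exactly the group of $G$-equivariant structures on $\mathcal{O}_X$ and $A=\{\lambda\in\Gamma(G\times X,K^\times):i^*\lambda=1,\ m^*\lambda=(a^*\lambda)(p^*\lambda)\}$.

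It then remains only to show that $A$ is trivial, and this is where the hypothesis that $G\times X$ is connected enters. Since $G\times X$ is connected, so is $X$, and hence $\Gamma(G\times X,K^\times)=\Gamma(X,K^\times)=K^\times$ (global sections of the constant sheaf $\underline{K^\times}$ over a connected space), while the pullback along any morphism of connected schemes acts as the identity on global sections of this constant sheaf. In particular $i^*\colon\Gamma(G\times X,K^\times)\to\Gamma(X,K^\times)$ is the identity map of $K^\times$, so $i^*\lambda=1$ already forces $\lambda=1$. Thus $A$ is trivial, and composing the isomorphisms above gives the desired isomorphism of groups between $G$-equivariant line bundle structures on $\mathcal{O}_X$ and $G_K$-equivariant line bundle structures on $\mathcal{O}_{X_K}$; the second cocycle condition on $\lambda$ is not even needed.

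I expect the main obstacle to be the bookkeeping in the second paragraph: verifying that the splitting of Lemma~\ref{lemma: tropical thing} is genuinely functorial in the underlying monoid scheme and therefore compatible with $i,m,a,p$ after base change, and keeping straight which ``$\Gamma(-,K^\times)$'' is the constant-sheaf term and over which of the three products it lives. Once that compatibility is in hand, the decoupling of the cocycle conditions is purely formal and the connectedness argument that kills $A$ is immediate.
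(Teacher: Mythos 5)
Your proposal is correct and follows essentially the same route as the paper: both reduce via Corollary~\ref{cor: units homs} to comparing principal units, split the units on $G_K\times X_K$ using Lemma~\ref{lemma: tropical thing}, and use connectedness to force the $K^\times$-component to be a constant equal to $1$. The only (immaterial) difference is that you kill the constant $\lambda$ with the normalization $i^*\lambda=1$, whereas the paper uses the cocycle identity to get $\lambda=\lambda^2$ and then invokes that $K^\times$ is a group.
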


\begin{proof}
Let $u \in \Gamma(G_K\times X_K, \mathcal{O}_{G_K\times X_K}^\times)$. We write $u=(u_1,u_2)$ under the isomorphism in Lemma \ref{lemma: tropical thing}.
Under a connectedness assumption, $u_1$ is constant, and hence the identity
\[
u(gh, x) = u(g, hx) u(h, x)
\]
together with the fact that $K^\times$ is a group (so that $x^2 = x$ implies $x =1$) implies that $u_1$ is $1$. It follows that $u$ actually lies in $\Gamma(G\times X, \mathcal{O}_{G\times X}^\times)$. Now, the desired group isomorphism follows from Corollary \ref{cor: units homs}. 
\end{proof}

In the classical setting, a result of Rosenlicht states if $X$ and $Y$ are irreducible varieties, then every morphism $X \times Y \rightarrow GL_1$ is the product of a morphism $X \rightarrow GL_1$ and a morphism $Y \rightarrow GL_1$ as shown in \cite[Proposition 4.1.3]{brionlinearization}.\footnote{
Rosenlicht only proved a corollary (which is pretty different from what we are referencing). } The expository text \cite{brionlinearization} explains how this is used to study equivariant line bundles in the classical setting.  We will prove an analogue for $\mathbb{F}_1$-schemes.

\begin{lem}\label{lemma: factorization of units}
Let $X, Y$ be $\mathbb{F}_1$-schemes which are either affine or  irreducible.  Let $u \in \Gamma(X \times Y, \mathcal{O}_{X\times Y}^\times)$.  Then there exist unique $\alpha \in \Gamma(X, \mathcal{O}_X^\times)$ and $\beta \in \Gamma(Y, \mathcal{O}_Y^\times)$ such that $u = \alpha\beta$.
\end{lem}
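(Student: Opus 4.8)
The plan is to handle the affine case by an explicit description of the unit group of a tensor product of pointed monoids, and then bootstrap to the general case by gluing factorizations over a suitable affine cover, with irreducibility entering only to guarantee that the relevant open sets overlap. Throughout, for $\alpha\in\Gamma(X,\mathcal{O}_X^\times)$ and $\beta\in\Gamma(Y,\mathcal{O}_Y^\times)$ I read ``$\alpha\beta$'' as $p_X^*\alpha\cdot p_Y^*\beta\in\Gamma(X\times Y,\mathcal{O}_{X\times Y}^\times)$, where $p_X,p_Y$ are the two projections.

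First I would do the affine case. Suppose $X=\Spec M$ and $Y=\Spec N$, so $X\times Y=\Spec(M\otimes_{\mathbb{F}_1}N)$ and $\Gamma(X\times Y,\mathcal{O}^\times_{X\times Y})=(M\otimes_{\mathbb{F}_1}N)^\times$. Recall that the nonzero elements of $M\otimes_{\mathbb{F}_1}N$ are exactly the elements $m\otimes n$ with $m\in M\setminus\{0\}$ and $n\in N\setminus\{0\}$, and that multiplication is $(m\otimes n)(m'\otimes n')=mm'\otimes nn'$. Hence $m\otimes n$ is a unit if and only if $m\in M^\times$ and $n\in N^\times$, so the map $M^\times\times N^\times\to (M\otimes_{\mathbb{F}_1}N)^\times$, $(\alpha,\beta)\mapsto p_X^*\alpha\cdot p_Y^*\beta=(\alpha\otimes 1)(1\otimes\beta)=\alpha\otimes\beta$, is a group isomorphism. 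Since $\Gamma(\Spec M,\mathcal{O}^\times_{\Spec M})=M^\times$ (and likewise for $N$), this is precisely existence and uniqueness of the factorization when both $X$ and $Y$ are affine.

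For the general case I would fix affine open covers adapted to the hypotheses: if $X$ is affine take the one-element cover $\{X\}$, and if $X$ is irreducible take any affine open cover $\{U_i=\Spec M_i\}$. In either case the members of the cover pairwise intersect (trivially when the cover is a singleton, and because nonempty opens of an irreducible space meet otherwise). Do the same for $Y$, getting $\{V_j=\Spec N_j\}$. Then $\{U_i\times V_j\}$ is an affine open cover of $X\times Y$ (with $U_i\times V_j=\Spec(M_i\otimes_{\mathbb{F}_1}N_j)$), and by the affine case $u|_{U_i\times V_j}=p_X^*\alpha_{ij}\cdot p_Y^*\beta_{ij}$ for unique $\alpha_{ij}\in M_i^\times=\Gamma(U_i,\mathcal{O}^\times)$ and $\beta_{ij}\in N_j^\times=\Gamma(V_j,\mathcal{O}^\times)$. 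Next I would show $\alpha_{ij}$ is independent of $j$: for $j,j'$ pick an affine open $W=\Spec P\subseteq V_j\cap V_{j'}$ (nonempty by the pairwise-intersection property), restrict both expressions for $u$ to $U_i\times W=\Spec(M_i\otimes_{\mathbb{F}_1}P)$, and apply uniqueness in the affine case to $(\alpha_{ij},\beta_{ij}|_W)=(\alpha_{ij'},\beta_{ij'}|_W)$ in $M_i^\times\times P^\times$, concluding $\alpha_{ij}=\alpha_{ij'}$; call this common value $\alpha_i$. Symmetrically $\beta_{ij}$ depends only on $j$; call it $\beta_j$. Then for $i,i'$ and an affine open $W'=\Spec Q\subseteq U_i\cap U_{i'}$, restricting to $W'\times V_j$ and again invoking affine uniqueness gives $\alpha_i|_{W'}=\alpha_{i'}|_{W'}$; letting $W'$ range over an affine cover of $U_i\cap U_{i'}$ and using the sheaf axiom yields $\alpha_i|_{U_i\cap U_{i'}}=\alpha_{i'}|_{U_i\cap U_{i'}}$, so the $\alpha_i$ glue to $\alpha\in\Gamma(X,\mathcal{O}_X^\times)$; likewise the $\beta_j$ glue to $\beta\in\Gamma(Y,\mathcal{O}_Y^\times)$. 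Finally $u$ and $p_X^*\alpha\cdot p_Y^*\beta$ agree on each $U_i\times V_j$ by construction, hence are equal, and uniqueness of the pair $(\alpha,\beta)$ follows by restricting any equality $p_X^*\alpha\cdot p_Y^*\beta=p_X^*\alpha'\cdot p_Y^*\beta'$ to the pieces $U_i\times V_j$ and applying affine uniqueness there.

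The only substantive input beyond formal sheaf-gluing is irreducibility, which is used solely to force the intersections $V_j\cap V_{j'}$ and $U_i\cap U_{i'}$ to be nonempty; without this, the locally defined factorizations need not be compatible (this is the feature that, in the classical setting, makes Rosenlicht's theorem nontrivial). Correspondingly, I expect the main, though modest, obstacle to be the bookkeeping in showing that $\alpha_{ij}$ depends only on $i$ and then glues; the affine-case computation and the absence of any ``scalar ambiguity'' (since $\mathbb{F}_1^\times$ is trivial) make the remaining steps routine.
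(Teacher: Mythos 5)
Your proof is correct and takes essentially the same approach as the paper: the same explicit identification $(M\otimes_{\mathbb{F}_1}N)^\times \cong M^\times\times N^\times$ in the affine case, and the same use of irreducibility to force affine opens to overlap so that the local factorizations agree and glue. The only difference is organizational — you run a single double-indexed cover $\{U_i\times V_j\}$ at once, whereas the paper bootstraps in two stages (first varying $X$ over affines with $Y$ fixed, then by symmetry) — and this does not change the substance of the argument.
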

\begin{proof}We consider first the case where $X = \Spec A$ and $Y = \Spec B$ are affine.  As a monoid, $A \otimes_{\mathbb{F}_1} B$ is obtained from the (unpointed) monoid $A \times B$ by identifying $(0, b)$ and $(a, 0)$ with $(0, 0)$ for all $a \in A$ and $b \in B$; this is an $\mathbb{F}_1$-algebra with zero element $(0, 0)$. The quotient map $\pi:A \times B \to (A\otimes_{\mathbb{F}_1}B)$ gives rise to a map 
\[
\theta: A^\times \times B^\times = (A \times B)^\times \rightarrow (A \otimes_{\mathbb{F}_1} B)^\times. 
\]
We claim that $\theta$ is bijective. In fact, $\theta$ is injective because the only fiber of the quotient map $\pi$ containing more than one element is $A\times \{0\} \cup \{0\} \times B$, and this fiber contains no units.  Surjectivity is similar: if $x \in (A \otimes_{\mathbb{F}_1} B)^\times$ then we can let $y$ be the unique element of $\pi^{-1}(x)$ and $z$ be the unique element of $\pi^{-1}(x^{-1})$ and observe that $\pi(yz) = 1$.  Since the fiber over $1$ contains a single element, $yz = 1$ so $x = \pi(y)$ for some unit $y$ and $x \in \mathrm{im} \theta$.  Thus $\theta$ is bijective.  Since we can view $\pi: A \times B \rightarrow (A \otimes_{\mathbb{F}_1} B)$ as the multiplication map, $\theta$ is the multiplication map on units, which establishes the result in the affine case.

Next, suppose $Y$ is such that the result holds for any pair $(Z, Y)$ with $Z$ affine. Suppose $X$ is irreducible.  Let $U \subseteq X$ be open affine.  Then by the affine case, for $u \in \Gamma(X\times Y, \mathcal{O}_{X\times Y}^\times)$, there exist unique elements $\alpha_U \in \Gamma(U, \mathcal{O}_X^\times)$ and $\beta_U \in \Gamma(Y, \mathcal{O}_Y^\times)$ such that
\[
u \mid_{U \times Y} = \alpha_U \beta_U.
\]
Let $U_1, U_2$ be open affine subsets of $X$; they are nondisjoint by irreducibility.  Let $V \subseteq U_1 \cap U_2$ be a nonempty open affine subset.  Then restricting the factorization of $u\mid_{U_1\times Y}$ to $V \times Y$ gives
\[
u\mid_{V\times Y} = \alpha_{U_1}\mid_V \beta_{U_1}.
\]
By applying the argument to $U_2$ and using uniqueness of $\alpha_V, \beta_V$, we obtain
\[
\alpha_{U_1}\mid_V = \alpha_V = \alpha_{U_2}\mid_V \textrm{ and } \beta_{U_1} = \beta_V = \beta_{U_2}.
\]
Since $\beta_U$ does depend on $U$, we denote it $\beta$.  Fix the open cover $\{ U_i \}$ of $X$ containing all open affine sets.  We have seen that the sections $\alpha_{U_i}\in \Gamma(U_i, \mathcal{O}_X^\times)$ agree on overlaps, so give rise to $\alpha \in \Gamma(X, \mathcal{O}_X^\times)$.

We now must show $\alpha$ and $\beta$ are the unique elements of $\Gamma(X, \mathcal{O}_X^\times)$ and $\Gamma(Y, \mathcal{O}_Y^\times)$ such that $u = \alpha \beta$.  For any open affine $U\subseteq X$, we have
\[ u \mid_{U\times Y} = \alpha_U \beta_U = \alpha\mid_U \beta = (\alpha\beta)\mid_{U \times Y}\]
Since the sets $U\times Y$ where $U$ ranges over open affine sets forms a cover of $X\times Y$, $u = \alpha\beta$.  If $u = \alpha' \beta'$ then restricting to $U\times Y$ gives $\alpha'\mid_U \beta' = u\mid_{U\times Y} = \alpha\mid_U \beta$.  By the uniqueness of $\alpha_U, \beta_U$, we obtain $\beta' = \beta$ and $\alpha'\mid_U = \alpha\mid_U$.  Since $\alpha$ and $\alpha'$ agree on all open affine subsets, they are equal.  

This establishes the result when $X$ is irreducible and $Y$ is such that the result holds for all pairs $(Z, Y)$ with $Z$ affine.  Since we have proven the affine case, any affine $\mathbb{F}_1$-scheme $Y$ satisfies this hypothesis; hence the result holds when $X$ is irreducible and $Y$ is affine or by symmetry when $X$ is affine and $Y$ is irreducible.  This implies any irreducible scheme $Y$ satisfies the above hypothesis, so the result holds when $X$ and $Y$ are irreducible.
\end{proof}

In the $\mathbb{F}_1$-setting, the above can be used to relate equivariant line bundles to characters of $G$.  We remark that using Proposition \ref{Correspondence for equivariant structures on O_X}, we can also apply Proposition \ref{proposition: equivariant line bundles and characters} to the semiring case. We will need the following lemma. 

\begin{lem}\label{lemma: constant lemma}
Let $X$ be a scheme over $\mathbb{F}_1$ and $f:X \to \text{GL}_1$ be a morphism of $\mathbb{F}_1$-schemes.  If $f_A(x)$ does not depend on $x \in X(A)$ for any $A$, then $f_A(x) = 1$ for all $x \in X(A)$.  
\end{lem}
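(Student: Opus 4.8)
The plan is to translate the statement into a statement about units and then exploit the structure of coproducts of monoids. By Lemma~\ref{units as morphisms}, the morphism $f\colon X\to\text{GL}_1$ corresponds to a global unit $u:=f^{*}t\in\Gamma(X,\mathcal{O}_X^{\times})$, and for any $\mathbb{F}_1$-algebra $A$ and any $x\in X(A)$ one has $f_A(x)=x^{*}u\in A^{\times}=\text{GL}_1(A)$. So it suffices to prove $u=1$; then $f_A(x)=x^{*}(1)=1$ for all $A$ and all $x\in X(A)$ automatically. Since units form a sheaf, it is enough to check $u|_U=1$ for every affine open $U=\Spec A\subseteq X$, which reduces the problem to the affine case. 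For such a $U$, writing $\iota_U\in X(A)$ for the inclusion, unwinding Lemma~\ref{units as morphisms} gives $u|_U=f_A(\iota_U)\in A^{\times}$.

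First I would set $B:=A\otimes_{\mathbb{F}_1}A$ and consider the two projections $p_1,p_2\colon\Spec B\to\Spec A=U$, corresponding to the monoid maps $a\mapsto a\otimes 1$ and $a\mapsto 1\otimes a$. Composing with $\iota_U$ produces two elements $\iota_U\circ p_1,\ \iota_U\circ p_2\in X(B)$, and the hypothesis that $f_B$ is constant on $X(B)$ forces $f_B(\iota_U\circ p_1)=f_B(\iota_U\circ p_2)$. Now by naturality of the functor of points of $\text{GL}_1$ — whose value on the monoid map $\phi\colon A\to B$ is just $\phi$ restricted to $A^{\times}\to B^{\times}$ — the left-hand side equals $(u|_U)\otimes 1$ and the right-hand side equals $1\otimes(u|_U)$ in $(A\otimes_{\mathbb{F}_1}A)^{\times}$. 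At this point I would invoke the computation already carried out in the proof of Lemma~\ref{lemma: factorization of units}: the multiplication map $\theta\colon A^{\times}\times A^{\times}\to(A\otimes_{\mathbb{F}_1}A)^{\times}$, $(a,b)\mapsto a\otimes b$, is a bijection. Since $(u|_U)\otimes 1=\theta(u|_U,1)$ and $1\otimes(u|_U)=\theta(1,u|_U)$, injectivity of $\theta$ yields $(u|_U,1)=(1,u|_U)$, hence $u|_U=1$.

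Finally I would assemble the global statement: as $u|_U=1$ on an affine open cover of $X$, the sheaf property gives $u=1\in\Gamma(X,\mathcal{O}_X^{\times})$, so $f$ is the morphism attached to the trivial unit and $f_A(x)=x^{*}u=1$ for all $A$ and all $x\in X(A)$, as desired. The only real content is the affine case, and the main obstacle there is to produce enough $B$-points of $X$ to detect that $u$ is nontrivial; the pair of projections out of $A\otimes_{\mathbb{F}_1}A$ does exactly this — note they are distinct $B$-points even though the hypothesis equates their images under $f_B$ — and once this is in place everything else is routine bookkeeping with the sheaf of units and the already-known description of $(A\otimes_{\mathbb{F}_1}A)^{\times}$. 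No genuinely new difficulty is expected beyond recognizing this trick.
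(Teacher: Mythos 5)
Your proof is correct, but it takes a genuinely different route from the paper's. The paper argues formally: constancy of $f_A$ means it factors through the terminal map $X(A)\to(\Spec\mathbb{F}_1)(A)$, the factored maps $\hat f_A$ are checked to be natural, and the resulting morphism $\Spec\mathbb{F}_1\to \text{GL}_1$ must be trivial because $\mathbb{F}_1^\times=\{1\}$. You instead work entirely on the level of units: after reducing to an affine open $U=\Spec A$ via the sheaf axiom for $\mathcal{O}_X^\times$, you feed the two coprojections $p_1,p_2\colon\Spec(A\otimes_{\mathbb{F}_1}A)\to\Spec A$ into the constancy hypothesis to get $(u|_U)\otimes 1=1\otimes(u|_U)$, and then invoke the bijection $A^\times\times A^\times\to(A\otimes_{\mathbb{F}_1}A)^\times$ established in the affine case of Lemma~\ref{lemma: factorization of units} to conclude $u|_U=1$. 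What your argument buys is a completely explicit, affine-local computation that sidesteps the naturality check for the factored map (which in the paper's proof quietly uses surjectivity of $t_A$, i.e.\ nonemptiness of $X(A)$ — harmless here since you only ever evaluate at algebras over which $X$ visibly has points); the cost is that it leans on the $\mathbb{F}_1$-specific splitting of units of a tensor product, which is precisely where such a statement would fail over rings. One cosmetic remark: the two $B$-points $\iota_U\circ p_1$ and $\iota_U\circ p_2$ need not be distinct (e.g.\ when $A=\mathbb{F}_1$), but distinctness is never used — only the equality of their images under $f_B$ — so this does not affect the argument.
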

\begin{proof}
The constancy of $f_A$ implies it factors through the terminal map (as a set map)
\[
t_A: X(A) \rightarrow (\Spec \mathbb{F}_1)(A),
\]
yielding a map $\hat{f}_A: (\Spec \mathbb{F}_1)(A) \rightarrow \text{GL}_1(A)$.  
  
We claim that $\hat{f}$ is natural in $A$. In fact, let $\psi:\Spec B \to \Spec A$, and $\psi_i^*$ be the induced maps on $X$, $\Spec \mathbb{F}_1$, and $\text{GL}_1$. Consider the following diagram: 
\[
\begin{tikzcd}
X(A) \arrow[r,"t_A"] \arrow[d," \psi_1^*",swap] & (\Spec \mathbb{F}_1)(A) \arrow[d,"\psi_2^*"]  \arrow[r,"\hat{f}_A"]& \text{GL}_1(A) \arrow[d,"\psi_3^*"]  \\
X(B) \arrow[r,"t_B"] & (\Spec \mathbb{F}_1)(B) \arrow[r,"\hat{f}_B"] & \text{GL}_1(B)
\end{tikzcd}
\]
The whole square and the left square commute by the definition of $t$ and $f$, i.e., 
\[
\psi_3^*\hat{f}_At_A=\hat{f}_Bt_B\psi_1^* \textrm{ and } \psi_2^*t_A=t_B\psi_1^*.
\]
Hence we have
\[
\psi_3^*\hat{f}_At_A=\hat{f}_B\psi_2^*t_A. 
\]
But, since $t_A$ is surjective, we have
\[
\psi_3^*\hat{f}_A=\hat{f}_B\psi_2^*, 
\]
which is precisely the naturality for $\hat{f}$. Hence, we obtain a morphism $\hat{f}: \Spec \mathbb{F}_1 \rightarrow GL_1$.  But $\mathbb{F}_1^\times $ has only the single element $1$, so $\hat{f}_A$ is the constant map $1$, and hence so is $f$.
\end{proof}

\begin{pro}\label{proposition: equivariant line bundles and characters}
Let $G$ be a group scheme over $\mathbb{F}_1$ and $X$ be an $\mathbb{F}_1$-scheme. Suppose $G$ and $X$ are either affine or irreducible.  Then there is a group isomorphism between
\[
\{\textrm{$G$-actions on the trivial line bundle $\mathcal{O}_X$ which make it an equivariant line bundle}  \}
\]
and
\[
\{\textrm{homomorphisms $\chi: G \rightarrow \text{GL}_1$.}\}
\]
This correspondence is characterized by the property that $G$ acts on the element $1_x$ of the fiber over $X(A)$ via $g 1_x = \chi_A(g) 1_{gx}$.
\end{pro}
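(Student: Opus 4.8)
The plan is to combine the classification of equivariant structures on $\mathcal{O}_X$ by principal morphisms (Proposition~\ref{Equivariant structures on O_X}) with the product-factorization of units (Lemma~\ref{lemma: factorization of units}). First I would recall that, by Proposition~\ref{Equivariant structures on O_X}, the $G$-actions making $\mathcal{O}_X$ an equivariant line bundle are in group isomorphism with the principal morphisms $u: G\times X \to \text{GL}_1$, the latter group under pointwise multiplication. By Lemma~\ref{units as morphisms} such a $u$ is the same datum as a unit in $\Gamma(G\times X, \mathcal{O}_{G\times X}^\times)$, and since $G$ and $X$ are each affine or irreducible, Lemma~\ref{lemma: factorization of units} produces unique $\alpha\in\Gamma(G,\mathcal{O}_G^\times)$ and $\beta\in\Gamma(X,\mathcal{O}_X^\times)$ with $u=\alpha\beta$; viewing $\alpha,\beta$ as morphisms $G\to\text{GL}_1$ and $X\to\text{GL}_1$, this reads $u_A(g,x)=\alpha_A(g)\,\beta_A(x)$ on $A$-points.

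The key step is to show $\beta$ is trivial. Since $u$ is principal, the identity $u_A(1,x)=1$ (as noted after Definition~\ref{definition: principal}) gives $\alpha_A(1)\,\beta_A(x)=1$ for every $A$-point $x$; as $\alpha_A(1)$ is a unit not depending on $x$, this forces $\beta_A(x)=\alpha_A(1)^{-1}$, i.e.\ $\beta_A$ is constant on $X(A)$, for every $A$. Lemma~\ref{lemma: constant lemma} then yields $\beta=1$ (and $\alpha_A(1)=1$). Hence $u_A(g,x)=\alpha_A(g)$ is independent of $x$, and the principality relation $u_A(gh,x)=u_A(h,x)\,u_A(g,hx)$ becomes $\alpha_A(gh)=\alpha_A(h)\,\alpha_A(g)$; since $\text{GL}_1$ takes values in commutative monoids, this says precisely that $\alpha=:\chi$ is a homomorphism $G\to\text{GL}_1$. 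Conversely, for any homomorphism $\chi:G\to\text{GL}_1$, setting $u_A(g,x):=\chi_A(g)$ (the pullback of $\chi$ along the projection $G\times X\to G$) one checks $u_A(gh,x)=\chi_A(gh)=\chi_A(h)\chi_A(g)=u_A(h,x)\,u_A(g,hx)$, so $u$ is principal.

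Finally I would assemble the group isomorphism. The assignment $\chi\mapsto u$ with $u_A(g,x)=\chi_A(g)$ is a bijection between $\Hom(G,\text{GL}_1)$ and principal morphisms which visibly intertwines pointwise multiplication of characters with pointwise multiplication of the $u$'s; composing with the group isomorphism of Proposition~\ref{Equivariant structures on O_X} gives the stated group isomorphism between $\Hom(G,\text{GL}_1)$ and the group of equivariant structures on $\mathcal{O}_X$. The characterization is then immediate from Lemma~\ref{construction of unit associated to an equivariant line bundle} (equivalently Lemma~\ref{lemma: from principal to equivariant}): taking $v=1$ as basis vector, the equivariant structure attached to $u$ satisfies $g\,1_x=u_A(g,x)\,1_{gx}=\chi_A(g)\,1_{gx}$.

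I expect the only genuinely delicate point to be the trivialization of $\beta$: one must run the argument uniformly over all test algebras $A$ (including those with $X(A)=\emptyset$, where constancy holds vacuously) so that Lemma~\ref{lemma: constant lemma} applies, and one must be careful to invoke the factorization of Lemma~\ref{lemma: factorization of units} — the one place the affine-or-irreducible hypothesis is used — for the pair $(G,X)$ rather than for $G\times X$ itself. Everything else is a direct bookkeeping of the functor-of-points identities already established.
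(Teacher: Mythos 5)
Your proposal is correct and follows essentially the same route as the paper: reduce via Proposition~\ref{Equivariant structures on O_X} to classifying principal morphisms, factor $u=\alpha\beta$ by Lemma~\ref{lemma: factorization of units}, kill $\beta$ using Lemma~\ref{lemma: constant lemma}, and read off the homomorphism property from the cocycle identity. The only (immaterial) difference is that you derive the constancy of $\beta$ directly from $u_A(1,x)=1$, whereas the paper first isolates $\beta_A(hx)=\alpha_A(gh)/(\alpha_A(g)\alpha_A(h))$ and then sets $h=1$; the computations coincide.
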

\begin{proof}
By Proposition \ref{Equivariant structures on O_X}, it suffices to construct a group isomorphism between homomorphisms $G \rightarrow \text{GL}_1$ and principal maps $u: G \times X \rightarrow \text{GL}_1$ (as in Definition \ref{definition: principal}). In other words, 
\[
u_A(gh, x) = u_A(g, hx) u_A(h, x).
\]
for all $\mathbb{F}_1$-algebra $A$ and all $(g, h, x) \in G(A) \times G(A) \times X(A)$.  

First, let $u$ be a principal map. By Lemmas \ref{units as morphisms} and \ref{lemma: factorization of units}, there exist unique $\alpha: G \rightarrow \text{GL}_1$ and $\beta: X \rightarrow \text{GL}_1$ such that $u_A(g, x) = \alpha_A(g) \beta_A(x)$ for all $(g, x)\in G(A) \times X(A)$.  Then, we have
\begin{equation}\label{eq: cancel}
\alpha_A(gh) \beta_A(x) = u_A(gh, x) = u_A(g, hx) u_A(h, x) = \alpha_A(g) \beta_A(hx) \alpha_A(h) \beta_A(x). 
\end{equation}
Since $\beta_A(x) \in \text{GL}_1(A)$ is invertible, we cancel it from both sides of \eqref{eq: cancel} to obtain 
\begin{equation}\label{eq: group}
\alpha_A(gh) = \alpha_A(g) \alpha_A(h) \beta_A(hx).
\end{equation}
Since the image of $\alpha_A$ consists of units, this equation gives
\begin{equation}\label{eq: fraction}
\beta_A(hx) = \frac{\alpha_A(gh)}{\alpha_A(g)\alpha_A(h)}. 
\end{equation}
By taking $h = 1$ in the above, we obtain $\beta_A(x) = \alpha_A(1)^{-1}$, so $\beta_A$ is constant.  By Lemma~\ref{lemma: constant lemma}, $\beta_A(x) = 1$ for all $x$. Then, from \eqref{eq: group}, we have
\[
\alpha_A(gh) = \alpha_A(g) \alpha_A(h) \beta = \alpha_A(g) \alpha_A(h),
\]
showing that $\alpha$ is a homomorphism.  

Thus we have constructed a map from principal maps $u$ to the set of homomorphisms $\alpha: G \rightarrow GL_1$.  Moreover the equation $u_A(g, x) = \alpha_A(g) \beta_A(x) = \alpha_A(g)$ implies $u$ is determined by $\alpha$, i.e. the map is injective.  

For surjectivity, fix a homomorphism $\alpha: G \rightarrow GL_1$ and define $u$ by $u_A(g, x) = \alpha_A(g)$, i.e. $u$ is the composition $\alpha \circ \pi: G \times X \rightarrow G \rightarrow GL_1$.  Then, we have
\[
u_A(gh, x) = \alpha_A(gh) = \alpha_A(g) \alpha_A(h) = u_A(g, hx) u_A(h, x),
\]
which establishes surjectivity.  

Because the correspondence we are considering is given by $u_A(g, x) = \alpha_A(g)$, we show it is a homomorphism as follows.  Let $\alpha, \alpha'$ correspond to $u, u'$.  Then, we have
\[
(uu')_A(g, x) = u_A(g, x)u'_A(g, x) = \alpha_A(g)\alpha'_A(g),
\]
and hence $uu'$ corresponds to $\alpha\alpha'$.

Finally, by Proposition \ref{Equivariant structures on O_X}, the equivariant line bundle is characterized by the property that $G$ acts on $1_x$ via $g1_x = u_A(g, x)1_{gx} = \alpha(g) 1_{gx}$.
\end{proof}

Note that two different equivariant structures on the line bundle $\mathcal{O}_X$ may be isomorphic under an automorphism of the line bundle.  Our next goal is to determine which equivariant structures are isomorphic.

A first step towards this goal is the following easy lemma.

\begin{lem}
Let $K$ be either a semiring or a monoid. Let $X$ be a scheme over $K$ and $G$ be a group scheme over $K$ acting on $X$.  Fix a morphism $f: X \rightarrow GL_1$.  Let $u$ be the ratio of the two morphisms $G\times X \rightarrow GL_1$ obtained by composing $f$ with either the action or the projection $G\times X \rightarrow X$.  In other words, the functor of points of $u$ is given by
\[
u_A(g, x) = f_A(gx) / f_A(x).
\]
Then $u$ is principal, i.e., $u$ satisfies $u_A(gh, x) = u_A(h, x) u_A(g, hx)$.
\end{lem}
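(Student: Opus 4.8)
The plan is to reduce everything to a one-line cancellation on functors of points, after first checking that $u$ is genuinely a morphism of schemes. For the latter, I would observe that $u$ is by construction the composite
\[
G\times_K X \xrightarrow{(f\circ a,\ f\circ p)} GL_1\times_K GL_1 \xrightarrow{\ \operatorname{div}\ } GL_1,
\]
where $a,p\colon G\times_K X\to X$ are the action and the projection, and $\operatorname{div}$ is the division map on the group scheme $GL_1$ (which exists because $GL_1=\Spec K[t,t^{-1}]$ is a group object). Hence $u$ is a morphism, and on functors of points it is exactly $u_A(g,x)=f_A(gx)/f_A(x)$, since the functor of points of a composite is the composite of functors of points. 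By (a suitable form of) Yoneda — or simply because two morphisms agreeing on all functors of points are equal — it then suffices to verify the identity $u_A(gh,x)=u_A(h,x)\,u_A(g,hx)$ for every $K$-algebra $A$ and every $(g,h,x)\in(G\times_K G\times_K X)(A)$.

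Next I would carry out the computation. Since the $G$-action on $X$ is a group action, one has $(gh)x=g(hx)$ in $X(A)$, so
\[
u_A(gh,x)=\frac{f_A\big((gh)x\big)}{f_A(x)}=\frac{f_A\big(g(hx)\big)}{f_A(x)}
=\frac{f_A\big(g(hx)\big)}{f_A(hx)}\cdot\frac{f_A(hx)}{f_A(x)}
=u_A(g,hx)\,u_A(h,x),
\]
where in the middle step I multiply and divide by $f_A(hx)\in GL_1(A)$, which is legitimate because $f_A$ takes values in the group $GL_1(A)=A^\times$. This is precisely the principality condition of Definition \ref{definition: principal}. As noted after that definition, it also forces $u_A(1,x)=1$, so nothing further need be checked.

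There is essentially no hard step here: the content is entirely the associativity of the action together with the fact that $f$ lands in units so that division is available. The only point requiring a word of care is the passage from "functor of points description" to "morphism of schemes", which is handled by exhibiting $u$ as the explicit composite above; alternatively one could phrase it via Lemma \ref{units as morphisms}, identifying $u$ with the unit $a^*(f^*t)\cdot p^*(f^*t)^{-1}\in\Gamma(G\times_K X,\mathcal{O}_{G\times_K X}^\times)$. Either way the verification of the cocycle identity is the displayed telescoping computation.
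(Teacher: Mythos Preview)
Your proof is correct. The paper actually states this as an ``easy lemma'' and gives no proof at all, so your telescoping computation on functors of points is exactly the intended argument; your extra care in exhibiting $u$ as an honest morphism via the composite through $\operatorname{div}$ (or equivalently via Lemma~\ref{units as morphisms}) is more than the paper bothers to say.
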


In what follows, let $K$ be either a semiring or a monoid, $X$ be a scheme over $K$ and $G$ be a group scheme over $K$ acting on $X$ unless otherwise stated. 

\begin{lem}\label{lemma: isomorphic equivariant structures on O_X}
Let $L$ and $L'$ be two equivariant line bundles whose underlying line bundle is $\mathcal{O}_X$ and let $u, u': G\times X \to \text{GL}_1$ be the associated principal maps from the construction of Lemma \ref{Equivariant structures on O_X}.  Then $L \simeq L'$ as equivariant line bundles if and only if there is some $f: X \rightarrow \text{GL}_1$ such that 
\begin{equation}\label{equation for trivial equivariant line bundle isomorphism}u_A(g, x) f_A(gx) = u'_A(g, x) f_A(x)\end{equation}
for all $K$-algebra $A$ and all $(g, x)\in G(A) \times X(A)$.
\end{lem}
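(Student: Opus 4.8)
The plan is to reduce the statement to an elementary computation on functors of points, using the explicit description of the two $G$-actions furnished by Lemma~\ref{construction of unit associated to an equivariant line bundle}. Since $L$ and $L'$ both have $\mathcal{O}_X$ as underlying line bundle, an isomorphism $L\simeq L'$ of equivariant line bundles is, by definition, an $\mathcal{O}_X$-module automorphism $\Phi$ of $\mathcal{O}_X$ that intertwines the two $G$-actions. By Lemma~\ref{units as morphisms}, every $\mathcal{O}_X$-module automorphism of $\mathcal{O}_X$ is multiplication by a unique global unit, equivalently corresponds to a unique morphism $f\colon X\to\text{GL}_1$; on functors of points this automorphism $\Phi=\Phi_f$ scales the fibre over a point $x\in X(A)$ by $f_A(x)\in\text{GL}_1(A)=A^\times$, i.e.\ $\Phi_{f,A}(a\,1_x)=a\,f_A(x)\,1_x$, where $1$ denotes the canonical basis section of $\mathcal{O}_X$. (Choosing the opposite convention merely replaces $f$ by $f^{-1}$, which changes nothing since the $\text{GL}_1$-valued morphisms of $X$ form a group.)

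Next I would record the two actions explicitly. By Lemma~\ref{construction of unit associated to an equivariant line bundle} applied to $L$ with the basis section $1$ of $\mathcal{O}_X$ (the normalization underlying Proposition~\ref{Equivariant structures on O_X}), the action $\rho$ defining $L$ satisfies $\rho_A(g,a\,1_x)=a\,u_A(g,x)\,1_{gx}$, and likewise the action $\rho'$ defining $L'$ satisfies $\rho'_A(g,a\,1_x)=a\,u'_A(g,x)\,1_{gx}$, for every $K$-algebra $A$, every $(g,x)\in G(A)\times X(A)$ and every $a\in A$.

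Now for the heart of the argument: two morphisms of schemes (resp.\ monoid schemes) agree if and only if they agree on $A$-points for all $K$-algebras (resp.\ monoids) $A$, so $\Phi_f$ intertwines $\rho$ and $\rho'$ if and only if
\[
\Phi_{f,A}\bigl(\rho_A(g,a\,1_x)\bigr)=\rho'_A\bigl(g,\Phi_{f,A}(a\,1_x)\bigr)
\]
for all such $A$, all $(g,x)\in G(A)\times X(A)$ and all $a\in A$. Using the formulas above, the left side equals $a\,u_A(g,x)\,f_A(gx)\,1_{gx}$ and the right side equals $a\,f_A(x)\,u'_A(g,x)\,1_{gx}$. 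Since $1_{gx}$ is a basis vector of the free rank-one module $(\mathcal{O}_X)_{gx}$ and $a\in A$ is arbitrary, this identity holds for all $a$ precisely when $u_A(g,x)\,f_A(gx)=u'_A(g,x)\,f_A(x)$, i.e.\ precisely when \eqref{equation for trivial equivariant line bundle isomorphism} holds. Reading this equivalence in both directions proves the lemma: given $f$ satisfying \eqref{equation for trivial equivariant line bundle isomorphism}, the automorphism $\Phi_f$ is an isomorphism $L\simeq L'$ of equivariant line bundles; conversely any such isomorphism is $\Phi_f$ for the unique morphism $f$ attached to it by Lemma~\ref{units as morphisms}, which then satisfies \eqref{equation for trivial equivariant line bundle isomorphism}.

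The only point requiring care — and the step I would flag as the main (if mild) obstacle — is the reduction ``the equivariance square commutes iff it commutes on $A$-points'': one invokes faithfulness of the functor of points on the category of all $K$-algebras for semiring schemes (and the analogous fact for monoid schemes), so that equality of the two morphisms $\Phi_f\circ\rho$ and $\rho'\circ(\mathrm{id}_G\times\Phi_f)$ from $G\times_K E$ to $E$ may be tested algebra by algebra. This is the usual Yoneda argument and, unlike Proposition~\ref{irreduciblefunctorofpoints}, requires no irreducibility hypothesis; everything else is bookkeeping with the section $1$ and the definitions of $u$ and $u'$.
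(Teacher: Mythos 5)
Your proof is correct and follows essentially the same route as the paper: identify an isomorphism of the underlying line bundle $\mathcal{O}_X$ with a global unit, i.e.\ a morphism $f\colon X\to\text{GL}_1$ via Lemma \ref{units as morphisms}, write both actions on the basis section using $u$ and $u'$, and equate coefficients of the basis vector $1_{gx}$. The one cosmetic difference is that you build the candidate isomorphism as a genuine sheaf automorphism (multiplication by the unit) so that naturality in $A$ is automatic, whereas the paper defines it fiberwise and then verifies naturality explicitly; your appeal to "faithfulness of the functor of points" is not even needed, since the paper's definition of equivariance is already stated levelwise on $A$-points.
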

\begin{proof}
Let $v$ be a basis element of $L$ and $w$ be a basis element of $L'$ (for concreteness, we take both to be $1\in \Gamma(X, \mathcal{O}_X^\times)$, but giving them individual names makes the argument more clear).
Given an isomorphism $\phi: L \rightarrow L'$, we obtain a basis element $\phi(v)$ of $L'$.  Such a basis element must have the form $\phi(v) = \alpha w$ for some $\alpha \in \Gamma(X, \mathcal{O}_X^\times)$.  Let $f: X\rightarrow GL_1$ correspond to $\alpha$ under Lemma \ref{units as morphisms}.  For $x\in X(A)$ the naturality part of Lemma \ref{units as morphisms} tells us that $f_A(x) = f \circ x$ corresponds to $x^*(\alpha) \in GL_1(A)$. So, we identify $x^*(\alpha)$ with $f_A(x)$.

$G(A)$ acts on the fiber $L'_x$ via $g(w_x) = u'_A(g, x) w_{gx}$. So, we have
\[ 
g(\phi(v_x)) = g(f_A(x) w_x) = f_A(x) g(w_x) = f_A(x) u'_A(g, x) w_{gx}.
\]
On the other hand, since $\phi$ is an isomorphism of equivariant bundles,
\[ g(\phi(v_x)) = \phi(g v_x) = \phi(u_A(g, x) v_{gx}) = u_A(g, x) f_A(gx) w_{gx} \]
Combining the two equations yields equation \eqref{equation for trivial equivariant line bundle isomorphism} as desired.

Conversely, suppose we are given $f: X \rightarrow GL_1$ satisfying \eqref{equation for trivial equivariant line bundle isomorphism}.  $f$ corresponds to a unit $f \in \Gamma(X, \mathcal{O}_X^\times)$, and we obtain a basis vector $\eta \in \Gamma(X, L')$ by multiplying $w$ with $f$.  For each $K$-algebra $A$ and each $x\in X(A)$, define $\phi_x: L_x \rightarrow L'_x$ via
\[
\phi_x(a v_x) = a \eta_x = a f_A(x) w_x
\]
for all $a\in A$.  Combining the maps on different fibers gives rise to maps
\[
\phi_A: L(A) \rightarrow L'(A)
\]
compatible with the projections of each bundle onto $X(A)$.  It remains to show $\phi$ is an isomorphism of equivariant line bundles.  We have seen compatibility with projections, and the fact that $\phi_x$ is a linear isomorphism for each point $x$ is clear.

Next we check that $\phi$ is equivariant.  By the same calculations as the other direction of this proof, $g(\phi_A(v_x)) = f_A(x) u'_A(g, x) w_{gx}$ and $\phi_A(g v_x) = u_A(g, x) f_A(gx) w_{gx}$.  Combining with \eqref{equation for trivial equivariant line bundle isomorphism}, we see that $g \circ \phi_x$ and $\phi_{gx} \circ g$ agree on a basis vector, and hence are equal, which implies $g \circ \phi_A = \phi_A \circ g$.

Finally, we show that $\phi$ is natural in $A$. To show that the action is natural in $A$, let $h:\Spec B \to \Spec A$, and $h^*$ be the induced maps. We need to show that the following diagram commutes:
\[
\begin{tikzcd}
L(A)\arrow[r,"\phi_A"] \arrow[d," h^*",swap] & L'(A) \arrow[d,"h^*"] \\
L(B) \arrow[r,"\phi_B"] & L'(B)
\end{tikzcd}
\]
Let $av_x \in L_x \subseteq L(A)$. Then we have
\[
h^*\phi_A(av_x)=h^*(af_A(x)w_x)=h^*(af_A(x))h^*(w_x)=h^*(a)h^*(f_A(x))w_{h^*x}
\]
and 
\[
\phi_Bh^*(av_x)=\phi_B(h^*(a)h^*(v_x))=\phi_B(h^*(a)(v_{h^*x}))=h^*(a)f_B(h^*x)w_{h^*x}
\]
From the naturality of $f$, we have $h^*f_A = f_Bh^*$, it follows that we have
\[
h^*\phi_A(av_x)=\phi_Bh^*(av_x),
\]
showing that $\phi$ is natural in $A$. 
\end{proof}

\begin{pro}
Let $X$ be a toric $\mathbb{F}_1$-scheme and $G$ be a group scheme over $\mathbb{F}_1$. Let $K$ be an idempotent semifield. Suppose $G \times X$ is connected.  Then there is a one-to-one correspondence between isomorphism classes of $G_K$-equivariant line bundles on $X_K$ which are trivial as line bundles and isomorphism classes of $G$-equivariant line bundles on $X$ which are trivial as line bundles.
\end{pro}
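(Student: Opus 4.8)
The plan is to deduce this from the group isomorphism of Proposition~\ref{Correspondence for equivariant structures on O_X} by passing to a suitable quotient on each side. A $G$-equivariant line bundle on $X$ that is trivial as a line bundle is, up to isomorphism, $\mathcal{O}_X$ equipped with some equivariant structure, and likewise over $K$; so it suffices to classify equivariant structures on $\mathcal{O}_X$ (resp.\ $\mathcal{O}_{X_K}$) up to isomorphism of equivariant line bundles. By Proposition~\ref{Equivariant structures on O_X} (see also Corollary~\ref{cor: units homs}) the group of such structures is identified with the group $P_{\mathbb{F}_1}$ of principal morphisms $G\times X\to\text{GL}_1$ (resp.\ $P_K$ over $K$), and by Lemma~\ref{lemma: isomorphic equivariant structures on O_X} two structures with principal morphisms $u,u'$ give isomorphic equivariant line bundles exactly when $u'/u$ lies in the subgroup $B_{\mathbb{F}_1}\subseteq P_{\mathbb{F}_1}$ of ``coboundaries'', i.e.\ morphisms of the form $(g,x)\mapsto f(gx)/f(x)$ with $f\in\Gamma(X,\mathcal{O}_X^\times)$ (and similarly $B_K$ over $K$). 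Hence the isomorphism classes we want are the cosets $P_{\mathbb{F}_1}/B_{\mathbb{F}_1}$, resp.\ $P_K/B_K$.

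So the task reduces to showing that the isomorphism $P_{\mathbb{F}_1}\xrightarrow{\,\sim\,}P_K$ --- which, as in the proof of Proposition~\ref{Correspondence for equivariant structures on O_X}, is implemented by base change $-\otimes_{\mathbb{F}_1}K$ together with Lemma~\ref{lemma: tropical thing} and the vanishing (by connectedness) of the $\Gamma(G\times X,K^\times)$-component of a principal morphism --- carries $B_{\mathbb{F}_1}$ onto $B_K$. One inclusion is immediate: the base change of $(g,x)\mapsto f(gx)/f(x)$ is $(g,x)\mapsto f_K(gx)/f_K(x)$, so $B_{\mathbb{F}_1}$ maps into $B_K$. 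For the reverse, suppose $u\in P_{\mathbb{F}_1}$ has base change $u_K=d(h)$ with $h\in\Gamma(X_K,\mathcal{O}_{X_K}^\times)$, where $d(h)(g,x)=h(gx)/h(x)$. Apply Lemma~\ref{lemma: tropical thing} with the trivial group scheme to split $h=(c,\bar h)$ under $\Gamma(X_K,\mathcal{O}_{X_K}^\times)\cong\Gamma(X,K^\times)\times\Gamma(X,\mathcal{O}_X^\times)$, noting $\Gamma(X,K^\times)=K^\times$ since $X$ (hence also $G\times X$) is connected. Writing $d(h)=a^*h\cdot(p^*h)^{-1}$ for the action $a$ and the projection $p\colon G\times X\to X$, and using naturality of this splitting together with connectedness of $G\times X$ (so that $a^*c=p^*c=c$), the $K^\times$-component of $u_K$ cancels and $u_K=a^*\bar h\cdot(p^*\bar h)^{-1}=d(\bar h)$, which is the base change of the $\mathbb{F}_1$-coboundary $d(\bar h)\in B_{\mathbb{F}_1}$. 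Since base change is injective on units of the monoid scheme $G\times X$ (again Lemma~\ref{lemma: tropical thing}), $u=d(\bar h)\in B_{\mathbb{F}_1}$. Thus $B_{\mathbb{F}_1}$ maps onto $B_K$.

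Consequently the isomorphism $P_{\mathbb{F}_1}\cong P_K$ descends to an isomorphism $P_{\mathbb{F}_1}/B_{\mathbb{F}_1}\cong P_K/B_K$, which is the desired one-to-one correspondence of isomorphism classes (and is in fact a group isomorphism). The main obstacle I anticipate is the bookkeeping around the product decomposition $\Gamma((-)_K,\mathcal{O}^\times)\cong\Gamma(-,K^\times)\times\Gamma(-,\mathcal{O}^\times)$: one must check it is natural enough in the monoid scheme to be compatible with pulling back along the action and the projection $G\times X\to X$, which is precisely what forces a $K$-coboundary to descend to an $\mathbb{F}_1$-coboundary. As in Proposition~\ref{Correspondence for equivariant structures on O_X}, connectedness of $G\times X$ is exactly what makes the constant ($K^\times$) part drop out.
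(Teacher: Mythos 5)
Your proof is correct and follows essentially the same route as the paper: reduce via Proposition~\ref{Correspondence for equivariant structures on O_X} to matching equivariant structures on $\mathcal{O}_X$ and $\mathcal{O}_{X_K}$, then use the coboundary criterion of Lemma~\ref{lemma: isomorphic equivariant structures on O_X} and the splitting of Lemma~\ref{lemma: tropical thing} to see that the constant $K^\times$-component of $f$ cancels in $f(gx)/f(x)$, so every $K$-coboundary descends to an $\mathbb{F}_1$-coboundary. Your write-up just makes explicit (as a quotient $P/B$ on each side, with both inclusions checked) what the paper states more tersely.
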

\begin{proof}
From Proposition \ref{Correspondence for equivariant structures on O_X}, we have a one-to-one correspondence between $G$-equivariant line bundle structures on $\mathcal{O}_X$ and $G_K$-equivariant line bundle structures on $\mathcal{O}_{X_K}$.
Now, with the same notation as in Lemma \ref{lemma: isomorphic equivariant structures on O_X}, two equivariant structures for the trivial line bundle on $X$ are isomorphic over $X_\mathbb{T}$ if and only if the ratio of the corresponding units has the form $f(gx) / f(x)$.  But a constant factor in front of $f$ does not affect this expression, so we may assume that $f$ has no component in $K^\times$, so it also yields an isomorphism of equivariant line bundles over $X$ from Proposition \ref{Correspondence for equivariant structures on O_X}.
\end{proof}

\section{Gluing equivariant vector bundles}\label{section: Gluing equivariant vector bundles}

It would be useful to be able to restrict equivariant vector bundles on $X$ to open sets $U\subseteq X$.  This is not always possible: if $U$ is not closed under the $G$-action, it should not inherit a $G$-action, so the notion of an equivariant vector bundle on $U$ does not even make sense.  The next lemma shows that this is the only obstruction. 

Let $R$ be a semiring or monoid. Let $G$ be a group scheme over $R$.  Let $X$ be a scheme over $R$ equipped with a $G$-action.  Let $E$ be a $G$-equivariant vector bundle on $X$.  Let $U\subseteq X$ be an open set with the property that the action $G \times_R X \rightarrow X$ restricts to an action $G \times_R U \rightarrow U$ and let $i: U\rightarrow X$ be the inclusion.  Let $E_U$ be the vector bundle obtained by restricting $E$ to $U$. As in the previous section, for schemes $X$ and $Y$ over $R$, we will simply write $X\times Y$ instead of $X\times_RY$ when there is no possible confusion. 

\begin{lem}\label{lemma: restriction of equivariant vector bundle}
With the same notation as above, the following hold.
\begin{enumerate}
    \item 
For any $R$-algebra $A$ and any $u \in U(A)$, the fiber of $E_U$ over $u$ is equal to the fiber of $E$ over $i(u)$. 
\item 
There is a unique equivariant vector bundle structure on $E_U$ such that for each $(g, u) \in G(A) \times U(A)$, the induced map on fibers $g: (E_U)_u \rightarrow (E_U)_{gu}$ is the same as the map $g: E_{i(u)} \rightarrow E_{g i(u)}$ induced by the equivariant structure on $E$.
\end{enumerate}
\end{lem}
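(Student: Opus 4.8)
The statement has two parts: (1) that the fiber of the restricted bundle $E_U$ over a point $u \in U(A)$ agrees with the fiber of $E$ over $i(u)$, and (2) that there is a unique equivariant structure on $E_U$ compatible with that of $E$ on fibers. For (1), I would recall that pullback along an open immersion is just restriction of sheaves, so if $\mathcal{F}$ is the locally free sheaf corresponding to $E$, then $E_U$ corresponds to $\mathcal{F}|_U = i^*\mathcal{F}$. Given $u \in U(A)$, i.e.\ $u : \Spec A \to U$, the composite $i(u) = i \circ u : \Spec A \to X$ satisfies $(i\circ u)^*\mathcal{F} = u^*(i^*\mathcal{F}) = u^*(\mathcal{F}|_U)$ by functoriality of pullback, and taking global sections gives $E_{i(u)} = (\mathcal{F}|_U)_u = (E_U)_u$ on the nose (canonically). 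This also identifies $U(A)$ with the subset of $X(A)$ consisting of maps factoring through $i$, so $E_U(A) = \bigsqcup_{u \in U(A)} (E_U)_u$ sits inside $E(A) = \bigsqcup_{x \in X(A)} E_x$ as the preimage of $U(A)$ under $\pi_A$.

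**The equivariant structure.** For part (2), the point is that $U(A) \subseteq X(A)$ is closed under the $G(A)$-action: since the action $G \times_R X \to X$ restricts to $G \times_R U \to U$, for $g \in G(A)$ and $u \in U(A)$ we have $g \cdot i(u) = i(g \cdot u)$, so $g \cdot i(u)$ again lies in the image of $i$. Therefore the $G(A)$-action on $E(A)$ restricts to an action on the subset $E_U(A) = \pi_A^{-1}(U(A))$: for $g \in G(A)$ and $v \in (E_U)_u = E_{i(u)}$, we set $g \cdot v$ to be the image of $v$ under the map $g : E_{i(u)} \to E_{g\cdot i(u)} = E_{i(gu)} = (E_U)_{gu}$ coming from the equivariant structure on $E$. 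I would then verify the three conditions of Definition~\ref{definition: equivariant vector bundle}: naturality in $A$ is inherited because for $f : A \to B$ the pullback maps for $E_U$ are the restrictions of those for $E$ (using the canonical identifications from part (1)) and the $G$-action on $E$ is already natural; compatibility with $\pi_A$ is immediate since $g \cdot v \in (E_U)_{gu}$; and $A$-linearity of $g : (E_U)_u \to (E_U)_{gu}$ is exactly the $A$-linearity of $g : E_{i(u)} \to E_{i(gu)}$, which holds by assumption on $E$. By naturality and the functor-of-points description, this action is the functor of points of an actual morphism of schemes, giving a genuine equivariant vector bundle structure.

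**Uniqueness.** Uniqueness is the easy part here. Any equivariant structure on $E_U$ satisfying the stated compatibility condition must have its fiberwise maps $g : (E_U)_u \to (E_U)_{gu}$ equal to the prescribed maps $g : E_{i(u)} \to E_{i(gu)}$ for all $A$-algebras $A$ and all $(g,u) \in G(A) \times U(A)$; but an action of $G(A)$ on $E_U(A) = \bigsqcup_u (E_U)_u$ that is compatible with the projection to $U(A)$ is completely determined by these fiberwise maps, since each element of $E_U(A)$ lies in a unique fiber. Hence the structure is forced, and the one constructed above is the unique one.

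**Main obstacle.** There is no deep obstacle; the content is entirely bookkeeping. The step requiring the most care is the canonical identification in part (1)---making sure that $(\mathcal{F}|_U)_u$ and $\mathcal{F}_{i(u)}$ are identified by a \emph{canonical} isomorphism (not merely an abstract one), so that the phrase ``is the same as the map'' in the statement is literally meaningful---and then checking that under this identification the $G$-action and the pullback maps really do restrict, which is where the hypothesis that $U$ is $G$-stable and Lemma~\ref{lemma: equivalence}-style compatibilities get used.
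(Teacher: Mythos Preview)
Your proposal is correct and follows essentially the same approach as the paper: for (1) you both use functoriality of pullback $(i\circ u)^*\mathcal{F}=u^*(i^*\mathcal{F})$, and for (2) you both view $E_U(A)$ as the subset $\pi_A^{-1}(U(A))\subseteq E(A)$ and observe that the $G(A)$-action on $E(A)$ restricts to it, with the three conditions of Definition~\ref{definition: equivariant vector bundle} inherited directly from $E$. One small remark: your closing sentence about the action being ``the functor of points of an actual morphism of schemes'' is unnecessary, since Definition~\ref{definition: equivariant vector bundle} is stated purely in terms of natural actions $G(A)\times E(A)\to E(A)$ and does not require an underlying morphism.
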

\begin{proof}
$(1)$ $E_U$ is the pullback of $E$ along $i$.  The fiber $(E_U)_u$ is the module of global sections of the pullback $u^* E_U = u^* i^* E$.  Since $i(u) = i \circ u$, $(E_U)_u$ equals the module of global sections of $i(u)^* E$, and this module is $E_{i(u)}$.

$(2)$ The action map $\rho_A^U:G(A) \times E_U(A) \rightarrow E_U(A)$ is determined by how $G(A)$ acts on each fiber, and this is specified in the statement of the lemma.  To show this is an action, we must show that for any $v \in (E_U)_u$ and $g, h \in G(A)$, $(gh)v = g(hv)$. But this is true because by construction $G$ acts on the fiber in the same way if we view it as $E_{i(u)}$, and the map $\rho_A:G(A) \times E(A) \rightarrow E(A)$ really is a group action.  The action map induces linear isomorphisms on fibers, since fiberwise it looks the same as the action on $E$.   

It remains to check naturality. Let $f:\Spec B \to \Spec A$, and $f^*$ be the induced maps. We need to show that the following diagram commutes:
\[
\begin{tikzcd}
G(A) \times E_U(A) \arrow[r,"\rho_A^U"] \arrow[d," f^*",swap] & E_U(A) \arrow[d,"f^*"] \\
G(B) \times E_U(B) \arrow[r,"\rho_B^U"] & E_U(B)
\end{tikzcd}
\]
But, under the inclusion $i$, we may view $E_U(A)$ as a subset of $E(A)$ and $\rho_A^U=\rho_A\mid_U$. In particular, the naturality directly follows from that of $\rho$. 
\end{proof}

The following proposition shows that under suitable conditions, we may obtain equivariant vector bundles from gluing equivariant vector bundles on an open cover.

Let $R$ be a semiring or monoid. Let $G$ be a group scheme over $R$.
Let $G$ be a group scheme over $R$, let $X$ be a scheme over $R$ equipped with a $G$-action, and let $E$ be a vector bundle over $X$.  Let $\{ U_i \}$ be an open cover of $X$, and suppose each open set $U_i$ is $G$-invariant in the sense that the action on $X$ restricts to maps $G \times U_i \rightarrow U_i$. 

\begin{pro}\label{proposition: gluing equivariant vector bundles} 
With the same notation as above, there is a one-to-one correspondence between equivariant structures on $E$ and collections of equivariant structures on each $E_{U_i}$ with the property that restricting the action from $E_{U_i}$ or $E_{U_j}$ to $E_{U_{ij}}$ yield the same morphism $G \times E_{U_{ij}} \rightarrow E_{U_{ij}}$, where $E_{U_{ij}}:=E_{U_i} \cap E_{U_j}$.
\end{pro}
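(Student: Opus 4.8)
The plan is to prove both directions of the claimed bijection using Lemma~\ref{lemma: restriction of equivariant vector bundle} together with the fact that the functor of points $A\mapsto E(A)$ satisfies Zariski descent; concretely, each fiber $E_x=\Gamma(\Spec A, x^*E)$ is glued from the fibers over the restrictions of $x$ to an affine open cover of $\Spec A$. Everything works identically for $R$ a semiring or a monoid. For the forward map: given an equivariant structure on $E$, each $U_i$ is $G$-invariant by hypothesis, so Lemma~\ref{lemma: restriction of equivariant vector bundle} supplies a canonical equivariant structure on each $E_{U_i}$; since an intersection of $G$-invariant opens is $G$-invariant, it also applies to $E_{U_{ij}}$, and the uniqueness clause of Lemma~\ref{lemma: restriction of equivariant vector bundle}(2) forces the restrictions of the $E_{U_i}$- and $E_{U_j}$-structures to $E_{U_{ij}}$ to both coincide with the restriction of the structure on $E$, hence to agree with each other. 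This defines the map from equivariant structures on $E$ to compatible families.

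For injectivity, suppose $\rho$ and $\rho'$ induce the same structure on every $E_{U_i}$. Fix an $R$-algebra $A$, a point $x\in X(A)$, $g\in G(A)$, and $v\in E_x$, and choose an affine open cover $\{\Spec A_\alpha\}$ of $\Spec A$ with $x\circ j_\alpha=\iota_{i(\alpha)}\circ y_\alpha$ for some $y_\alpha\in U_{i(\alpha)}(A_\alpha)$, where $j_\alpha\colon\Spec A_\alpha\to\Spec A$. By naturality of the action (condition (1) of Definition~\ref{definition: equivariant vector bundle}) and of the $G$-action on $X$ (as in \eqref{eq: natural}), one gets $j_\alpha^*(\rho_A(g,v))=\rho_{A_\alpha}(j_\alpha^*g,j_\alpha^*v)$; since $j_\alpha^*v$ lies in the fiber $(E_{U_{i(\alpha)}})_{y_\alpha}$ by Lemma~\ref{lemma: restriction of equivariant vector bundle}(1) and the action is fiberwise, this equals the $E_{U_{i(\alpha)}}$-action, which by hypothesis agrees for $\rho$ and $\rho'$. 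The elements $j_\alpha^*(\rho_A(g,v))$ determine $\rho_A(g,v)$ by the sheaf property of $E_x$, so $\rho=\rho'$.

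For surjectivity, given a compatible family $\{\rho^i\}$ I would construct $\rho$ on $E$ by descent. With $x,g,v$ and a cover $\{\Spec A_\alpha\}$ as above, set $w_\alpha:=\rho^{i(\alpha)}_{A_\alpha}(j_\alpha^*g,j_\alpha^*v)$; by $G$-invariance of $U_{i(\alpha)}$ and naturality of the $G$-action, both $x\circ j_\alpha$ and $(gx)\circ j_\alpha$ factor through $U_{i(\alpha)}$, so $w_\alpha$ lies in the fiber of $E$ over $(gx)\circ j_\alpha$. On each affine open of $\Spec A_\alpha\cap\Spec A_\beta$ the point $x$ restricts to a map factoring through $U_{i(\alpha)}\cap U_{i(\beta)}$, so the compatibility hypothesis and naturality of $\rho^{i(\alpha)}$ and $\rho^{i(\beta)}$ force the $w_\alpha$ to agree on overlaps; hence they glue to an element $\rho_A(g,v)\in E_{gx}$. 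One then checks this is independent of the cover (refine two covers to a common one), natural in $A$, compatible with $\pi$, a genuine $G(A)$-action, and fiberwise linear --- each of these being local on $\Spec A$ and holding because it holds for every $\rho^i$. Finally, taking the trivial cover when $x$ already factors through a fixed $U_i$ shows that $\rho$ induces on $E_{U_i}$ the same fiberwise action as $\rho^i$, hence equals $\rho^i$ by the uniqueness in Lemma~\ref{lemma: restriction of equivariant vector bundle}(2); so $\rho$ is sent to the given family, and the two constructions are mutually inverse.

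The hard part will be exactly that a point $x\in X(A)$ need not factor through any single $U_i$, so neither direction can be carried out fiberwise on the nose; the substance of the argument is the systematic passage to an affine cover of $\Spec A$ on which $x$ does factor, combined with the sheaf property of the fibers $E_x$, after which the verification of the group-action axioms, naturality, and linearity becomes a routine local check.
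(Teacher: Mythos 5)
Your proof is correct, but it takes a genuinely different route from the paper's. The paper works with the \emph{geometric} vector bundle $\pi\colon E\to X$: since each $E_{U_i}$ is an open subscheme of $E$, a compatible family of actions $\beta_i\colon G\times E_{U_i}\to E_{U_i}$ glues to a single morphism of schemes $\beta\colon G\times E\to E$, and the group-action axiom, fiberwise additivity and scalar multiplication are then verified as equalities of \emph{morphisms of schemes} (e.g. $\beta\circ(\mathrm{id}_G\times +)=+\circ(\beta\times\beta)\circ\varphi\circ(\Delta\times\mathrm{id})$, using Proposition \ref{proposition: addition morphism}), which can be checked on the open cover; naturality is then automatic because everything is a scheme morphism. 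You instead never leave the functor-of-points world: you exploit that $E_x=\Gamma(\Spec A,x^*E)$ is a sheaf-theoretic object and do Zariski descent along an affine cover of $\Spec A$ on which $x$ factors through some $U_i$, defining $\rho_A(g,v)$ fiber by fiber and verifying all axioms locally. What the paper's approach buys is the elimination of all cover-dependence and well-definedness bookkeeping (and it gets naturality for free); what yours buys is that you never need the addition and scalar-multiplication \emph{morphisms of schemes}, so the semiring and monoid cases are handled uniformly, whereas the paper has to remark separately that the monoid case requires dropping the additive map. The one point worth making explicit in a write-up is the descent statement you rely on --- that an element of $E_x$ is the same as a compatible family of elements of $E_{x\circ j_\alpha}$ over an affine cover of $\Spec A$, with overlaps themselves covered by affines --- but this is just the sheaf axiom for $x^*E$ together with $j_\alpha^*x^*E=(x\circ j_\alpha)^*E$, so there is no gap.
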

\begin{proof}
We first prove the semiring case.
We will use the geometric vector bundle perspective to view $E$ as a scheme equipped with a map $\pi: E \rightarrow X$. From the definition, one can easily see that $E_U$ is an open subscheme of $E$ for a $G$-invariant open subset $U$.

If $E$ is an equivariant vector bundle, we can give a geometric description of the action on the bundle $E_U$ from Lemma \ref{lemma: restriction of equivariant vector bundle}.  It is clear that the functor of points for the action on $E_{U}$ is exactly the same as the functor of points from the map $G \times E_{U} \rightarrow E_{U}$ whose composition with the open immersion $E_{U}\rightarrow E$ is the restriction $G \times E_{U} \rightarrow E$ of the action because both are given by $g, v \mapsto \beta(g, v)$ where $\beta$ is the action on $E$, and where we identify $U(A)$ with a subset of $X(A)$.  In other words, the equivariant vector bundle structure on $E_U$ is the same one obtained by restriction of the action viewed as a morphism of schemes.

Suppose we are given actions $\beta_i: G \times E_{U_i} \rightarrow E_{U_i}$ which agree on overlaps and which make each $E_{U_i}$ into an equivariant vector bundle.  Then we may glue them to obtain a map $\beta: G \times E \rightarrow E$.  Since each $\beta_i$ is compatible with the action $\alpha_{U_i}: G \times U_i \rightarrow U_i$, $\beta$ must be compatible with $\alpha: G\times X \rightarrow X$.  Consider the maps
\[
\eta, \theta: G \times G \times E \rightarrow E
\]
given on functors of points by
\[
\eta(g, h, v) = \beta(gh, v) \textrm{ and } \theta(g, h, v) = \beta(g, \beta(h, v)).
\]
Consider maps $\eta_i, \theta_i$ defined similarly using the actions on $E_{U_i}$.  Since $\beta_i$ is an action, we obtain $\eta_i = \theta_i$.  Clearly if $v \in E_{U_i}(A)$ and $g, h \in G(A)$ then $\eta(g, h, v) = \eta_i(g, h, v)$ and similarly for $\theta$,  and hence $\eta = \theta$, showing that $\beta$ is a group action.

Consider the map $+: E \times E \rightarrow E$ which is simply vector addition on the level of functors of points. From Proposition \ref{proposition: addition morphism}, $+$ is a morphism of schemes when we view $E$ as a geometric vector bundle. To show that $\beta$ is additive on fibers we need to show for $g \in G(A)$, $x \in X(A)$ and $v, w \in E_x$ that $\beta(g, v + w) = \beta(g, v) + \beta(g, w)$.  We may rephrase this as
\[
\beta \circ (\mathrm{id}_G \times +) = + \circ (\beta \times \beta) \circ \varphi\circ (\Delta \times \mathrm{id}_{E\times E}),
\]
where $\Delta: G \to G \times G$ is the diagonal and $\varphi:G\times G \times E \times E \to G\times E \times G \times E$ switches the second and the third coordinates. In view of the assumptions on the open cover, it suffices to check that the additivity holds on each $E_{U_i}$ and by retracing our steps, this is equivalent to $\beta_i$ being additive on fibers.  Since $E_{U_i}$ is an equivariant vector bundle, $\beta$ is additive on fibers. We may now show linearity on fibers by using the scalar multiplication map $\cdot: \mathbb{A}^1_X \times E \rightarrow E$, which is also a morphism of schemes when we view $E$ as a geometric vector bundle from Proposition \ref{proposition: addition morphism}.

Finally, $\beta_A: G(A) \times X(A) \rightarrow X(A)$ is natural because it is a morphism of schemes.  Thus $E$ is an equivariant vector bundle.

Conversely, suppose $E$ is an equivariant vector bundle.  Then we have seen that restricting the action $G \times E \rightarrow E$ to $E_{U_i}$ gives an equivariant vector bundle structure on $E_{U_i}$.  Because these maps $\beta_i: G \times E_{U_i} \rightarrow E_{U_i}$ are all given by restricting a morphism of schemes, they must agree on overlaps.

Now, one can easily see that this gives one-to-one correspondence. 

For the monoid case, one may modify Proposition \ref{proposition: addition morphism} just for the scalar multiplication map since in this case one does not have an additive map. With this modification, the same proof goes through in this case.
\end{proof}

We now show that in the toric case, all equivariant vector bundles are the sum of equivariant line bundles.  This reduces the problem of classifying toric vector bundles to the line bundle case.

\begin{mydef}\label{definition: direct sum of equivarant bundles}
Let $X$ be a scheme over a semiring $R$ and $G$ be a group scheme over $R$ acting on $X$. Let $E_1$ and $E_2$ be $G$-equivariant line bundles. The direct sum $E=E_1\oplus E_2$ is a $G$-equivariant vector bundle whose underlying vector bundle is $E_1\oplus E_2$ (viewed $E_1$ and $E_2$ as vector bundles) and the action is given as follows:
\[
\beta:G \times E \to E, \quad (g,v_1+v_2)=\beta_1(g,v_1)+\beta_2(g,v_2),
\]
where $\beta_i:G\times E_i \to E_i$ is the action of $E_i$ for $i=1,2$. In other words, the action of $E$ is componentwise. 
\end{mydef}

\begin{mythm}\label{theorem: equivariantly split theorem}
Let $X$ be a toric scheme over an idempotent semifield $R$, and let $G$ be the corresponding torus.   Let $E$ be a $G$-equivariant vector bundle on $X$.  Then there are unique (up to permutation) equivariant line bundles $L_1, \ldots, L_n$ such that $E = L_1 \oplus \ldots \oplus L_n$ (as $G$-equivariant vector bundles).
\end{mythm}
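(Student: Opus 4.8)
The plan is to reduce the global statement to the affine pieces, invoke Corollary \ref{corollary: torus-equivariant trivial vector bundles are sums of line bundles} on each piece, and then glue using Proposition \ref{proposition: gluing equivariant vector bundles}. Concretely, let $X$ be the toric scheme associated to a fan $\Delta$ over $R$, with $G$ the corresponding torus, and let $\{U_\sigma\}_{\sigma\in\Delta}$ be the standard affine cover of $X$ by $G$-invariant open affine subschemes $U_\sigma = \Spec R[M_\sigma]$, where $M_\sigma$ is the (cancellative) toric monoid of $\sigma$. Each $U_\sigma$ is $G$-invariant, so by Lemma \ref{lemma: restriction of equivariant vector bundle} the equivariant vector bundle $E$ restricts to an equivariant vector bundle $E_{U_\sigma}$ on $U_\sigma$. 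By Proposition \ref{corollary: vector bundle on affine is trivial}, $E_{U_\sigma}$ is trivial as a vector bundle, so Corollary \ref{corollary: torus-equivariant trivial vector bundles are sums of line bundles} applies and gives an equivariant splitting $E_{U_\sigma} = \bigoplus_{k} L^\sigma_k$ into equivariant line bundles on $U_\sigma$.

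The heart of the argument is to show these local splittings can be chosen compatibly on the overlaps $U_\sigma \cap U_\tau$ so that they glue to a global equivariant splitting. First I would observe that $X$ is irreducible (Proposition \ref{proposition: irreducible toric}), so it has a unique generic point $\eta$ lying in every $U_\sigma$; by Proposition \ref{theorem: bundles split} the underlying vector bundle of $E$ splits globally as $E = \bigoplus_{i=1}^n L_i$ into line bundles, and this splitting is unique up to permutation of the summands. The key point is that each equivariant local splitting $\{L^\sigma_k\}$, being a splitting of $E_{U_\sigma}$ as an ordinary vector bundle, must (by the uniqueness clause of Proposition \ref{theorem: bundles split} applied to the irreducible scheme $U_\sigma$) agree with the restriction $\{L_i|_{U_\sigma}\}$ up to a permutation. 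Hence, after reindexing the local splittings consistently using the global one, we may assume $L^\sigma_i = L_i|_{U_\sigma}$ for all $\sigma$. The equivariant structures on $L_i|_{U_\sigma}$ and $L_i|_{U_\tau}$ then both restrict to equivariant structures on $L_i|_{U_\sigma\cap U_\tau}$; to see that these coincide I would appeal to Proposition \ref{proposition: action determined by generic point} (the $G$-action on an equivariant vector bundle is determined by its action on the torus), since $U_\sigma\cap U_\tau$ contains the torus $G\subseteq X$. Once the equivariant structures agree on all overlaps, Proposition \ref{proposition: gluing equivariant vector bundles} produces a global equivariant line bundle structure on each $L_i$, and Definition \ref{definition: direct sum of equivarant bundles} assembles $E = L_1\oplus\cdots\oplus L_n$ as $G$-equivariant vector bundles.

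For uniqueness: if $E = L_1\oplus\cdots\oplus L_n = L'_1\oplus\cdots\oplus L'_m$ are two equivariant splittings into equivariant line bundles, then forgetting the equivariant structure and applying the uniqueness in Proposition \ref{theorem: bundles split} gives $n=m$ and a permutation $\pi$ with $L_i \cong L'_{\pi(i)}$ as line bundles; one then checks this isomorphism is automatically equivariant, again because the $G$-action on each line bundle is determined by its restriction to the generic point (Proposition \ref{proposition: action determined by generic point}), and the two equivariant structures restrict to the same thing on $G$ since they come from the same ambient $E$.

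The main obstacle I anticipate is the bookkeeping in the overlap-compatibility step: one must be careful that the "up to permutation" ambiguity in the local splittings can be resolved \emph{simultaneously} for all cones $\sigma$, and this is exactly where irreducibility of $X$ (so that all $U_\sigma$ share a generic point and the global splitting of Proposition \ref{theorem: bundles split} is available as a common reference) does the essential work. A secondary technical point is checking that the glued morphism $\beta: G\times_R E\to E$ really is an equivariant vector bundle structure — but this is precisely what Proposition \ref{proposition: gluing equivariant vector bundles} is designed to handle, so no new work is needed there.
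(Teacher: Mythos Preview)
Your strategy is essentially the paper's: fix the global non-equivariant splitting $E=\bigoplus L_i$ from Proposition~\ref{theorem: bundles split}, observe that the local equivariant splittings of $E_{U_\sigma}$ furnished by Corollary~\ref{corollary: torus-equivariant trivial vector bundles are sums of line bundles} must (by uniqueness of non-equivariant splittings) coincide with $\{L_i|_{U_\sigma}\}$, check overlap compatibility, and glue via Proposition~\ref{proposition: gluing equivariant vector bundles}.

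The one substantive difference is in how you handle overlap compatibility and uniqueness. You invoke Proposition~\ref{proposition: action determined by generic point} (the action is determined on the torus) for both. The paper instead proves uniqueness \emph{first}, by the elementary observation that in any equivariant decomposition $E=\bigoplus L_i$ the action on $L_i$ is literally the restriction of the ambient action $\beta$ on $E$: for $v_i\in L_i$ one has $\beta_i(g,v_i)=\beta(g,v_i)$. This single sentence then also settles overlap compatibility, since both the $\sigma$- and $\tau$-induced equivariant structures on $L_i|_{U_\sigma\cap U_\tau}$ are just $\beta|_{L_i}$ restricted to that overlap. Your appeal to Proposition~\ref{proposition: action determined by generic point} is not wrong, but it is heavier than needed: the very reason you give for agreement on the torus (``they come from the same ambient $E$'') already works on the entire overlap, so the reduction to the torus is superfluous. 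Note also that Proposition~\ref{proposition: action determined by generic point} appears after this theorem in the paper's ordering; there is no actual circularity (its proof only uses the non-equivariant splitting), but the paper's arrangement avoids even the appearance of one.
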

\begin{proof}
First we prove uniqueness; the toric assumption is not needed here.  By Proposition~\ref{theorem: bundles split}, any direct sum decomposition of a vector bundle into line bundles is unique.  So, we just need to show the action $\beta_i$ on each $L_i$ is determined by the action $\beta$ on $E$.  But by definition of a direct sum of equivariant line bundles, the action on $E = L_1 \oplus \ldots \oplus L_n$ is given by
\[
\beta(g, v_1 + \ldots + v_n) = \beta_1(g, v_1) + \ldots +\beta_n(g, v_n), \quad v_i \in L_i~\forall i=1,\dots,n.
\]
In particular, for $v_i \in L_i$, $\beta_i$ is given by $\beta_i(g, v_i) = \beta(g, v)$.

As a vector bundle, again by Proposition \ref{theorem: bundles split}, $E$ is a sum of line bundles. Call these line bundles $L_1, \ldots, L_n$.  We need to specify the group action on the $L_i$.

For any cone $\sigma$, we let $E_\sigma$ denote the restriction to $U_\sigma$ and similarly for $(L_i)_\sigma$.  Observe that
\[
E_\sigma = (L_1)_\sigma \oplus\ldots\oplus (L_n)_\sigma
\]
is the unique decomposition into line bundles (by Proposition \ref{theorem: bundles split}). Note $E_\sigma$ is trivial by Proposition \ref{corollary: vector bundle on affine is trivial}.  Furthermore, Corollary \ref{corollary: torus-equivariant trivial vector bundles are sums of line bundles} implies that $E_\sigma$ is a sum of equivariant line bundles.  By uniqueness of the decomposition into line bundles, these equivariant line bundles must equal $(L_1)_\sigma, \ldots, (L_n)_\sigma$.  In other words, we have obtained equivariant line bundle structures on each $(L_i)_\sigma$ such that $E_\sigma = (L_1)_\sigma \oplus\ldots\oplus (L_n)_\sigma$ as equivariant vector bundles.

Given two cones $\sigma_1, \sigma_2$, we let $E_{\sigma_1, \sigma_2} = E_{U_{\sigma_1} \cap U_{\sigma_2}}$ and similarly for the $L_i$.  We obtain two equivariant vector bundle structures on each $(L_i)_{\sigma_1, \sigma_2}$ such that $E_{\sigma_1, \sigma_2} = (L_1)_{\sigma_1, \sigma_2} \oplus\ldots\oplus (L_n)_{\sigma_1, \sigma_2}$: we can either restrict the action on $(L_i)_{\sigma_1}$ or restrict the action on $(L_i)_{\sigma_2}$.  By the uniqueness part of the theorem (which does not really require $U_{\sigma_1} \cap U_{\sigma_2}$ to be toric), these must give the same action on $(L_i)_{\sigma_1, \sigma_2}$.  Thus the actions on the $(L_i)_\sigma$ for different values of $\sigma$ agree on overlaps.  By Proposition \ref{proposition: gluing equivariant vector bundles}, these glue together to give an action on $L_i$.

There are now two actions on $E$ which make $E$ into an equivariant vector bundle. The first is the action that comes from the assumption that $E$ is a $G$-equivariant vector bundle. The other, we can obtain from the actions on the $L_i$, which allow us to put an equivariant vector bundle structure on $E = L_1 \oplus \ldots \oplus L_n$.  We had $E_\sigma = (L_1)_\sigma \oplus\ldots\oplus (L_n)_\sigma$ as equivariant vector bundles.  Hence the two actions on $E$ agree when restricted to any $U_\sigma$.  By Proposition \ref{proposition: gluing equivariant vector bundles}, this means the two actions are equal.  The result now follows.
\end{proof}

Next we relate equivariant vector bundles on toric varieties to equivariant vector bundles on the torus.  Conceptually this should use the density of the torus.  But we need something stronger, because it is not clear that regular functions that agree on a dense set should agree everywhere.  In the classical case $\{ x: f(x) = g(x) \}$ is closed as it is the set of prime ideals containing $\langle f - g \rangle$, but in the tropical case we cannot use subtraction.

Let $X$ be a toric scheme over a semiring $R$. As in the case over $\mathbb{Z}$, one has $X = \bigcup U_{\sigma_i}$, with each cone $\sigma_i$ in a fan, and $U_{\{0\}}\subseteq X$ is the torus.  

\begin{lem}\label{lemma: regular function is determined on torus}
Let $X$ be a toric scheme over a semiring $R$.  Let $f, g \in \Gamma(X, \mathcal{O}_X)$. Suppose $f = g$ on $U_{\{0\}}$.  Then $f = g$.
\end{lem}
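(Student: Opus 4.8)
The plan is to verify the identity chart by chart on the toric scheme, exploiting that for toric schemes the restriction map from each affine chart to the torus is \emph{injective} — for a reason that does not use subtraction and therefore survives the passage from rings to semirings.

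Let $\Delta$ be the defining fan, so that $X = \bigcup_{\sigma \in \Delta} U_\sigma$ with $U_\sigma = \Spec R[S_\sigma]$, where $S_\sigma = \sigma^\vee \cap \Lambda$, and the torus is $U_{\{0\}} = \Spec R[\Lambda]$ (since $S_{\{0\}} = \Lambda$). Because $\{0\}$ is a face of every cone, $U_{\{0\}}$ is an open subscheme of every $U_\sigma$; chasing through the gluing data of the underlying toric monoid scheme together with the base-change functor $-\otimes_{\F_1}R$ of Definition \ref{definition: based change functor} (which, being a functor, respects the relevant localizations of monoids), the restriction map
\[
\rho_\sigma\colon \Gamma(U_\sigma, \mathcal{O}_X) = R[S_\sigma] \longrightarrow R[\Lambda] = \Gamma(U_{\{0\}}, \mathcal{O}_X)
\]
is the $R$-algebra homomorphism obtained by applying the monoid-algebra construction to the inclusion of monoids $S_\sigma \hookrightarrow \Lambda$. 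The crucial point is that $\rho_\sigma$ is injective: for a (pointed) monoid $M$ the monoid semiring $R[M]$ is the free $R$-module on the nonzero elements of $M$, so an injective monoid map carries a basis injectively into a basis, and a family obtained this way is linearly independent in the sense of Definition \ref{definition: linearly independent}; hence the induced $R$-module map is injective. This is the semiring substitute for the classical observation that $R[S_\sigma]\to R[\Lambda]$ is a localization of a domain, which is unavailable here.

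Granting this, the proof closes quickly. By the sheaf property it suffices to show $f|_{U_\sigma} = g|_{U_\sigma}$ for every $\sigma \in \Delta$. Functoriality of restriction along $U_{\{0\}} \subseteq U_\sigma \subseteq X$ gives
\[
\rho_\sigma\bigl(f|_{U_\sigma}\bigr) = f|_{U_{\{0\}}} = g|_{U_{\{0\}}} = \rho_\sigma\bigl(g|_{U_\sigma}\bigr),
\]
so injectivity of $\rho_\sigma$ forces $f|_{U_\sigma} = g|_{U_\sigma}$; as $\sigma$ was arbitrary, $f = g$. I expect the only delicate point to be the bookkeeping that identifies $\rho_\sigma$ with the monoid-algebra map of $S_\sigma \hookrightarrow \Lambda$ after base change, together with recording that $R[M]$ is free on the nonzero elements of $M$ over an arbitrary semiring; everything else is formal.
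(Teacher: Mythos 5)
Your argument is correct and follows essentially the same route as the paper's proof: reduce to the affine charts $U_\sigma$, observe that the inclusion of monoids $S_\sigma = \Lambda\cap\sigma^\vee \hookrightarrow \Lambda$ induces an injection $R[S_\sigma]\to R[\Lambda]$ because a monoid semiring is free as an $R$-module on the nonzero monoid elements, and then conclude by the sheaf property. The paper records the same key observation (an injection of sets induces an injection of free modules) and carries out the identical affine-then-glue argument.
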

\begin{proof}
As a preliminary, observe that an injection of sets $S \rightarrow T$ induces an injection $R^S \rightarrow R^T$ of free modules.  In particular, an injection of monoids $M_1 \rightarrow M_2$ induces a monomorphism $R[M_1] \rightarrow R[M_2]$.

First consider the case where $X$ is an affine scheme, corresponding to a cone $\sigma$, i.e. $X = \Spec R[\Lambda \cap \sigma^\vee]$ for some lattice $\Lambda$ and some cone $\sigma$. 
The torus is $\Spec R[\Lambda]$.  Because the inclusion $\Lambda \cap \sigma^\vee$ into $\Lambda$ is injective, the map $\phi: R[\Lambda \cap \sigma^\vee] \rightarrow R[\Lambda]$ is a monomorphism.  Consider two regular functions $f, g$ on $X$ which agree on the torus.  Algebraically, this means $\phi(f) = \phi(g)$, and injectivity gives $f = g$ as desired.

Now we consider the general case.  Assume $f = g$ on the torus.  For each cone $\sigma$ in the fan corresponding to $X$, we may apply this result to $U_\sigma$ to obtain $f \mid_{U_\sigma} = g \mid_{U_\sigma}$.  Since the $U_\sigma$ form an open cover, the result follows.
\end{proof}

We may view regular functions as sections of the trivial line bundle.  In fact, the above result can be generalized to any vector bundle.

\begin{lem}\label{lemma: section is determined by values on torus}
Let $X$ be a toric scheme over an idempotent semifield $R$. 
Let $E$ be a vector bundle on $X$.  Suppose $s, t \in \Gamma(X, E)$ are such that $s \mid_{U_{\{0\}}} = t \mid_{U_{\{0\}}}$.  Then $s = t$.
\end{lem}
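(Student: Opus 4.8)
The plan is to mimic the proof of Lemma~\ref{lemma: regular function is determined on torus}, reducing to the affine charts of $X$ and using that every vector bundle on an affine toric scheme over $R$ is trivial (Corollary~\ref{corollary: vector bundle on affine is trivial}). So no splitting of $E$ into line bundles is needed.

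First I would reduce to the affine case. Let $\Delta$ be the fan defining $X$, so that $X=\bigcup_{\sigma\in\Delta}U_\sigma$ is an open cover and, since $\{0\}$ is a face of every cone $\sigma\in\Delta$, the torus $U_{\{0\}}$ is an open subscheme of each $U_\sigma$. Because the $U_\sigma$ cover $X$, it suffices to prove $s\mid_{U_\sigma}=t\mid_{U_\sigma}$ for each $\sigma$. Moreover the restriction $\Gamma(X,E)\to\Gamma(U_{\{0\}},E)$ factors as $\Gamma(X,E)\to\Gamma(U_\sigma,E)\to\Gamma(U_{\{0\}},E)$, so the hypothesis $s\mid_{U_{\{0\}}}=t\mid_{U_{\{0\}}}$ says precisely that $s\mid_{U_\sigma}$ and $t\mid_{U_\sigma}$ become equal after further restriction to $U_{\{0\}}$.

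Next, fix $\sigma$ and write $U_\sigma=\Spec R[\Lambda\cap\sigma^\vee]$ and $U_{\{0\}}=\Spec R[\Lambda]$. Since $\Lambda\cap\sigma^\vee$ is a submonoid of the lattice $\Lambda$, it is cancellative, so $E\mid_{U_\sigma}$ is trivial by Corollary~\ref{corollary: vector bundle on affine is trivial}; fix an isomorphism $E\mid_{U_\sigma}\cong\mathcal{O}_{U_\sigma}^n$, and restrict it to the open subscheme $U_{\{0\}}\subseteq U_\sigma$ to obtain a compatible trivialization $E\mid_{U_{\{0\}}}\cong\mathcal{O}_{U_{\{0\}}}^n$. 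Under these identifications the restriction map $\Gamma(U_\sigma,E)\to\Gamma(U_{\{0\}},E)$ becomes the map $R[\Lambda\cap\sigma^\vee]^n\to R[\Lambda]^n$ obtained componentwise from the monoid ring homomorphism $R[\Lambda\cap\sigma^\vee]\to R[\Lambda]$ induced by the inclusion $\Lambda\cap\sigma^\vee\hookrightarrow\Lambda$. As observed at the start of the proof of Lemma~\ref{lemma: regular function is determined on torus}, an injection of sets induces an injection of free $R$-modules, so this inclusion of monoids induces a monomorphism $R[\Lambda\cap\sigma^\vee]\to R[\Lambda]$, and passing to $n$-fold direct sums preserves injectivity. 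Hence $s\mid_{U_\sigma}=t\mid_{U_\sigma}$, and letting $\sigma$ range over $\Delta$ gives $s=t$.

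There is no serious obstacle here; the only point requiring a little care is to choose the trivialization on the chart $U_\sigma$ first and then restrict it to the torus, so that the comparison map is literally the monoid-inclusion map. (Even a twist of that map by a unit of $R[\Lambda]$ would cause no trouble, since multiplication by a unit is still injective.)
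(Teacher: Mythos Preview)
Your proof is correct and in fact slightly more direct than the paper's. The paper argues in three layers: it first invokes Lemma~\ref{lemma: regular function is determined on torus} for $E=\mathcal{O}_X$, then handles the case of a line bundle by trivializing on each $U_\sigma$ and reducing to the structure-sheaf case, and finally treats a general $E$ by globally splitting $E=L_1\oplus\cdots\oplus L_n$ via Proposition~\ref{theorem: bundles split} and applying the line-bundle case componentwise. You instead trivialize the whole rank-$n$ bundle on each $U_\sigma$ at once (which Corollary~\ref{corollary: vector bundle on affine is trivial} allows for any rank), so the restriction map is immediately identified with the injection $R[\Lambda\cap\sigma^\vee]^n\hookrightarrow R[\Lambda]^n$. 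This bypasses the global splitting theorem entirely; the only ingredient you need beyond Lemma~\ref{lemma: regular function is determined on torus} is local triviality on affine toric charts, together with your observation that the trivialization on $U_\sigma$ should be chosen first and then restricted to the torus so that the comparison map is literally the monoid-inclusion map.
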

\begin{proof}
We have already proven in Lemma~\ref{lemma: regular function is determined on torus} the statement when $E = \mathcal{O}_X$.

Next we consider the case where $E$ is a line bundle.  For any cone $\sigma$, $E_{U_\sigma}$ is trivial by Proposition \ref{corollary: vector bundle on affine is trivial}.  Since $s \mid_{U_\sigma}$ and $t \mid_{U_\sigma}$ agree on the torus, they are equal by the case of the trivial line bundle.  Since this holds for all $\sigma$ and since $E$ is a sheaf, we have $s = t$.

Finally, we use Proposition \ref{theorem: bundles split} to write $E = L_1 \oplus \ldots \oplus L_n$.  We may decompose $s, t$ into sections $s_i, t_i \in \Gamma(X, L_i)$ for each $i$.  Since $s$ and $t$ agree on the torus, the same is true for $s_i$ and $t_i$.  By the line bundle case $s_i = t_i$ for all $i$, and hence $s = t$.
\end{proof}

\begin{lem}\label{lemma: functor of points of a section}
Let $X$ be a scheme over a semiring $R$ and let $E$ be a vector bundle. Let $s_A: X(A) \rightarrow E(A)$ be a natural transformation such that $s_A(x) \in E_x$ for all $x$, where $A$ are $R$-algebras. Then there exists a unique element $\check{s} \in \Gamma(X, E)$ which induces $s_A$.
\end{lem}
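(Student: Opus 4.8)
The statement is a Yoneda-type fact: a natural section of the projection $\pi_A\colon E(A)\to X(A)$ must come from a genuine global section of $E$, and the proof runs parallel to that of Proposition~\ref{irreduciblefunctorofpoints}, only with no irreducibility hypothesis. The plan is to reconstruct $\check{s}$ locally. Fix an open affine cover $\{U_i=\Spec A_i\}$ of $X$ and let $\iota_i\in X(A_i)$ be the inclusion $U_i\hookrightarrow X$. Since $\iota_i$ is an open immersion, $\iota_i^*E=E|_{U_i}$, so the fiber $E_{\iota_i}=(\iota_i^*E)(\Spec A_i)$ is canonically $\Gamma(U_i,E)$; set $\check{s}_i:=s_{A_i}(\iota_i)\in\Gamma(U_i,E)$.

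First I would check that the $\check{s}_i$ agree on overlaps. Given $i,j$, cover $U_i\cap U_j$ by affine opens $V=\Spec B$ and let $\kappa\in X(B)$ be the inclusion $V\hookrightarrow X$, so $\kappa=\iota_i\circ\phi_i=\iota_j\circ\phi_j$ for the open-immersion inclusions $\phi_i\colon V\hookrightarrow U_i$ and $\phi_j\colon V\hookrightarrow U_j$. Naturality of $s$ applied to the restriction maps $A_i\to B$ and $A_j\to B$ gives $s_B(\kappa)=\phi_i^*(\check{s}_i)=\check{s}_i|_V$ and $s_B(\kappa)=\phi_j^*(\check{s}_j)=\check{s}_j|_V$, where $\phi_i^*$ denotes pullback of a global section as in Remark~\ref{remark: pullback global section}; hence $\check{s}_i|_V=\check{s}_j|_V$. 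Since $E$ is a sheaf and such $V$ cover $U_i\cap U_j$, the $\check{s}_i$ glue to a section $\check{s}\in\Gamma(X,E)$ with $\check{s}|_{U_i}=\check{s}_i$.

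Next I would verify that $\check{s}$ induces $s$, i.e. $\check{s}(x)=s_A(x)$ for every $R$-algebra $A$ and $x\in X(A)$, where $\check{s}(x)$ is the pullback of $\check{s}$ along $x$. If $x$ factors as $x=\iota_i\circ y$ with $y\in U_i(A)$, then naturality gives $s_A(x)=y^*(s_{A_i}(\iota_i))=y^*(\check{s}_i)$, while functoriality of pullback gives $\check{s}(x)=y^*(\check{s}|_{U_i})=y^*(\check{s}_i)$, so the two agree. For a general $x$, choose an affine open cover $\{W_k=\Spec C_k\}$ of $\Spec A$ with $x(W_k)\subseteq U_{i(k)}$; then each $x|_{W_k}$ factors through a chart, so $s_{C_k}(x|_{W_k})=\check{s}(x|_{W_k})$ by the previous case. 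By naturality $s_{C_k}(x|_{W_k})$ is the restriction of $s_A(x)$ to $W_k$, and by functoriality $\check{s}(x|_{W_k})$ is the restriction of $\check{s}(x)$ to $W_k$; since $x^*E$ is a sheaf and the $W_k$ cover $\Spec A$, we conclude $s_A(x)=\check{s}(x)$. Finally, uniqueness is immediate: any $\check{t}\in\Gamma(X,E)$ inducing $s$ satisfies $\check{t}|_{U_i}=\check{t}(\iota_i)=s_{A_i}(\iota_i)=\check{s}|_{U_i}$ for all $i$, so $\check{t}=\check{s}$. The argument uses only that $X$ is locally affine and $E$ is a sheaf of $\mathcal{O}_X$-modules (local freeness plays no role), and the one point requiring care is the bookkeeping of pullbacks of sections together with the correct application of naturality when passing between the statement ``$x$ factors through $U_i$'' and the corresponding ring homomorphisms; there is no deeper obstacle.
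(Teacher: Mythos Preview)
Your proof is correct, but the paper takes a different and shorter route. The paper invokes the equivalence between locally free sheaves and geometric vector bundles (Corollary~\ref{corollary: equivalence of functor of points}): viewing $E$ as a scheme with projection $\pi\colon E\to X$, the ordinary Yoneda lemma turns the natural transformation $s_A$ into a morphism of schemes $s\colon X\to E$; the hypothesis $s_A(x)\in E_x$ says exactly that $\pi\circ s=\mathrm{id}_X$, so $s$ is a section of $\pi$, and such sections correspond bijectively to elements of $\Gamma(X,E)$ by Lemma~\ref{lemma: geo to locally free}.

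Your argument instead stays on the sheaf side and rebuilds the section by hand on an affine cover, gluing via naturality and then checking that the glued section induces $s_A$ on all points. This is more explicit and has the advantage of not relying on the geometric vector bundle correspondence (indeed, as you note, local freeness is never used, only that $E$ is a sheaf of $\mathcal{O}_X$-modules). The paper's approach is quicker precisely because the Section~\ref{section: Equivalence between locally free sheaves and geometric vector bundles} machinery has already packaged the Yoneda/gluing bookkeeping into the statement that $E$ is a representable functor; your approach essentially unpacks that representability proof in situ.
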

\begin{proof}
Let $\pi:E \to X$ be a projection map, where we view $E$ as a geometric vector bundle. Let $s:X \to E$ be a morphism of schemes corresponding to the given natural transformation. Since $s_A(x) \in E_x$, we have that $\pi\circ s = \text{id}_X$, and hence $s$ is a section of $\pi$. The claim follows from the fact that the sections of $\pi$ bijectively corresponds to the elements of $\Gamma(X,E)$. 
\end{proof}

We may use the above results to show that the $G$-action on an equivariant vector bundle is determined by the action on the part of the vector bundle lying over the torus.

\begin{pro}\label{proposition: action determined by generic point}
Let $X$ be a toric scheme over an idempotent semifield $R$.  Let $G = U_{\{0\}} \subseteq X$ be the torus.  Let $E$ be a vector bundle on $X$.  Let $\beta_1, \beta_2: G \times E \rightarrow E$ be actions which make $E$ into an equivariant vector bundle.  If $\beta_1$ and $\beta_2$ induce the same action on $E_{U_{\{0\}}}$, then $\beta_1 = \beta_2$.
\end{pro}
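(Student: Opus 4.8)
The plan is to repackage each equivariant structure $\beta_j$ ($j=1,2$) as a single global section of one vector bundle on the product toric scheme $G\times_R X$, and then to invoke Lemma~\ref{lemma: section is determined by values on torus}, which says precisely that a section of a vector bundle on a toric scheme over an idempotent semifield is determined by its restriction to the torus. To set this up, let $\mathrm{pr}\colon G\times_R X\to X$ be the second projection and $a\colon G\times_R X\to X$ the action morphism, and put $\mathcal{V}:=\underline{\Hom}_{\mathcal{O}_{G\times_R X}}(\mathrm{pr}^{*}E,\,a^{*}E)$. This is a vector bundle: locally it looks like $\underline{\Hom}(\mathcal{O}^{n},\mathcal{O}^{n})=\mathcal{O}^{n^{2}}$, and equivalently $\mathcal{V}\cong(\mathrm{pr}^{*}E)^{\vee}\otimes_{\mathcal{O}}a^{*}E$, a tensor of locally free sheaves. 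Chasing through the functor-of-points descriptions of pullback, dual and tensor product from Section~\ref{section: Equivalence between locally free sheaves and geometric vector bundles} (Proposition~\ref{proposition: functor of points of tensor product} and Lemma~\ref{lemma: pullback of dual sheaf}), for an $R$-algebra $A$ and a point $(g,x)\in(G\times_R X)(A)$ one obtains a canonical identification $\mathcal{V}_{(g,x)}\cong\Hom_{A}(E_{x},E_{gx})$, using that $(\mathrm{pr}^{*}E)_{(g,x)}=E_{x}$ and $(a^{*}E)_{(g,x)}=E_{gx}$.

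Next I would encode $\beta_j$. By condition~(3) of Definition~\ref{definition: equivariant vector bundle}, $\beta_j$ furnishes for each $(g,x)$ an $A$-linear map $E_{x}\to E_{gx}$, namely $v\mapsto\beta_{j,A}(g,v)$; regard this as an element $s_{j,A}(g,x)\in\mathcal{V}_{(g,x)}$. Naturality of $\beta_j$ (condition~(1) of Definition~\ref{definition: equivariant vector bundle}) makes $(g,x)\mapsto s_{j,A}(g,x)$ a natural transformation $(G\times_R X)(A)\to\mathcal{V}(A)$ with values in the fibers, so by Lemma~\ref{lemma: functor of points of a section} it is induced by a unique global section $\check s_j\in\Gamma(G\times_R X,\mathcal{V})$. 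Since a morphism of schemes is determined by its functor of points, and $\beta_{j,A}$ on $(G\times_R E)(A)=\bigsqcup_{g\in G(A),\,x\in X(A)}E_{x}$ is exactly $\bigl((g,v)\mapsto s_{j,A}(g,x)(v)\bigr)$ for $v\in E_x$, it follows that $\beta_1=\beta_2$ if and only if $\check s_1=\check s_2$.

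It then remains to compare $\check s_1$ and $\check s_2$ on the torus. First, $G\times_R X$ is again a toric scheme over $R$: base change along $\mathbb{F}_1\to R$ commutes with products, and the product of the toric monoid schemes underlying $G$ and $X$ is the toric monoid scheme of the product fan; its torus is $G\times_R U_{\{0\}}$, with $U_{\{0\}}\subseteq X$ the given torus. The $A$-points of $G\times_R U_{\{0\}}$ are the pairs $(g,x)$ with $g\in G(A)$ and $x\in U_{\{0\}}(A)$; since $U_{\{0\}}$ is $G$-invariant, Lemma~\ref{lemma: restriction of equivariant vector bundle} shows that the equivariant structure $\beta_j$ induces on $E_{U_{\{0\}}}$ acts on fibers by the very same maps $E_x\to E_{gx}$, and by the hypothesis of the proposition these coincide for $j=1,2$. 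Hence $s_{1,A}(g,x)=s_{2,A}(g,x)$ at every torus point, i.e.\ $\check s_1$ and $\check s_2$ restrict to the same section over the torus of $G\times_R X$. Lemma~\ref{lemma: section is determined by values on torus}, applied to the vector bundle $\mathcal{V}$ on the toric scheme $G\times_R X$ over the idempotent semifield $R$, then forces $\check s_1=\check s_2$, whence $\beta_1=\beta_2$.

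The only genuinely delicate part is the setup in the first two paragraphs: producing the natural identification $\mathcal{V}_{(g,x)}\cong\Hom_{A}(E_{x},E_{gx})$ by threading together the functor-of-points formulas for pullbacks, duals and tensor products, and then verifying that $(g,x)\mapsto s_{j,A}(g,x)$ is natural in $A$ — which is exactly the naturality axiom built into the definition of an equivariant vector bundle, but must be matched up carefully with the identifications. The remaining ingredients (that $G\times_R X$ is toric with torus $G\times_R U_{\{0\}}$, and that restricting a section to the torus is computed fiberwise) are routine.
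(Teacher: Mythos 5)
Your proof is correct, and it rests on the same two pillars as the paper's own argument: converting the action data into global sections of a vector bundle on $G\times_R X$ via Lemma~\ref{lemma: functor of points of a section}, and then invoking Lemma~\ref{lemma: section is determined by values on torus} for the toric scheme $G\times_R X$. The difference is in the packaging. The paper first treats the case where $E$ is trivial: it fixes a global basis $v_1,\dots,v_n$ of $E$ and, for each $v_i$, produces a section of the single pullback bundle $\alpha^*E$ on $G\times_R X$ recording $(g,x)\mapsto \beta_j(g,(v_i)_x)$; fiberwise linearity then pins down $\beta_j$. The general case is handled by restricting to the affine opens $U_\sigma$, where $E$ is trivial by Proposition~\ref{corollary: vector bundle on affine is trivial}, and reassembling via Proposition~\ref{proposition: gluing equivariant vector bundles}. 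You instead encode the entire fiberwise linear map $E_x\to E_{gx}$ as one section of the Hom-bundle $\mathcal{V}=\underline{\Hom}(\mathrm{pr}^*E,\,a^*E)\cong(\mathrm{pr}^*E)^\vee\otimes a^*E$, which is locally free whether or not $E$ is trivial, so you never need the trivial-bundle reduction or the gluing step — a genuine streamlining. The price is the extra bookkeeping you correctly flag as the delicate part: the fiberwise identification $\mathcal{V}_{(g,x)}\cong\Hom_A(E_x,E_{gx})$ requires Lemma~\ref{lemma: pullback of dual sheaf} and Proposition~\ref{proposition: functor of points of tensor product} together with the isomorphism $M^\vee\otimes_A N\cong\Hom_A(M,N)$ for locally free $M$ over a semiring (not stated in the paper, but provable by localization), and one must check that condition~(1) of Definition~\ref{definition: equivariant vector bundle} really does translate into naturality of $(g,x)\mapsto s_{j,A}(g,x)$ under these identifications. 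Both proofs also rely on the fact that $G\times_R X$ is toric with torus $G\times_R U_{\{0\}}$, which the paper uses only implicitly and you state explicitly.
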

\begin{proof}
Suppose first that $E$ is a trivial vector bundle. Fix a global section $v$ of $E$.  We consider the pullback $\alpha^* E$ along the action $\alpha:G \times X \rightarrow X$. 

Let $A$ be an $R$-algebra. Because fibers are constructed via pullback along a point $(g, x) \in G(A) \times X(A)$, the fibers of $\alpha^* E$ are given by 
\begin{equation}\label{eq: identification}
 (\alpha^* E)_{g, x} = E_{gx}.   
\end{equation}
Note that $(\beta_1)_A(g, v_x), (\beta_2)_A(g, v_x) \in E_{gx}$.  Define
\begin{equation}
(s_i)_A(g, x) := (\beta_i)_A(g, v_x),
\end{equation}
where we use the identification \eqref{eq: identification}. Hence, we have $(s_i)_A:G(A) \times X(A) \to \alpha^*E(A)$. Apply Lemma \ref{lemma: functor of points of a section} to obtain global sections $\check{s_1}, \check{s_2} \in \Gamma(G\times X, \alpha^*E)$.  By assumption, when $x$ is a point of the torus, we have
\[
(\beta_1)_A(g, v_x) = (\beta_2)_A(g, v_x).
\]
Hence the functors of points of $\check{s_1}, \check{s_2} $ agree on the torus $G\times G$. By Lemma \ref{lemma: section is determined by values on torus}, $\check{s_1} = \check{s_2}$, and hence they induce the same element of each fiber, i.e., $(\beta_1)_A(g, v_x) = (\beta_2)_A(g, v_x)$ for all $x$.  In particular, letting $v$ range over a basis $v_1, \ldots, v_n$ of global sections of the trivial bundle $E$, for any $x \in X(A)$ there is a basis $(v_1)_x, \ldots, (v_n)_x\in E_x$ such that $(\beta_1)_A(g, (v_i)_x) = (\beta_2)_A(g, (v_i)_x)$.  By linearity, this implies $(\beta_1)_A = (\beta_2)_A$, and hence $\beta_1=\beta_2$.

Now consider the general case.   For any cone $\sigma$, $E_{U_\sigma}$ is trivial by Proposition \ref{corollary: vector bundle on affine is trivial}.  The actions on $E_{U_\sigma}$ induced by $\beta_1$ and $\beta_2$ agree on the torus, so are equal.  Since this is true for all $\sigma$, the result follows from Proposition \ref{proposition: gluing equivariant vector bundles}.
\end{proof}

Another technique for relating equivariant line bundles over toric schemes to line bundles over the torus is the following result.
\begin{lem}\label{lemma: extension of equivariant line bundle structure in the affine case}
Let $X_R$ be an affine toric scheme over an idempotent semifield $R$ and let $G_R = U_{\{0\}}$ be the torus.  Let $L$ be a line bundle on $X_R$.  Then restriction of the action yields a one-to-one correspondence between $G$-actions on $L$ which make it an equivariant line bundle and $G$-actions on $L_{U_{\{0\}}}$ which make it an equivariant line bundle.
\end{lem}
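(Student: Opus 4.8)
The plan is to reduce to the trivial line bundle and then identify both collections of equivariant structures with the character group of $G_R$, in such a way that the restriction map corresponds to the identity.

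First I would observe that, writing $X_R = \Spec R[\Lambda \cap \sigma^\vee]$ for the cone $\sigma$ defining the affine toric scheme $X_R$, the line bundle $L$ is trivial by Proposition \ref{corollary: vector bundle on affine is trivial}, since $R$ is an idempotent semifield. Fixing an isomorphism $L \simeq \mathcal{O}_{X_R}$, which restricts to an isomorphism $L_{U_{\{0\}}} \simeq \mathcal{O}_{U_{\{0\}}}$, and using that the torus $U_{\{0\}} = G_R$ is $G$-invariant (the action restricting to translation) so that restriction of equivariant structures makes sense by Lemma \ref{lemma: restriction of equivariant vector bundle}, the claim reduces to the following: restriction gives a bijection between the $G_R$-actions that make $\mathcal{O}_{X_R}$ an equivariant line bundle and the $G_R$-actions that make $\mathcal{O}_{U_{\{0\}}}$ an equivariant line bundle. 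This reduction is legitimate because transport of structure along the fixed isomorphisms commutes with restriction.

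Next I would apply the character description. Both $X_R$ and $U_{\{0\}} = G_R$ are affine — and irreducible by Proposition \ref{proposition: irreducible toric} — while $G_R \times_R X_R$, and the analogous product of the underlying $\mathbb{F}_1$-schemes, is irreducible, hence connected, by Proposition \ref{proposition: irreducible product}. Therefore Proposition \ref{proposition: equivariant line bundles and characters}, which by the remark following its statement is available in the semiring setting via Proposition \ref{Correspondence for equivariant structures on O_X}, identifies the $G_R$-equivariant structures on $\mathcal{O}_{X_R}$ (for the given action on $X_R$), and the $G_R$-equivariant structures on $\mathcal{O}_{U_{\{0\}}}$ (for $G_R$ acting on itself by translation), each with the group $\Hom(G_R, \text{GL}_1)$ of characters; in both cases the structure attached to a character $\chi$ is characterized by $g \cdot 1_x = \chi_A(g)\, 1_{gx}$, where $1$ denotes the distinguished basis section. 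Because the section $1$ of $\mathcal{O}_{X_R}$ restricts to the section $1$ of $\mathcal{O}_{U_{\{0\}}}$ and, by Lemma \ref{lemma: restriction of equivariant vector bundle}(1), the fiber of $\mathcal{O}_{U_{\{0\}}}$ over a point $x \in U_{\{0\}}(A)$ coincides with that of $\mathcal{O}_{X_R}$, the restriction of the structure attached to $\chi$ obeys the same formula over points of $U_{\{0\}}$ and hence is exactly the structure on $\mathcal{O}_{U_{\{0\}}}$ attached to $\chi$. Thus, under the two character bijections, the restriction map corresponds to the identity on $\Hom(G_R, \text{GL}_1)$, and in particular is a bijection; retracing the reduction proves the lemma.

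The only genuine content is surjectivity — extending an equivariant structure from the torus to all of $X_R$ — and the character description renders it automatic; injectivity is anyway immediate from Proposition \ref{proposition: action determined by generic point}. Someone preferring to stay entirely within the semiring world would instead face a direct computation: by Proposition \ref{Equivariant structures on O_X} and Lemma \ref{units as morphisms} the equivariant structures on $\mathcal{O}_{X_R}$ correspond to units $u \in \Gamma(G_R \times_R X_R, \mathcal{O}^\times)$ with $i^*u = 1$; since $G_R \times_R X_R = \Spec R[\Lambda \oplus (\Lambda \cap \sigma^\vee)]$ is an affine toric scheme whose torus is $G_R \times_R U_{\{0\}}$, injectivity of restriction follows from Lemma \ref{lemma: regular function is determined on torus}, and for surjectivity one uses \cite[Proposition 3.4]{jun2019picard} to write $(R[\Lambda \oplus (\Lambda \cap \sigma^\vee)])^\times = R^\times \times (\Lambda \oplus (\Lambda \cap \sigma^\perp))$ and then the normalization $i^*u = 1$ to force $u$ to lie in the $\Lambda$-factor, i.e.\ to be a character of $G_R$, which plainly extends over $X_R$. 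The awkward point in that route is the bookkeeping with the monoid of the product cone, which the character approach sidesteps.
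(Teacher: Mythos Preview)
Your proof is correct and follows essentially the same route as the paper: reduce to the trivial bundle via Proposition~\ref{corollary: vector bundle on affine is trivial}, then invoke the character description of equivariant structures on $\mathcal{O}_X$ (Proposition~\ref{proposition: equivariant line bundles and characters}, transported to the semiring setting through Proposition~\ref{Correspondence for equivariant structures on O_X}) on both $X_R$ and $U_{\{0\}}$, and observe that restriction corresponds to the identity on the character group. The paper phrases the reduction as an explicit passage to the underlying $\mathbb{F}_1$-schemes before applying Proposition~\ref{proposition: equivariant line bundles and characters}, whereas you cite the remark that this proposition is available over $R$; the content is the same. Your closing paragraph sketching a direct semiring computation via units on $G_R\times_R X_R$ is a legitimate alternative, but it is extra and not the paper's argument.
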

\begin{proof}
By Proposition \ref{corollary: vector bundle on affine is trivial}, $L$ is trivial.  By Proposition \ref{Correspondence for equivariant structures on O_X}, it suffices to prove this in the $\mathbb{F}_1$-case (i.e., monoid schemes). Let $G$ be the torus over $\mathbb{F}_1$ and $X$ be the affine $\mathbb{F}_1$-scheme associated to $X_R$, i.e., $X_R = X\otimes_{\mathbb{F}_1}R$.  Since $G$ and $X$ are affine, we may apply Proposition \ref{proposition: equivariant line bundles and characters} to see that $G$-actions which make $L$ into an equivariant line bundle are in bijective correspondence with homomorphisms $\chi: G \rightarrow GL_1$ and this correspondence is such that
\[
g1_x = \chi_A(g)1_{gx} \quad \textrm{for } (g, x)\in G(A) \times X(A).
\]
Applying these propositions again, such homomorphisms are in bijective correspondence with $G$-actions on $L_{U_{\{0\}}}$ which make it into a line bundle, and this correspondence is such that 
\[
g1_x = \chi_A(g)1_{gx}, \quad \textrm{for } (g, x)\in G(A) \times U_{\{0\}}(A).
\]
Composing the above bijections gives the desired bijective correspondence.  It remains to show that the bijection we obtained is given by restricting the equivariant line bundle structure on $L$ to one on $L_{U_{\{0\}}}$.  Let $(g, x)\in G(A) \times U_{\{0\}}(A) \subseteq G(A) \times X(A)$.  Then in $L$, $g$ acts on $1_x$ via $g1_x = \chi(g)1_{gx}$, where $\chi: G\rightarrow GL_1$ is the homomorphism corresponding to the action on $L$.  Thus the same is true for the restriction of this action to $L_{U_{\{0\}}}$.  But the bijection above is given by the property that the action on $L_{U_{\{0\}}}$ satisfies $g1_x = \chi(g)1_{gx}$, so it agrees with the restriction map.
\end{proof}

\begin{pro}\label{proposition: construction of equivariant structure on line bundle}
Let $X_R$ be a toric scheme over an idempotent semifield $R$ and let $G_R$ be the torus.  Let $L$ be a line bundle over $X_R$.  Then there is some action which makes $L$ into an equivariant line bundle.
\end{pro}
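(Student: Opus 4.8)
The plan is to construct the equivariant structure locally on each affine toric chart and then glue. Recall that $G_R = U_{\{0\}}$ and that $\{0\}$ is a face of every cone of the defining fan $\Delta$, so $U_{\{0\}}\subseteq U_\sigma$ for each $\sigma\in\Delta$; moreover the sets $U_\sigma$ form an open cover of $X_R$ by $G$-invariant affine subschemes. The starting point will be the restriction $L_{U_{\{0\}}}$, which is a line bundle on the affine toric scheme $G_R = \Spec R[\Lambda]$. By Proposition~\ref{corollary: vector bundle on affine is trivial} it is trivial, and hence it carries at least one $G$-equivariant structure; fix one and call it $\rho_0$ (for instance the trivial one, obtained by choosing a trivialization).

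Next, for each cone $\sigma\in\Delta$ I would apply Lemma~\ref{lemma: extension of equivariant line bundle structure in the affine case} to the affine toric scheme $U_\sigma$ and the line bundle $L_{U_\sigma}$: since restriction of equivariant structures along $U_{\{0\}}\hookrightarrow U_\sigma$ is a bijection onto the set of $G$-equivariant structures on $(L_{U_\sigma})_{U_{\{0\}}} = L_{U_{\{0\}}}$, there is a unique $G$-equivariant structure $\rho_\sigma$ on $L_{U_\sigma}$ whose restriction to $L_{U_{\{0\}}}$ equals $\rho_0$.

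The crucial step is to check that the collection $\{\rho_\sigma\}_{\sigma\in\Delta}$ is compatible on overlaps in the sense required by Proposition~\ref{proposition: gluing equivariant vector bundles}. Given cones $\sigma_1,\sigma_2\in\Delta$, their intersection $\tau = \sigma_1\cap\sigma_2$ is a common face of both, so $U_{\sigma_1}\cap U_{\sigma_2} = U_\tau$ is again an affine toric scheme, and it contains $U_{\{0\}}$. The restrictions of $\rho_{\sigma_1}$ and of $\rho_{\sigma_2}$ to $L_{U_\tau}$ are both $G$-equivariant structures on $L_{U_\tau}$, and because restriction of equivariant structures is transitive (this is immediate from the fiberwise characterization in Lemma~\ref{lemma: restriction of equivariant vector bundle}), each of them further restricts to $\rho_0$ on $L_{U_{\{0\}}}$. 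Applying the uniqueness clause of Lemma~\ref{lemma: extension of equivariant line bundle structure in the affine case}, now to the affine toric scheme $U_\tau$, forces these two restrictions to coincide. Hence the $\rho_\sigma$ agree on all overlaps, and Proposition~\ref{proposition: gluing equivariant vector bundles} produces a $G$-equivariant structure on $L$, as desired.

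The only inputs that need a word of justification are that each $U_\sigma$ is $G$-invariant under the torus action (so that the restricted bundles and Proposition~\ref{proposition: gluing equivariant vector bundles} apply), that $U_{\sigma_1}\cap U_{\sigma_2}$ is the affine toric chart $U_\tau$ of the common face, and that restriction of equivariant structures is transitive; all three are standard and require no new computation. The entire weight of the argument rests on the overlap compatibility, and there it is carried by the uniqueness half of Lemma~\ref{lemma: extension of equivariant line bundle structure in the affine case}, so no cocycle or character bookkeeping is needed. (Alternatively, the overlap equality could be deduced from Proposition~\ref{proposition: action determined by generic point} applied to the toric scheme $U_\tau$, since both restricted structures agree over the torus.)
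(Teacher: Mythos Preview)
Your proposal is correct and follows essentially the same route as the paper: trivialize $L$ over the torus, use Lemma~\ref{lemma: extension of equivariant line bundle structure in the affine case} to propagate a fixed equivariant structure $\rho_0$ on $L_{U_{\{0\}}}$ to each affine chart $U_\sigma$, verify overlap compatibility on $U_{\sigma_1}\cap U_{\sigma_2}=U_{\sigma_1\cap\sigma_2}$ via the uniqueness half of that same lemma, and glue with Proposition~\ref{proposition: gluing equivariant vector bundles}. Your exposition is in fact a bit more explicit than the paper's about the transitivity of restriction and the role of uniqueness in the overlap check.
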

\begin{proof}
This is true if $L$ is trivial by Example \ref{example: trivial eq} and in particular we may make $L_{U_{\{0\}}}$ into an equivariant line bundle.  For each cone $\sigma$, we obtain an equivariant vector bundle structure on $L_{U_\sigma}$ via Lemma \ref{lemma: extension of equivariant line bundle structure in the affine case}.  We must check agreement on overlaps.  By \cite[Section 1.4]{Ful93}, the intersection of two such open sets is given by $U_\sigma \cap U_\tau = U_{\sigma \cap \tau}$.    

We obtain two equivariant line bundle structures on $L_{U_{\sigma \cap \tau}}$ by restricting the actions on $L_{U_\sigma}$ and $L_{U_\tau}$ respectively.  By construction, both agree when restricted further to the torus $U_{\{0\}}$.  Thus we may apply Lemma \ref{lemma: extension of equivariant line bundle structure in the affine case} to $U_{\sigma \cap \tau}$ to see that both actions on $L_{U_{\sigma \cap \tau}}$ are equal.  This implies agreement on overlaps, so the result follows from Proposition \ref{proposition: gluing equivariant vector bundles}
\end{proof}

We next turn to the problem of classifying equivariant line bundles which are trivial as line bundles.  By Proposition \ref{proposition: equivariant line bundles and characters}, all such equivariant line bundles may be constructed from homomorphisms $G \rightarrow GL_1$ where $G$ is the torus over $\mathbb{F}_1$.  The following lemma classifies such homomorphisms.

\begin{lem}\label{lemma: characters as elements of the dual lattice}
Let $\Lambda \subseteq \mathbb{R}^n$ be a lattice (viewed as a group and written multiplicatively).  Define the $\mathbb{F}_1$-scheme $G$ as $G = \Spec (\Lambda \cup \{ 0 \})$.  The functor of points of any morphism $\chi: G \rightarrow GL_1$ satisfies $\chi_A(xy) = \chi_A(x) \chi_A(y)$.  Moreover, there is an isomorphism between the group of such morphisms and $\Lambda$.
\end{lem}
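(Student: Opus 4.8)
The plan is to reduce everything to elementary algebra of pointed monoids, using that both $G$ and $GL_1$ are affine $\mathbb{F}_1$-schemes. Recall $GL_1 = \Spec \mathbb{F}_1[t,t^{-1}]$, where $\mathbb{F}_1[t,t^{-1}]$ is, as a pointed monoid, the infinite cyclic group generated by $t$ together with an adjoined absorbing element $0$; note also that the units of $\Lambda\cup\{0\}$ are precisely the elements of $\Lambda$, since $0$ is never a unit. Because the category of affine monoid schemes is the opposite of the category of $\mathbb{F}_1$-algebras, a morphism $\chi: G \to GL_1$ is the same datum as a homomorphism of pointed monoids $\phi_\chi: \mathbb{F}_1[t,t^{-1}] \to \Lambda\cup\{0\}$. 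Since $t$ is a unit, $\phi_\chi$ must send $t$ to a unit, hence to some $\lambda_\chi \in \Lambda$; conversely, as $\{t^n\}$ is the free abelian group on $t$, any $\lambda \in \Lambda$ determines a unique such $\phi$ (by $t^n \mapsto \lambda^n$ and $0\mapsto 0$). Thus $\chi \mapsto \lambda_\chi := \phi_\chi(t)$ is a bijection from morphisms $G \to GL_1$ onto $\Lambda$. Alternatively, one may invoke Lemma \ref{units as morphisms} to identify $\Hom_{\Spec \mathbb{F}_1}(G, GL_1)$ with $\Gamma(G, \mathcal{O}_G^\times) = (\Lambda\cup\{0\})^\times = \Lambda$.

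Next I would compute the functor of points. For an $\mathbb{F}_1$-algebra $A$, we have $G(A) = \Hom_{\textbf{Monoids}}(\Lambda\cup\{0\}, A)$; since a homomorphism must carry units to units and is determined by its restriction to $\Lambda$, this is naturally the group $\Hom_{\textbf{Groups}}(\Lambda, A^\times)$ under pointwise multiplication, and likewise $GL_1(A) = A^\times$. Under these identifications, $\chi_A: G(A) \to GL_1(A)$ is the map $x \mapsto x(\lambda_\chi)$: indeed $\chi_A(x) = x\circ\phi_\chi$, and a monoid homomorphism out of $\mathbb{F}_1[t,t^{-1}]$ is pinned down by the image of $t$, namely $x(\phi_\chi(t)) = x(\lambda_\chi)$. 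Hence for $x, y \in G(A)$,
\[
\chi_A(xy) = (xy)(\lambda_\chi) = x(\lambda_\chi)\,y(\lambda_\chi) = \chi_A(x)\,\chi_A(y),
\]
which is the first assertion.

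Finally, for the ``moreover'' part: the set of morphisms $G \to GL_1$ carries the group structure induced by the multiplicative group scheme structure of $GL_1$, so on functors of points $(\chi\cdot\chi')_A(x) = \chi_A(x)\,\chi'_A(x)$. Combining with the formula above gives $(\chi\cdot\chi')_A(x) = x(\lambda_\chi)\,x(\lambda_{\chi'}) = x(\lambda_\chi\lambda_{\chi'})$, so $\chi\cdot\chi'$ corresponds to $\lambda_\chi\lambda_{\chi'}$. Therefore the bijection $\Lambda \to \Hom(G, GL_1)$ sending $\lambda$ to the unique $\chi$ with $\lambda_\chi = \lambda$ is a group homomorphism, hence a group isomorphism, as claimed.

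The only place demanding a little care is the bookkeeping of variances — that morphisms of affine monoid schemes reverse the direction of monoid homomorphisms, and that the group laws on $G(A)$ and on $\Hom(G, GL_1)$ come, respectively, from the comultiplication of $G$ (equivalently, restriction to units of $\Lambda\cup\{0\}$) and from the multiplication of the group scheme $GL_1$. Beyond that I do not anticipate any genuine obstacle; the argument is entirely formal once the affine dictionary between $\mathbb{F}_1$-schemes and pointed monoids is in place.
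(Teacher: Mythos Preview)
Your proof is correct and proceeds along essentially the same lines as the paper's: both invoke Lemma~\ref{units as morphisms} for the bijection $\Hom(G,GL_1)\simeq\Lambda$ and then verify multiplicativity via the comultiplication on $\Lambda\cup\{0\}$. The only difference is presentational: the paper checks the identity $\mu^*(u)=\pi_1^*(u)\pi_2^*(u)$ directly in the tensor product algebra $(\Lambda\cup\{0\})\otimes_{\mathbb{F}_1}(\Lambda\cup\{0\})$ and then translates back via Lemma~\ref{units as morphisms}, whereas you identify $G(A)\cong\Hom_{\mathrm{Groups}}(\Lambda,A^\times)$ and read off $\chi_A(x)=x(\lambda_\chi)$ explicitly, so that multiplicativity drops out of the pointwise group law. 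Your version is arguably more transparent and also makes the group-isomorphism part of the ``moreover'' clause explicit, which the paper leaves implicit in its appeal to Lemma~\ref{units as morphisms}.
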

\begin{proof}The fact there is an isomorphism between $\Lambda$ and the group of morphisms $G\rightarrow GL_1$ follows from Lemma \ref{units as morphisms}. 

Let $\chi:G \rightarrow GL_1$ be a morphism, and let $u \in \Lambda$ be the corresponding unit.  On the level of $\mathbb{F}_1$-algebras $\chi$ corresponds to the map $\mathbb{Z} \cup \{0\} \rightarrow \Lambda \cup \{ 0 \}$ sending the generator of $\mathbb{Z}$ to $u$.  Let $\mu, \pi_1, \pi_2: G \times G \rightarrow G$ be the multiplication and the two projections respectively.  The corresponding maps of $\mathbb{F}_1$-algebras $\Lambda \cup \{0\} \rightarrow (\Lambda \cup \{0\}) \otimes_{\mathbb{F}_1} (\Lambda \cup \{0\})$ are given respectively by $\mu^*(x) = x \otimes x$, $\pi_1^*(x) = x \otimes 1$, and $\pi_2^*(x) = 1 \otimes x$.  Hence the same formulas are true for the induced maps on unit groups of $\mathbb{F}_1$-algebras.  Thus we have
\[ \mu^*(u) = u \otimes u = (u \otimes 1)(1 \otimes u) = \pi_1^*(u) \pi_2^*(u) \]
Reinterpreting this in terms of maps $G \times G \rightarrow GL_1$ via Lemma \ref{units as morphisms}, we see that $\chi \circ \mu$ is given as the product of $\chi \circ \pi_1$ and $\chi \circ \pi_2$.  Rewriting this in terms of functors of points yields $\chi_A(xy) = \chi_A(x) \chi_A(y)$.
\end{proof}

While each element of $\Lambda$ yields an action on the trivial line bundle on a toric scheme over $\mathbb{F}_1$, two such actions might result in isomorphic equivariant line bundles.

\begin{lem}\label{lemma: dual lattice points corresponding to the trivial equivariant line bundle in the affine case}
Let $X = U_\sigma$ be a toric $\mathbb{F}_1$-scheme corresponding to a cone $\sigma$.  Let $\Lambda$ be the dual lattice and $G$ be the corresponding torus.  Let $x \in \Lambda$ and let $\chi: G \rightarrow GL_1$ be the corresponding map under Lemma \ref{lemma: characters as elements of the dual lattice}.  Then there is some morphism $f: X \rightarrow GL_1$ whose functor of points satisfies
\begin{equation}\label{eq: character of trivial equivariant line bundle}
\chi_A(g) = f_A(gx) / f_A(x)
\end{equation}
if and only if $x \in (\Lambda \cap \sigma^\vee)^\times$.
\end{lem}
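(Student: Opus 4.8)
The plan is to pass, via Lemma \ref{units as morphisms}, from morphisms $f\colon X\to GL_1$ to units of the coordinate monoid of $X$, and then to compute the ratio $f_A(gx)/f_A(x)$ directly from the explicit form of the toric coaction. (I read the displayed identity in the statement with the convention of Lemma \ref{lemma: isomorphic equivariant structures on O_X}: the ``$x$'' appearing inside $f_A(gx)/f_A(x)$ ranges over $X(A)$, which is a mild clash with the fixed $x\in\Lambda$ naming the character $\chi$.)

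First I would fix the bookkeeping. Write $S_\sigma=(\Lambda\cap\sigma^\vee)\cup\{0\}$ for the face monoid of $\sigma$ with an absorbing element adjoined, so that $X=U_\sigma=\Spec S_\sigma$ and $G=\Spec(\Lambda\cup\{0\})$, and the torus action $G\times X\to X$ corresponds to the comultiplication $S_\sigma\to(\Lambda\cup\{0\})\otimes_{\mathbb{F}_1}S_\sigma$ sending a monomial $\chi^m$ to $\chi^m\otimes\chi^m$. On functors of points this says: for $g\in G(A)$ (a group homomorphism $\Lambda\to A^\times$) and $y\in X(A)$ (a monoid homomorphism $\Lambda\cap\sigma^\vee\to(A,\cdot)$), the point $gy$ is the homomorphism $m\mapsto\chi^m(g)\chi^m(y)$.

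Next I would describe the morphisms $f\colon X\to GL_1$. By Lemma \ref{units as morphisms} these are in natural bijection with $\Gamma(X,\mathcal{O}_X^\times)$, which for $X=\Spec S_\sigma$ is the unit group of the monoid $\Lambda\cap\sigma^\vee$. An element $m\in\Lambda\cap\sigma^\vee$ is invertible exactly when $-m\in\sigma^\vee$ as well, i.e. when $m\in\sigma^\perp$; hence $(\Lambda\cap\sigma^\vee)^\times=\Lambda\cap\sigma^\perp$ and every such $f$ is the monomial $\chi^w$ for a unique $w\in\Lambda\cap\sigma^\perp$. Using the coaction from the previous paragraph together with the naturality clause of Lemma \ref{units as morphisms}, one gets $f_A(y)=\chi^w(y)$ and $f_A(gy)=\chi^w(gy)=\chi^w(g)\chi^w(y)$, so
\[
\frac{f_A(gy)}{f_A(y)}=\chi^w_A(g),
\]
the character of $G$ attached to $w$ by Lemma \ref{lemma: characters as elements of the dual lattice}.

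It remains to combine these. The requested identity $\chi_A(g)=f_A(gy)/f_A(y)$, holding for all $A$, all $g\in G(A)$ and all $y\in X(A)$, says exactly that $\chi$ (the character attached to $x$) and $\chi^w$ coincide as morphisms $G\to GL_1$; by the injectivity contained in the isomorphism of Lemma \ref{lemma: characters as elements of the dual lattice} this holds iff $x=w$ in $\Lambda$. Since $w$ ranged over $\Lambda\cap\sigma^\perp=(\Lambda\cap\sigma^\vee)^\times$, a morphism $f$ with the desired property exists iff $x\in(\Lambda\cap\sigma^\vee)^\times$, which is the claim. The only step that is not pure formalism is pinning down the explicit coaction $\chi^m\mapsto\chi^m\otimes\chi^m$, which is what makes the ratio $f_A(gy)/f_A(y)$ collapse to a character of $G$; everything else is an application of Lemmas \ref{units as morphisms} and \ref{lemma: characters as elements of the dual lattice}.
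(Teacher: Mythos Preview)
Your proof is correct and follows essentially the same approach as the paper: both arguments identify morphisms $f\colon X\to GL_1$ with units $w\in(\Lambda\cap\sigma^\vee)^\times$ via Lemma~\ref{units as morphisms}, use the explicit coaction $m\mapsto m\otimes m$ to compute the ratio $f_A(gy)/f_A(y)$ as the character $\chi^w$, and then invoke the isomorphism of Lemma~\ref{lemma: characters as elements of the dual lattice} to conclude that the identity holds iff $x=w$. The paper packages the computation as an equality of morphisms $G\times X\to GL_1$ before passing to units, while you work directly on functors of points, but the substance is identical; your observation about the notational clash between the fixed $x\in\Lambda$ and the variable point in $X(A)$ is also apt.
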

\begin{proof}
Let $\alpha:G\times X \to X$ be the action and let $\pi_i: G \times X \rightarrow X$ be the projections. The condition
\[
f_A(x) \chi_A(g) = f_A(gx)
\]
can be rewritten as follows: 
\begin{equation}\label{eq: lemma}
(f \circ \pi_2) (\chi \circ \pi_1) = f \circ \alpha.
\end{equation}
Letting $x \in \Lambda$ and $u \in \Gamma(U_\sigma, \mathcal{O}_{U_\sigma}^\times) = (\Lambda \cap \sigma^\vee)^\times$ correspond to $\chi$ and $f$ respectively under Lemmas \ref{units as morphisms}, the condition \eqref{eq: lemma} may be rewritten as 
\begin{equation}
\pi_2^* u \pi_1^* x = \alpha^* u
\end{equation}
The map of $\mathbb{F}_1$-algebras
\[
(\Lambda \cup \{ 0 \}) \otimes_{\mathbb{F}_1} (\Lambda \cap \sigma^\vee \cup \{ 0 \}) \rightarrow (\Lambda \cap \sigma^\vee \cup \{ 0 \})
\]
corresponding to the action $G \times X \rightarrow X$ is given by $u \mapsto u \otimes u$, while the projections correspond to algebra homomorphisms given by tensoring with $1$.  Thus the same is true for the induced maps on unit groups (with tensor replaced by Cartesian product), so the above equation may be rewritten as 
\[ (1, u) (x, 1) = (u, u)\]
or simply $x = u$.
So the existence of $f: X \rightarrow GL_1$ satisfying \eqref{eq: character of trivial equivariant line bundle} is equivalent to the existence of $u \in (\Lambda \cap \sigma^\vee)^\times$ satisfying $x = u$.
\end{proof}

Let $X$ be a toric $\mathbb{F}_1$-scheme with fan $\Delta$.  Let $\Lambda$ be the dual lattice and $G$ be the corresponding torus.  Let $x \in \Lambda$ and let $\chi: G \rightarrow GL_1$ be the corresponding map under Lemma \ref{lemma: characters as elements of the dual lattice}. Let $\Pic_G(X)$ be the set of isomorphism classes of equivariant line bundles on $X$. From Proposition \ref{proposition: equivariant Picard group}, $\Pic_G(X)$ is a group. Moreover, Lemma \ref{lemma: characters as elements of the dual lattice} shows that there is a group homomorphism $\psi:\Lambda \to \Pic_G(X)$. The following proposition characterizes the kernel of $\psi$.

\begin{pro}\label{proposition: dual lattice points corresponding to the trivial equivariant line bundle}
With the same notation as above, the equivariant line bundle corresponding to $\chi$ under Proposition \ref{proposition: equivariant line bundles and characters} is isomorphic to the one corresponding to the trivial homomorphism if and only if $x \in \bigcap\limits_{\sigma \in \Delta} (\Lambda \cap \sigma^\vee)^\times$.
\end{pro}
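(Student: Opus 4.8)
The plan is to reduce the statement to the affine case that has already been handled in Lemma~\ref{lemma: dual lattice points corresponding to the trivial equivariant line bundle in the affine case} by gluing along the $G$-invariant affine cover $\{U_\sigma\}_{\sigma\in\Delta}$ of $X$. First I would unwind the setup: by Proposition~\ref{proposition: equivariant line bundles and characters} the equivariant line bundle attached to $\chi$ has underlying line bundle $\mathcal{O}_X$ and, via Proposition~\ref{Equivariant structures on O_X}, corresponds to the principal morphism $u\colon G\times X\to GL_1$ with $u_A(g,y)=\chi_A(g)$, while the one attached to the trivial homomorphism corresponds to the constant principal morphism $u'=1$. Then Lemma~\ref{lemma: isomorphic equivariant structures on O_X} says the two are isomorphic as equivariant line bundles if and only if there is a morphism $f\colon X\to GL_1$ with $\chi_A(g)\,f_A(gy)=f_A(y)$ for all $\mathbb{F}_1$-algebras $A$ and all $(g,y)\in G(A)\times X(A)$. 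Replacing $f$ by its pointwise inverse $f^{-1}$ (which again corresponds to a unit via Lemma~\ref{units as morphisms}), this is equivalent to the existence of $f\colon X\to GL_1$ with $\chi_A(g)=f_A(gy)/f_A(y)$, i.e. to the identity of morphisms $(\chi\circ\pi_1)\cdot(f\circ\pi_2)=f\circ\alpha$ from $G\times X$ to $GL_1$, where $\alpha$ is the action and $\pi_1,\pi_2$ the projections. This is exactly the condition appearing in the affine lemma, just over $X$ instead of a single $U_\sigma$.

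For the ``only if'' direction I would restrict such an $f$ to each $U_\sigma$. Since each $U_\sigma$ is $G$-invariant, $\alpha$ restricts to $G\times U_\sigma\to U_\sigma$, and restricting the displayed identity of morphisms to $G\times U_\sigma$ gives $(\chi\circ\pi_1)\cdot((f|_{U_\sigma})\circ\pi_2)=(f|_{U_\sigma})\circ\alpha$; the character of the torus of $U_\sigma$ attached to $x\in\Lambda$ is still $\chi$ (same $x$). Lemma~\ref{lemma: dual lattice points corresponding to the trivial equivariant line bundle in the affine case} applied to $U_\sigma$ then forces $x\in(\Lambda\cap\sigma^\vee)^\times$, and since $\sigma$ was arbitrary, $x\in\bigcap_{\sigma\in\Delta}(\Lambda\cap\sigma^\vee)^\times$.

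For the ``if'' direction, assume $x\in\bigcap_\sigma(\Lambda\cap\sigma^\vee)^\times$. Then the character monomial $x$ is a unit of $\Gamma(U_\sigma,\mathcal{O}_{U_\sigma})$ for every $\sigma$, and on the overlaps $U_\sigma\cap U_\tau=U_{\sigma\cap\tau}$ these units agree (each is just $x$ viewed in $(\Lambda\cap(\sigma\cap\tau)^\vee)^\times$), so they glue to a global unit $x\in\Gamma(X,\mathcal{O}_X^\times)$; let $f\colon X\to GL_1$ be the morphism corresponding to it under Lemma~\ref{units as morphisms}. By the proof of Lemma~\ref{lemma: dual lattice points corresponding to the trivial equivariant line bundle in the affine case}, the morphism $U_\sigma\to GL_1$ it produces corresponds precisely to the unit $u$ with $u=x$, so $f|_{U_\sigma}$ is exactly that morphism and hence $(\chi\circ\pi_1)\cdot(f\circ\pi_2)=f\circ\alpha$ holds after restriction to $G\times U_\sigma$ for each $\sigma$. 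Because $\{G\times U_\sigma\}$ is an open cover of $G\times X$, the two morphisms $G\times X\to GL_1$ agree everywhere, so $f$ satisfies the criterion from the first paragraph and the $\chi$-equivariant line bundle is isomorphic to the trivial one.

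I expect the only genuine subtlety to be bookkeeping rather than mathematics: carefully matching the ``$f$ versus $f^{-1}$'' (equivalently ``$\chi$ versus $\chi^{-1}$'') conventions between Lemma~\ref{lemma: isomorphic equivariant structures on O_X} and Lemma~\ref{lemma: dual lattice points corresponding to the trivial equivariant line bundle in the affine case}, and justifying that $GL_1$-valued morphisms, and the equation $(\chi\circ\pi_1)\cdot(f\circ\pi_2)=f\circ\alpha$ among them, can be checked on the open cover $\{G\times U_\sigma\}$ — which is legitimate precisely because each $U_\sigma$ is $G$-invariant so $\alpha$ restricts. Everything else is a direct appeal to the already-established affine statement plus the standard gluing of units and of morphisms to $GL_1$.
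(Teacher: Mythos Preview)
Your argument is correct and follows the same overall architecture as the paper's proof: translate the isomorphism question via Lemma~\ref{lemma: isomorphic equivariant structures on O_X} into the existence of a suitable $f\colon X\to GL_1$, and then reduce to the affine statement of Lemma~\ref{lemma: dual lattice points corresponding to the trivial equivariant line bundle in the affine case} over each $U_\sigma$.

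The one genuine difference is in the ``if'' direction. The paper first rephrases the global condition as the existence of a compatible family $\{f_\sigma\colon U_\sigma\to GL_1\}$ and then proves an independent claim: \emph{any} such family automatically agrees on overlaps. That claim is established by restricting to the torus and invoking Lemma~\ref{lemma: regular function is determined on torus} together with the observation that the only morphism $\Spec\mathbb{F}_1\to GL_1$ is the constant $1$, forcing $(f_\sigma)_A\mid_{G}=\chi_A$ for every $\sigma$. You bypass this entirely by exhibiting a specific global $f$: since $x$ itself lies in $(\Lambda\cap\sigma^\vee)^\times$ for every $\sigma$, the local units are literally the same lattice element and glue trivially, and the proof of the affine lemma identifies the resulting $f$ as the (unique) solution on each chart. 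Your route is shorter and avoids the torus-density lemma; the paper's route yields the slightly stronger byproduct that the local $f_\sigma$ are in fact uniquely determined (they all equal $\chi$ on the torus), which is not needed for the proposition but clarifies why no gluing obstruction could arise.
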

\begin{proof}
By Lemma \ref{lemma: isomorphic equivariant structures on O_X}, the equivariant line bundle corresponding to $\chi$ is isomorphic to the one corresponding to the trivial character if and only if there is some $f: X \rightarrow GL_1$ whose functor of points satisfies \eqref{eq: character of trivial equivariant line bundle} for all $(g, x)\in  G(A) \times X(A)$.  By gluing morphisms of schemes, this is equivalent to the existence of a collection of morphisms $f_\sigma: U_\sigma \rightarrow GL_1$ which satisfy \eqref{eq: character of trivial equivariant line bundle} and agree on overlaps.  

First we claim that the condition of agreement on overlaps in the above correspondence is true.  In fact, since $U_\sigma \cap U_\tau = U_{\sigma \cap \tau}$ is itself toric, in order to check agreement on overlaps, it will suffice by Lemma \ref{lemma: regular function is determined on torus} and Lemma \ref{units as morphisms} to check that all the maps $f_\sigma$ have the same restriction to the torus.  Letting $1$ be the identity element of the torus, \eqref{eq: character of trivial equivariant line bundle} gives
\begin{equation}
\chi_A(g) = (f_\sigma)_A(g) / (f_\sigma)_A(1).
\end{equation}
The only natural transformation (which is an inclusion) $i:\Spec \mathbb{F}_1 \rightarrow GL_1$ is the constant map $1$ (this follows from Lemma \ref{units as morphisms}). For any $\mathbb{F}_1$-algebra $A$, $i_A$ is a map from the one point set $\Spec (\mathbb{F}_1)(A)=\{p\}$ to the group $\text{GL}_1(A)$. One can easily observe that the image of the only point $p$ is the identity element $1$ of $\text{GL}_1(A)$. We can write
\[
(f_\sigma)_A(1) = (f_\sigma \circ i)_A(p), 
\]
and $f_\sigma \circ i$ is a morphism from $\Spec \mathbb{F}_1$ to $\text{GL}_1$. Hence $(f_\sigma)_A(1) = 1$ and $\chi_A(g) = (f_\sigma)_A(g)$ for all $g \in G(A) \subseteq U_\sigma(A)$, which establishes the claim.

It remains to classify $\chi$ such that there exists a collection of morphisms $f_\sigma: U_\sigma \rightarrow GL_1$ satisfying \eqref{eq: character of trivial equivariant line bundle}.  Since we may construct each $f_\sigma$ independently, it follows from Lemma \ref{lemma: dual lattice points corresponding to the trivial equivariant line bundle in the affine case} that such $\chi$ correspond to $x \in \bigcap\limits_{\sigma \in \Delta} (\Lambda \cap \sigma^\vee)^\times$.
\end{proof}

\begin{mythm}\label{theorem: exact sequence}
Let $R$ be an idempotent semifield. Let $X$ be a toric scheme over $R$.  Let $G$ be the corresponding torus, $\Lambda$ be the dual lattice and $\Delta$ be the fan corresponding to $X$.  Then there is an exact sequence of abelian groups
\[ 
0 \rightarrow \bigcap_{\sigma \in \Delta} (\Lambda \cap \sigma^\vee)^\times \rightarrow \Lambda \rightarrow \mathrm{Pic}_G(X) \rightarrow \mathrm{Pic}(X) \rightarrow 0 .
\]
\end{mythm}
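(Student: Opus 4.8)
The plan is to produce the three maps in the sequence, note that each is a group homomorphism, and then check exactness at each of the four terms using the results already assembled. The leftmost map is the inclusion of $\bigcap_{\sigma\in\Delta}(\Lambda\cap\sigma^\vee)^\times$ as a subgroup of $\Lambda$, so it is injective and exactness at the first term is automatic. The middle map $\psi\colon\Lambda\to\Pic_G(X)$ is the composite of: the group isomorphism between $\Lambda$ and the group of morphisms $G\to\mathrm{GL}_1$ (Lemma~\ref{lemma: characters as elements of the dual lattice}); the group isomorphism between such morphisms and $G$-equivariant structures on the trivial line bundle $\mathcal{O}_X$ (Proposition~\ref{proposition: equivariant line bundles and characters}, carried over to the idempotent-semifield setting via Proposition~\ref{Correspondence for equivariant structures on O_X} and the remark preceding Proposition~\ref{proposition: equivariant line bundles and characters}, using that $X$ and $G$ are irreducible by Proposition~\ref{proposition: irreducible toric}, hence $G\times_R X$ is irreducible and in particular connected by Proposition~\ref{proposition: irreducible product}); and the surjection onto isomorphism classes in $\Pic_G(X)$, which is a group by Proposition~\ref{proposition: equivariant Picard group}. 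Being a composite of group homomorphisms, $\psi$ is a group homomorphism. The rightmost map is the forgetful map $\Pic_G(X)\to\Pic(X)$, a homomorphism because the forgetful functor respects tensor products; note $\Pic_G(X)$ is abelian since tensor product is symmetric.

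Next I would verify exactness at $\Lambda$. By Proposition~\ref{proposition: dual lattice points corresponding to the trivial equivariant line bundle}, an element $x\in\Lambda$ lies in $\ker\psi$ exactly when the equivariant line bundle it determines is isomorphic, as an equivariant line bundle, to the one coming from the trivial character, that is, to the tensor unit of $\Pic_G(X)$, and this holds if and only if $x\in\bigcap_{\sigma\in\Delta}(\Lambda\cap\sigma^\vee)^\times$. Thus $\ker\psi$ coincides with the image of the inclusion, giving exactness at $\Lambda$.

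For exactness at $\Pic_G(X)$ I would argue as follows. If an equivariant line bundle $L$ maps to $0$ in $\Pic(X)$, then its underlying line bundle is isomorphic to $\mathcal{O}_X$; transporting the $G$-action along such an isomorphism produces a $G$-equivariant structure on $\mathcal{O}_X$ itself whose isomorphism class equals $[L]$, and by Proposition~\ref{proposition: equivariant line bundles and characters} (in the idempotent-semifield form) this structure arises from a character, hence $[L]\in\operatorname{im}\psi$. Conversely, every element of $\operatorname{im}\psi$ is represented by an equivariant structure on $\mathcal{O}_X$, so has trivial image in $\Pic(X)$. Hence $\ker\bigl(\Pic_G(X)\to\Pic(X)\bigr)=\operatorname{im}\psi$. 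Finally, exactness at $\Pic(X)$ is surjectivity of the forgetful map, which is exactly Proposition~\ref{proposition: construction of equivariant structure on line bundle}: every line bundle on a toric scheme over an idempotent semifield admits an equivariant structure.

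The main obstacle is the bookkeeping required to transfer the character-theoretic machinery -- Lemma~\ref{lemma: characters as elements of the dual lattice}, Proposition~\ref{proposition: equivariant line bundles and characters}, and Proposition~\ref{proposition: dual lattice points corresponding to the trivial equivariant line bundle}, all phrased over $\mathbb{F}_1$ -- to the toric scheme $X$ over $R$, and to make sure the several canonical identifications used in the kernel computation and in the image computation are literally the same map $\psi$. Once one checks, using Proposition~\ref{Correspondence for equivariant structures on O_X} together with the subsequent correspondence of isomorphism classes of equivariant line bundles that are trivial as line bundles, that all of these identifications are compatible and natural, the four exactness statements are each a short consequence of a previously established result, and the whole argument is essentially formal.
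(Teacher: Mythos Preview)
Your proposal is correct and follows essentially the same approach as the paper: the paper's proof cites exactly the same three ingredients you identify, namely Proposition~\ref{proposition: construction of equivariant structure on line bundle} for surjectivity, the combination of Proposition~\ref{proposition: equivariant line bundles and characters} with Lemma~\ref{lemma: characters as elements of the dual lattice} to produce $\psi$ with image equal to the kernel of the forgetful map, and Proposition~\ref{proposition: dual lattice points corresponding to the trivial equivariant line bundle} to compute $\ker\psi$. Your write-up is more explicit about the transfer from $\mathbb{F}_1$ to $R$ via Proposition~\ref{Correspondence for equivariant structures on O_X} and about why the maps are group homomorphisms, but the argument is the same.
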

\begin{proof}
Proposition \ref{proposition: construction of equivariant structure on line bundle} tells us that the forgetful morphism $\phi: \mathrm{Pic}_G(X) \rightarrow \mathrm{Pic}(X)$ is surjective.  The combination of Proposition \ref{proposition: equivariant line bundles and characters} and Lemma \ref{lemma: characters as elements of the dual lattice} gives us a group homomorphism $\psi: \Lambda \rightarrow \mathrm{Pic}_G(X)$ whose image is $\mathrm{ker}(\phi)$.  The kernel of $\psi$ is characterized by Proposition \ref{proposition: dual lattice points corresponding to the trivial equivariant line bundle}.
\end{proof}

\section{Klyachko theorem for toric schemes over an idempotent semifield}\label{section: Tropical Klyachko theorem}

In this section, we prove a version of Klyachko theorem classifying toric vector bundles on a toric scheme over an idempotent semifield $K$. By a toric line bundle, we mean a torus-equivariant line bundle. 

We begin with a gluing construction, which constructs a toric line bundle by gluing together toric line bundles on open subsets corresponding to cones. 


Let $X$ be a toric scheme over $\mathbb{F}_1$ corresponding to a fan $\Delta$ with dual lattice $\Lambda$. Let $S$ be a set of families $u_\sigma \in \Lambda$ indexed by cones $\sigma\in \Delta$ such that for each such inclusion $\tau\subseteq\sigma$ of cones, the following holds
\[
u_\sigma^{-1} u_\tau\in (\Lambda \cap \tau^\vee)^\times.
\]

For cones $\tau\subseteq\sigma$, we will let $\phi_{\sigma\tau} = u_\sigma^{-1} u_\tau\in (\Lambda \cap \tau^\vee)^\times$.

Observe that $(\Lambda \cap \tau^\vee)^\times = \Gamma(U_{\tau}, \mathcal{O}_{U_\tau}^\times)$, so $\phi_{\sigma\tau}$ can be thought of as a morphism $\phi_{\sigma\tau}: U_\tau \rightarrow \text{GL}_1$ by Lemma \ref{units as morphisms}.  Similarly, we will identify $u_\sigma \in \Lambda = \Gamma(G, \mathcal{O}_G^\times)$ with a morphism $u_\sigma: G\rightarrow \text{GL}_1$, where $G = \Spec (\Lambda \cup \{0\})$ denotes the torus.  Lemma \ref{lemma: characters as elements of the dual lattice} ensures this morphism is a homomorphism. 

\begin{lem}\label{lem: multiplicative lemma}
With the same notation as above, for any $\mathbb{F}_1$-algebra $A$, $g \in G(A)$, $x \in U_\tau(A)$, one has the following
\begin{equation}\label{eq: multiplicative}
(\phi_{\sigma\tau})_A(gx)=(\phi_{\sigma\tau})_A(g)(\phi_{\sigma\tau})_A(x),
\end{equation}
where we view $\phi_{\sigma\tau}:U_\tau \to \text{GL}_1$.
\end{lem}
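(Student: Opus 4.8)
The plan is to exploit that $\phi_{\sigma\tau} = u_\sigma^{-1}u_\tau$ is a single monomial lying in $(\Lambda\cap\tau^\vee)^\times \subseteq \Lambda$. By Lemma \ref{units as morphisms} it corresponds both to a morphism $\phi_{\sigma\tau}\colon U_\tau\to\text{GL}_1$ (since $(\Lambda\cap\tau^\vee)^\times = \Gamma(U_\tau,\mathcal{O}_{U_\tau}^\times)$) and, via $\Lambda = \Gamma(G,\mathcal{O}_G^\times)$, to a homomorphism $\chi\colon G\to\text{GL}_1$ (Lemma \ref{lemma: characters as elements of the dual lattice}); moreover, since the inclusion $G = U_{\{0\}}\hookrightarrow U_\tau$ induces on unit groups the identity inclusion $(\Lambda\cap\tau^\vee)^\times\hookrightarrow\Lambda$, the restriction of $\phi_{\sigma\tau}$ to $G$ is exactly $\chi$, so $(\phi_{\sigma\tau})_A(g) = \chi_A(g)$ for $g\in G(A)\subseteq U_\tau(A)$. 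Thus it suffices to prove $(\phi_{\sigma\tau})_A(gx) = \chi_A(g)\,(\phi_{\sigma\tau})_A(x)$, which is essentially the same monomial computation that appears in the proof of Lemma \ref{lemma: dual lattice points corresponding to the trivial equivariant line bundle in the affine case}.

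Concretely, I would first record the effect of the relevant structure maps on $\mathbb{F}_1$-algebras: writing $\alpha\colon G\times U_\tau\to U_\tau$ for the action and $\pi_1\colon G\times U_\tau\to G$, $\pi_2\colon G\times U_\tau\to U_\tau$ for the projections, the induced maps of monoid algebras send a monomial $m$ to $m\otimes m$, to $m\otimes 1$, and to $1\otimes m$ respectively. Passing to unit groups (with $\otimes$ replaced by Cartesian product, exactly as in the proof of Lemma \ref{lemma: dual lattice points corresponding to the trivial equivariant line bundle in the affine case}) this yields the identity of units
\[
\alpha^*\phi_{\sigma\tau} \;=\; \pi_1^*\phi_{\sigma\tau}\cdot\pi_2^*\phi_{\sigma\tau} \;\in\; \Gamma\!\left(G\times U_\tau,\mathcal{O}_{G\times U_\tau}^\times\right),
\]
where on the left $\phi_{\sigma\tau}$ is regarded as an element of $\Gamma(U_\tau,\mathcal{O}_{U_\tau}^\times)$, while in $\pi_1^*\phi_{\sigma\tau}$ it is regarded as an element of $\Lambda = \Gamma(G,\mathcal{O}_G^\times)$, i.e.\ as $\chi$.

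Finally, I would translate this identity of units back into morphisms to $\text{GL}_1$ using the naturality of Lemma \ref{units as morphisms}: it becomes $\phi_{\sigma\tau}\circ\alpha = (\chi\circ\pi_1)\cdot(\phi_{\sigma\tau}\circ\pi_2)$, an equality of morphisms $G\times U_\tau\to\text{GL}_1$ with the product taken pointwise. Evaluating both sides on a point $(g,x)\in(G\times U_\tau)(A)$ and using $\alpha_A(g,x)=gx$ gives $(\phi_{\sigma\tau})_A(gx) = \chi_A(g)\,(\phi_{\sigma\tau})_A(x) = (\phi_{\sigma\tau})_A(g)\,(\phi_{\sigma\tau})_A(x)$, which is \eqref{eq: multiplicative}. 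There is no genuine obstacle here; the only point demanding care is keeping track of which scheme each occurrence of $\phi_{\sigma\tau}$ is being viewed on — a function on $U_\tau$, a character of $G$, or a pullback to $G\times U_\tau$ — and this is harmless precisely because all of them originate from the single monomial $u_\sigma^{-1}u_\tau$ and $\alpha^*,\pi_1^*,\pi_2^*$ all take monomials to monomials.
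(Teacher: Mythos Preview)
Your proposal is correct and takes essentially the same approach as the paper: both arguments reduce to the observation that the coaction $\alpha^*\colon (\Lambda\cap\tau^\vee)\cup\{0\}\to(\Lambda\cup\{0\})\otimes_{\mathbb{F}_1}((\Lambda\cap\tau^\vee)\cup\{0\})$ sends a monomial $m$ to $m\otimes m$, and then evaluate on the particular monomial $\phi_{\sigma\tau}=u_\sigma^{-1}u_\tau$. The paper carries this out by a direct computation evaluating both sides at the generator $t$ of $\text{GL}_1$, whereas you package the same computation as the identity $\alpha^*\phi_{\sigma\tau}=\pi_1^*\phi_{\sigma\tau}\cdot\pi_2^*\phi_{\sigma\tau}$ of units and then invoke the naturality of Lemma~\ref{units as morphisms}; these are the same argument in slightly different dress.
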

\begin{proof}
For the notational convenience, we let $\gamma:=\phi_{\sigma\tau}$. Let $T=\angles{t}$ so that $\text{GL}_1=\Spec T$, and $M=\Gamma(U_\tau,\mathcal{O}_{U_\tau}^\times)$.

In the affine case, Lemma \ref{units as morphisms} states that any $\gamma \in M^\times$ corresponds the map $\gamma:T \to M$ sending $t$ to $\gamma$. With $g:\Lambda \to A$ and $x:M \to A$, we have
\[
\gamma_A(g)=g\circ \gamma: T \to A, \quad \gamma_A(x)=x \circ \gamma: T \to A,
\]
where for $\gamma_A(g)$ we use the fact that $M=(\Lambda \cap \tau^\vee)^\times \subseteq \Lambda$. Since the group structure on $\text{GL}_1$ is given by $\Delta:T \to T \otimes T$, which sends $t$ to $t\otimes t$, we have
\[
(\gamma_A(g)\gamma_A(x))(t) = \mu\circ ((g\circ \gamma)\otimes (x\circ \gamma))\circ \Delta(t)=\mu\circ ((g\circ \gamma)(t)\otimes (x\circ \gamma)(t))
\]
\[
=g(\gamma)x(\gamma), 
\]
where $\mu:A \otimes A \to A$ is multiplication of $A$ and we identified $\gamma(t)$ with the element $\gamma \in M$. 

Next, let $\alpha:G \otimes U_\tau \to U_\tau$ be the action of torus. We let $\alpha^*:M \to \Lambda\otimes M$ be the corresponding map sending $m$ to $m\otimes m$. Then, we have
\[
(\gamma_A(gx))(t)=\mu\circ (g\otimes x)\circ \alpha^*\circ \gamma(t) = \mu\circ (g\otimes x)\circ \alpha^*\circ \gamma(t)
\]
\[
=\mu (g\otimes x)(\gamma \otimes \gamma) = g(\gamma)x(\gamma).
\]
This proves the desired equality \eqref{eq: multiplicative}.
\end{proof}

\begin{lem}\label{lemma: Gluing equivariant line bundles}
With the same notation as above, there is a surjection from $S$ to the set of isomorphism classes of toric line bundles on $X$.  

The image of $\{u_\sigma\}$ under this map is the unique toric line bundle $L$ which has a family of sections $s_\sigma \in \Gamma(U_\sigma, L)$ such that
\[
s_\sigma = u_\sigma u_\tau^{-1} s_\tau \quad \forall \sigma, \tau
\]
and 
\[
g(s_\sigma)_x = u_\sigma(g) (s_\sigma)_{gx} \quad \forall g\in G(A), x\in X(A),
\]
and such that $s_\sigma$ is a basis over $U_\sigma$. 
\end{lem}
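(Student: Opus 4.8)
The plan is to construct $L$ by gluing the trivial line bundles $L_\sigma := \mathcal{O}_{U_\sigma}$, indexed by the cones $\sigma \in \Delta$, equipping each $L_\sigma$ with the $G$-equivariant structure attached to the character $u_\sigma \colon G \to \mathrm{GL}_1$ via Lemma \ref{lemma: characters as elements of the dual lattice} and Proposition \ref{proposition: equivariant line bundles and characters}, then checking that these structures agree on overlaps and invoking Proposition \ref{proposition: gluing equivariant vector bundles}. Surjectivity will be obtained by reversing the procedure: any toric line bundle restricts to a trivial bundle on each $U_\sigma$ (line bundles on $U_\sigma$ are trivial, since $\Lambda \cap \sigma^\vee \cup \{0\}$ is a local monoid, its nonunits forming the unique maximal ideal), so it carries a character on each chart, and the compatibility of the restrictions forces the resulting family of characters to lie in $S$.

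First I would set up the gluing. Each chart $U_\sigma$ is $G$-invariant, and for cones $\sigma,\tau$ the fan axiom gives $U_\sigma \cap U_\tau = U_{\sigma \cap \tau}$. Since $\sigma \cap \tau$ is a face of both $\sigma$ and $\tau$, we may write $u_\sigma u_\tau^{-1} = (u_{\sigma\cap\tau}^{-1} u_\sigma)(u_\tau^{-1} u_{\sigma\cap\tau})$, and each factor lies in $(\Lambda \cap (\sigma \cap \tau)^\vee)^\times$ by the defining condition of $S$ applied to the inclusions $\sigma\cap\tau \subseteq \sigma$ and $\sigma\cap\tau \subseteq \tau$; hence $u_\sigma u_\tau^{-1} \in (\Lambda \cap (\sigma \cap \tau)^\vee)^\times = \Gamma(U_{\sigma \cap \tau}, \mathcal{O}^\times)$, which by Lemma \ref{units as morphisms} we view as a morphism $U_{\sigma \cap \tau} \to \mathrm{GL}_1$. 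Taking these units as transition functions, the cocycle identity $(u_\sigma u_\tau^{-1})(u_\tau u_\rho^{-1}) = u_\sigma u_\rho^{-1}$ and the normalization $u_\sigma u_\sigma^{-1} = 1$ are immediate, so we obtain a line bundle $L$ with basis sections $s_\sigma \in \Gamma(U_\sigma, L)$ satisfying $s_\sigma = u_\sigma u_\tau^{-1} s_\tau$ on overlaps.

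The main work is to glue the equivariant structures. On $L_\sigma$ take the structure with $g(s_\sigma)_x = u_\sigma(g)(s_\sigma)_{gx}$ for $g \in G(A)$, $x \in U_\sigma(A)$ (Proposition \ref{proposition: equivariant line bundles and characters}); Lemma \ref{lemma: restriction of equivariant vector bundle} makes the restriction of such structures to $U_{\sigma\cap\tau}$ meaningful. By Proposition \ref{proposition: gluing equivariant vector bundles} it is enough to see that the actions coming from $L_\sigma$ and from $L_\tau$ agree after restriction to $U_{\sigma \cap \tau}$. Computing $g(s_\sigma)_x$ in two ways — directly, and through the transition relation $s_\sigma = u_\sigma u_\tau^{-1} s_\tau$ — the equality of the two expressions reduces to the identity
\[
(u_\sigma u_\tau^{-1})_A(gx) \;=\; (u_\sigma u_\tau^{-1})_A(g) \cdot (u_\sigma u_\tau^{-1})_A(x),
\]
where on the left $u_\sigma u_\tau^{-1}$ is the morphism $U_{\sigma \cap \tau} \to \mathrm{GL}_1$ and $(u_\sigma u_\tau^{-1})_A(g)$ is the value of the associated character of $G$; this follows from Lemma \ref{lem: multiplicative lemma} applied to each of the two factors above (its conclusion clearly passes to inverses). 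I expect this compatibility check to be the main obstacle: it is the one place where the multiplicative lemma is genuinely used and where one must be careful about the orientation of the transition maps; everything else is formal. Once the structures are glued, Proposition \ref{proposition: gluing equivariant vector bundles} produces the equivariant structure on $L$, and by construction $(L, \{s_\sigma\})$ has the two listed properties.

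It remains to prove uniqueness and surjectivity. If $(L, \{s_\sigma\})$ and $(L', \{s'_\sigma\})$ both satisfy the listed properties, the chart-wise assignments $s_\sigma \mapsto s'_\sigma$ glue to an isomorphism of line bundles — well defined on overlaps by the first relation — which is equivariant by the second, so $L \cong L'$ as toric line bundles; this gives the uniqueness clause and shows in particular that $\{u_\sigma\} \mapsto [L]$ is a well-defined map from $S$ to isomorphism classes of toric line bundles. For surjectivity, let $L$ be a toric line bundle and choose a basis section $t_\sigma$ of the trivial bundle $L|_{U_\sigma}$; Proposition \ref{proposition: equivariant line bundles and characters} and Lemma \ref{lemma: characters as elements of the dual lattice} present the equivariant structure of $L|_{U_\sigma}$ by an element $u_\sigma \in \Lambda$. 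For $\tau \subseteq \sigma$ one has $t_\sigma|_{U_\tau} = \mu\, t_\tau$ for some unit $\mu$ on $U_\tau$, and comparing the two descriptions of the $G$-action on $L|_{U_\tau}$ and applying Lemma \ref{lem: multiplicative lemma} identifies $\mu$ with $u_\sigma u_\tau^{-1}$ inside $\Lambda$, so $u_\sigma u_\tau^{-1} \in (\Lambda \cap \tau^\vee)^\times$ and $\{u_\sigma\} \in S$. Finally, running the gluing construction on this family recovers $L$ up to isomorphism, since the transition units agree and hence $t_\sigma \mapsto s_\sigma$ is an isomorphism; thus the map $S \to \{\text{isomorphism classes of toric line bundles}\}$ is surjective.
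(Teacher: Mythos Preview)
Your proposal is correct and follows essentially the same approach as the paper: glue the trivial bundles $\mathcal{O}_{U_\sigma}$ via the transition units determined by the $u_\sigma$, equip each chart with the equivariant structure coming from the character $u_\sigma$ (Proposition~\ref{proposition: equivariant line bundles and characters}), verify agreement on overlaps using Lemma~\ref{lem: multiplicative lemma}, and glue via Proposition~\ref{proposition: gluing equivariant vector bundles}. Your surjectivity argument is more explicit than the paper's one-line remark, and your caution about the orientation of the transition maps is well placed---the overlap computation is sensitive to whether one uses $u_\sigma u_\tau^{-1}$ or its inverse (the paper's proof in fact writes the transition as $\phi_{\sigma\tau}=u_\sigma^{-1}u_\tau$), so it is worth carrying out that step in full rather than only asserting that it reduces to the multiplicative identity.
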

\begin{proof}
For each cone $\sigma$ with our fixed $u_\sigma \in S$, applying Proposition \ref{proposition: equivariant line bundles and characters}  gives rise to an equivariant line bundle $L_{\sigma}$ (whose underlying line bundle is trivial) on $U_{\sigma}$ together with a basis $s_\sigma$ of sections for the equivariant line bundle.

We construct a line bundle $L$ on $X$ by gluing the $L_{\sigma}$ together by identifying $s_\sigma\mid_{U_\tau}$ with $\phi_{\sigma\tau}s_\tau$ for each inclusion of cones $\tau\subseteq\sigma$.  $L_\sigma$ is then the restriction of $L$ to $U_\sigma$. To be precise, for each $U_{\sigma_i}$ and $U_{\sigma_j}$ and $\delta=\sigma_i \cap \sigma_j$, we consider the isomorphism 
\[
\psi_{ij}:L_{\sigma_i}\mid_{\delta} \to L_{\sigma_j}\mid_{\delta}
\]
given by 
\[
(s_{\sigma_i})\mid_{U_\delta} \mapsto u_{\sigma_i}^{-1}u_{\sigma_j}(s_{\sigma_j})\mid_{U_\delta}.
\]
We have to check the cocycle condition $\psi_{ik}=\psi_{jk}\circ \psi_{ij}$. But, on $U_\tau=U_{\sigma_i} \cap U_{\sigma_j} \cap U_{\sigma_k}$, $\psi_{ik}$ sends $(s_{\sigma_i})\mid_{U_\tau}$ to $u_{\sigma_i}^{-1}u_{\sigma_k}(s_{\sigma_k})\mid_{U_\tau}$, whereas we have
\[
\psi_{jk}(\psi_{ij}((s_{\sigma_i})\mid_{U_\tau}) =\psi_{jk}(u_{\sigma_i}^{-1}u_{\sigma_j}(s_{\sigma_j})\mid_{U_\tau}) =u_{\sigma_i}^{-1}u_{\sigma_j} u_{\sigma_j}^{-1}u_{\sigma_k}(s_{\sigma_k})\mid_{U_\tau}, 
\]
showing the cocycle condition. 

Now, we show that $L$ is a $G$-equivariant line bundle. Let $A$ be an $\mathbb{F}_1$-algebra. We view $u_\sigma$ as a morphism $G \to \text{GL}_1$. For the notational convenience, we simply write $u_\sigma$ instead of the following: $u_\sigma(A):G(A) \to \text{GL}_1(A)$. Let $g\in G(A)$ and $x\in U_\tau(A)$.  By Proposition \ref{proposition: equivariant line bundles and characters}, we know that the $G$-action on $L_\sigma$ satisfies the following
\begin{equation}
g (s_\sigma)_x = u_\sigma(g) (s_\sigma)_{gx}.
\end{equation}
On the other hand, the $G$-action on $L_\tau$ satisfies
\begin{equation}
g (\phi_{\sigma\tau}(x)(s_\tau)_x) = \phi_{\sigma\tau}(x) u_\tau(g) (s_{\tau})_{gx} = (\phi_{\sigma\tau}(x) u_\tau(g) \phi_{\sigma\tau}(gx)^{-1})(\phi_{\sigma\tau}(gx)(s_\tau)_{gx}).
\end{equation}
We claim 
\begin{equation}\label{eq: homo}
u_\sigma(g) = \phi_{\sigma\tau}(x) u_\tau(g) \phi_{\sigma\tau}(gx)^{-1}. 
\end{equation}
In fact, from Lemma \ref{lem: multiplicative lemma}, we have $\phi_{\sigma\tau}(gx)=\phi_{\sigma\tau}(g)\phi_{\sigma\tau}(x)$. Hence, \eqref{eq: homo} becomes the following
\[
u_\sigma(g) = \phi_{\sigma\tau}(x) u_\tau(g) \phi_{\sigma\tau}(g)^{-1}\phi_{\sigma\tau}(x)^{-1} \iff \phi_{\sigma\tau}(g)u_\sigma(g)=u_\tau(g),
\]
which is clear from the definition of $\phi_{\sigma\tau}$. 

Thus the $G$-action on $L_\tau$ satisfies 
\begin{equation}g (s_\sigma\mid_{U_\tau})_x = g (\phi_{\sigma\tau}(x)(s_\tau)_x) = (\phi_{\sigma\tau}(x) u_\tau(g) \phi_{\sigma\tau}(gx)^{-1}) (s_\sigma\mid_{U_\tau})_{gx} = u_\sigma(g) (s_\sigma\mid_{U_\tau})_{gx}
\end{equation}
and clearly the action on $(L_\sigma)_{U_\tau}$ satisfies $g (s_\sigma\mid_{U_\tau})_x = u_\sigma(g) (s_\sigma\mid_{U_\tau})_{gx}$ as well.  Thus the actions agree.

By Proposition \ref{proposition: gluing equivariant vector bundles} the actions on $L_\sigma$ glue together to make $L$ into an equivariant vector bundle.  For surjectivity, we observe that any equivariant vector bundle $L$ can be obtained by gluing together the vector bundles $L_{U_\sigma}$ and the actions on them.  The remaining claims are clear from examining the above construction.
\end{proof}

We are now in a position to give a classification of toric line bundles, which does not require knowledge of $\mathrm{Pic}(X)$.

\begin{rmk}
One can easily observe that $(\Lambda \cap \sigma^\vee)^\times = \Lambda \cap \sigma^\perp$. In particular, Proposition \ref{proposition: classification toric line bundles} and Corollary \ref{Corollary: Klyachko-like result for line bundles} are precisely the same as toric line bundles on affine toric varieties over a field. 
\end{rmk}

\begin{pro}\label{proposition: classification toric line bundles}
Let $X$ be a toric scheme over $\mathbb{F}_1$ with fan $\Delta$ and dual lattice $\Lambda$.  Then isomorphism classes of toric line bundles on $X$ are in one-to-one correspondence with families of elements $[u_\sigma]\in \Lambda / (\Lambda \cap \sigma^\vee)^\times$ indexed by cones, which satisfy the compatibility condition that for $\tau\subseteq\sigma$, $[u_\tau]$ is the image of $[u_\sigma]$ under the quotient map $\pi_{\sigma\tau}:\Lambda / (\Lambda \cap \sigma^\vee)^\times \rightarrow \Lambda / (\Lambda \cap \tau^\vee)^\times$.
\end{pro}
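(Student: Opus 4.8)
The plan is to build on Lemma~\ref{lemma: Gluing equivariant line bundles}, which already produces a surjection from the set $S$ of families $\{u_\sigma\}_{\sigma\in\Delta}$ of elements of $\Lambda$ with $u_\sigma^{-1}u_\tau\in(\Lambda\cap\tau^\vee)^\times$ whenever $\tau\subseteq\sigma$, onto the set of isomorphism classes of toric line bundles on $X$. The first, essentially bookkeeping, step is to note that the condition defining $S$ is literally the compatibility condition of the proposition: since $\tau\subseteq\sigma$ forces $(\Lambda\cap\sigma^\vee)^\times\subseteq(\Lambda\cap\tau^\vee)^\times$, the map $\pi_{\sigma\tau}$ is well-defined, and $\pi_{\sigma\tau}([u_\sigma])=[u_\tau]$ is equivalent to $u_\sigma^{-1}u_\tau\in(\Lambda\cap\tau^\vee)^\times$. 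Hence reducing each entry modulo $(\Lambda\cap\sigma^\vee)^\times$ defines a surjection $S\twoheadrightarrow\mathcal K$ onto the set $\mathcal K$ of compatible families $\{[u_\sigma]\}$, a preimage being obtained by lifting the classes arbitrarily. It therefore suffices to show that the surjection of Lemma~\ref{lemma: Gluing equivariant line bundles} factors through $S\twoheadrightarrow\mathcal K$ and that the induced map $\mathcal K\to\{\text{isomorphism classes of toric line bundles}\}$ is injective; equivalently, writing $v_\sigma:=u_\sigma^{-1}u'_\sigma\in\Lambda$, that two families $\{u_\sigma\},\{u'_\sigma\}\in S$ yield isomorphic toric line bundles if and only if $v_\sigma\in(\Lambda\cap\sigma^\vee)^\times$ for every cone $\sigma$.

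For the ``only if'' direction I would restrict to the affine pieces. If $L,L'$ are the toric line bundles attached to $\{u_\sigma\},\{u'_\sigma\}$ and $\Psi\colon L\to L'$ is an isomorphism of equivariant line bundles, then restriction to the torus-invariant open set $U_\sigma$ (Lemma~\ref{lemma: restriction of equivariant vector bundle}) yields an isomorphism $L_\sigma\cong L'_\sigma$ of equivariant line bundles; by the construction in Lemma~\ref{lemma: Gluing equivariant line bundles} these are trivial as line bundles and correspond, under Proposition~\ref{proposition: equivariant line bundles and characters} and Lemma~\ref{lemma: characters as elements of the dual lattice}, to the characters $u_\sigma$ and $u'_\sigma$ (with the chosen bases $s_\sigma$, $s'_\sigma$ identifying them with $\mathcal O_{U_\sigma}$). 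Applying Lemma~\ref{lemma: isomorphic equivariant structures on O_X} on $U_\sigma$, the existence of such an isomorphism means there is a morphism $f\colon U_\sigma\to\text{GL}_1$ with $u_\sigma(g)\,f_A(gx)=u'_\sigma(g)\,f_A(x)$, i.e.\ $f_A(gx)/f_A(x)=(v_\sigma)_A(g)$; then Lemma~\ref{lemma: dual lattice points corresponding to the trivial equivariant line bundle in the affine case}, applied with the character $v_\sigma\in\Lambda$, forces $v_\sigma\in(\Lambda\cap\sigma^\vee)^\times$, which is what we wanted.

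For the ``if'' direction, assume $v_\sigma\in(\Lambda\cap\sigma^\vee)^\times=\Gamma(U_\sigma,\mathcal O_{U_\sigma}^\times)$ for every $\sigma$. Let $s'_\sigma$ be the distinguished basis sections of $L'$ supplied by Lemma~\ref{lemma: Gluing equivariant line bundles}, and set $t_\sigma:=v_\sigma s'_\sigma$, still a basis of $L'$ over $U_\sigma$. Regarding $v_\sigma$ simultaneously as a regular unit on $U_\sigma$ and as a character of the torus, the computation in the proof of Lemma~\ref{lem: multiplicative lemma} (which applies verbatim to any unit of $\Gamma(U_\sigma,\mathcal O_{U_\sigma}^\times)$) gives $(v_\sigma)_A(gx)=(v_\sigma)_A(g)(v_\sigma)_A(x)$; using this, a direct calculation shows $g(t_\sigma)_x=u_\sigma(g)(t_\sigma)_{gx}$, and a cancellation of the $u'$- and $v$-factors in $\Lambda$ shows that the $t_\sigma$ obey the same transition relations over overlaps as the distinguished sections of the toric line bundle attached to $\{u_\sigma\}$. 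Hence $L'$ carries precisely the family of sections that, by the uniqueness clause of Lemma~\ref{lemma: Gluing equivariant line bundles}, characterizes that line bundle, so $L'\cong L$ as equivariant line bundles. (Equivalently, one writes down the gluing isomorphism $s_\sigma\mapsto v_\sigma s'_\sigma$ and checks directly that the local maps agree on overlaps and are equivariant.)

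The step that needs the most care is the ``if'' direction: the comparison of $L$ and $L'$ is intrinsically local, and a priori the isomorphisms over the various $U_\sigma$ need not glue to a global one. What makes it work is that all the discrepancies live in the group $\Lambda$ and cancel telescopically precisely because the compatibility condition was imposed on the classes $[u_\sigma]$; I would package this through the uniqueness statement of Lemma~\ref{lemma: Gluing equivariant line bundles} rather than verifying a cocycle condition by hand. The ``only if'' direction and the identification $S\leftrightarrow\mathcal K$ are then routine consequences of the earlier lemmas.
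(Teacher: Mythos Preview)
Your proposal is correct and follows essentially the same route as the paper: surjectivity comes from Lemma~\ref{lemma: Gluing equivariant line bundles}, the ``if'' direction (well-definedness) is handled by rescaling the distinguished sections $s'_\sigma$ by the units $v_\sigma$ and invoking the uniqueness clause of that lemma, and the ``only if'' direction (injectivity) is obtained by restricting to each $U_\sigma$ and reading off the character. The only cosmetic difference is that for injectivity the paper cites Proposition~\ref{proposition: dual lattice points corresponding to the trivial equivariant line bundle} directly, whereas you unpack it via Lemma~\ref{lemma: isomorphic equivariant structures on O_X} and Lemma~\ref{lemma: dual lattice points corresponding to the trivial equivariant line bundle in the affine case}; these amount to the same argument.
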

\begin{proof}
Given a family of $[u_\sigma]$ as above, we pick representatives, and observe they satisfy the compatibility condition of Lemma \ref{lemma: Gluing equivariant line bundles}.  So there is a unique toric line bundle $L$ which locally possesses a basis consisting of a section $s_\sigma \in \Gamma(U_\sigma, L)$ with the action given by 
\[
g(s_\sigma)_x = u_\sigma(g) (s_\sigma)_{gx}
\]
under the functor of points and with $s_\sigma = u_\sigma^{-1} u_\tau s_\tau$.  The desired bijective correspondence $f$ will send $\{ [u_\sigma] \}_{\sigma\in\Delta}\}$ to $L$.

First we check this is well-defined.  Choose representatives $u_\sigma$ of $[u_\sigma]$ and let $L$ and $s_\sigma$ be as above.  Let $v_{\sigma} \in [u_{\sigma_0}]$ be another representative.  Then, we have
\[
u_\sigma v_{\sigma}^{-1}\in (\Lambda \cap \sigma^\vee)^\times = \Gamma(U_\sigma, \mathcal{O}_X^\times).
\]
Thus it makes sense to define sections $t_\sigma = (u_\sigma v_{\sigma}^{-1}) s_\sigma$. We claim that for each $\sigma$, one has the following: 
\begin{equation}
g(t_\sigma)_x = v_{\sigma}(g) (t_\sigma)_{gx}.    
\end{equation}
In fact, with $\varphi_\sigma=u_\sigma v_\sigma^{-1} $, we have
\[
(t_\sigma)_x = \varphi_\sigma(x)(s_\sigma)_x.
\]
Hence, we have
\[
g(t_\sigma)_x = g (\varphi_\sigma(x)(s_\sigma)_x) = \varphi_\sigma(x) u_\sigma(g)(s_\sigma)_{gx} = \varphi_\sigma(g^{-1})\varphi_\sigma(gx)u_\sigma(g)(s_\sigma)_{gx}
\]
\[
=u_\sigma(g^{-1})v_\sigma^{-1}(g^{-1})u_\sigma(g)\varphi_\sigma(gx)(s_\sigma)_{gx}=v_\sigma(g)\varphi_\sigma(gx)(s_\sigma)_{gx}=v_\sigma(g)(t_\sigma)_{gx}.
\]
In addition, 
\begin{equation}
t_\sigma = (u_\sigma v_\sigma^{-1}) s_\sigma = (u_\sigma v_\sigma^{-1} )u_\sigma^{-1} u_\tau s_\tau = v_\sigma^{-1} u_\tau s_\tau = v_\sigma^{-1} v_\tau t_\tau
\end{equation}

Thus $L$ with the sections $t_\sigma$ satisfies the defining property of the line bundle $L$ that is the image of the family of $[v_\sigma]$ under $f$.  Since the families of $[u_\sigma]$ and $[v_\sigma]$ map the same place under $f$, $f$ is well-defined.

Surjectivity follows from the surjectivity in Lemma \ref{lemma: Gluing equivariant line bundles}.

Let $L, L'$ correspond to $\{ [u_\sigma] \}$ and $\{ [v_\sigma ] \}$ respectively.  To prove injectivity of $f$, we suppose $L\cong L'$.  Then $L_{U_\sigma} \cong L'_{U_\sigma}$ for all cones $\sigma$.  Any line bundle on $U_\sigma$ is trivial, and $L_{U_\sigma}$ has a basis $s_\sigma$ satisfying $g(s_\sigma)_x = u_\sigma(g) (s_\sigma)_{gx}$.  Thus it corresponds under Proposition~\ref{proposition: equivariant line bundles and characters} and Lemma~\ref{lemma: characters as elements of the dual lattice} to $u_\sigma \in \Lambda$.  The same holds for $L'_{U_\sigma}$ and $v_\sigma$.  By Proposition \ref{proposition: dual lattice points corresponding to the trivial equivariant line bundle}, $u_\sigma \cong v_\sigma^{-1} \;\mathrm{mod}\, (\Lambda \cap \sigma^\vee)^\times$.  So $[u_\sigma] = [v_\sigma]$, establishing injectivity.
\end{proof}

\begin{lem}\label{lemma: equivalent congruent}
Let $\Delta$ be a fan and $\Lambda$ be the dual lattice.  For each cone $\sigma\in\Delta$, let $u_\sigma\in\Lambda$.  The following are equivalent:
\begin{enumerate}
\item 
For each inclusion of cones $\tau\subseteq\sigma$, $u_\sigma$ is congruent to $u_\tau$ modulo $(\Lambda \cap \tau^\vee)^\times$.
\item 
For each inclusion of cones $\tau\subseteq\sigma$ and each $x\in \tau$, $\langle u_\sigma, x\rangle = \langle u_\tau, x\rangle$.
\item 
For each cone $\sigma\in\Delta$ and each ray $\rho\in\Delta$ with $\rho\subseteq\sigma$, $\langle u_\sigma, x\rangle = \langle u_\rho, x\rangle$ for all $x \in \rho$.
\end{enumerate}
\end{lem}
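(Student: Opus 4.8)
The plan is to translate the three conditions into a single statement about the bilinear pairing $\langle\,,\,\rangle$ together with one standard fact from convex geometry. First I would record the identity $(\Lambda \cap \tau^\vee)^\times = \Lambda \cap \tau^\perp$ noted in the remark above: an element $u$ of the monoid $\Lambda \cap \tau^\vee$ is a unit if and only if $-u$ also lies in $\Lambda\cap\tau^\vee$, i.e.\ if and only if $\langle u, x\rangle \ge 0$ and $\langle u, x\rangle \le 0$ for all $x\in\tau$, which happens exactly when $u \in \Lambda\cap\tau^\perp$. Consequently, for $u_\sigma, u_\tau \in \Lambda$, the statement ``$u_\sigma$ is congruent to $u_\tau$ modulo $(\Lambda\cap\tau^\vee)^\times$'' is equivalent to $u_\sigma - u_\tau \in \Lambda\cap\tau^\perp$, and this is literally the assertion that $\langle u_\sigma - u_\tau, x\rangle = 0$ for every $x \in \tau$. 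Thus $(1)\Leftrightarrow(2)$ is immediate.

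Next I would observe that $(2)\Rightarrow(3)$ is trivial: if $\rho\in\Delta$ is a ray with $\rho\subseteq\sigma$, then $\rho = \rho\cap\sigma$ is a face of $\sigma$, so applying~(2) with $\tau=\rho$ gives $\langle u_\sigma, x\rangle = \langle u_\rho, x\rangle$ for all $x\in\rho$. The substantive implication is $(3)\Rightarrow(2)$. Here I would fix an inclusion $\tau\subseteq\sigma$ of cones in $\Delta$ and use the fact that a strongly convex rational polyhedral cone is the conical hull of its rays, so every $x\in\tau$ can be written $x = \sum_i \lambda_i x_i$ with $\lambda_i\ge 0$ and each $x_i$ lying on a ray $\rho_i\preceq\tau$. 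Since a face of a face is a face, each $\rho_i$ is a face of $\sigma$ as well, so $\rho_i\in\Delta$ with $\rho_i\subseteq\tau\subseteq\sigma$. Applying~(3) once to the pair $(\sigma,\rho_i)$ and once to the pair $(\tau,\rho_i)$ gives $\langle u_\sigma, x_i\rangle = \langle u_{\rho_i}, x_i\rangle = \langle u_\tau, x_i\rangle$; summing against the weights $\lambda_i$ and using bilinearity of the pairing yields $\langle u_\sigma, x\rangle = \langle u_\tau, x\rangle$, which is~(2).

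There is no genuinely hard step here. The only points requiring care are the two standard facts invoked — the identity $(\Lambda\cap\tau^\vee)^\times = \Lambda\cap\tau^\perp$ and the conical-hull-of-rays description of a strongly convex cone — and the observation that the rays $\rho_i$ appearing in the decomposition of $x$ really do lie in $\Delta$, which follows because faces of faces are faces and a fan is closed under passing to faces. If anything is to be singled out as the main obstacle, it is checking that condition~(3), which quantifies only over rays, recaptures the full force of conditions~(1)--(2); this is precisely what the conical-hull-of-rays fact supplies.
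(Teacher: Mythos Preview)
Your proof is correct and follows essentially the same route as the paper: the paper proves $(1)\Leftrightarrow(2)$ by directly observing that $u_\sigma-u_\tau$ and its negative both lie in $\tau^\vee$ exactly when the pairings agree (which is the same computation as your identity $(\Lambda\cap\tau^\vee)^\times=\Lambda\cap\tau^\perp$), and handles $(3)\Rightarrow(2)$ via the same span-of-rays argument you give.
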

\begin{proof}
For the proof, we will use the additive notation for $\Lambda$.

Suppose $u_\sigma$ is congruent to $u_\tau$ modulo $(\Lambda \cap \tau^\vee)^\times$.  Then $u_\sigma - u_\tau\in \tau^\vee$ as is $u_\tau - u_\sigma$.  So for any $x\in \tau$, we have both $\langle u_\sigma - u_\tau, x\rangle \geq 0$ and the reverse inequality.  Thus $\langle u_\sigma, x\rangle = \langle u_\tau, x\rangle$.

Conversely suppose $\langle u_\sigma, x\rangle = \langle u_\tau, x\rangle$ for all $x\in\tau$.  Then $\langle u_\sigma - u_\tau, x\rangle = 0$, so $(u_\sigma - u_\tau), (u_\tau - u_\sigma) \in \tau^\vee$.  Since both also belong to $\Lambda$, $u_\sigma - u_\tau \in (\Lambda \cap \tau^\vee)^\times$.

The second condition trivially implies the third, by taking $\tau$ to be a ray and $x$ to be the spanning vector of the ray.  Conversely suppose that for each cone $\sigma$ and each ray $\rho\subseteq\sigma$, we have $\langle u_\sigma, x\rangle = \langle u_\rho, x\rangle$ for all $x \in \rho$.  Let $\tau\in\Delta$ be such that $\tau\subseteq\sigma$.  For any ray $\rho$ contained in $\tau$ and $x \in \rho$, we have $\langle u_\sigma, x\rangle = \langle u_\rho, x\rangle = \langle u_\tau, x\rangle$.  Thus $u_\sigma$ and $u_\tau$ have the same inner product with any vector in the span of such rays.  But each cone in $\Delta$ is the span of the rays contained within it, so $\langle u_\sigma, x\rangle = \langle u_\tau, x\rangle$ for all $x\in\tau$.
\end{proof}

We can recast the classification of line bundles in a form that looks more like Klyachko's classification.

\begin{cor}\label{corollary: equivariant picard}
Let $X$ be a toric scheme over $\mathbb{F}_1$ with fan $\Delta$ and dual lattice $\Lambda$.  Then isomorphism classes of toric line bundles on $X$ are in one-to-one correspondence with families of elements $[u_\sigma]\in \Lambda / (\Lambda \cap \sigma^\vee)^\times$ indexed by cones, which satisfy the compatibility condition that for every ray $\rho\in\Delta$ that is a face of $\sigma$, $\langle u_\sigma, x\rangle$ depends only on $\rho$ and not on $\sigma$ for all $x \in \rho$.
\end{cor}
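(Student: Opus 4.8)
The plan is to deduce this directly from Proposition~\ref{proposition: classification toric line bundles} together with the equivalence of compatibility conditions established in Lemma~\ref{lemma: equivalent congruent}. No new geometric input is needed; the content is purely a translation between two equivalent ways of phrasing the gluing condition on the local data $[u_\sigma]$, so the whole argument amounts to quoting these two results and matching up the statements.

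First I would recall that Proposition~\ref{proposition: classification toric line bundles} already identifies isomorphism classes of toric line bundles on $X$ with families $\{[u_\sigma]\}_{\sigma\in\Delta}$, where $[u_\sigma]\in\Lambda/(\Lambda\cap\sigma^\vee)^\times$, subject to the condition that for $\tau\subseteq\sigma$ the class $[u_\tau]$ is the image of $[u_\sigma]$ under the quotient map $\pi_{\sigma\tau}$. Choosing any representatives $u_\sigma\in\Lambda$, this condition says precisely that $u_\sigma\equiv u_\tau \pmod{(\Lambda\cap\tau^\vee)^\times}$ for all $\tau\subseteq\sigma$, which is condition~(1) of Lemma~\ref{lemma: equivalent congruent}. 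Applying the equivalence (1)$\iff$(3) of that lemma replaces it by the condition: for each cone $\sigma$ and each ray $\rho\subseteq\sigma$, one has $\langle u_\sigma,x\rangle=\langle u_\rho,x\rangle$ for all $x\in\rho$. Since a ray is a face of itself, this is exactly the assertion that, for each ray $\rho\in\Delta$ that is a face of $\sigma$, the value $\langle u_\sigma,x\rangle$ (for $x\in\rho$) is the same for every cone $\sigma$ having $\rho$ as a face, i.e.\ it depends only on $\rho$ and not on $\sigma$. Finally I would record the one point needing a remark: using $(\Lambda\cap\sigma^\vee)^\times=\Lambda\cap\sigma^\perp$, for $x\in\rho\subseteq\sigma$ the integer $\langle u_\sigma,x\rangle$ is unchanged when $u_\sigma$ is altered by an element of $(\Lambda\cap\sigma^\vee)^\times$, so the rewritten condition is genuinely well posed on the classes $[u_\sigma]$, and the bijection of Proposition~\ref{proposition: classification toric line bundles} carries over verbatim to the new phrasing.

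I do not expect any real obstacle here: the statement is a corollary whose proof is the two-line combination above. The only things to be careful about are aligning the quantifiers in the two formulations of the compatibility condition and verifying the representative-independence just mentioned; everything substantive has already been done in Proposition~\ref{proposition: classification toric line bundles} and Lemma~\ref{lemma: equivalent congruent}.
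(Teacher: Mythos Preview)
Your proposal is correct and matches the paper's intended argument exactly: the corollary is stated immediately after Proposition~\ref{proposition: classification toric line bundles} and Lemma~\ref{lemma: equivalent congruent} with no further proof, precisely because it is the combination of those two results that you describe. Your additional remark that $\langle u_\sigma,x\rangle$ is well defined on the class $[u_\sigma]$ (via $(\Lambda\cap\sigma^\vee)^\times=\Lambda\cap\sigma^\perp$) is a helpful clarification the paper leaves implicit.
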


\begin{cor}\label{Corollary: Klyachko-like result for line bundles}
Let $X_R$ be a toric scheme over an idempotent semifield $R$ with fan $\Delta$ and dual lattice $\Lambda$.  Then isomorphism classes of toric line bundles on $X_R$ are in one-to-one correspondence with families of elements $[u_\sigma]\in \Lambda / (\Lambda \cap \sigma^\vee)^\times$ indexed by cones, which satisfy the compatibility condition that for every ray $\rho\in\Delta$ that is a face of $\sigma$, $\langle u_\sigma, x \rangle$ depends only on $\rho$ and not on $\sigma$ for all $x \in \rho$.
\end{cor}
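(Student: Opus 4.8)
The statement is the semiring counterpart of Corollary~\ref{corollary: equivariant picard}, so the plan is to reduce to the monoid-scheme case. Write $X$ for the toric $\mathbb{F}_1$-scheme whose base change to $R$ is $X_R$, with torus $G$, base-changing to $G_R$. First I would establish the $R$-analogue of Proposition~\ref{proposition: classification toric line bundles} — that isomorphism classes of toric line bundles on $X_R$ correspond to families $[u_\sigma]\in\Lambda/(\Lambda\cap\sigma^\vee)^\times$ with $[u_\tau]=\pi_{\sigma\tau}([u_\sigma])$ for $\tau\subseteq\sigma$ — and then apply Lemma~\ref{lemma: equivalent congruent}, which is purely lattice-theoretic, to rewrite that compatibility condition in the stated ``Klyachko'' form, exactly as in the passage from Proposition~\ref{proposition: classification toric line bundles} to Corollary~\ref{corollary: equivariant picard}.

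\textbf{The local picture.} For each cone $\sigma$ the open set $U_{\sigma,R}\subseteq X_R$ is an affine toric scheme over $R$, so by Proposition~\ref{corollary: vector bundle on affine is trivial} every line bundle on it is trivial; hence any toric line bundle on $X_R$ is obtained by gluing the trivial line bundle, carrying various equivariant structures, over the pieces $U_{\sigma,R}$ along isomorphisms over the overlaps $U_{\sigma,R}\cap U_{\tau,R}=U_{(\sigma\cap\tau),R}$ (using $U_\sigma\cap U_\tau=U_{\sigma\cap\tau}$, \cite[Section 1.4]{Ful93}), and the same description holds over $\mathbb{F}_1$. Since $U_\sigma$, being an affine toric $\mathbb{F}_1$-scheme, is connected, Proposition~\ref{Correspondence for equivariant structures on O_X} identifies the $G_R$-equivariant structures on $\mathcal{O}_{U_{\sigma,R}}$ with the $G$-equivariant structures on $\mathcal{O}_{U_\sigma}$, which by Proposition~\ref{proposition: equivariant line bundles and characters}, Lemma~\ref{lemma: characters as elements of the dual lattice} and Lemma~\ref{lemma: dual lattice points corresponding to the trivial equivariant line bundle in the affine case} are classified up to isomorphism by $\Lambda/(\Lambda\cap\sigma^\vee)^\times$. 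Thus a representative $u_\sigma\in\Lambda$ determines over $R$ an equivariant structure on $\mathcal{O}_{U_{\sigma,R}}$ with a basis section $s_\sigma$ on which $G_R$ acts through $u_\sigma$.

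\textbf{Gluing over $R$.} Next I would prove the $R$-version of Lemma~\ref{lemma: Gluing equivariant line bundles}: given $\{u_\sigma\in\Lambda\}$ with $u_\sigma^{-1}u_\tau\in(\Lambda\cap\tau^\vee)^\times$ for $\tau\subseteq\sigma$, set $\phi_{\sigma\tau}=u_\sigma^{-1}u_\tau$, read as a morphism $U_{\tau,R}\to GL_1$ via Lemma~\ref{units as morphisms} over $R$, and glue the local bundles of the previous step by $s_\sigma\mapsto\phi_{\sigma\tau}s_\tau$. The cocycle identity and the check that the two $G_R$-actions agree on overlaps are the same computations as in Lemma~\ref{lemma: Gluing equivariant line bundles}, the one non-formal input being the multiplicativity relation \eqref{eq: multiplicative} of Lemma~\ref{lem: multiplicative lemma}; I would re-prove the latter verbatim for $R$-algebras, since its proof only manipulates the comultiplication on $R[\Lambda]$ and the coaction on $R[\Lambda\cap\tau^\vee]$, and $\phi_{\sigma\tau}$ is a monomial, hence group-like. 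Proposition~\ref{proposition: gluing equivariant vector bundles} then yields a toric line bundle on $X_R$, and surjectivity onto isomorphism classes follows since, by the local picture, every toric line bundle on $X_R$ arises this way. As in the proof of Proposition~\ref{proposition: classification toric line bundles}, changing $u_\sigma$ within its class twists $s_\sigma$ by a unit of $\Gamma(U_{\sigma,R},\mathcal{O}^\times)$ and gives an isomorphic bundle (well-definedness), while an isomorphism between the bundles attached to $\{[u_\sigma]\}$ and $\{[v_\sigma]\}$ restricts over each $U_{\sigma,R}$ to force $u_\sigma\equiv v_\sigma\pmod{(\Lambda\cap\sigma^\vee)^\times}$ by the affine comparison above (injectivity).

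\textbf{Main obstacle.} The only genuinely new work is the gluing step: confirming that the base-changed transition data really intertwines the local $G_R$-actions, i.e. the $R$-analogue of the computation in Lemma~\ref{lemma: Gluing equivariant line bundles} together with Lemma~\ref{lem: multiplicative lemma}. The subtle point is that over $R$ the unit group $\Gamma(U_{\tau,R},\mathcal{O}^\times)$ carries an extra $R^\times$-factor beyond $(\Lambda\cap\tau^\vee)^\times$, so one must verify that the transition data coming from an actual equivariant bundle has trivial $R^\times$-component (which it does, because it must restrict over the torus to the group-like character $u_\tau u_\sigma^{-1}$, and restriction is the identity on $R^\times$-components) and that this choice is consistent across all overlaps. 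This should be immediate from $\phi_{\sigma\tau}$ being a monomial, but it is the place where the argument is not a mere citation of the $\mathbb{F}_1$-results.
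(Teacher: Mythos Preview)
Your plan is exactly the paper's: its proof is the single line ``One may check that Lemma~\ref{lemma: Gluing equivariant line bundles} and Proposition~\ref{proposition: classification toric line bundles} are still valid with $X_R$,'' and you spell out how to carry that check through and then invoke the purely combinatorial Lemma~\ref{lemma: equivalent congruent}.

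One small imprecision in your ``Main obstacle'': you assert that the transition unit $\psi$ between two local trivializations of an actual equivariant bundle must restrict over the torus to the monomial $u_\tau u_\sigma^{-1}$, hence has trivial $R^\times$-component. Comparing the two torus actions on the chosen basis sections only yields $\psi(gx)/\psi(x)=(u_\sigma^{-1}u_\tau)(g)$, which determines $\psi$ on the torus only up to a constant in $R^\times$; so the $R^\times$-component need not vanish for an arbitrary choice of the $s_\sigma$. The repair is immediate: rescale each $s_\sigma$ by a constant in $R^\times$ so that all restrictions to the torus agree with one fixed basis section there (equivalently, the $R^\times$-components of the transition data form a \v{C}ech $1$-cocycle in the constant sheaf $R^\times$ on the irreducible scheme $X_R$, hence a coboundary). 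Alternatively, appeal to Proposition~\ref{theorem: bundles stable under scalar extension} to choose the underlying line bundle with $\mathbb{F}_1$-monomial transition functions from the start.
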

\begin{proof}
One may check that Lemma \ref{lemma: Gluing equivariant line bundles} and Proposition \ref{proposition: classification toric line bundles} are still valid with $X_R=X\otimes R$, where $X$ is the toric scheme over $\mathbb{F}_1$ associated to $\Delta$. 
\end{proof}

\begin{mydef}\label{definition: Klyachko family}
A \emph{Klyachko family} is a family of elements $[u_\sigma]\in \Lambda / (\Lambda \cap \sigma^\vee)^\times$ indexed by cones, which satisfy the compatibility condition that for every ray $\rho\in\Delta$ that is a face of $\sigma$, $\langle u_\sigma, x \rangle$ for $x \in \rho$ depends only on $\rho$ and not on $\sigma$. 
\end{mydef}

Since equivariant vector bundles decompose uniquely as direct sums of equivariant line bundles, we obtain a classification of equivariant vector bundles.

\begin{cor}
There is a one-to-one correspondence between isomorphism classes of equivariant vector bundles and $S_n$-orbits of n-tuples of Klyachko families. 
\end{cor}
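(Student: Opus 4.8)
The plan is to bootstrap from the unique decomposition of an equivariant vector bundle into equivariant line bundles (Theorem~\ref{theorem: equivariantly split theorem}) together with the dictionary between toric line bundles and Klyachko families (Corollary~\ref{Corollary: Klyachko-like result for line bundles}). Throughout, $X$ is the toric scheme over an idempotent semifield $R$ with fan $\Delta$, dual lattice $\Lambda$, and torus $G$; the case of a toric scheme over $\mathbb{F}_1$ is handled identically, using Corollary~\ref{corollary: equivariant picard} in place of Corollary~\ref{Corollary: Klyachko-like result for line bundles}. Since the rank of an equivariant vector bundle is well defined (it is the number of summands in the decomposition of Theorem~\ref{theorem: equivariantly split theorem}) and the length $n$ is part of the data of an $n$-tuple, it suffices to exhibit, for each $n \ge 0$, a bijection between isomorphism classes of rank-$n$ equivariant vector bundles and $S_n$-orbits of $n$-tuples of Klyachko families, and then take the disjoint union over $n$.

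First I would build a map $\Phi$ from $n$-tuples of Klyachko families to isomorphism classes of rank-$n$ equivariant vector bundles. Given $(\mathcal{K}_1, \dots, \mathcal{K}_n)$, let $L_i$ be the toric line bundle associated to $\mathcal{K}_i$ by Corollary~\ref{Corollary: Klyachko-like result for line bundles}, and set $E = L_1 \oplus \cdots \oplus L_n$ as an equivariant vector bundle via Definition~\ref{definition: direct sum of equivarant bundles}. The isomorphism class of $E$ depends only on the isomorphism classes of the $L_i$ — an equivariant isomorphism $L_i \to L_i'$ for each $i$ induces one on the direct sums, directly from the componentwise description of the action in Definition~\ref{definition: direct sum of equivarant bundles} — hence only on the $\mathcal{K}_i$; and it is unchanged by permuting the summands, since for $\pi \in S_n$ the reordering map $L_1 \oplus \cdots \oplus L_n \to L_{\pi(1)} \oplus \cdots \oplus L_{\pi(n)}$ is an equivariant isomorphism. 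Thus $\Phi$ descends to a well-defined map on $S_n$-orbits.

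Next I would verify that $\Phi$ is a bijection. Surjectivity is immediate from Theorem~\ref{theorem: equivariantly split theorem}: every rank-$n$ equivariant vector bundle is $L_1 \oplus \cdots \oplus L_n$ for equivariant line bundles $L_i$, each of which, being torus-equivariant, is a toric line bundle and so corresponds to a Klyachko family. For injectivity, suppose $(\mathcal{K}_i)$ and $(\mathcal{K}_i')$ have $\Phi$-images that are isomorphic as equivariant vector bundles, say $L_1 \oplus \cdots \oplus L_n \cong L_1' \oplus \cdots \oplus L_n'$. The uniqueness clause of Theorem~\ref{theorem: equivariantly split theorem} gives a permutation $\sigma \in S_n$ with $L_i \cong L_{\sigma(i)}'$ as equivariant line bundles for all $i$; the injectivity of the correspondence in Corollary~\ref{Corollary: Klyachko-like result for line bundles} then yields $\mathcal{K}_i = \mathcal{K}_{\sigma(i)}'$ for all $i$, so the two $n$-tuples lie in a common $S_n$-orbit. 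Hence $\Phi$ is a bijection.

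The one step carrying real content — and the reason the whole argument works — is injectivity, which rests on the uniqueness half of Theorem~\ref{theorem: equivariantly split theorem}: an equivariant isomorphism between two direct sums of equivariant line bundles must identify the summands up to permutation. This in turn builds on the uniqueness of the underlying line-bundle decomposition (Proposition~\ref{theorem: bundles split}) and on the fact that the induced action on each summand is determined by the global action. Everything else here is formal manipulation with the already-established line-bundle classification, so I do not anticipate any further difficulty.
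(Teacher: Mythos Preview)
Your proposal is correct and matches the paper's own approach: the paper states the corollary with only a one-sentence justification (``Since equivariant vector bundles decompose uniquely as direct sums of equivariant line bundles, we obtain a classification of equivariant vector bundles''), and your argument is precisely the unpacking of that sentence via Theorem~\ref{theorem: equivariantly split theorem} and Corollary~\ref{Corollary: Klyachko-like result for line bundles}.
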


It remains to recast this in a way that looks more like Klyachko's theorem.

\begin{mydef}A filtration $E_i\subseteq M$ on a module $M$ over a semiring $R$ indexed by $\mathbb{Z}$ is called \emph{exhaustive} if 
\[
\bigcap_{i\in\mathbb{Z}} E_i = 0 \textrm{ and } \bigcup_{i\in\mathbb{Z}} E_i = M.
\]
\end{mydef}

\begin{mydef}\label{definition: Delta space}
Fix fan $\Delta$ and dual lattice $\Lambda$. An $n$-dimensional \emph{$\Delta$-Klyachko space} over an idempotent semifield $K$ is a free $K$-module $E$ of rank $n$ with collections of decreasing exhaustive filtrations $\{E^\rho(n)\}$ indexed by the rays of $\Delta$, satisfying the following compatibility condition: for each cone $\sigma \in \Delta$, there exist a decomposition $E=\bigoplus \limits_{[u] \in \Lambda / (\Lambda \cap \sigma^\vee)^\times} E_{[u]}$ such that
\[
E^\rho(i) = \sum_{\left<[u],v_\rho\right> \geq i}E_{[u]},
\]
for every $\rho \preceq \sigma$ and $i \in \mathbb{Z}$, where $v_\rho$ is the primitive generator of $\rho$.
\end{mydef}

Let $E$ and $F$ be $\Delta$-Klyachko spaces over an idempotent semifield $K$. By a morphism from $E$ to $F$, we mean a $K$-linear map $\phi:E \to F$ such that $\phi(E^\rho(i)) \subseteq F^\rho(i)$ for every ray $\rho \in \Delta$ and $i \in \mathbb{Z}$.

\begin{lem}
Let $E$ and $F$ be $\Delta$-Klyachko spaces over an idempotent semifield $K$.  Let $\phi: E \rightarrow F$ be an isomorphism of $\Delta$-Klyachko spaces.  Let $[u]\in \Lambda / (\Lambda \cap \sigma^\vee)^\times$ for some cone $\sigma$.  Then $\phi(E_{[u]}) = F_{[u]}$.
\end{lem}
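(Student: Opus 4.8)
The plan is to prove the stronger statement that, for a fixed cone $\sigma$, the decomposition $E = \bigoplus_{[u]} E_{[u]}$ appearing in Definition \ref{definition: Delta space} is uniquely determined by the filtrations $\{E^\rho(i)\}_{\rho \preceq \sigma}$, and then to observe that an isomorphism of $\Delta$-Klyachko spaces transports this canonical data. For the latter reduction: since $\phi$ is an isomorphism of $\Delta$-Klyachko spaces, both $\phi$ and $\phi^{-1}$ are morphisms, so $\phi(E^\rho(i)) = F^\rho(i)$ for every ray $\rho$ and every $i \in \mathbb{Z}$. Then $F = \bigoplus_{[u]} \phi(E_{[u]})$ is a direct-sum decomposition of $F$, and applying $\phi$ to $E^\rho(i) = \sum_{\langle [u], v_\rho\rangle \geq i} E_{[u]}$ gives $F^\rho(i) = \sum_{\langle [u], v_\rho\rangle \geq i} \phi(E_{[u]})$ for $\rho \preceq \sigma$. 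Hence $\{\phi(E_{[u]})\}_{[u]}$ satisfies the compatibility condition of Definition \ref{definition: Delta space} for $\sigma$, and the uniqueness statement forces $\phi(E_{[u]}) = F_{[u]}$.

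The core of the argument is the uniqueness claim, and here I would exploit the rigidity of free modules over an idempotent semifield. By Corollary \ref{corollary: semidirect} any two bases of a free $K$-module differ by a permutation together with rescalings, so the set of \emph{coordinate lines} of $E$ --- the $K$-spans of the vectors of an arbitrary basis --- is intrinsic to $E$; call them $\ell_1, \dots, \ell_n$. Every submodule of $E$ that occurs as a direct summand in a decomposition of $E$ --- in particular each $E_{[u]}$, and each $E^\rho(i)$, which is the summand $\bigoplus_{\langle [u], v_\rho\rangle \geq i} E_{[u]}$ with complement $\bigoplus_{\langle [u], v_\rho\rangle < i} E_{[u]}$ --- is, by Lemma \ref{lemma: direct summand of free module} combined with this rigidity, the direct sum of a uniquely determined subset of the $\ell_j$, namely of those $\ell_j$ contained in it. Consequently, for each ray $\rho \preceq \sigma$ and each coordinate line $\ell$ the integer $i_\rho(\ell) := \max\{\, i : \ell \subseteq E^\rho(i)\,\}$ is well defined (finite, because the filtration $E^\rho(\bullet)$ is decreasing and exhaustive), and comparison with $E^\rho(i) = \bigoplus_{\langle [u], v_\rho\rangle \geq i} E_{[u]}$ shows $i_\rho(\ell) = \langle [u], v_\rho\rangle$ whenever $\ell \subseteq E_{[u]}$.

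To conclude I would use that the assignment $[u] \mapsto (\langle [u], v_\rho\rangle)_{\rho \preceq \sigma}$ is injective on $\Lambda/(\Lambda \cap \sigma^\vee)^\times$: if $\langle u, v_\rho\rangle = 0$ for all rays $\rho$ of $\sigma$, then $u$ is orthogonal to $\mathrm{span}(\sigma)$, i.e.\ $u \in \Lambda \cap \sigma^\perp = (\Lambda \cap \sigma^\vee)^\times$. Therefore two coordinate lines lie in the same summand $E_{[u]}$ precisely when they have the same tuple $(i_\rho(\ell))_{\rho \preceq \sigma}$, a condition phrased entirely in terms of the filtrations; explicitly, $E_{[u]}$ is the direct sum of the coordinate lines $\ell$ with $i_\rho(\ell) = \langle [u], v_\rho\rangle$ for all $\rho \preceq \sigma$. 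This description depends only on $\{E^\rho(i)\}$ and on the (canonical) coordinate lines of $E$, which establishes uniqueness; and since $\phi$ carries coordinate lines of $E$ bijectively to coordinate lines of $F$ and satisfies $\ell \subseteq E^\rho(i) \iff \phi(\ell) \subseteq F^\rho(i)$ (because $\phi(E^\rho(i)) = F^\rho(i)$), the very same description applied to $F$ yields $\phi(E_{[u]}) = F_{[u]}$.

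The main obstacle is precisely the uniqueness of the $\sigma$-decomposition: over a ring one would recover $E_{[u]}$ as a subquotient built from intersections of the $E^\rho(i)$, but with no subtraction or quotients available one has to pin the decomposition down at the level of coordinate lines using Corollary \ref{corollary: semidirect} and Lemma \ref{lemma: direct summand of free module}. A secondary point requiring care is verifying that each $E^\rho(i)$ really is a direct summand of $E$ (so that the rigidity lemmas apply) and that, for a coordinate line $\ell$, the containment $\ell \subseteq E^\rho(i)$ genuinely detects whether $\ell$ is one of the coordinate-line summands of $E^\rho(i)$.
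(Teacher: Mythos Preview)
Your proposal is correct and follows essentially the same approach as the paper: both arguments exploit the rigidity of bases over an idempotent semifield (Corollary~\ref{corollary: semidirect} and Lemma~\ref{lemma: direct summand of free module}) to see that each basis vector lies in a single summand $E_{[u]}$, then identify $[u]$ via the filtration depth $\max\{i:\ell\subseteq E^\rho(i)\}=\langle u,v_\rho\rangle$ together with the injectivity of $[u]\mapsto(\langle u,v_\rho\rangle)_{\rho\preceq\sigma}$ on $\Lambda/(\Lambda\cap\sigma^\vee)^\times$. The only difference is organizational: you package this as a uniqueness statement for the $\sigma$-decomposition and then transport it along $\phi$, whereas the paper argues directly that each basis element of $E_{[u]}$ is sent by $\phi$ into $F_{[u]}$.
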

\begin{proof}
Since $F = \bigoplus\limits_{[v]\in \Lambda / (\Lambda \cap \sigma^\vee)^\times} F_{[v]}$, and since direct summands of free modules over $K$ are free, there is a basis of $F$ such that every element lies in $F_{[v]}$ for some $v$ (the basis is a disjoint union of the bases of the summands).  Since the basis is unique up to rescaling and permutation, this is in fact true for any basis.

Fix $[u]\in \Lambda / (\Lambda \cap \sigma^\vee)^\times$.  Similar to the above observation $E_{[u]}$ is free with a basis that is a subset of some basis of $E$.  Let $e\in E_{[u]}$ be an element of such a basis.  Then $\phi(e)$ belongs to some basis of $F$ so $\phi(e)\in F_{[v]}$ for some $v$.  It remains to show $[v] = [u]$.

For any ray $\rho$ that is a face of $\sigma$, the minimum $i$ such that $e \in E^\rho(i)$ is $\langle u, v_\rho \rangle$, where $v_\rho$ is the primitive generator of $\rho$. Because $\phi$ is an isomorphism, this must be identical to the minimum $i$ such that $\phi(e) \in F^{\rho}(i)$, which is $\langle v, v_\rho \rangle$.  Thus we have $\langle u, v_\rho \rangle = \langle v, v_\rho \rangle$, i.e $\langle u - v, v_\rho \rangle = 0$.  For any $w\in \sigma$, $w$ lies in the subspace spanned by such rays, so $\langle u - v, w\rangle = 0$.  In particular because $u - v$ and $v - u$ have nonnegative inner product with any $w\in \sigma$, and since they are elements of $\Lambda$, $u - v \in (\Lambda \cap \sigma^\vee)^\times$.  So $[u] = [v]$ and hence $\phi(E_{[u]})\subseteq F_{[u]}$.  The reverse inclusion follows by applying this result to $\phi^{-1}$.
\end{proof}

\begin{lem}\label{lemma: correspondence family space}
There is a one-to-one correspondence between isomorphism classes of $n$-dimensional $\Delta$-Klyachko spaces over an idempotent semifield $K$ and $S_n$-orbits of $n$-tuples of Klyachko families.
\end{lem}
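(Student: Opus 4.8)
The plan is to exhibit the bijection explicitly in both directions. The one algebraic fact that makes everything work is that over an idempotent semifield $K$ a free module has a basis that is unique up to permutation and rescaling: indeed $\Spec K$ is a single point, hence connected, so $K$ has only trivial idempotent pairs (Proposition~\ref{proposition: connected implies no idempotet paris}) and is zero-sum-free, so Proposition~\ref{proposition: exact sequence R, GL, S_n} and Lemma~\ref{lemma: direct summand of free module} apply with $R=K$.

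From an $n$-tuple of Klyachko families $\big(\{[u^{(j)}_\sigma]\}_{\sigma\in\Delta}\big)_{j=1}^n$ I would build a $\Delta$-Klyachko space as follows. Put $E=K^n$ with standard basis $e_1,\dots,e_n$. For a ray $\rho$, the Klyachko compatibility condition — equivalently condition~(3) of Lemma~\ref{lemma: equivalent congruent} — says that the integer $\langle u^{(j)}_\sigma, v_\rho\rangle$ is independent of the cone $\sigma\succeq\rho$ (and well defined since $(\Lambda\cap\sigma^\vee)^\times=\Lambda\cap\sigma^\perp$ pairs trivially with $v_\rho\in\sigma$); call it $a_j(\rho)$. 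Define $E^\rho(i)=\mathrm{span}\{e_j : a_j(\rho)\ge i\}$; this is a decreasing filtration which is exhaustive because each $a_j(\rho)$ is a finite integer. For the compatibility condition at a cone $\sigma$, set $E_{[u]}=\mathrm{span}\{e_j : [u^{(j)}_\sigma]=[u]\}$ (only finitely many nonzero); then $E=\bigoplus_{[u]}E_{[u]}$ and one checks directly that $E^\rho(i)=\sum_{\langle[u],v_\rho\rangle\ge i}E_{[u]}$ for $\rho\preceq\sigma$. Permuting the tuple by $\tau\in S_n$ merely relabels the basis vectors, so yields an isomorphic space; hence this construction descends to a well-defined map from $S_n$-orbits of $n$-tuples of Klyachko families to isomorphism classes of $\Delta$-Klyachko spaces.

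For surjectivity, start from a $\Delta$-Klyachko space $E$ and fix a basis $e_1,\dots,e_n$. For each $\sigma$, the given decomposition $E=\bigoplus_{[u]}E_{[u]}$, together with an iteration of Lemma~\ref{lemma: direct summand of free module}, shows that each nonzero $E_{[u]}$ is free on a subset of $\{e_1,\dots,e_n\}$ and that these subsets partition the basis; thus each $e_j$ lies in a unique summand $E_{[u^{(j)}_\sigma]}$. The key computation, valid for $\rho\preceq\sigma$, is that $\sup\{i\in\mathbb{Z} : e_j\in E^\rho(i)\}=\langle u^{(j)}_\sigma, v_\rho\rangle$, since $e_j\neq 0$ lies in exactly one summand and so belongs to $E^\rho(i)=\bigoplus_{\langle[u],v_\rho\rangle\ge i}E_{[u]}$ precisely when $\langle[u^{(j)}_\sigma],v_\rho\rangle\ge i$. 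The left-hand side does not involve $\sigma$, so $\langle u^{(j)}_\sigma, v_\rho\rangle$ depends only on $\rho$; by Lemma~\ref{lemma: equivalent congruent} this is exactly the assertion that $\{[u^{(j)}_\sigma]\}_\sigma$ is a Klyachko family, and the $\Delta$-Klyachko space built from the resulting $n$-tuple is isomorphic to $E$ via $e'_j\mapsto e_j$ (the filtrations agree by the same computation). For injectivity of the map on $S_n$-orbits, an isomorphism $\phi\colon E\to F$ of $\Delta$-Klyachko spaces sends the chosen basis of $E$ to a basis of $F$, so $\phi(e_j)=c_jf_{\tau(j)}$ for some $\tau\in S_n$, $c_j\in K^\times$; combining with $\phi(E_{[u]})=F_{[u]}$ (the preceding lemma) and the key computation gives $\langle u^{(j)}_\sigma, v_\rho\rangle=\langle v^{(\tau(j))}_\sigma, v_\rho\rangle$ for all rays $\rho\preceq\sigma$, which since $\sigma$ is spanned by its rays forces $[u^{(j)}_\sigma]=[v^{(\tau(j))}_\sigma]$ for all $\sigma$, i.e. the two tuples lie in one $S_n$-orbit. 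I expect the main obstacle to be purely organizational: carefully handling the fact that the direct-sum decomposition in the definition of a $\Delta$-Klyachko space depends on $\sigma$ while still being compatible with a single basis (the iterated application of Lemma~\ref{lemma: direct summand of free module}), together with the ``supremum of filtration levels'' bookkeeping that converts filtration data into the pairings $\langle u_\sigma, v_\rho\rangle$; the rest is routine.
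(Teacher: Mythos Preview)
Your proof is correct and follows essentially the same approach as the paper: both construct the $\Delta$-Klyachko space from a tuple by letting $E^\rho(i)$ be spanned by the basis vectors with $\langle u^{(j)}_\sigma,v_\rho\rangle\ge i$ and $E_{[u]}$ by those with $[u^{(j)}_\sigma]=[u]$, prove surjectivity via Lemma~\ref{lemma: direct summand of free module} together with the ``filtration-level supremum'' computation, and handle injectivity using that automorphisms of $K^n$ are monomial (Proposition~\ref{proposition: exact sequence R, GL, S_n}). The only minor difference is that for injectivity you invoke the preceding lemma on $\phi(E_{[u]})=F_{[u]}$, whereas the paper absorbs the permutation into the $S_n$-orbit and argues directly that a pure rescaling sends $e_k\in E_{[u]}$ to $a_ke_k\in F_{[u]}$; both routes are equivalent.
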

\begin{proof}
Suppose we are given an $n$-tuple of Klyachko families, the $k$-th of which is denoted $[u_{\sigma, k}] \in \Lambda / (\Lambda \cap \sigma^\vee)^\times$.  For each ray $\rho$, we let
\[
i_{\rho, k} = \langle[u_{\sigma,k}],v_\rho\rangle,
\]
where $\sigma$ is some cone containing $\rho$ as a face and $v_\rho$ is the primitive generator of $\rho$.  By the compatibility condition in the definition of Klyachko families, the choice of $\sigma$ is irrelevant.  
Now let $E = K^n$. Let $E^\rho(i)$ be spanned by the standard basis vectors $e_k$ such that $i \geq i_{\rho, k}$.  One can easily see that this is an exhaustive filtration.  For a cone $\sigma$, and an element $[u]\in \Lambda / (\Lambda \cap \sigma^\vee)^\times$, let
\[
E_{[u]}= \text{span}\{e_k \mid [u] = [u_{\sigma, k}]\}.
\]
Clearly for each $k$, this latter condition is true for exactly one $[u]$, so the $E_{[u]}$ yield a partition of the standard basis and hence
\begin{equation}\label{eq: decomp}
E=\bigoplus \limits_{[u] \in \Lambda / (\Lambda \cap \sigma^\vee)^\times} E_{[u]}.
\end{equation}
For the compatibility condition, recall that
\begin{equation}\label{eq: rho}
E^\rho(i) = \text{span}\{e_k \mid i \geq \langle[u_{\sigma,k}],v_\rho\rangle \}. 
\end{equation}
This basis is the disjoint union over $[u] \in \Lambda / (\Lambda \cap \sigma^\vee)^\times$ satisfying $i \geq \langle[u],v_\rho\rangle$ of the set of $e_k$ such that $[u_{\sigma, k}] = [u]$ or equivalently such that $e_k$ is an element of the basis of $E_{[u]}$.  Since the basis of $E^\rho(i)$ is the disjoint union of the bases of such $E_{[u]}$, $E^\rho(i)$ is the direct sum of such modules, i.e. 
\begin{equation} \label{eq: isotypical}
E^\rho(i) = \sum_{\left<[u],v_\rho\right> \geq i}E_{[u]}.
\end{equation}
If we started with a different $n$-tuple of Klyachko families that belongs to this $S_n$-orbit, they are related by some permutation $p \in S_n$, and it is clear that $e_k \mapsto e_{p(k)}$ defines an isomorphism between the resulting $\Delta$-Klyachko spaces.  So we have a well-defined map from $S_n$-orbits of $n$-tuples of Klyachko families to isomorphism classes of $\Delta$-Klyachko spaces.

For surjectivity, let $E$ be an $n$-dimensional $\Delta$-Klyachko space.  Write $E$ as a direct sum of $1$-dimensional free modules $E = L_1 \oplus\ldots\oplus L_n$.  Each $L_i$ is spanned by a single basis vector of $E$, and by Lemma~\ref{lemma: direct summand of free module} any direct summand of $E$ is spanned by a subset of the basis.  So in the direct sum decomposition $E=\bigoplus \limits_{[u] \in \Lambda / (\Lambda \cap \sigma^\vee)^\times} E_{[u]}$ corresponding to a cone $\sigma$, exactly one $E_{[u]}$ contains $L_k$.  Call this $[u_{\sigma, k}]$. 

We claim that $\{[u_{\sigma,k}]\}_{\sigma \in \Delta,~k=1,\dots,n}$ is an $n$-tuple of Klyachko family which maps to $E$ under the above construction. In fact, one can easily see that $\{[u_{\sigma,k}]\}_{\sigma \in \Delta,~k=1,\dots,n}$ corresponds to $E$ from the above construction, and hence we only have to check that $\langle [u_{\sigma,k}],v_\rho\rangle$ for a ray $\rho \preceq \sigma$ only depends on $\rho$ (not $\sigma$). To see this, observe that since $E^\rho(i)$ is a direct sum of isotypical components as in \eqref{eq: isotypical}, we have the following:
\begin{equation}\label{eq: iff}
L_k \subseteq E^\rho(i) \iff E_{[u_{\sigma,k}]} \subseteq E^\rho(i) \iff \langle[u_{\sigma,k}],v_\rho\rangle \geq i. 
\end{equation}
Since the condition $L_k \subseteq E^\rho(i)$ does not depend on a choice of $\sigma$, the condition $\langle [u_{\sigma,k}],v_\rho\rangle \geq i$ does not depend on a choice of $\sigma$ either. It follows that the minimum $i$ for which \eqref{eq: iff} holds, which is $\langle [u_{\sigma,k}], v_\rho \rangle$, does not depend on $\sigma$. In particular, we have
\[
\langle [u_{\sigma,k}], v_\rho \rangle = \langle [u_{\rho,k}], v_\rho \rangle.
\]
This shows the compatibility condition, and hence $\{[u_{\sigma,k}]\}_{\sigma \in \Delta,~k=1,\dots,n}$ is an $n$-tuple of Klyachko family which maps to $E$ under the above construction.

Now, we prove injectivity. Suppose that we have two $n$-tuples of Klyachko families $\{[u_\sigma]\}$ and $\{[v_\sigma]\}$ which give rise to isomorphic $\Delta$-Klyachko spaces $E$ and $F$. In other words, we have an isomorphism $\varphi:E=K^n \to F=K^n$ of free $K$-modules such that 
\begin{equation}\label{eq: filtration}
\varphi(E^\rho(i)) = F^\rho(i)
\end{equation}
for every ray $\rho \in \Delta$ and $i \in \mathbb{Z}$. From Proposition~\ref{proposition: exact sequence R, GL, S_n}, $\varphi$ is obtained by permutation of standard basis vectors and their rescaling. Since permuting the standard basis vectors results in another representative in an $S_n$-orbit of  $n$-tuple of Klyachko families, we may assume that $\varphi$ is obtained by rescaling the standard basis vectors. 

Fix a cone $\sigma \in \Delta$. Let $\{[u_{\sigma,k}]\}_{k=1,\dots,n}$ and $\{[v_{\sigma,k}]\}_{k=1,\dots,n}$ be the elements in the Klyachko families indexed by the cone $\sigma$. Now, observe that $[u_{\sigma,k}]$ is $[u]$ in \eqref{eq: decomp} if $E_{[u]}$ contains $e_k$, which is true if and only if $F_{[u]}$ contains $a_ke_k$ for some $a_k \in K^*$. This in turn holds if and only if $e_k \in F[u]$, which is equivalent to $[u]=[v_{\sigma,k}]$. This proves injectivity. 
\end{proof}

We thus have proved the following version of tropical Klyachko theorem from Corollary \ref{corollary: equivariant picard} and Lemma \ref{lemma: correspondence family space}.

\begin{mytheorem}\label{theorem: tropical Klyachko}
Let $K$ be an idempotent semifield. Let $X_K$ be the toric scheme over $K$ associated to a fain $\Delta$. The set of isomorphism classes of toric vector bundles on $X_K$ is in one-to-one correspondence with the set of isomorphism classes of $\Delta$-Klyachko spaces over $K$.
\end{mytheorem}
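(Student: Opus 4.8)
\emph{Proof proposal.} The plan is to chain together the classification results already established, taking isomorphisms into account at each stage. First I would use Theorem~\ref{theorem: equivariantly split theorem}: a toric vector bundle $E$ of rank $n$ on $X_K$ decomposes as $E = L_1 \oplus \cdots \oplus L_n$ into toric line bundles, uniquely up to permutation of the summands. Conversely, any $n$-tuple of toric line bundles yields a toric vector bundle of rank $n$ via the direct sum of equivariant bundles (Definition~\ref{definition: direct sum of equivarant bundles}), and two such sums are isomorphic as toric vector bundles precisely when the tuples agree up to a permutation together with isomorphisms of the individual line bundles. (The only point needing care here is that an arbitrary isomorphism of toric vector bundles must respect the canonical line-bundle decomposition; this follows from the uniqueness in Theorem~\ref{theorem: equivariantly split theorem} combined with Proposition~\ref{theorem: bundles split}.) Thus isomorphism classes of rank-$n$ toric vector bundles on $X_K$ are in bijection with $S_n$-orbits of $n$-tuples of isomorphism classes of toric line bundles on $X_K$ — this is the content of the corollary stated just after Definition~\ref{definition: Klyachko family}.

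Next I would invoke Corollary~\ref{Corollary: Klyachko-like result for line bundles}, which identifies isomorphism classes of toric line bundles on $X_K$ with Klyachko families (Definition~\ref{definition: Klyachko family}). Since this is a bijection of sets with no extra structure, applying it coordinatewise identifies $n$-tuples of isomorphism classes of toric line bundles with $n$-tuples of Klyachko families, equivariantly for the $S_n$-action permuting coordinates. Hence isomorphism classes of rank-$n$ toric vector bundles on $X_K$ correspond to $S_n$-orbits of $n$-tuples of Klyachko families.

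Finally, Lemma~\ref{lemma: correspondence family space} furnishes a bijection between $S_n$-orbits of $n$-tuples of Klyachko families and isomorphism classes of $n$-dimensional $\Delta$-Klyachko spaces over $K$. Composing the three correspondences above and taking the union over all ranks $n$ produces the desired one-to-one correspondence between isomorphism classes of toric vector bundles on $X_K$ and isomorphism classes of $\Delta$-Klyachko spaces over $K$.

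The substantive content — and hence the main obstacle — is not in the final composition, which is formal, but in verifying that the notion of isomorphism is faithfully tracked through each step. Concretely, the two places where this requires real input are: (i) that an isomorphism of toric vector bundles is forced to come from a permutation plus isomorphisms of the summand line bundles, which relies on the uniqueness of the splitting (Theorem~\ref{theorem: equivariantly split theorem}) and on Proposition~\ref{theorem: bundles split}; and (ii) that an isomorphism of $\Delta$-Klyachko spaces is exactly the data of a permutation and rescaling of a basis compatible with all filtrations, which is the lemma preceding Lemma~\ref{lemma: correspondence family space} and rests on Proposition~\ref{proposition: exact sequence R, GL, S_n} describing $\mathrm{GL}_n$ over a zero-sum-free semiring as $(K^\times)^n \rtimes S_n$. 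With those compatibility checks in hand (all of which are already available in the preceding sections), the theorem follows immediately by composition.
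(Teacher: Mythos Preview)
Your proposal is correct and follows essentially the same route as the paper: the paper's proof is the single sentence that the theorem follows from the corollary immediately after Definition~\ref{definition: Klyachko family} (which packages Theorem~\ref{theorem: equivariantly split theorem} and Corollary~\ref{Corollary: Klyachko-like result for line bundles}) together with Lemma~\ref{lemma: correspondence family space}, exactly as you have unpacked it. Your more detailed tracking of isomorphisms through each step is accurate and simply makes explicit what the paper leaves implicit.
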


\begin{rmk}\label{remark: valuated matroids}
A $\Delta$-Klyachko space over $\TT$ naturally determines a tropical toric reflexive sheaf as in \cite[Definition 1.1 (Definition 5.1)]{khan2024tropical} as follows; Let $E=\mathbb{T}^n$ be a $\Delta$-Klyachko space of rank $n$, for some fan $\Delta$. $E$ gives rise to a simple valuated matroid $\mathcal{M}$ with ground set $\{1,2,\dots,n\}$. For each ray $\rho \in \Delta$, we have the collection $\{E^\rho(i)\}_{i \in \mathbb{Z}}$. Now, from \eqref{eq: rho}, we can confirm that each $E^\rho(i)$ is a flat of the underlying matroid $\underline{\mathcal{M}}$; note that $\underline{\mathcal{M}}$ is the free matroid of rank $n$ on $\{1,\dots,n\}$. The conditions that 
\begin{enumerate}
    \item 
$E^\rho(j)\leq E^\rho(i)$ if $j >i$;
\item 
$E^\rho(i) = \emptyset$ for $i \gg 0$;
\item 
$E^\rho(i) = \{1,\dots,n\}$ for $i \ll 0$;
\end{enumerate}
are clear from the definition (with the order reversed). Now the definition of tropical toric vector bundle \cite[Definition 5.4]{khan2024tropical} is a tropical toric reflexive sheaf together with a compatibility condition which is precisely \eqref{eq: rho}. 
\end{rmk}

\appendix

\section{Toric varieties, toric monoid schemes, and toric schemes over $\mathbb{T}$}\label{section: Toric varieties, toric monoid schemes, and tropical toric schemes}

We recall how toric varieties, toric monoid schemes and toric schemes over $\mathbb{T}$ spaces are related and what the ``torus action'' is in the latter cases. We proceed to define toric vector bundles in each case. This section is largely expository. 

We start with the connection between toric monoid schemes and toric varieties. This correspondence has been observed in \cite{Kato94}, \cite{deitmar2008f1} and \cite{cortinas2015toric}. The following theorem is well known. \footnote{For instance, see \cite[Example 5.8]{cortinas2015toric}.} 

We note that each toric monoid scheme has a unique generic point (\cite[Lemma 2.3 and Theorem 4.4]{cortinas2015toric}). 

\begin{mythm}\label{thm:F1-to-C-toric}
Let $X$ be a toric monoid scheme and $k$ a field. Then $X_k$ is a toric variety.
\end{mythm}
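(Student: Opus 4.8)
The plan is to unwind the definitions and observe that base change along $-\otimes_{\mathbb{F}_1}k$ carries the fan-theoretic gluing of a toric monoid scheme onto the classical fan-theoretic gluing of a toric variety. Let $\Delta$ be the fan in $N_\R$ from which $X$ is built, so that $X$ has an affine open cover $\{U_\sigma\}_{\sigma\in\Delta}$ with $U_\sigma=\Spec M_\sigma$, where $M_\sigma=(\sigma^\vee\cap\Lambda)\cup\{0\}$ is the (pointed) toric monoid attached to $\sigma$, and with $U_\tau\hookrightarrow U_\sigma$ the canonical open immersion (a monoid localization) whenever $\tau$ is a face of $\sigma$; cf.\ the discussion in Section~\ref{section: preliminaries} and \cite{cortinas2015toric}. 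By Definition~\ref{definition: based change functor}, $X_k=X\otimes_{\mathbb{F}_1}k$ is obtained by replacing each $U_\sigma$ with $\Spec k[M_\sigma]=\Spec k[\sigma^\vee\cap\Lambda]$ and gluing along the images of the same face relations.

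The first step is to check that this base change is compatible with the gluing. For a monoid $M$ and $f\in M$, localization gives $M_f=S^{-1}M$ with $S=\{1,f,f^2,\dots\}$, and there is a natural isomorphism $k[M_f]\cong k[M]_{[f]}$; hence each open immersion $U_\tau\hookrightarrow U_\sigma$ base changes to the open immersion $U_{\tau,k}\hookrightarrow U_{\sigma,k}$ of schemes over $k$. More structurally, the functor $-\otimes_{\mathbb{F}_1}k$ is a left adjoint (Definition~\ref{definition: based change functor}), so it preserves colimits; since a monoid scheme is, by construction, a colimit of its affine pieces along these open immersions, $X_k$ is the colimit of the schemes $\Spec k[\sigma^\vee\cap\Lambda]$ along the corresponding open immersions. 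In other words, $X_k$ is glued from the affine pieces $\Spec k[\sigma^\vee\cap\Lambda]$ according to exactly the pattern of face inclusions in $\Delta$.

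The second step is to identify the resulting scheme with the classical toric variety $X(\Delta)_k$. For each cone $\sigma$, $\Spec k[\sigma^\vee\cap\Lambda]$ is by definition the affine toric variety $U_\sigma$ associated to $\sigma$ over $k$, and the gluing data just produced coincides with the standard construction of $X(\Delta)_k$ from $\Delta$ (\cite[\S 1.4]{Ful93}, \cite{CLS}). It then remains only to recall the standard facts that make $X(\Delta)_k$ a toric variety: each $k[\sigma^\vee\cap\Lambda]$ is a normal (saturated) affine semigroup ring, the compatibility condition on a fan (intersections of cones are common faces) yields separatedness, the open piece $U_{\{0\},k}=\Spec k[\Lambda]$ is the torus $T_N$ and is dense, and the torus action is glued from the coaction homomorphisms $k[\sigma^\vee\cap\Lambda]\to k[\Lambda]\otimes_k k[\sigma^\vee\cap\Lambda]$ induced by the diagonal monoid maps $m\mapsto m\otimes m$. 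Density of $U_{\{0\},k}$ uses that $X$ has a unique generic point lying in every $U_\sigma$ (\cite[Lemma 2.3, Theorem 4.4]{cortinas2015toric}), together with the fact that $\{0\}$ is a face of every cone.

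The content here is entirely bookkeeping; the only point requiring genuine care is the compatibility of $-\otimes_{\mathbb{F}_1}k$ with the gluing, i.e.\ that it sends the Zariski cover and cocycle of $X$ to a Zariski cover and cocycle of $X_k$. This is where the identity $k[M_f]\cong k[M]_{[f]}$ (equivalently, colimit-preservation by the left adjoint $-\otimes_{\mathbb{F}_1}k$, as in \cite[Section 2.3]{giansiracusa2016equations}) does the work; once that is in hand, the identification with $X(\Delta)_k$ and hence the conclusion are immediate from the classical theory.
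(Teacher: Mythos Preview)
The paper does not actually give a proof of this theorem; it states the result as well known and defers to \cite[Example~5.8]{cortinas2015toric}. Your proposal is a correct and standard unpacking of that reference: you verify that the base change functor $-\otimes_{\mathbb{F}_1}k$ carries the fan-gluing of the toric monoid scheme to the classical fan-gluing of $X(\Delta)_k$, using that localization commutes with the monoid-algebra functor, and then invoke the classical facts about $X(\Delta)_k$. Since the paper offers no competing argument, there is nothing to compare; your write-up is an acceptable elaboration of the cited result.
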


\begin{rmk} 
Theorem~\ref{thm:F1-to-C-toric} also holds when $X_k$ is reducible. Then the group $G \cong \mathbb{Z}^r \times F$ for a finite Abelian group $F$. 
We denote by $\mathcal{G} = \Spec G$ the torus. It is easy to see that $\mathcal{G} = \Spec G$ is dense in $X$ and note that $\mathcal{G}(k) \cong (k^{\times})^{r} \times F$. Since the torus $\mathcal{G}_k$ is dense in $X_k$, it meets every irreducible component of $X_k$, and $F$ permutes the irreducible components. 
\end{rmk}

In order to understand the torus action $\mathcal G$ on $X$ it is enough to do so in the case when $X$ is affine, i.e. $X=\Spec A$ for a toric monoid $A$. Let $k$ be a field and let $X_{k} = \Spec k[A]$ and $\mathcal{G}_k = \Spec k[G]$. Here $X_k$ is a toric variety with torus $\mathcal{G}_k$. By definition, the action of the (dense) torus on itself extends to an action on $X_k$. The following proposition explains the relation between the points of the toric variety $X_k$ and the monoid scheme $X$ and the actions of the corresponding tori on them.

\begin{pro} \label{proposition: constant torus action}
There is an injection from the points of the toric monoid scheme $X = \Spec A$ to the points of the affine toric variety $X_k = \Spec k[A]$. The coaction of the torus on $k[A]$ gives rise to a coaction on $A$, which is constant on the points of $X$. In particular, there is a one to one correspondence between the points of $X$ and the orbits of the torus action.
\end{pro}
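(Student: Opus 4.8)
The plan is to realise the points of the monoid scheme $X=\Spec A$ as the ``monomial'' prime ideals of $k[A]$ and then to match these with the generic points of the torus orbits on $X_k$. First I would build the injection: to a prime ideal $\mathfrak p$ of the monoid $A$, associate the ideal $k[\mathfrak p]\subseteq k[A]$ spanned over $k$ by the monomials $[a]$ with $a\in\mathfrak p$. Because the complement of a prime ideal is a submonoid, the quotient $k[A]/k[\mathfrak p]$ is canonically the monoid algebra $k[A\setminus\mathfrak p]$; and since $A$ is a toric monoid it sits inside a lattice, so $k[A\setminus\mathfrak p]$ is a subring of a Laurent polynomial ring over $k$, hence a domain, and $k[\mathfrak p]$ is prime. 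As $\mathfrak p$ is recovered from $k[\mathfrak p]$ as $\{a\in A:[a]\in k[\mathfrak p]\}$, the map $\iota\colon\mathfrak p\mapsto k[\mathfrak p]$ is injective; in fact it is a section of the natural continuous map $\pi\colon\Spec k[A]\to\Spec A$ induced by $A\to k[A]$.

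Next I would describe the coaction. The torus action $\mathcal G_k\times X_k\to X_k$ corresponds to the $k$-algebra map $\mu^\sharp\colon k[A]\to k[G]\otimes_k k[A]$ with $[a]\mapsto[a]\otimes[a]$ (the torus scales $\chi^a$ by $\chi^a$). Since $\mu^\sharp$ carries the monomial basis into the monomial basis, it is induced by a monoid homomorphism on the defining monoids — the coaction on $A$ — and hence by a morphism $\mathcal G\times X\to X$ of monoid schemes, where $\mathcal G=\Spec G$. That this is ``constant on points'' comes from $G$ being a group: in $k[G]\otimes_k k[A]$ the element $[a]\otimes 1$ is a unit, so a prime ideal contains $[a]\otimes[a]=([a]\otimes 1)(1\otimes[a])$ iff it contains $1\otimes[a]$; the same holds at the monoid level, giving a bijection between the points of $\mathcal G\times X$ and those of $X$ under which $\mu^\sharp$ induces the identity. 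Thus the $\mathcal G$-action fixes every point of $X$, i.e.\ the orbit morphism $\mathcal G\to X$ through any point is constant.

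For the final assertion I would note that the same unit argument shows $\pi$ is invariant on underlying topological spaces (a point of $\mathcal G_k$ scales monomials by nonzero scalars, so does not change which monomials lie in a given prime of $k[A]$, hence does not change its image under $\pi$); so the fibres of $\pi$ are $\mathcal G_k$-stable, and nonempty because $\iota$ is a section. It then remains to see that each fibre is a single orbit, which I would check affine-locally: for $A=\sigma^\vee\cap M$ the primes of $A$ are the complements of the faces of $\sigma^\vee$, hence indexed by faces $\tau\preceq\sigma$, and $k[A]/k[\mathfrak p_\tau]=k[\sigma^\vee\cap\tau^\perp\cap M]$ is the coordinate ring of the orbit closure $\overline{O(\tau)}$ by \cite[\S3.1]{Ful93}. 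A prime of $k[A]$ lies over $\mathfrak p_\tau$ exactly when it contains precisely the monomials outside that face, i.e.\ when it survives in the localisation $k[\tau^\perp\cap M]=\mathcal O(O(\tau))$; hence $\pi^{-1}(\mathfrak p_\tau)=O(\tau)$ and $\iota(\mathfrak p_\tau)$ is its generic point. As $\tau$ and $\sigma$ vary this realises every torus orbit exactly once, and composing with $\iota^{-1}$ gives the required bijection between points of $X$ and orbits of the torus action.

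The routine parts are the primeness and injectivity in the first step and the bookkeeping at the end. The main obstacle is conceptual rather than computational: pinning down the precise meaning of ``constant on the points of $X$'' and reconciling the (topologically trivial) monoid-scheme action with the variety-level orbit structure — which is why the last step ultimately rests on the orbit–cone correspondence to identify the fibres of $\pi$ with the orbits.
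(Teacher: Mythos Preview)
Your proof is correct and follows essentially the same strategy as the paper: send a prime $\mathfrak p$ of $A$ to the monomial ideal $k[\mathfrak p]\subseteq k[A]$, verify it is prime, and then invoke the standard correspondence between monomial prime ideals and torus orbits (the paper cites \cite[Section~5]{CLS}, you cite \cite[\S3.1]{Ful93}). The one substantive difference is in the primeness step: the paper argues directly by a monomial-by-monomial contradiction argument, whereas you more cleanly observe that $k[A]/k[\mathfrak p]\cong k[A\setminus\mathfrak p]$ embeds in a Laurent polynomial ring over $k$ and is therefore a domain; you also spell out the ``constant on points'' claim via the unit argument $[a]\otimes[a]=([a]\otimes 1)(1\otimes[a])$, which the paper's proof leaves implicit.
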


\begin{proof}
Consider the map $\phi: A \rightarrow k[A]$ sending a monomial $g \in A$ to $g \in k[A]$. Let $\mathfrak{p} \subset A$ be a prime ideal, then let $\mathfrak{q}$ be the monomial ideal generated by $\phi(\mathfrak{p})$. The ideal $\mathfrak{q}$ is prime. To see this argue by contradiction, assume $g,h \not\in \mathfrak{q}$ but $gh \in \mathfrak{q}$. Write $g$ and $h$ as sums of monomials $g = \sum_{i=1}^{k} m_i$ and $h = \sum_{i=1}^{s} n_i$, then $gh = \sum_{i,j} m_i n_j$. Since $\mathfrak{q}$ is a monomial ideal, if $f \in \mathfrak{q}$ is a polynomial, then each monomial of $f$ are in $\mathfrak{q}$. Thus every monomial $m_i n_j$ of $gh$ is in $\mathfrak{q}$. Since $\mathfrak{p}$ is prime it follows that either $m_i$ or $n_j$ are in $\mathfrak{p}$ and $\mathfrak{q}$ for every pair $i,j$. Thus we conclude that $\mathfrak{q}$ is prime. \par\smallskip
Thus the map $\phi$ is injective and sends prime ideals in $A$ to monomial prime ideals in $k[A]$ which are in turn in one to one correspondence with orbits of the torus action on the toric variety. The second part follows for the well-known lemma below.
\end{proof}

This lemma is a summary of the construction from \cite[Section 5]{CLS}.
\begin{lem}
Let $k$ be a field, then the ideal $\mathfrak{a} \subseteq k[A]$ corresponds to a torus invariant irreducible subvariety if and only if $\mathfrak{a}$ is a monomial ideal.
\end{lem}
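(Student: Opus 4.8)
The claim is that for a field $k$, an ideal $\mathfrak{a} \subseteq k[A]$ corresponds to a torus-invariant irreducible subvariety of $\Spec k[A]$ if and only if $\mathfrak{a}$ is a (prime) monomial ideal. The plan is to exploit the coaction of the torus $\mathcal{G}_k = \Spec k[G]$ on $k[A]$ and decompose $k[A]$ into its weight spaces. Recall that $k[A] = \bigoplus_{m \in A} k\cdot \chi^m$ is $A$-graded, and the torus coaction $k[A] \to k[A] \otimes_k k[G]$ sends $\chi^m \mapsto \chi^m \otimes \chi^m$; hence each monomial spans a one-dimensional subrepresentation with distinct weights. An ideal $\mathfrak{a}$ is torus-invariant (i.e., stable under the coaction) precisely when it is homogeneous with respect to this $A$-grading, which for $k[A]$ means exactly that it is a monomial ideal: if $f = \sum_i c_i \chi^{m_i} \in \mathfrak{a}$ with the $m_i$ distinct, invariance forces each $\chi^{m_i} \in \mathfrak{a}$.

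First I would make precise the correspondence between closed subschemes and radical ideals: $V(\mathfrak{a})$ is irreducible iff $\sqrt{\mathfrak{a}}$ is prime, and $V(\mathfrak{a})$ is torus-invariant iff $\sqrt{\mathfrak{a}}$ is torus-invariant (the radical of a homogeneous ideal is homogeneous). So without loss of generality we may take $\mathfrak{a}$ radical, and the statement reduces to: $\mathfrak{a}$ is a torus-invariant prime iff $\mathfrak{a}$ is a monomial prime. Second, I would prove the equivalence ``torus-invariant $\iff$ monomial'' for ideals of $k[A]$ using the weight-space argument above — this is the technical heart, though it is standard (cf.\ \cite[Section 5]{CLS}, as the statement already indicates). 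Third, for the forward direction I note a monomial prime ideal is automatically both, and for the reverse direction a torus-invariant irreducible subvariety has radical ideal which is torus-invariant, hence monomial, hence (being prime) a monomial prime.

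The main obstacle is somewhat cosmetic: since the lemma is explicitly flagged as ``a summary of the construction from \cite[Section 5]{CLS},'' the cleanest route is to cite that reference directly rather than reprove the monomial-ideal/invariance dictionary from scratch. The only genuine content to check is that the coaction on $k[A]$ restricts compatibly so that ``torus-invariant subvariety'' translates to ``monomial ideal'' in this possibly non-normal, possibly non-saturated monoid setting — but since $A$ embeds in the group $G$ (toric monoid) and $k[A] \subseteq k[G]$ is a graded subring, the weight decomposition goes through verbatim. I would therefore write the proof in two or three lines: reduce to radical ideals, invoke that a radical ideal is torus-invariant iff it is a monomial ideal, and observe irreducibility corresponds to primality; then cite \cite[Section 5]{CLS} for the details. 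If a self-contained argument is preferred, the weight-space computation sketched above fills the gap with no real difficulty.
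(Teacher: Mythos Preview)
Your proposal is correct and matches the paper's approach exactly: the paper gives no proof at all for this lemma, merely prefacing it with ``This lemma is a summary of the construction from \cite[Section 5]{CLS}.'' Your suggestion to cite that reference directly is precisely what the paper does, and your weight-space sketch is the standard argument one would give if a self-contained proof were desired; you also correctly noted that the statement as written should say ``monomial \emph{prime} ideal'' to match the irreducibility hypothesis.
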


Now we look closer at toric schemes as defined in Definition \ref{definition: tropical toric schemes}. Toric schemes arise from toric monoid schemes which in turn correspond to fans as shown in \cite{cortinas2015toric}. In particular, if $\Delta$ is a fan, then a toric scheme over a semiring $R$ is covered by open sets isomorphic to $\Spec R[S_\sigma]$, where  $\sigma$ are the cones of $\Delta$ and $S_\sigma:=\sigma^\vee \cap \Lambda$.

\begin{mytheorem}\label{thm:F1-to-T-toric}
Let $X$ be a toric monoid scheme. Via base change to an idempotent semifield $K$ we obtain a toric scheme $X_K$ over $K$ with dense torus $\mathcal{G}_K$ whose action on itself extends to an action on $X_K$. 
\end{mytheorem}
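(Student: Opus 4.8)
The plan is to reduce everything to the affine case and then glue, mirroring the structure of Theorem~\ref{thm:F1-to-C-toric} and Proposition~\ref{proposition: constant torus action}. First I would recall that a toric monoid scheme $X$ is, by definition, obtained from a fan $\Delta$ by gluing the affine pieces $U_\sigma = \Spec A_\sigma$ where $A_\sigma = S_\sigma \cup \{0_{\mathbb{F}_1}\}$ and $S_\sigma = \sigma^\vee \cap \Lambda$; the distinguished affine piece $U_{\{0\}} = \Spec (\Lambda \cup \{0\})$ is the torus $G$. By the base change functor of Definition~\ref{definition: based change functor}, $X_K$ is covered by the open affines $(U_\sigma)_K = \Spec K[S_\sigma]$, and $(U_{\{0\}})_K = \Spec K[\Lambda] = G_K$. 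So the statement decomposes into three claims: (i) each $\Spec K[S_\sigma]$ is a toric scheme over $K$ in the sense of Definition~\ref{definition: tropical toric schemes}, which is immediate; (ii) $G_K$ is dense in $X_K$; (iii) the multiplication action of $G_K$ on itself extends to an action $G_K \times_K X_K \to X_K$.

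For density (ii), I would argue as in the monoid-scheme case: $X_K$ is irreducible by Proposition~\ref{proposition: irreducible toric} (over the irreducible idempotent semiring $K$), and an irreducible scheme has a unique generic point which lies in every nonempty open set; since $G_K$ is a nonempty open subscheme it contains that generic point, and more strongly, any nonempty open of $X_K$ meets $G_K$ because both contain the generic point. Alternatively, on each affine chart the inclusion $K[S_\sigma] \hookrightarrow K[\Lambda]$ is injective (it is the map induced by an inclusion of sets $S_\sigma \hookrightarrow \Lambda$, hence a monomorphism of free $K$-modules, exactly as in the proof of Lemma~\ref{lemma: regular function is determined on torus}), which is the algebraic expression of density.

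For the action (iii), the cleanest route is to build it affine-locally and glue. On the torus chart, the group scheme structure on $G_K = \Spec K[\Lambda]$ gives the comultiplication $\mu^\ast\colon K[\Lambda] \to K[\Lambda]\otimes_K K[\Lambda]$, $m \mapsto m\otimes m$. For each cone $\sigma$ I would define a coaction $\alpha_\sigma^\ast \colon K[S_\sigma] \to K[\Lambda]\otimes_K K[S_\sigma]$ by the same formula $m \mapsto m \otimes m$ for $m \in S_\sigma \subseteq \Lambda$; this is well-defined precisely because $S_\sigma$ is a submonoid of $\Lambda$, and the coassociativity and counit axioms are inherited from those of $\mu^\ast$. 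This yields morphisms $\alpha_\sigma\colon G_K \times_K (U_\sigma)_K \to (U_\sigma)_K$ restricting to the translation action on $G_K \times_K G_K$. To glue, I would use that the charts overlap in $U_{\sigma\cap\tau}$ (by \cite[Section 1.4]{Ful93}, $U_\sigma \cap U_\tau = U_{\sigma\cap\tau}$) and that $\alpha_\sigma$ and $\alpha_\tau$ both restrict, on the affine piece $G_K \times_K (U_{\sigma\cap\tau})_K$, to the coaction $m\mapsto m\otimes m$ on $K[S_{\sigma\cap\tau}]$; since these agree and $G_K\times_K X_K$ is covered by the $G_K \times_K (U_\sigma)_K$, the local morphisms glue to a global $\alpha\colon G_K\times_K X_K \to X_K$. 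The action axioms (associativity, identity) can either be checked globally by the same gluing argument, or—since $G_K \times_K X_K$ is irreducible by Proposition~\ref{proposition: irreducible product} and the two composites $G_K\times_K G_K\times_K X_K \rightrightarrows X_K$ agree on the dense torus—deduced from Lemma~\ref{lemma: regular function is determined on torus} applied coordinate-wise.

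The main obstacle, such as it is, is purely bookkeeping: verifying that the locally defined coactions genuinely glue, i.e.\ that the restriction of $\alpha_\sigma$ to the overlap really is expressed by $m \mapsto m\otimes m$ on $K[S_{\sigma\cap\tau}]$ and hence is independent of whether one started from $\sigma$ or $\tau$. Because all these coactions are given by the single uniform formula $m\mapsto m\otimes m$ inherited from the lattice $\Lambda$, this compatibility is automatic, so no genuine difficulty arises; the only subtlety worth flagging is that $(U_\sigma)_K\cap (U_\tau)_K = (U_{\sigma\cap\tau})_K$ and that base change commutes with this intersection, which follows because base change $-\otimes_{\mathbb{F}_1}K$ preserves the relevant colimits (as used already in the proof of Lemma~\ref{lemma: tropical thing}).
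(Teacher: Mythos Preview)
Your proposal is correct and follows essentially the same approach as the paper: define the coaction affine-locally by $m \mapsto m\otimes m$, glue over the fan, and deduce density from irreducibility of $X_K$ via the generic-point argument. The paper phrases the affine step slightly more abstractly (taking $G=\Frac(M)=\mathcal{O}_{X,\eta}$ rather than fixing the fan presentation up front) and cites \cite{JMT20} for irreducibility of $X_K$ rather than Proposition~\ref{proposition: irreducible toric}, but the substance is identical and your write-up is in fact more explicit about the gluing step than the paper's.
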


\begin{proof}
Let $X$ be toric monoid scheme, and let $U = \Spec M$ be an affine open subset of $X$, where $M$ is a cancellative monoid. Each affine open set of $X$ has a unique generic point $\eta$. Let $G$ be $\Frac(M)=\mathcal{O}_{\Spec M, \eta}$. Since $G$ is also the stalk $\mathcal{O}_{X,\eta}$, $G$ does not depend on the choice of $U$. Since $M$ is cancellative, the quotient map $\varphi : M \rightarrow G$ is injective and induces the maps $\varphi_K : K[M] \rightarrow K[G]$ and 
\begin{equation}\label{eq: torus action for fields}
K[M] \rightarrow K[G]\otimes K[M], \quad a\mapsto \varphi_K(a)\otimes a.
\end{equation}
The last map induces an action of the algebraic group $\mathcal{G}_K = \Spec\ K[G]$ on $\Spec\  K[M]$ which extends to an action of $\mathcal{G}_K$ on $X_{K}$ (as this is compatible with the restriction maps of the structure sheaf). 
To see that that $\mathcal{G}_K$ is dense in $U_K$, and hence it is dense in $X_K$ we refer to the next lemma.
\end{proof}

\begin{lem}
Let $K$ be an idempotent semifield.  Let $X$ be a toric monoid scheme with a dense torus $\mathcal{G}$. Then $\mathcal{G}_K$ is dense in $X_K$.
\end{lem}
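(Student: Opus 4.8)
The plan is to reduce the claim to the affine case and there exhibit the generic point of the torus as the generic point of the toric scheme, so that density follows from the fact that the closure of the generic point is the whole space (since toric monoid schemes, and hence their base changes, are irreducible). First I would cover $X$ by affine open toric pieces $U = \Spec M$ with $M$ a cancellative monoid, so that $U_K = \Spec K[M]$. By Proposition~\ref{proposition: irreducible affine} (or the remark following it, since $K$ is zero-sum-free with no nontrivial nilpotents), $K[M]$ has no nontrivial zero-divisors; in particular $\Spec K[M]$ is irreducible, so it has a unique generic point $\xi$, namely the prime ideal $\{0\}$. Density is a local question: $\mathcal{G}_K$ is dense in $X_K$ if and only if $\mathcal{G}_K \cap U_K$ is dense in $U_K$ for each $U$ in the cover (using that $X_K$ is irreducible by Proposition~\ref{proposition: irreducible toric}, or just that density can be checked on an open cover once one knows each $U_K$ meets $\mathcal{G}_K$).

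Next I would identify $\mathcal{G}_K \cap U_K$ explicitly. Writing $G = \Frac(M)$ as in the proof of Theorem~\ref{thm:F1-to-T-toric}, the inclusion $\varphi\colon M \hookrightarrow G$ induces $\varphi_K\colon K[M]\to K[G]$, and $\mathcal{G}_K = \Spec K[G]$ maps into $U_K = \Spec K[M]$ via $\Spec \varphi_K$. Since $\varphi$ is the localization of $M$ at all its elements, $\varphi_K$ realizes $K[G]$ as a localization of $K[M]$ (inverting the monomials $[m]$, $m\in M$), so $\Spec \varphi_K$ is an open immersion onto the open subset $\bigcap_{m\in M} D([m]) \subseteq \Spec K[M]$. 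In particular $\mathcal{G}_K \cap U_K$ is a nonempty open subscheme of $U_K$: nonempty because the generic point $\xi = \{0\}$ contains no $[m]$ (as $K[M]$ has no zero-divisors, each $[m]$ is a nonzero non-nilpotent, hence $[m]\notin\{0\}$), so $\xi \in D([m])$ for every $m$, i.e.\ $\xi \in \mathcal{G}_K \cap U_K$.

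Finally, I would conclude: any nonempty open subset $V$ of the irreducible space $U_K$ contains the generic point $\xi$, and $\xi$ lies in every nonempty open subset, so $\overline{\mathcal{G}_K \cap U_K} \supseteq \overline{\{\xi\}} = U_K$; thus $\mathcal{G}_K \cap U_K$ is dense in $U_K$. Doing this for each member of the affine cover and invoking irreducibility of $X_K$ (Proposition~\ref{proposition: irreducible toric}) gives that $\mathcal{G}_K$ is dense in $X_K$.

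I do not expect a serious obstacle here; the only point requiring a little care is the claim that $\Spec\varphi_K$ is an open immersion with image $\bigcap_m D([m])$ — this uses that base change $-\otimes_{\mathbb{F}_1}K$ commutes with the relevant localizations of monoids (i.e.\ $K[M[S^{-1}]] = K[M][\{[m]^{-1}\}]$), which is exactly the compatibility already recorded in Definition~\ref{definition: based change functor} and used throughout Section~\ref{section: Equivalence between locally free sheaves and geometric vector bundles}. Everything else is formal consequences of irreducibility, which has been established for toric schemes over idempotent (even zero-sum-free) semirings in Proposition~\ref{proposition: irreducible toric} via Proposition~\ref{proposition: irreducible affine}.
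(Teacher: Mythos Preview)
Your proposal is correct and follows essentially the same approach as the paper: reduce to the affine case, use irreducibility of $X_K$ (so there is a unique generic point), and show that $\mathcal{G}_K$ contains this generic point. The paper's proof is terser—it simply asserts that because $\mathcal{G}$ contains the generic point of $X$, $\mathcal{G}_K$ contains the generic point of $X_K$—whereas you spell out explicitly that $\mathcal{G}_K\cap U_K=\bigcap_{m\in M}D([m])$ and verify $\{0\}$ lies in this intersection using the absence of zero-divisors in $K[M]$; this extra detail is sound and arguably makes the argument more self-contained.
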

\begin{proof}
	We may assume that $X$ is affine. Since $X$ is a toric monoid scheme, $X$ is irreducible. In particular, $X_K$ is irreducible.\footnote{See the proof of Proposition \ref{theorem: bundles stable under scalar extension} in \cite{JMT20}.} On the other hand, since $\mathcal{G}$ is dense in $X$, $\mathcal{G}$ contains a generic point of $X$. It follows that $\mathcal{G}_K$ contains a generic point of $X_K$, and hence is dense in $X_K$. 
\end{proof}

When toric schemes are defined over the tropical semifield $\mathbb{T}$, the relationship between toric monoid schemes and toric schemes over $\mathbb{T}$ in the affine case is depicted in the following diagrams in a view of tropicalization. 

\begin{center}
    \begin{tikzcd} 
    S_\sigma = M \arrow[r,"\otimes k"]\arrow[dr,"\otimes \T"]&k[M]\arrow[d, "\text{trop}"]&&
    X\arrow[r,""]\arrow[dr," "]&X_k(\sigma)\arrow[d, "\text{scheme trop}"]\\& \T[M] && &X_\T(\sigma)
    \end{tikzcd}
\end{center}

The vertical map on the right is the scheme-theoretic tropicalization of \cite{giansiracusa2016equations}. 
Whether we are working over a field $k$ or the tropical semifield $\T$, the torus action of $\mathcal{G}_k$ or $\mathcal{G}_\mathbb{T}$ arises from the monoid scheme as explained in \cite[Construction 4.2 and Example 5.8]{cortinas2015toric} and the proof of 
Theorem~\ref{thm:F1-to-T-toric}. Therefore, the action of the torus commutes with tropicalization on affine open sets. But since localization also commutes with tropicalization (we are localizing at a monomial) the affine pictures glue over the fan. In particular, we have the following commutative diagram.

\begin{center}
    \begin{tikzcd} 
    \mathcal{G}_k \times_k X_k(\Delta) \arrow[r, "\text{trop}"]\arrow[d, " "]& \mathcal{G}_\mathbb{T} \times_{\T} X_\T(\Delta)\arrow[d, " "] \\
    X_k(\Delta)\arrow[r, "\text{trop}" ]& X_\T(\Delta)
    \end{tikzcd}
\end{center}

\section{$S_n$ as a functor} \label{sn as a functor}

In this section, we prove that $S_n$ can be seen as a group scheme defined over $\mathbb{N}$.

Let $n$ be a positive integer and $S_n$ be the symmetric group on $n$ letters. We define the following semiring $R_n$: 
\begin{equation}\label{eq: R_n}
R_n=\frac{\mathbb{N}[e_\sigma]_{\sigma \in S_n}}{\angles{\{e_\sigma \cdot e_\tau = 0\}_{\sigma \neq \tau},\sum_{\sigma \in S_n} e_\sigma =1}}.
\end{equation}
In other words, $R_n$ is generated by elements $\{e_{\sigma}\}_{\sigma \in S_n}$ with relations $e_\sigma \cdot e_\tau =0$ for $\sigma \neq \tau$ and $\sum\limits_{\sigma \in S_n} e_\sigma =1$. Note that with the relations given, one has that
\[
e_{\sigma}^2 = e_\sigma, \quad \forall \sigma \in S_n.
\]

We define a comultiplication on $R_n$ as follows:
\begin{equation}\label{eq: comultiplication}
\Delta(e_\sigma) = \sum_\tau e_\tau \otimes e_{\tau^{-1}\sigma}. 
\end{equation}
The counit is defined as follows:
\begin{equation}\label{eq: counit}
\varepsilon: R_n \to \mathbb{N}, \quad e_\sigma \mapsto \begin{cases}
    1 & \text{if } \sigma= \text{id}, \\
    0 & \text{otherwise.}
\end{cases}
\end{equation}
The antipode is defined as follows:
\begin{equation}
S: R_n \to R_n, \quad e_\sigma \mapsto e_{\sigma^{-1}}. 
\end{equation}

\begin{lem}
With the same notation as above, for any semiring $A$, $R_n(A):=\Hom(R_n,A)$ is equipped with a natural group structure. 
\end{lem}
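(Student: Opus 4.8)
The plan is to verify that $R_n$ together with the comultiplication $\Delta$, counit $\varepsilon$, and antipode $S$ defined above forms a commutative Hopf algebra object in the category of $\mathbb{N}$-semirings, so that for any semiring $A$ the set $R_n(A) = \Hom(R_n, A)$ inherits a natural group structure by the usual dualization. Concretely, one defines multiplication on $R_n(A)$ by $(f \cdot g)(r) = (m_A \circ (f \otimes g) \circ \Delta)(r)$, where $m_A : A \otimes_{\mathbb{N}} A \to A$ is multiplication; the identity element is $\varepsilon$ composed with the structure map $\mathbb{N} \to A$; and the inverse of $f$ is $f \circ S$. Naturality in $A$ is automatic once the operations are phrased this way, since every map is built by composition with fixed morphisms of $R_n$.

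First I would check that $\Delta$, $\varepsilon$, and $S$ are well-defined semiring homomorphisms, i.e.\ that they respect the defining relations of $R_n$ in \eqref{eq: R_n}. For $\Delta$: using idempotence and orthogonality of the $e_\tau$, one computes $\Delta(e_\sigma)\Delta(e_{\sigma'}) = \sum_{\tau,\tau'} e_\tau e_{\tau'} \otimes e_{\tau^{-1}\sigma} e_{\tau'^{-1}\sigma'}$, which collapses (since $e_\tau e_{\tau'} = 0$ unless $\tau = \tau'$) to $\sum_\tau e_\tau \otimes e_{\tau^{-1}\sigma} e_{\tau^{-1}\sigma'}$; this vanishes when $\sigma \ne \sigma'$ because $\tau^{-1}\sigma \ne \tau^{-1}\sigma'$, and equals $\Delta(e_\sigma)$ when $\sigma = \sigma'$. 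Similarly $\sum_\sigma \Delta(e_\sigma) = \sum_{\sigma,\tau} e_\tau \otimes e_{\tau^{-1}\sigma} = \sum_\tau e_\tau \otimes \big(\sum_\sigma e_{\tau^{-1}\sigma}\big) = \sum_\tau e_\tau \otimes 1 = 1 \otimes 1$, using that $\sigma \mapsto \tau^{-1}\sigma$ is a bijection of $S_n$. The checks for $\varepsilon$ and $S$ are routine in the same spirit.

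Next I would verify the Hopf algebra axioms: coassociativity $(\Delta \otimes \mathrm{id})\circ\Delta = (\mathrm{id}\otimes\Delta)\circ\Delta$, the counit identities $(\varepsilon\otimes\mathrm{id})\circ\Delta = \mathrm{id} = (\mathrm{id}\otimes\varepsilon)\circ\Delta$, and the antipode identity $m \circ (S\otimes\mathrm{id})\circ\Delta = \eta\circ\varepsilon = m\circ(\mathrm{id}\otimes S)\circ\Delta$, where $\eta : \mathbb{N}\to R_n$ is the unit. Each reduces to a bookkeeping computation over $S_n$: e.g.\ coassociativity amounts to $\sum_{\tau,\rho} e_\tau \otimes e_\rho \otimes e_{\rho^{-1}\tau^{-1}\sigma}$ being symmetric under the two ways of parenthesizing, which follows from reindexing; the antipode identity becomes $\sum_\tau e_{\tau^{-1}} e_{\tau^{-1}\sigma} = \varepsilon(e_\sigma)\cdot 1$, and since $e_{\tau^{-1}} e_{\tau^{-1}\sigma} = 0$ unless $\tau^{-1} = \tau^{-1}\sigma$, i.e.\ $\sigma = \mathrm{id}$, in which case the sum is $\sum_\tau e_{\tau^{-1}} = 1$, this matches $\varepsilon(e_\sigma)\cdot 1$. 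Once these are in place, standard Hopf-algebra yoga shows that $\Hom(R_n,-)$ is a functor to groups, and in particular $R_n(A)$ is a group for each $A$, with structure natural in $A$.

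The main obstacle is a conceptual one rather than a computational one: in the semiring setting one cannot take additive inverses, so one must be careful that the antipode identity genuinely holds \emph{on the nose} (it does, thanks to the orthogonal idempotent structure forcing all cross terms to vanish) rather than only up to a subtraction that is unavailable. Relatedly, one should confirm that the tensor product $R_n \otimes_{\mathbb{N}} R_n$ behaves as expected — that the elements $e_\tau \otimes e_{\tau'}$ form a well-behaved spanning set with the orthogonality relations — which follows from the fact that $R_n$ is a free $\mathbb{N}$-module on $\{e_\sigma\}$ (the relations exactly say the $e_\sigma$ are orthogonal idempotents summing to $1$, so $R_n \cong \mathbb{N}^{n!}$ as a semiring, a product of $n!$ copies of $\mathbb{N}$). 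Indeed this last observation gives a cleaner route: $\Spec R_n$ is literally the disjoint union of $n!$ copies of $\Spec\mathbb{N}$, so $R_n(A)$ is in natural bijection with the set of idempotent-pair-style decompositions of $\Spec A$ into $n!$ (possibly empty) clopen pieces labeled by $S_n$; when $\Spec A$ is connected this is just $S_n$ itself, and the comultiplication above restricts to the group law of $S_n$. I would present the explicit Hopf-algebra verification as the proof, and mention the $\mathbb{N}^{n!}$ description as the motivation.
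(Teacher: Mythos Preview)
Your proposal is correct and follows essentially the same strategy as the paper: both exploit the Hopf-algebra structure on $R_n$ to put a group structure on $\Hom(R_n,A)$ via the convolution product $f*g = \mu\circ(f\otimes g)\circ\Delta$, with unit $\eta\circ\varepsilon$ and inverse $f\circ S$.

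The difference is one of presentation rather than substance. The paper verifies the group axioms \emph{directly on $R_n(A)$}: it computes $e*f(e_\sigma)$, $f*f^{-1}(e_\sigma)$, and $((f*g)*h)(e_\sigma)$ explicitly and checks they come out right. You instead verify the Hopf-algebra axioms \emph{on $R_n$ itself} (coassociativity, counit, antipode identity) and then invoke the general principle that $\Hom$ from a Hopf algebra object is group-valued. Your route is more conceptual and has the virtue of checking something the paper glosses over, namely that $\Delta$, $\varepsilon$, and $S$ are well-defined semiring homomorphisms (i.e.\ respect the defining relations of $R_n$); the paper simply writes down the formulas and proceeds. Your observation that $R_n\cong\mathbb{N}^{n!}$ as a semiring, and your explicit caution about the antipode identity holding on the nose in the absence of subtraction, are both apt and not made explicit in the paper's argument. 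Either presentation is fine; yours is arguably cleaner.
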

\begin{proof}
For $f, g \in R_n(A)$, the group multiplication is defined as follows:
\begin{equation}\label{eq: group mul}
f*g :=\mu \circ (f\otimes g )\otimes \Delta, 
\end{equation}
where $\mu:R_n \otimes_\mathbb{N} R_n \to R_n$ is the multiplication. 

The inverse is given as follows:
\begin{equation}\label{eq: group inv}
f^{-1}=f\circ S. 
\end{equation}

The unit is given as follows:
\begin{equation}\label{eq: group id}
e=i\circ \varepsilon, \quad i:\mathbb{N} \to A. 
\end{equation}

Now, we check the group axioms. To show that $e$ is the identity, we only have to check that $e*f(e_\sigma)=f*e(e_\sigma)=f(e_\sigma)$ as follows:
\[
f*e(e_\sigma) = \mu\circ (f\otimes e)\circ\Delta(e_\sigma)=\mu\circ (f\otimes e)(\sum_\tau e_\tau \otimes e_{\tau^{-1}\sigma}) 
\]
\[= \sum_\tau f(e_\tau)e(e_{\tau^{-1}\sigma})=f(e_\sigma)e(1)=f(e_\sigma). 
\]
Likewise, 
\[
e*f(e_\sigma) = \mu\circ (e\otimes f)\circ \Delta(e_\sigma)=\sum_\tau e(e_\tau)f(e_{\tau^{-1}\sigma}) = e(1)f(e_\sigma)=f(e_\sigma). 
\]

For the existence of inverses, we check $f*f^{-1}=e$ as follows:
\[
f*f^{-1}(e_\sigma)=\sum_\tau f(e_\tau)f(S(e_{\tau^{-1}\sigma}))=\sum_\tau f(e_\tau)f(e_{\sigma^{-1}\tau}) 
\]
\[
=\sum_\tau f(e_\tau e_{\sigma^{-1}\tau}) = \begin{cases}
 \sum_\tau f(e_\tau)=1 & \text{if } \sigma=\text{id}, \\
 0 & \text{otherwise.}
\end{cases}
\]
In particular, $f*f^{-1}(e_\sigma)=e(e_\sigma)$. Similarly, $f^{-1}*f=e$. 

Finally, we check the associativity: let $f,g,h \in R_n(A)$:, 
\[
((f*g)*h)(e_\sigma)=\sum_\tau (f*g)(e_\tau)h(e_{\tau^{-1}\sigma}) = \sum_\tau \sum_{\delta} f(e_\delta)g(e_{\delta^{-1}\tau}) h(e_{\tau^{-1}\sigma})
\]
On the other hand:
\[
(f*(g*h))(e_\sigma)=\sum_{\alpha} f(e_\alpha) (g*h)(e_{\alpha^{-1}\sigma}) = \sum_\alpha \sum_\beta f(e_\alpha)g(e_\beta)h(e_{\beta^{-1}\alpha^{-1}\sigma} ), 
\]
and one can easily see that the two expressions are identical with $\delta = \alpha$ and $\tau = \delta \beta$. 
\end{proof}

Define an element $f_\sigma$ in $R_n$ by 
\[
f_\sigma := \sum_{\tau \neq \sigma} e_\tau.
\] 
Observe that for each $\sigma$, $(e_\sigma, f_\sigma)$ is an idempotent pair. For instance, if $n=2$, this is equivalent to the notion of idempotent pairs as in Definition \ref{definition: idempotent pair}, i.e., 
\[
e_\sigma+f_\sigma=1, \quad e_\sigma f_\sigma =0. 
\]
In fact, one can observe that $R_n$ is just a direct sum of copies of $\mathbb{N}$, labelled by elements of $S_n$.  

Let $S_n(A)$ be the group of functions from the set of connected components of $\Spec A$ to $S_n$. Then, we have the following.

\begin{lem}
With the same notation as above, $\Hom(\Spec A, \Spec R_n)=R_n(A)$ is isomorphic to $S_n$ as a group if $\Spec A$ is connected. 
\end{lem}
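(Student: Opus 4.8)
The goal is to show that for a connected semiring $A$, the group $R_n(A) = \Hom(\Spec A, \Spec R_n)$ is isomorphic to $S_n$. The plan is to exhibit a natural bijection between $R_n(A)$ and $S_n$ and then check it respects the group operations already defined via the Hopf-algebra-style structure on $R_n$. First I would observe that, as noted just before the statement, $R_n = \bigoplus_{\sigma \in S_n} \mathbb{N}$ — more precisely, the idempotents $e_\sigma$ are pairwise orthogonal, sum to $1$, and each satisfies $e_\sigma^2 = e_\sigma$, so $R_n$ is the direct sum (product in the category of semirings, but here the finite biproduct) of $|S_n| = n!$ copies of $\mathbb{N}$ indexed by $S_n$. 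Geometrically, $\Spec R_n$ is the disjoint union of $n!$ copies of $\Spec \mathbb{N}$, one for each $\sigma$.

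Next I would use connectedness: by the Lemma in Section~\ref{section:torus-equivariant} (the one stating that a morphism from a connected scheme to $\Spec(R\oplus S)$ factors through $\Spec R$ or $\Spec S$), applied inductively to the decomposition $R_n = \bigoplus_{\sigma} \mathbb{N}$, any homomorphism $\varphi: R_n \to A$ with $\Spec A$ connected must satisfy $\varphi(e_{\sigma_0}) = 1$ for exactly one $\sigma_0 \in S_n$ and $\varphi(e_\sigma) = 0$ for all $\sigma \neq \sigma_0$. Indeed, since the $(e_\sigma, f_\sigma)$ form idempotent pairs and $A$ is connected, Proposition~\ref{proposition: connected implies no idempotet paris} forces $\varphi(e_\sigma) \in \{0,1\}$ for each $\sigma$; orthogonality $e_\sigma e_\tau = 0$ then prevents two of them from being $1$, and $\sum_\sigma e_\sigma = 1$ forces at least one to be $1$. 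This gives a bijection $\Phi: R_n(A) \to S_n$, $\varphi \mapsto \sigma_0$, where $\sigma_0$ is the unique index with $\varphi(e_{\sigma_0}) = 1$; its inverse sends $\sigma$ to the composite $R_n \twoheadrightarrow \mathbb{N} \xrightarrow{i} A$ that is the projection onto the $\sigma$-th factor followed by the structure map.

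Finally I would verify that $\Phi$ is a group homomorphism, i.e. that the multiplication $f * g = \mu \circ (f \otimes g) \circ \Delta$ on $R_n(A)$ corresponds under $\Phi$ to the product in $S_n$. If $\Phi(f) = \sigma$ and $\Phi(g) = \tau$, compute $(f*g)(e_\rho) = \sum_{\alpha} f(e_\alpha) g(e_{\alpha^{-1}\rho})$; the term survives only when $\alpha = \sigma$ (so $f(e_\alpha) = 1$) and $\sigma^{-1}\rho = \tau$, i.e. $\rho = \sigma\tau$, in which case the value is $1$, and otherwise $0$. Hence $\Phi(f*g) = \sigma\tau = \Phi(f)\Phi(g)$. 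One similarly checks the identity element $e = i \circ \varepsilon$ maps to $\mathrm{id} \in S_n$ and the antipode-induced inverse $f^{-1} = f \circ S$ maps to $\sigma^{-1}$, using $S(e_\sigma) = e_{\sigma^{-1}}$. This completes the proof.

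\textbf{Main obstacle.} The only genuinely non-formal point is the reduction from "$\varphi$ sends each $e_\sigma$ to $0$ or $1$" to "$\varphi$ sends exactly one $e_\sigma$ to $1$ and the rest to $0$", which is where connectedness is used essentially; once the bijection $\Phi$ is in hand, the compatibility with the group structure is a direct unwinding of the definitions \eqref{eq: comultiplication}, \eqref{eq: counit} and \eqref{eq: group mul} exactly mirroring the computations already carried out in the preceding lemma. I would also remark (for the reader) that without connectedness one gets instead $R_n(A) \cong S_n(A)$, the group of locally constant $S_n$-valued functions on $\Spec A$, recovering the statement made just before the lemma, so that $\Spec R_n$ represents the constant group functor $S_n$ over $\mathbb{N}$.
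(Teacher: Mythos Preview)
Your proposal is correct and follows essentially the same approach as the paper's own proof: both use Proposition~\ref{proposition: connected implies no idempotet paris} to force each $\varphi(e_\sigma)\in\{0,1\}$, then use the relations $\sum_\sigma e_\sigma=1$ and $e_\sigma e_\tau=0$ to pin down a unique $\sigma_0$, and finally unwind the comultiplication to check that the resulting bijection $R_n(A)\to S_n$ is a group isomorphism. Your write-up is slightly more explicit in the group-law verification and adds the helpful remark about the disconnected case, but there is no substantive difference in strategy.
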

\begin{proof}
Let $A$ be a semiring. A map from $\Spec A$ to $\Spec R_n$ is equivalent to a homomorphism $\varphi:R_n \to A$, which is determined by the elements $\varphi(e_\sigma)$, for $e_\sigma \in R_n$. Since $\Spec A$ is connected, it follows from Lemma~\ref{proposition: connected implies no idempotet paris} that $A$ has only trivial idempotent pairs. In other words, if $(e,f)$ is an idempotent pair of $A$, then $\{e,f\} = \{0,1\}$. 

We claim that each $\varphi \in R_n(A)$ determines a unique element $\sigma \in S_n$, giving us a bijection between $R_n(A)$ and $S_n$. In fact, for each $\sigma \in S_n$, the pair $(\varphi(e_\sigma),\varphi(f_\sigma))$ should be a trivial idempotent pair, i.e., $\varphi(e_\sigma)$ is either $0$ or $1$ for each $\sigma \in S_n$. It is easy to see from the identity $\sum_{\sigma \in S_n} e_\sigma =1$ that $\varphi(e_\sigma)=1$ for at least one choice of $\sigma$, and from the identities $e_\sigma\cdot e_\tau=0$ that $\varphi(e_\sigma)$ cannot be equal to $1$ for two choices of $\sigma$. So we have obtained an element of $S_n$, specifically the unique element such that $\varphi(e_\sigma)=1$. 

On the other hand, for a given element $\sigma$ in $S_n$, note that there is a unique homomorphism $\varphi:R_n \to A$  with $\varphi(e_\sigma)=1$ and $\varphi(e_\tau)=0$ for all other $\tau$, since this choice of $\varphi(e_\tau)$ satisfies the relation defining $R_n$. This gives us a desired bijection between $R_n(A)$ and $S_n$. 

Finally, we need to check the comultiplication in \eqref{eq: comultiplication} is right one to give a canonical isomorphism $R_n(A) \simeq S_n$ as groups with the above bijection. But, it is a straightforward calculation that
\[
\mu\circ (\sigma\otimes \tau) \circ \Delta(x) = (\sigma\tau)(x),
\]
where we abuse notation by identifying elements of $S_n$ with elements of $R_n(A)$ via the bijection. Also, the unit $e \in R_n(A)$, as in \eqref{eq: group id}, maps to the unit in $S_n$ via the bijection. 
\end{proof}

Showing that this representing object has the correct $A$-valued points for disconnected $A$ should require an extra condition.  Even in the ring-theoretic case, one may need some finiteness condition in order to describe a scheme as the disjoint union of connected components. For example, this is not necessarily true for the spectrum of an infinite Boolean algebra, which is totally disconnected, but in general is not discrete.  

Suppose $\Spec A$ has finitely many connected components, but is not connected.  Then we can find a non-trivial idempotent pair $(e, f)$.  Using the idempotence of $e$, it is easy to see that $eA$ is a semiring (with identity element $e$), and similarly for $fA$.  Note that any $a$ in $A$ can be written as $ea + fa$. Furthermore, if $a = eb + fc$ then multiplying by $f$ gives $fa = fc$ and similarly $ea = eb$, so the decomposition as a sum of an element of $eA$ and an element of $fA$ is unique, so we get a bijection between $eA \times fA$ and $A$. One can check that this is a homomorphism. So $A$ is a product of nontrivial semirings, and $\Spec A$ is the coproduct $\Spec eA \bigsqcup \Spec fA$.  In particular if $S_n$ denotes the functor represented by $R_n$, then we have
\[
S_n(A) = S_n(eA) \times  S_n(fA). 
\]
As in the case for rings, the following holds. 
\begin{lem}\label{lemma: decomposition lemma}
With the same notation as above, $\Spec A$ is homeomorphic to $\Spec eA \bigsqcup \Spec fA$. \end{lem}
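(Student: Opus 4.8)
\textbf{Proof plan for Lemma~\ref{lemma: decomposition lemma}.} The plan is to show that the natural continuous map $j\colon \Spec eA \sqcup \Spec fA \to \Spec A$ induced by the two projections $A \to eA$ and $A \to fA$ is a homeomorphism, mimicking the classical argument for rings but being careful about two features of semiring geometry: ideals of a semiring need not be subtractive, and prime ideals are defined via the complement being multiplicatively closed rather than via quotients being domains. First I would recall from the discussion preceding the lemma that, since $(e,f)$ is a nontrivial idempotent pair, the map $A \to eA \times fA$, $a \mapsto (ea, fa)$, is an isomorphism of semirings; here $eA$ has multiplicative identity $e$ and $fA$ has multiplicative identity $f$, and multiplication in the product is componentwise. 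So it suffices to prove the general statement that $\Spec(B\times C) \cong \Spec B \sqcup \Spec C$ for semirings $B,C$.

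The key steps, in order, would be: (1) Describe the prime ideals of $B \times C$. Given a prime $\mathfrak{p} \subseteq B\times C$, note that $(1_B,0)$ and $(0,1_C)$ form an idempotent pair, so their product $(0,0)$ lies in $\mathfrak{p}$ while their sum $(1_B,1_C)$ does not; since the complement of $\mathfrak p$ is multiplicatively closed and contains $(1_B,1_C)$, exactly one of $(1_B,0)$, $(0,1_C)$ lies in the complement — if both were in the complement their product $(0,0)$ would be too, impossible; if neither were, then $(1_B,1_C)=(1_B,0)+(0,1_C)$ would have both summands in $\mathfrak p$, but this does not immediately force $(1_B,1_C)\in\mathfrak p$ since $\mathfrak p$ need not be subtractive — so instead I argue directly: $(1_B,0)(b,c) = (b,0)$ and $(0,1_C)(b,c)=(0,c)$, and if $(1_B,0)\in\mathfrak p$ then $(b,0)\in\mathfrak p$ for all $(b,c)$, so $\mathfrak p \supseteq B\times\{0\}$ and then $\mathfrak p$ corresponds to a prime of $C$ (via $\mathfrak p = B\times \mathfrak q$ with $\mathfrak q$ prime in $C$), and symmetrically. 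One of the two must occur because $(1_B,0)(0,1_C)=(0,0)\in\mathfrak p$ and $\mathfrak p$ is prime. This gives a bijection between $\Spec(B\times C)$ and $\Spec B \sqcup \Spec C$ on the level of sets. (2) Check continuity of $j$: a basic open $D(b,c)$ of $\Spec(B\times C)$ pulls back along the inclusion $\Spec B \hookrightarrow \Spec(B\times C)$ (which sends $\mathfrak q \mapsto \mathfrak q \times C$) to $D(b)$, and similarly on the $C$ side, so $j$ is continuous, and in fact the images of $\Spec B$ and $\Spec C$ are the open sets $D(1_B,0)$ and $D(0,1_C)$ respectively, which are complementary. (3) Check $j$ is open (hence a homeomorphism): the image of a basic open $D(b) \subseteq \Spec B$ is $D((1_B,0)\cdot\iota(b)) = D(b,0) \subseteq \Spec(B\times C)$ for any lift, so images of basic opens are open; since $\Spec B$ and $\Spec C$ land in disjoint opens whose union is everything, $j$ is a continuous open bijection, i.e.\ a homeomorphism.

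I do not expect a serious obstacle here; the statement is the semiring analogue of a standard fact and the argument is essentially formal. The one point requiring genuine care — and the step I would flag as the ``main obstacle'' in the mild sense — is handling primality of ideals in $B\times C$ without recourse to subtractivity: one cannot use quotients-are-domains reasoning, so the decomposition $\mathfrak p = B \times \mathfrak q$ or $\mathfrak p = \mathfrak q \times C$ must be extracted purely from the multiplicative closedness of the complement together with the absorption identities $(1_B,0)(b,c)=(b,0)$ and $(0,1_C)(b,c)=(0,c)$. Once that is in place, matching up the structure sheaves (if one wants the scheme-level statement rather than just the homeomorphism of underlying spaces, which is all the lemma asks) is routine via the same localization-commutes-with-products computation already used in the paper.
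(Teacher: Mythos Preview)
Your proposal is correct. The paper does not actually prove this lemma; it merely records it with the remark ``As in the case for rings, the following holds'' and moves on. Your outline supplies exactly the details one would want, and the reduction to $\Spec(B\times C)\cong \Spec B\sqcup \Spec C$ via the isomorphism $A\cong eA\times fA$ established just before the lemma is the right move.

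One small point: your detour around subtractivity in step~(1) is unnecessary. Ideals in a semiring are closed under addition (not subtraction), so if both $(1_B,0)$ and $(0,1_C)$ lay in $\mathfrak p$ their sum $(1_B,1_C)$ would too, forcing $\mathfrak p$ to be improper. Thus the trichotomy collapses immediately to the single observation you already make at the end of that step: $(1_B,0)(0,1_C)=(0,0)\in\mathfrak p$ and primality give one of the two factors in $\mathfrak p$, and improperness rules out both. After that, your verification that $\mathfrak p = B\times\mathfrak q$ or $\mathfrak q\times C$ with $\mathfrak q$ prime, and the continuity/openness checks via $j^{-1}(D(b,c))=D(b)\sqcup D(c)$ and $j(D(b))=D(b,0)$, are clean and complete. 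The lemma only asks for a homeomorphism of underlying spaces, so your final parenthetical about structure sheaves is not needed here.
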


In particular, $\Spec eA$ and $\Spec fA$ have fewer connected components. Hence, by inductive hypothesis, elements of $S_n(eA)$ correspond to maps from the connected components of $e_A$ to the discrete set $S_n$, and similarly for $fA$. By the disjoint union property, a map from the connected components of $\Spec A$ to $S_n$ is the same as a pair of maps from the connected components of $\Spec eA$ and $\Spec fA$ to $S_n$, and such pairs correspond to elements of $S_n(eA) \times S_n(fA) = S_n(A)$. Hence we have the following. 

\begin{pro}\label{proposition: finitely many components}
 With the same notation as above, $R_n(A)$ is isomorphic to $S_n$ if $\Spec A$ has finitely many connected components. 
\end{pro}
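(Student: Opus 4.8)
\textbf{Proof proposal for Proposition~\ref{proposition: finitely many components}.} The plan is to proceed by induction on the number $k$ of connected components of $\Spec A$, with the content carried by the two lemmas that precede the statement. The base case $k = 1$ is exactly the preceding lemma identifying $R_n(A)$ with $S_n$ when $\Spec A$ is connected, noting that for connected $\Spec A$ the group $S_n$ (the functor represented by $R_n$) of locally constant maps from the connected components to $S_n$ is just $S_n$ itself; so the proposition holds in this case. I would then fix $k > 1$ and assume the result for all semirings whose spectrum has strictly fewer than $k$ connected components.

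For the inductive step, since $\Spec A$ is not connected, Proposition~\ref{proposition: connected implies no idempotet paris} gives a non-trivial idempotent pair $(e, f)$, and as explained in the text we get a decomposition $A \cong eA \times fA$ into a product of semirings, with $eA$ and $fA$ having identity elements $e$ and $f$ respectively. By Lemma~\ref{lemma: decomposition lemma}, $\Spec A$ is homeomorphic to $\Spec eA \bigsqcup \Spec fA$, so each of $\Spec eA$ and $\Spec fA$ has strictly fewer than $k$ connected components (each has at least one, and their numbers sum to $k$). Since $R_n$ represents the functor $A \mapsto R_n(A)$ and $\Spec$ turns the product of semirings into a coproduct of schemes, the universal property of the coproduct gives a natural bijection
\[
R_n(A) = \Hom(\Spec A, \Spec R_n) \cong \Hom(\Spec eA, \Spec R_n) \times \Hom(\Spec fA, \Spec R_n) = R_n(eA) \times R_n(fA),
\]
and one checks directly from the definition of the group law \eqref{eq: group mul} that this is a group isomorphism. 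Applying the inductive hypothesis to $eA$ and $fA$ identifies $R_n(eA)$ with $S_n(eA)$ and $R_n(fA)$ with $S_n(fA)$, i.e.\ with the groups of maps from the connected components of $\Spec eA$ (resp.\ $\Spec fA$) to $S_n$. Finally, a map from the set of connected components of $\Spec A$ to $S_n$ is the same as a pair of such maps on $\Spec eA$ and $\Spec fA$ under the decomposition of Lemma~\ref{lemma: decomposition lemma}, so $S_n(A) \cong S_n(eA) \times S_n(fA) \cong R_n(eA) \times R_n(fA) \cong R_n(A)$, completing the induction.

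I do not expect a serious obstacle here: the argument is the standard "reduce to connected components" induction, and all the ingredients (the connected case, the idempotent-pair decomposition $A \cong eA \times fA$, the homeomorphism of Lemma~\ref{lemma: decomposition lemma}, and the compatibility of $\Spec$ with finite products) are already in place. The only points requiring a little care are verifying that the bijection $R_n(A) \cong R_n(eA) \times R_n(fA)$ is compatible with the group structures — which amounts to observing that comultiplication, counit, and antipode are all applied componentwise under the product decomposition — and making sure the induction is set up on the number of connected components (finite by hypothesis) rather than on some other parameter, so that the decomposition genuinely strictly decreases the induction quantity.
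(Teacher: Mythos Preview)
Your proposal is correct and follows essentially the same argument as the paper: induction on the number of connected components, with the base case given by the connected lemma and the inductive step carried out via the idempotent-pair decomposition $A \cong eA \times fA$, Lemma~\ref{lemma: decomposition lemma}, and the resulting product decomposition $R_n(A) \cong R_n(eA) \times R_n(fA)$. The only minor addition in your write-up is the explicit remark that the product decomposition respects the group structure, which the paper leaves implicit.
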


\begin{rmk}\label{remark: exact sequence remark}
In Proposition~\ref{proposition: exact sequence R, GL, S_n}, we prove that there exists an exact sequence as follows, when $A$ is zero-sum free and has only trivial idempotent pairs:
\begin{equation}\label{eq: exact seq}
0 \to (A^\times)^n \to \text{GL}_n(A) \to S_n \to 0. 
\end{equation}
From Proposition \ref{proposition: finitely many components}, if $\Spec A$ is the coproduct of connected components, one can obtain an exact sequence \eqref{eq: exact seq} as a product of copies of the exact sequences in the connected case. A scheme-theoretic version of the above exact sequence is in \cite[Theorem 15.4]{BJ24}.
\end{rmk}

\bibliography{Vectorbundle}\bibliographystyle{alpha}

\end{document}